\DeclareSymbolFont{rsfs}{U}{rsfs}{m}{n}
\DeclareSymbolFontAlphabet{\mathscrsfs}{rsfs}
\def\KRR{{\sf KRR}}
\def\cH{{\mathcal H}}
\def\bsh{{\boldsymbol h}}
\def\bPsi{{\boldsymbol \Psi}}
\def\bLambda{{\boldsymbol \Lambda}}
\def\bS{{\boldsymbol S}}
\def\bO{{\boldsymbol O}}
\def\bQ{{\boldsymbol Q}}
\def\blambda{{\boldsymbol \lambda}}
\def\bY{{\boldsymbol Y}}
\def\bH{{\boldsymbol H}}
\def\bE{{\boldsymbol E}}
\def\beps{{\boldsymbol \eps}}
\def\boldf{{\boldsymbol f}}
\def\KR{{\sf KR}}
\def\balpha{{\boldsymbol \alpha}}
\def\bbH{{\mathbb H}}
\def\bK{{\boldsymbol K}}
\def\K{{\mathbb K}}
\def\cL{{\mathcal L}}
\def\T{{\mathbb T}}
\def\endd{{\rm end}}
\def\bw{{\boldsymbol w}}
\def\de{{\rm d}}
\def\bx{{\boldsymbol x}}
\def\by{{\boldsymbol y}}
\def\bW{{\boldsymbol W}}
\def\ba{{\boldsymbol a}}
\def\hba{\hat{\boldsymbol a}}
\def\cF{{\mathcal F}}
\def\Unif{{\sf Unif}}
\def\normal{{\sf N}}
\def\bU{{\boldsymbol U}}
\def\bV{{\boldsymbol V}}
\def\bM{{\boldsymbol M}}
\def\bZ{{\boldsymbol Z}}
\def\bS{{\boldsymbol S}}
\def\bxi{{\boldsymbol \xi}}
\def\hy{\hat{y}}
\def\cuH{\mathscrsfs{H}}
\def\proj{{\mathsf P}}
\def\bbHe{{\rm He}}
\def\cE{{\mathcal E}}
\def\stest{\mbox{\tiny\rm test}}
\def\bdelta{{\boldsymbol\delta}}
\def\Trace{{\rm Tr}}
\def\bbeta{{\boldsymbol \beta}}
\def\bDelta{{\boldsymbol \Delta}}
\def\bB{{\boldsymbol B}}
\def\bH{{\boldsymbol H}}
\def\bX{{\boldsymbol X}}
\def\bG{{\boldsymbol G}}
\def\bh{{\boldsymbol h}}
\def\be{{\boldsymbol e}}
\def\bu{{\boldsymbol u}}
\def\bg{{\boldsymbol g}}
\def\sM{{\sf M}}
\def\bA{{\boldsymbol A}}
\def\bD{{\boldsymbol D}}
\def\bv{{\boldsymbol v}}
\def\bxi{{\boldsymbol \xi}}
\def\btheta{{\boldsymbol \theta}}
\def\bTheta{{\boldsymbol \Theta}}
\def\Coeff{{\rm Coeff}}
\def\bfone{{\boldsymbol 1}}
\def\RF{{\sf RF}}
\def\NT{{\sf NT}}
\def\NN{{\sf NN}}
\def\KRR{{\sf KRR}}
\def\reals{{\mathbb R}}
\def\integers{{\mathbb Z}}
\def\naturals{{\mathbb N}}
\def\hf{\hat{f}}
\def\bi{{\boldsymbol i}}
\def\bj{{\boldsymbol j}}
\def\bk{{\boldsymbol k}}
\def\tcT{\widetilde{\mathcal T}}
\def\cC{{\mathcal C}}
\def\cQ{{\mathcal Q}}
\def\sk{{\rm sk}}
\def\cT{{\mathcal T}}
\def\hR{\hat{R}}
\colorlet{linkequation}{blue}
\begin{document}

\title{Linearized two-layers neural networks in high dimension}

\author{Behrooz Ghorbani\thanks{Department of Electrical Engineering, Stanford University}, \;\;Song Mei\thanks{Institute for Computational and Mathematical Engineering, Stanford University},\;\; Theodor Misiakiewicz\thanks{Department of Statistics, Stanford University}, \;\; Andrea Montanari\thanks{Department of Electrical Engineering and Department of Statistics, Stanford University}}

\maketitle

\begin{abstract}
We consider the problem of learning an unknown function $f_{\star}$ on the $d$-dimensional sphere with respect to the square loss,  given i.i.d. samples
$\{(y_i,\bx_i)\}_{i\le n}$ where $\bx_i$ is a feature vector uniformly distributed on the sphere and $y_i=f_{\star}(\bx_i)+\eps_i$. 
We study two popular classes of models that can be regarded as linearizations of two-layers neural networks around a random initialization:
the random features model of Rahimi-Recht (\RF);  the neural tangent kernel model of Jacot-Gabriel-Hongler (\NT).
Both these approaches can also be regarded as randomized  approximations of  kernel ridge regression (with respect to different kernels),
and  enjoy universal approximation properties when the number of neurons
$N$ diverges, for a fixed dimension $d$. 

We consider two specific regimes: the approximation-limited regime, in which $n=\infty$ while $d$ and $N$ are large but finite; and
the sample size-limited regime in which $N=\infty$ while $d$ and $n$ are large but finite.
In the first regime, we prove that if $d^{\ell + \delta} \le N\le d^{\ell+1-\delta}$ for small $\delta > 0$, then \RF\, effectively fits a
degree-$\ell$ polynomial in the raw features, and \NT\, fits a  degree-$(\ell+1)$ polynomial.
In the second regime, both \RF\, and \NT\, reduce to kernel methods with rotationally invariant kernels.
We prove that, if the number of samples is $d^{\ell + \delta} \le n \le d^{\ell +1-\delta}$, then kernel methods can fit at most a
a degree-$\ell$ polynomial in the raw features. This lower bound is achieved by kernel ridge regression. 
Optimal prediction error is achieved for vanishing ridge regularization.
\end{abstract}

\tableofcontents

\section{Introduction and main results}

In the canonical statistical learning problem, we are given
independent and identically distributed (i.i.d.) pairs $(y_i,\bx_i)$, $1 \le i\le n$, where $\bx_i\in\reals^d$ is a feature vector and $y_i\in\reals$ is a label or response variable. 
We would like to construct a function $f$ which allows us to predict future responses. Throughout this paper, we will measure the quality of a predictor 
$f$ via its square prediction error (risk): $R(f) \equiv \E\{(y-f(\bx))^2\}$. 

\subsection{Background}

For a number of important applications, state-of-the-art  performances are obtained by representing the function $f$ as
a multi-layers neural network. The simplest 
model in this class is given by two-layers networks (\NN):
\begin{align}\tag{\NN}
\cF_{\NN} \equiv \Big\{ f(\bx) = \sum_{i=1}^N a_i\,\sigma(\<\bw_i,\bx\>)\; :\;\;\; a_i\in\reals, \bw_i\in \reals^d \;\,\,\, \forall i\le N\Big\}\, .
\end{align}
Here $N$ is the number of neurons and $\sigma:\reals\to\reals$ is an
activation function.

Two-layers neural networks have been extensively studied in the nineties, with a focus on two goals: $(i)$~Establishing
approximation guarantees over classical function spaces; $(ii)$~Controlling the generalization error via Rademacher
complexity arguments. We  refer to
\cite{pinkus1999approximation,anthony2009neural} for surveys of these results.

Computational aspects were notably under-represented within these early theoretical contributions. On the contrary,
it is nowadays increasingly clear that computational and statistical aspects cannot be separated in the analysis of
neural networks (see, e.g. \cite{soudry2018implicit,mei2018mean,chizat2018global}).  Indeed, the optimization algorithm
does not simply compute the unique minimizer
of a regularized empirical risk: it instead selects one among many possible near-minimizers, whose generalization properties can vary significantly. 
Therefore, the specific optimization algorithm is an integral part of the definition of the regularization  method.

A concrete scenario in which this interplay can be understood precisely is the so-called `neural tangent kernel' regime.
First explicitly described in \cite{jacot2018neural}, this regime has attracted considerable amount of work.
The basic idea is that, for highly overparametrized networks, the network weights barely change from their random initialization.
We can therefore replace the nonlinear function class $\cF_{\NN}$  by its first order Taylor expansion around this initialization.

Denoting by $(a_{0,i},\bw_{0,i})_{i\le N}$ the weights at initialization, a first order Taylor expansion yields
\begin{align*}
f_{\NN}(\bx) &= \sum_{i=1}^N a_i\,\sigma(\<\bw_i,\bx\>) \\
&\approx f_{\NN,0}(\bx) + \sum_{i=1}^N (a_i-a_{0,i})\,\sigma(\<\bw_{0,i},\bx\>)+\sum_{i=1}^N a_{0,i}\<\bw_i-\bw_{0,i},\bx\>\,
\sigma'(\<\bw_{0,i},\bx\>)\, ,
\end{align*}
where  $f_{\NN,0}$ is the neural network at initialization.
In other words,  $f_{\NN}-f_{\NN,0}$ is a function in the direct sum $\cF_{\NT}(\bW)\oplus \cF_{\RF}(\bW)$, where we defined 
\begin{align}
\cF_{\RF}(\bW) &\equiv \Big\{f(\bx) = \sum_{i=1}^N a_i\,\sigma(\<\bw_i,\bx\>)\; :\;\;\; a_i\in\reals \; \forall i\le N\Big\}\, , \tag{\RF}\label{eqn:RF}\\
\cF_{\NT}(\bW) &\equiv \Big\{ f(\bx) = \sum_{i=1}^N \<\ba_i,\bx\>\,\sigma'(\<\bw_i,\bx\>)\; :\;\;\; \ba_i\in\reals^d \; \forall i\le N\Big\}\, .  \tag{\NT}
\end{align}
Here $\bW\in\reals^{N\times d}$ is a matrix whose $i$-th row is the vector $\bw_i$, and
$\sigma'$ is the derivative of the activation function
with respect to its argument (if $\<\bw_i,\bx\>$ has a density, $\sigma$ only needs to be weakly differentiable).

We will refer to $\cF_{\RF}(\bW)$ as the `random features' (\RF) model: it amounts to fixing the first layer, and
only optimizing the coefficients in the second layer.
Equivalently, $\cF_{\RF}(\bW)$ corresponds to the first order Taylor expansion of $f_{\NN}$ with respect to the second layer weights $(a_i)_{i\le N}$.
This model can be traced back to the work of Neal \cite{neal1996priors},
and was successfully developed by Rahimi and Recht \cite{rahimi2008random} as a randomized approximation to kernel methods.

The second function class $\cF_{\NT}(\bW)$  corresponds to the first order Taylor expansion of $f_{\NN}$ with respect to the 
first layer weights $(\bw_i)_{i\le N}$ \cite{jacot2018neural}. We will refer to $\cF_{\NT}(\bW)$
as the neural tangent class\footnote{Often the term `neural tangent' is reserved for the direct sum $\cF_{\NT}(\bW)\oplus \cF_{\RF}(\bW)$.
  We find it more convenient to give distinct names to each of the two terms, especially since $\cF_{\RF}(\bW)$ has much smaller
  dimension than $\cF_{\NT}(\bW)$ for large $d$.}.

A sequence of recent papers proves that, in a certain overparametrized regime, gradient descent (GD) applied to the nonlinear
neural network class $\cF_{\NN}$ effectively converges to a model in $\cF_{\NT}(\bW)\oplus \cF_{\RF}(\bW)$.
Namely, if the number of neurons $N$ is larger than a threshold $N_0(n,d)$, and training is initialized with
$f_0(\bx) = N^{-1/2}\sum_{i=1}^N a_{0,i}\,\sigma(\<\bw_{0,i},\bx\>)$ where $\{(a_{0,i},\bw_{0,i})\}_{i\le N}\sim_{iid}
\normal(0,1)\otimes \normal(0,\id_d/d)$, then gradient descent converges exponentially fast to
weights $\{(a_{i},\bw_{i})\}_{i\le N}$ such that $f-f_0$ is well approximated by a function in  $\cF_{\NT}(\bW)\oplus \cF_{\RF}(\bW)$.
The specific value of the threshold $N_0(n,d)$ for the onset of this \NT\, regime has been steadily pushed down
over the last year \cite{du2018gradient,du2018gradient2,allen2018convergence,zou2018stochastic,arora2019fine}.

Does the \NT\, regime explain the power of multi-layers neural networks, when trained by gradient descent methods?
From an empirical point of view, the evidence is not univocal  \cite{lee2019wide,geiger2019disentangling,chizat2019lazy}.
From a theoretical point of view, while the expressivity of neural networks is superior to the one of \NT\, models,
this hypothesis is not easy to dismiss for at least two reasons.
First,  neural networks learned by gradient descent algorithms form a significantly smaller class than general
networks. Second, the answer depends on the data distribution, the target function $f_*$ and the sample size.

In order to clarify this question, we explore the behavior of \RF\, and \NT\, models in the high-dimensional
setting. More precisely, we consider two specific asymptotic regimes:
\begin{enumerate}
\item[$(i)$] The infinite sample size case in which $n=\infty$, and $N,d$ diverge while being polynomially related.
  In this case the prediction error reduces to the approximation error $\inf_{f\in\cF_{\sM}}\E\{[f_*(\bx)-f(\bx)]^2\}$,
  for either model $\sM\in\{\NT,\RF\}$.
\item[$(ii)$] The infinite width regime in which $N=\infty$ and $n,d$ diverge while being polynomially related.
  In this case (and under a suitable bound on the $\ell_2$ norm of the coefficients) both classes $\cF_\RF$, $\cF_\NT$
  reduce to certain reproducing kernel Hilbert spaces (RKHS).
\end{enumerate}
In both cases we obtain sharp results, up to errors vanishing as $d\to\infty$. Crucially, our results hold \emph{pointwise},
i.e. they provide a characterization of approximation and generalization error which hold \emph{for a given function $f_*$}.
This allows us to derive precise separation results between \NN\, and \NT\, models.

\subsection{A parenthesis}
\label{sec:Parenthesis}

The approximation properties of neural networks have been studied for over three decades \cite{devore1989optimal, cybenko1989approximation, hornik1991approximation, barron1993universal, mhaskar1994dimension, girosi1995regularization, mhaskar1996neural, petrushev1998approximation, maiorov1999best, pinkus1999approximation}.
It is useful to discuss the  relation between the questions outlined above and existing literature.

A number of results are available on the approximation of functions in certain smoothness classes by two-layers
neural networks. In particular \cite{barron1993universal} controls smoothness by the average frequency content
in the Fourier transform (the `Barron norm'), while \cite{mhaskar1996neural, petrushev1998approximation, maiorov1999best} use
classical Sobolev norms.
 For instance \cite{maiorov1999best} proves that $N$-neurons
\NN\, approximate functions in the Sobolev ball $W^{r}_2$ with worst case error
\begin{align}
  C_1(d)N^{-r/(d-1)}\le \sup_{f\in W_2^r}\inf_{\hf\in\cF_{\NN}}\E\{[f(\bx)-\hf(\bx)]^2\}\le C_2(d)N^{-r/(d-1)}\, .\label{eq:Maiorov}
\end{align}
for some unspecified functions $C_1,C_2$. (Similar results are found in \cite{petrushev1998approximation}.)
These results cannot be used for our purposes.

First of all, we are interested in the \NT\, class which is potentially much less powerful than \NN.

Second, bounds of the type \eqref{eq:Maiorov} make it hard to prove separation
results between \NN\, and \NT. In order to prove such a separation, we would have to prove that neural networks
trained by gradient descent have good approximation properties, uniformly  over Sobolev balls.
This objective is currently out of reach.
Our pointwise approximation results make it much easier to prove separation statements.

Third, earlier work neglects polynomial dependencies in $d$.  Bounds of the type \eqref{eq:Maiorov}
have weak implications when both $d$ and $N$ are large, say $d=100$, $N=10^6$.
We will instead prove sharp asymptotic results that are valid in this regime. As illustrated in  the next section, our analysis
captures the actual behavior in a quantitative manner, already when $d\ge 100$. 

Quantitative results in the high-dimensional regime have been proved only recently. 
In particular, Bach \cite{bach2017equivalence} established quantitative upper and lower bounds for the approximation error
in the \RF\, model. However, these results do not have direct implications
on the \NT\, model which is our main interest here. Further, 
lower bounds in \cite{bach2017equivalence} are, as before, worst case over a certain RKHS.
(See also \cite{bach2013sharp,alaoui2015fast,rudi2017generalization} for related work.)

Similar considerations apply to the generalization error of kernel methods. While this is a classical topic
\cite{cristianini2000introduction,caponnetto2007optimal,rudi2017generalization,liang2018just}, 
earlier work proves minimax upper and lower bounds.
Establishing pointwise lower bounds  is instead important in order to understand precisely the separation between
neural networks and their linearized counterparts.
We refer to Section \ref{sec:Related} for further discussion of related work.

\subsection{A numerical experiment}
\label{sec:Numerical}

\begin{figure}
\includegraphics[width=0.49\linewidth]{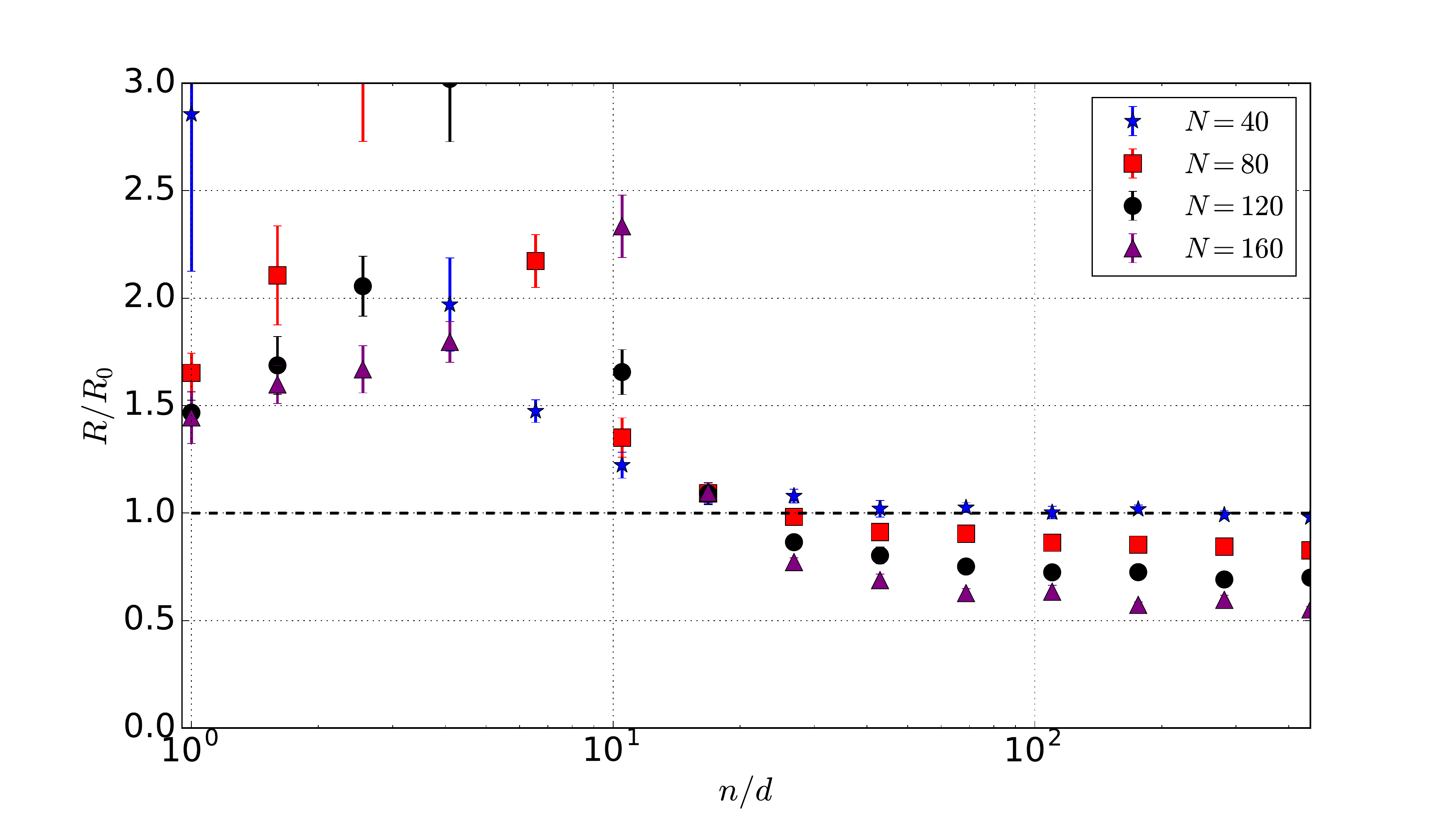}
\includegraphics[width=0.49\linewidth]{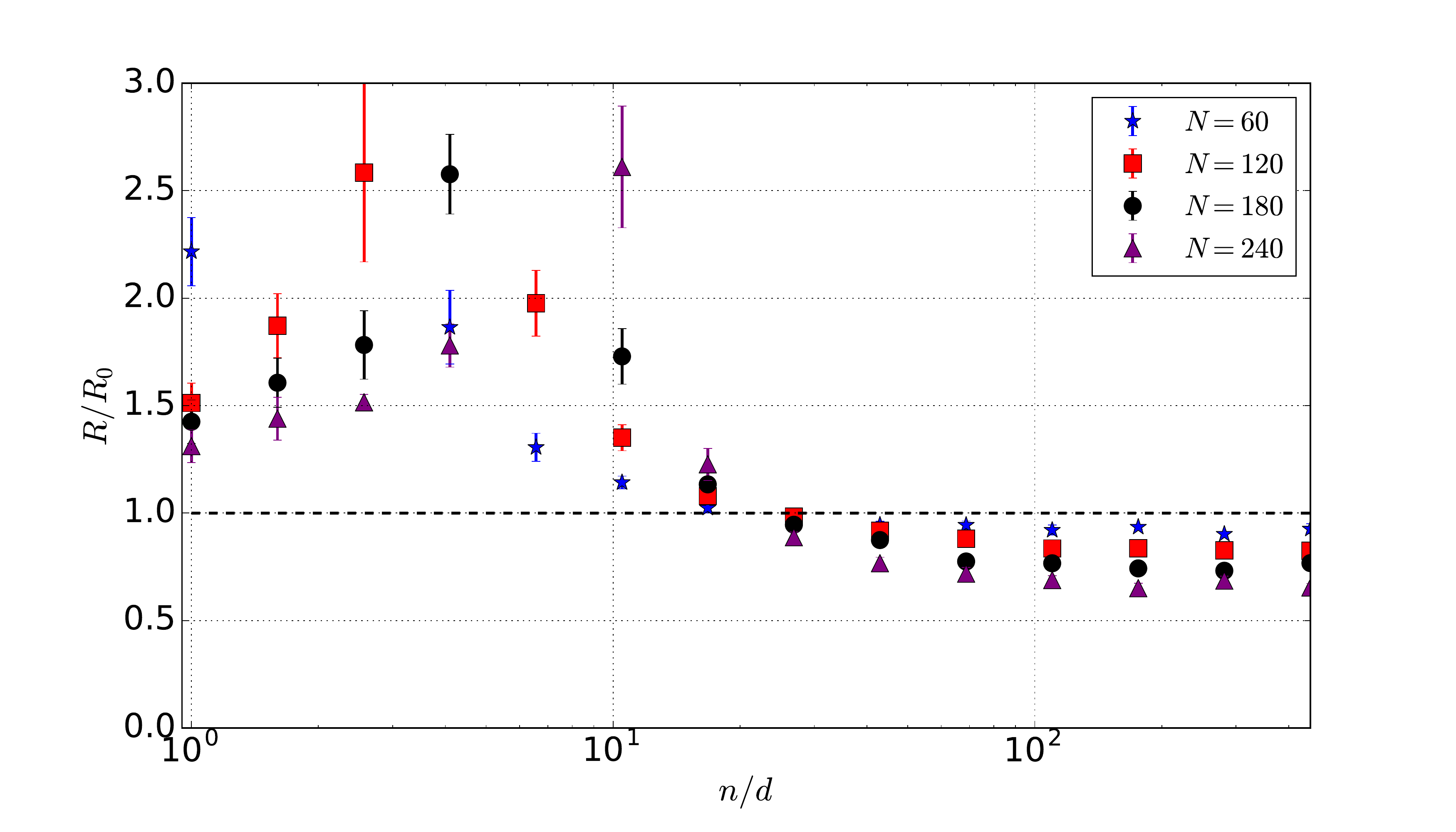}\\
\includegraphics[width=0.49\linewidth]{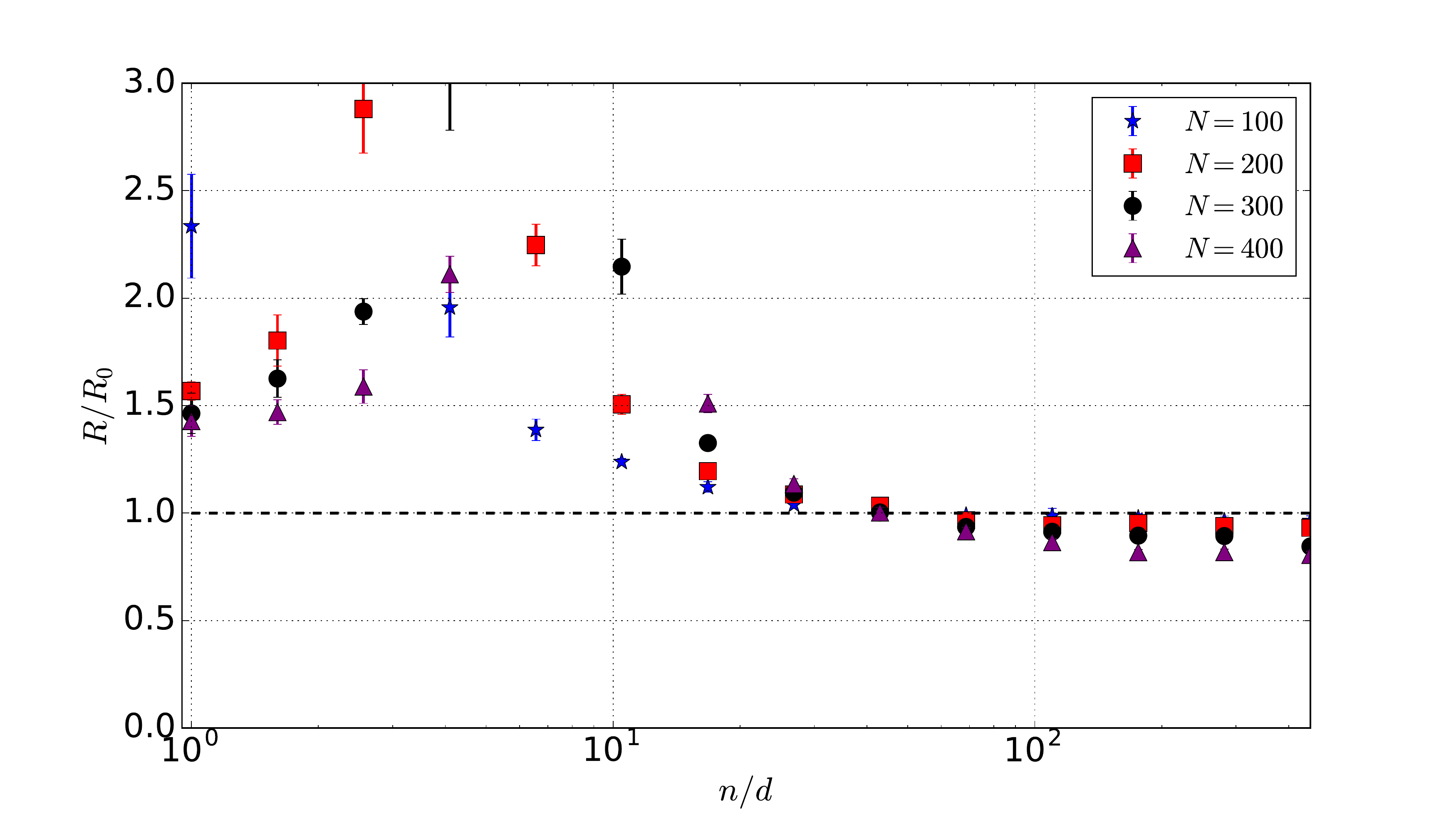}
\includegraphics[width=0.49\linewidth]{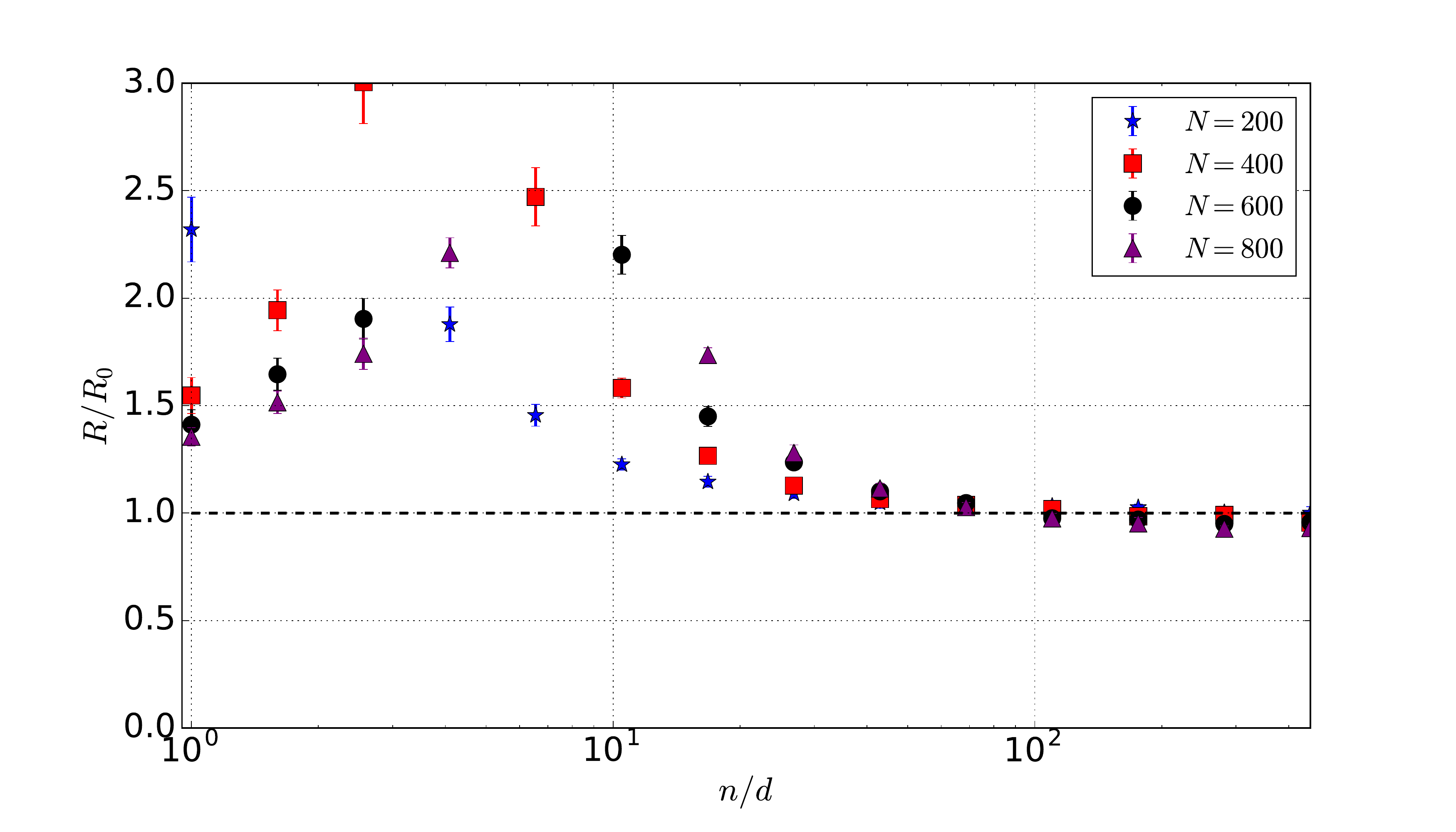}
\caption{Risk of the random features model for learning a quadratic function $f_{\star,2}$,
for $d=20$ (top left), $d=30$ (top right), $d=50$ (bottom left) and $d=100$ (bottom right).  We use least square to estimate
the model coefficients from $n$ samples and report the test error over $n_{\stest}=1500$ fresh samples.
 Data points correspond to averages over $10$ independent repetitions, and the risk is normalized by the risk $R_0$
of the trivial (constant) predictor. }\label{fig:RF-SecondDeg}
\end{figure}

\begin{figure}
\includegraphics[width=0.49\linewidth]{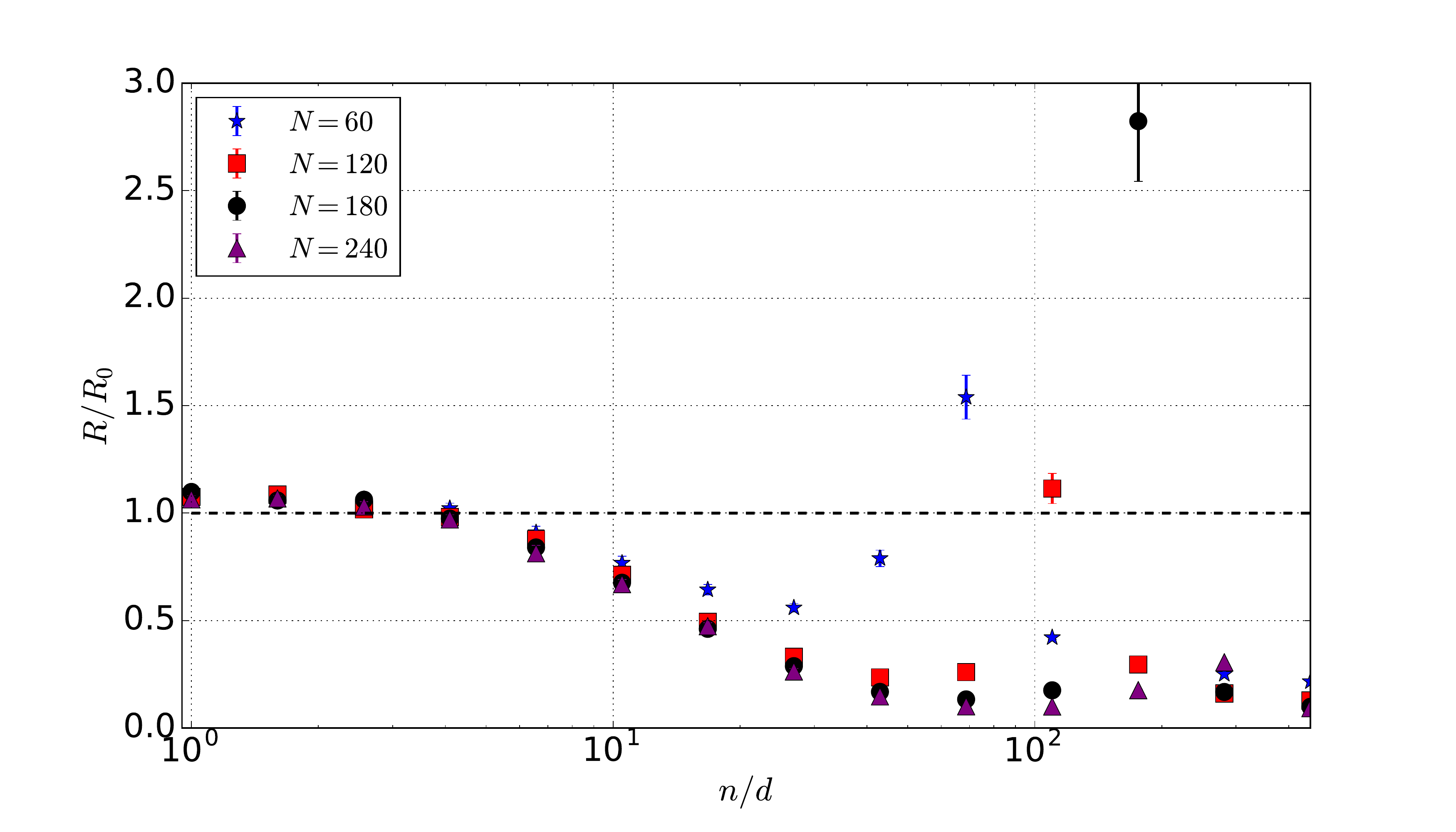}
\includegraphics[width=0.49\linewidth]{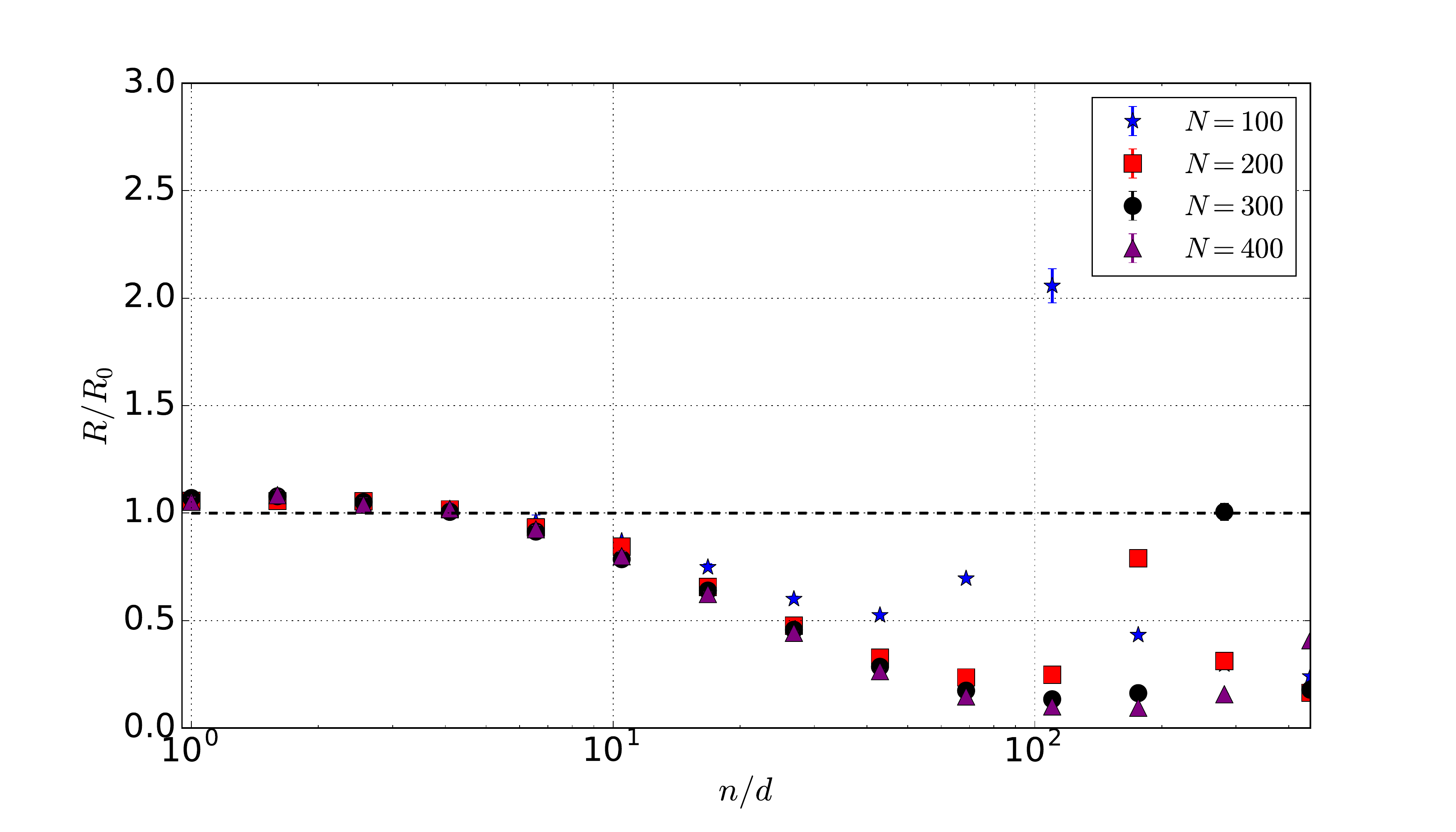}
\caption{Risk  (test error) of the neural tangent  model in learning a quadratic function $f_{\star,2}$,
for $d=30$ (left frame) and $d=50$ (right frame). The other settings are the same as in Figure \ref{fig:RF-SecondDeg}.}\label{fig:NT-SecondDeg}
\end{figure}
\begin{figure}
\includegraphics[width=0.49\linewidth]{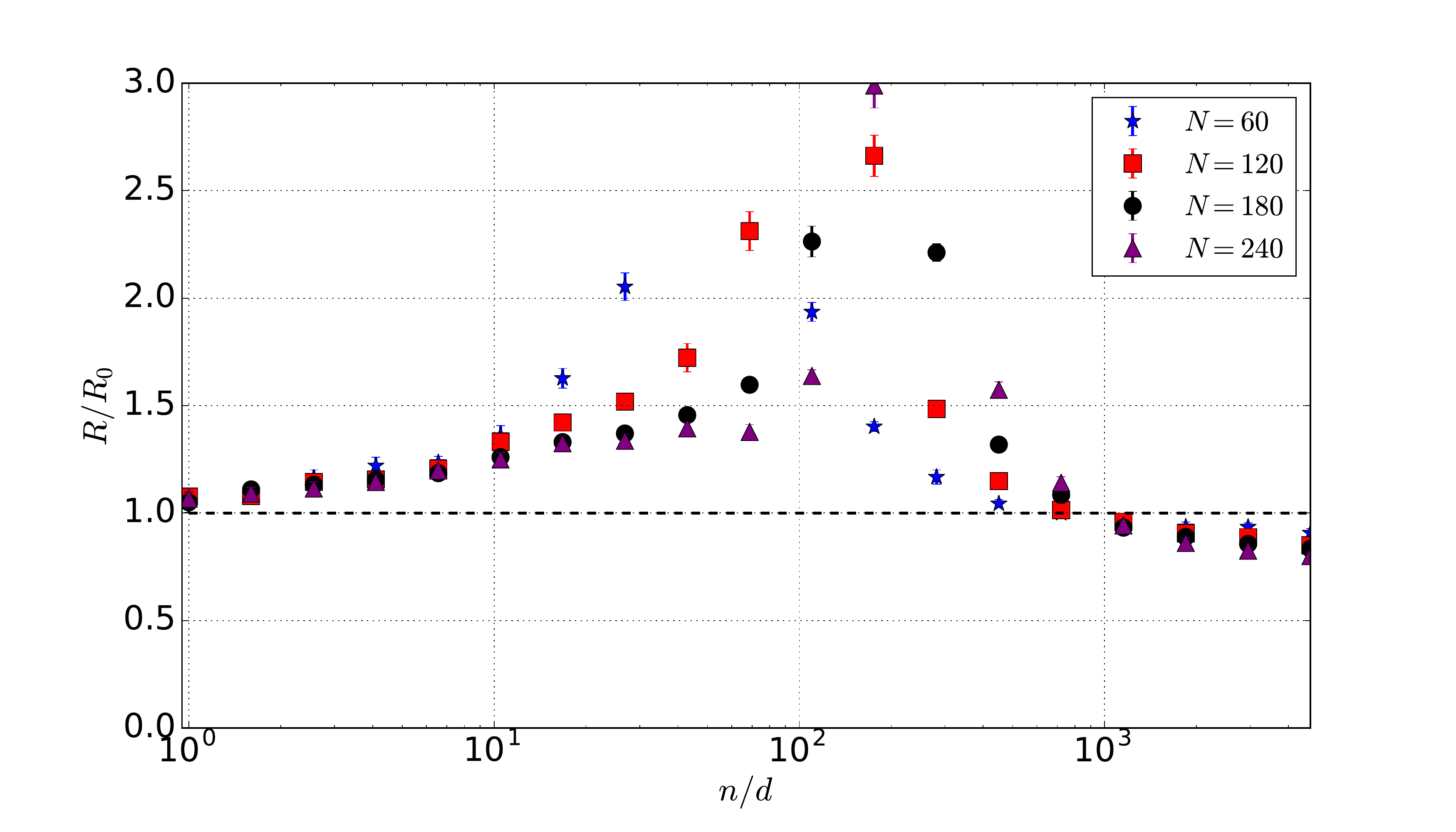}
\includegraphics[width=0.49\linewidth]{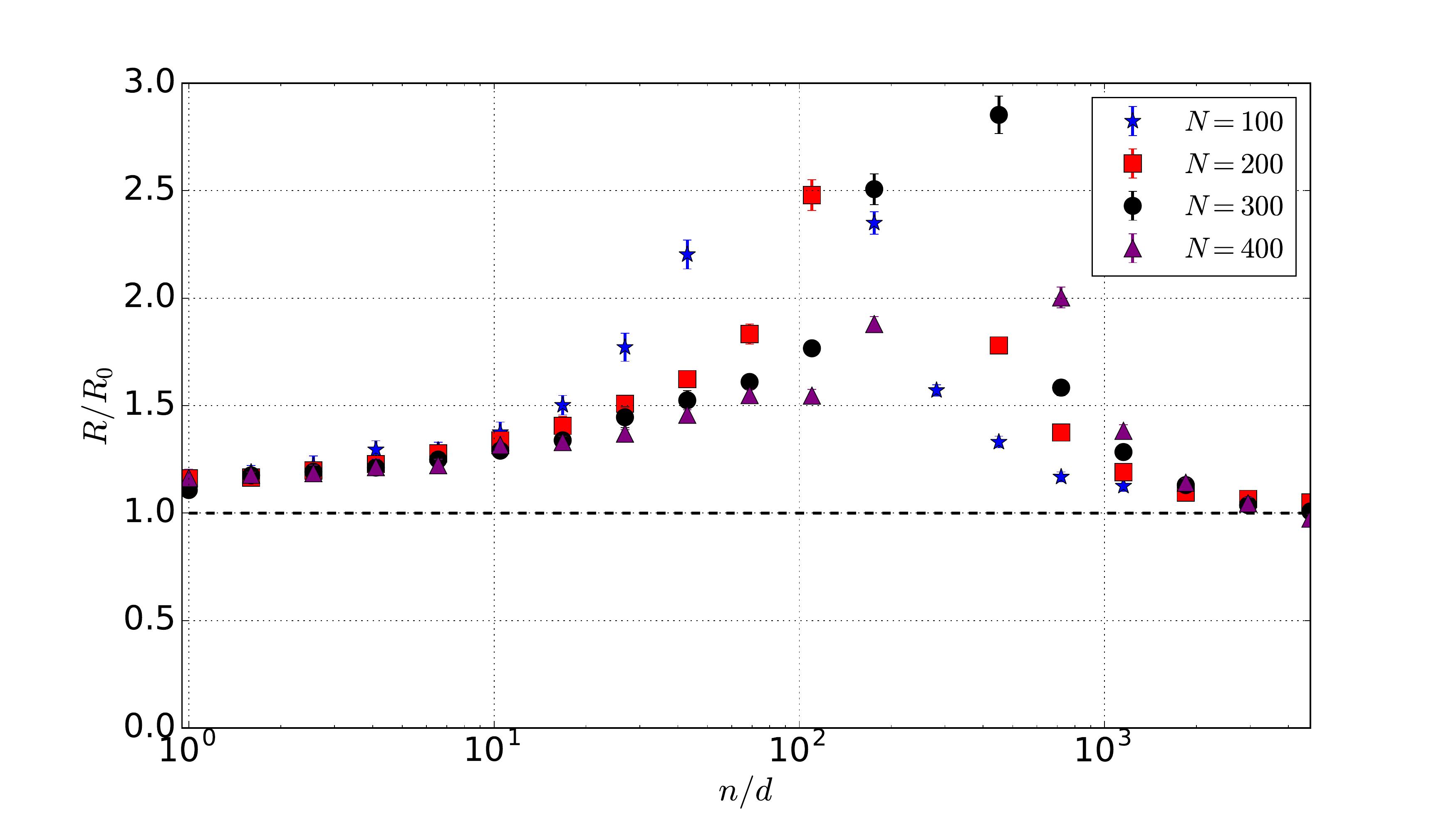}
\caption{Risk  (test error) of the neural tangent  model in learning a third order polynomial $f_{\star,3}$,
for $d=30$ (left frame) and $d=50$ (right frame). The other settings are the same as in Figures \ref{fig:RF-SecondDeg}
and \ref{fig:NT-SecondDeg}.}\label{fig:NT-ThirdDeg}
\end{figure}
In order to illustrate the approximation behavior of \RF\, and \NT\, models, we present a simple simulation study.
We consider feature vectors normalized so that $\|\bx_i\|_2^2= d$, and otherwise
uniformly random, and responses $y_i = f_{\star}(\bx_i)$, for a certain function $f_{\star}$. Indeed, this will be the setting
throughout the paper: $\bx_i\sim\Unif(\S^{d-1}(\sqrt{d}))$ (where $\S^{d-1}(r)$ denotes the sphere with radius $r$ in $d$ dimensions) and
$f_{\star}:\S^{d-1}(\sqrt{d})\to\reals$. We draw random weights $(\bw_i)_{i\le N}\sim_{iid} \Unif(\S^{d-1}(1))$. We use $n$ samples to fit a model 
in $\cF_{\RF}(\bW)$ or $\cF_{\NT}(\bW)$.
 We learn the model parameters using least squares. If the model is overparametrized, we select the minimum $\ell_2$-norm solution.
(We refer to Appendix \ref{app:Ridge} for simulations using ridge
regression instead.)
We estimate the risk (test error) using $n_{\stest}=1500$ fresh samples, and normalize it by the risk of the trivial
model $R_0 = \E\{f_{\star}(\bx)^2\}$.

Figures \ref{fig:RF-SecondDeg}, \ref{fig:NT-SecondDeg}, \ref{fig:NT-ThirdDeg} report the results of such a simulation using \RF\, --for Figure~\ref{fig:RF-SecondDeg}-- and \NT\,  --for Figures~\ref{fig:NT-SecondDeg} and \ref{fig:NT-ThirdDeg}.
We use shifted ReLU activations $\sigma(u) = \max(u-u_0,0)$, $u_0=0.5$.  The choice of $u_0 = 0.5$ is not essential: (Lebesgue-)almost every $u_0 \neq 0$ has similar behavior. In contrast, the case $u_0=0$ is degenerate because $\max(u,0)$ is equal
to a linear function plus an even function.

The target functions $f_{\star}$ in these examples are quite simple.
Figures \ref{fig:RF-SecondDeg} and \ref{fig:NT-SecondDeg} use a quadratic function
$f_{\star,2}(\bx) = \sum_{i\le \lfloor d/2\rfloor}x_i^2-
 \sum_{i>\lfloor d/2\rfloor}x_i^2$. In Figure \ref{fig:NT-ThirdDeg}, the target function is a third-order polynomial 
$f_{\star,3}(\bx) =\sum_{i=1}^{d}(x_i^3-3x_i)$.

The results are somewhat disappointing: in two cases (first and third figures) \RF\, and \NT\, models do not beat the trivial predictor. In one case (the second one),
the \NT\, model surpasses the trivial baseline, and it appears to decrease to $0$ as the number of samples $n$ increase.
We also note that the risk shows a cusp when $n\approx p$, with $p$ the number of parameters ($p=N$ for \RF, and $p = Nd$ for \NT).
This  phenomenon is related to overparametrization, and will not be discussed further in this paper 
(see \cite{belkin2018reconciling,belkin2019two,hastie2019surprises,mei2019generalization} for relevant work).
We will instead focus on the population behavior $n\to\infty$.

\begin{figure}
\includegraphics[width=0.49\linewidth]{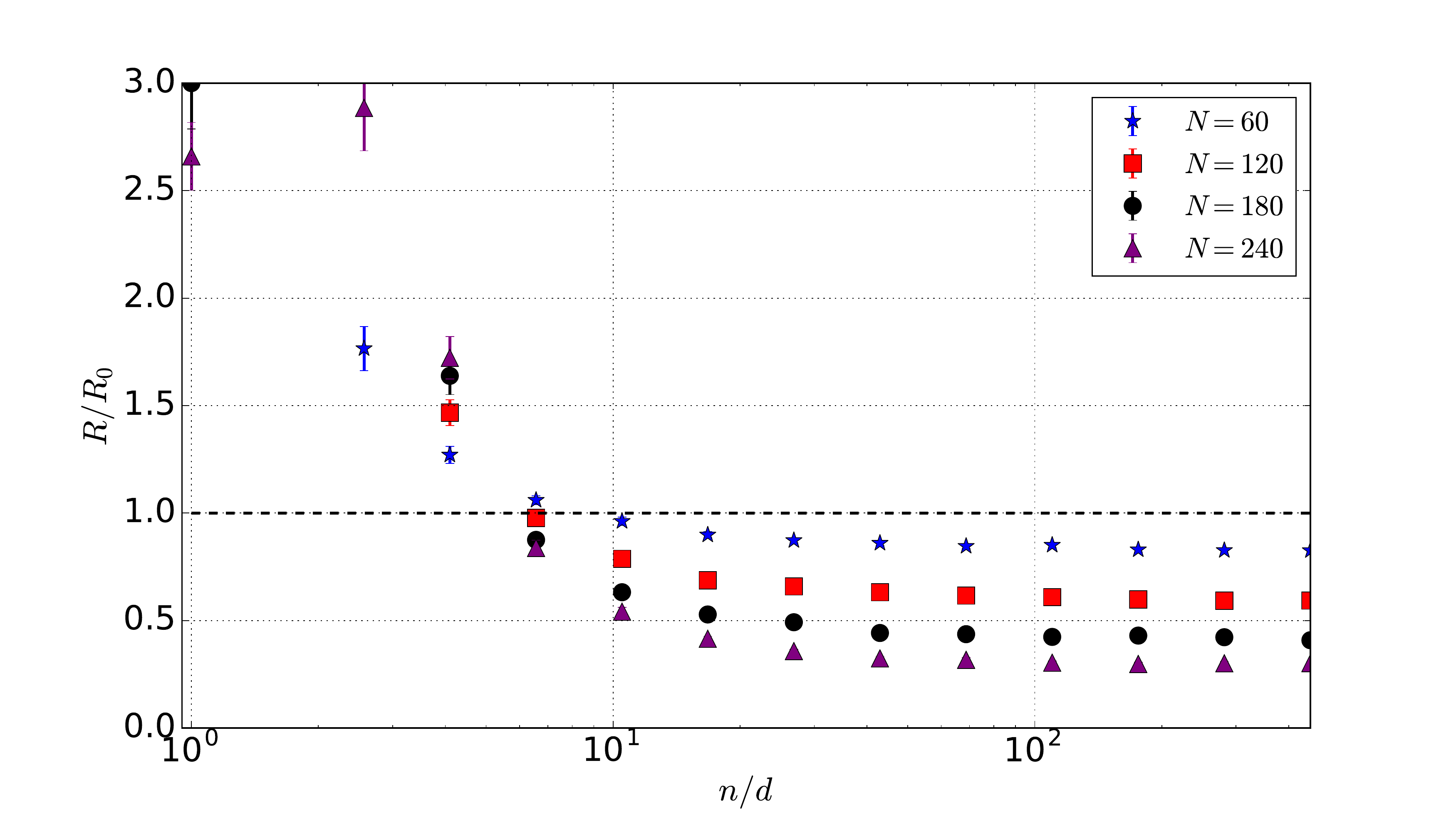}
\includegraphics[width=0.49\linewidth]{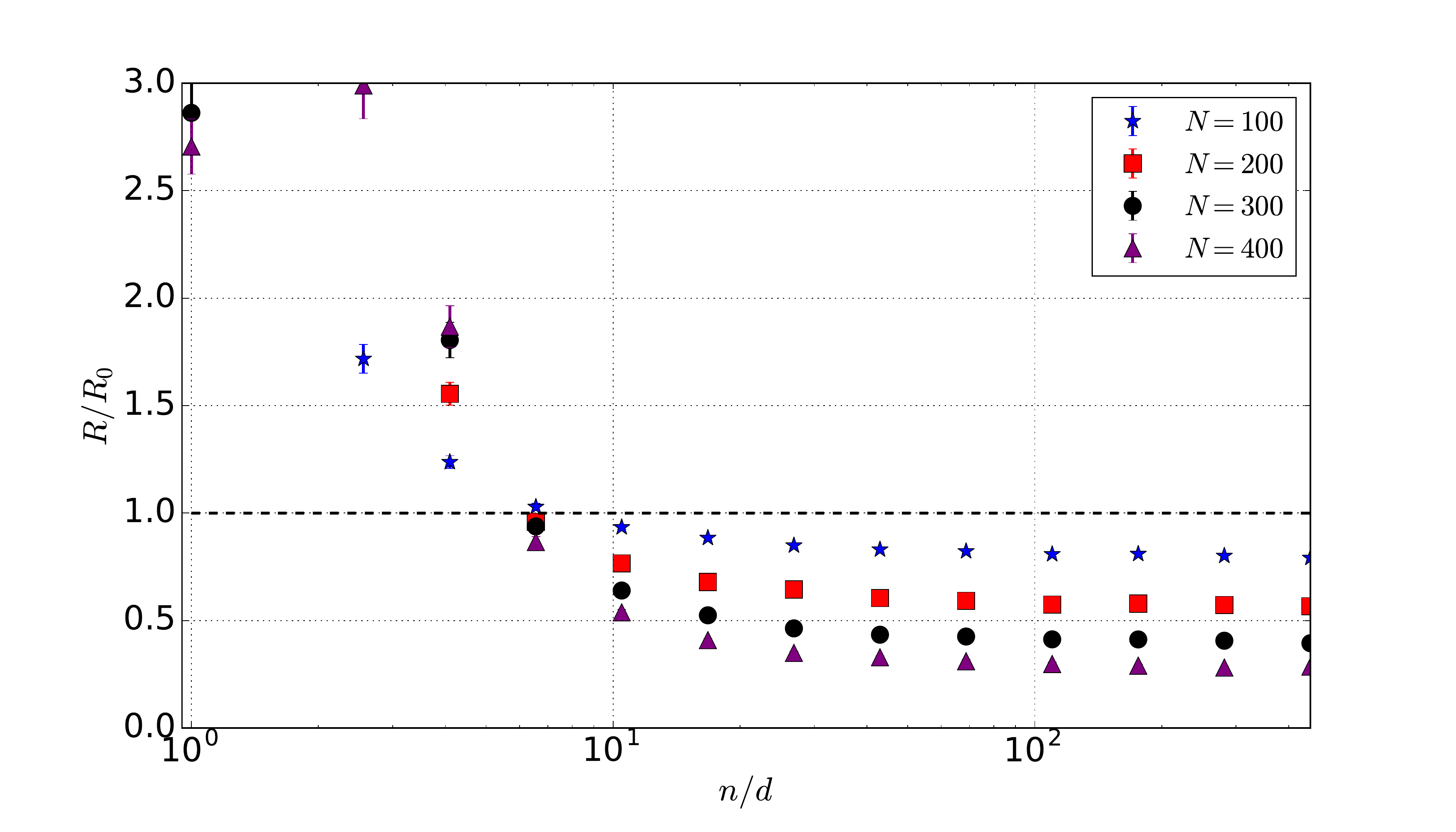}
\caption{Upper bounds on the optimal risk of the neural network model $\cF_{\NN}$ when used to learn the third order polynomial $f_{\star,3}$ (same target function 
as in Figure \ref{fig:NT-ThirdDeg}), for $d=30$ (left frame) and  $d=50$ (right frame). 
We use $n$ train samples and report the test error over $n_{\stest}=1500$ fresh samples.
 Data points correspond to averages over $50$ independent repetitions, and the risk is normalized by the risk $R_0$ of the trivial (constant) predictor. Training uses oracle knowledge of the function $f_{\star,3}$.}\label{fig:NN-SparseFeatures}
\end{figure}
 In other words, the \RF\, model does not appear to be able to learn  a simple quadratic function,
 and the \NT\, model does not appear to be able to learn a third order polynomial.
 Our main theorems (presented in the next sections) capture in a precise manner this behavior. In particular,
\begin{itemize}
\item We will prove that for $N= O_d(d^{2-\delta})$ , \RF\, does not outperform the trivial predictor on \emph{any function} that has
  vanishing projection on linear functions.  Similarly, \NT\, does not outperform the trivial predictor
  on \emph{any function} that has
  vanishing projection on  linear and quadratic functions.
\item  In contrast, there exists neural networks in $\cF_{\NN}$ with $N=O_d(d)$ neurons,
  and a small approximation error both for $f_{\star,2}$ and $f_{\star,3}$ (see, e.g., \cite{bach2017equivalence}, or \cite[Proposition 1]{mei2018mean}). 
\end{itemize}
These two points illustrate the gap in approximation power between \NT\, (or \RF) and \NN.

We demonstrate the second point empirically in Fig.~\ref{fig:NN-SparseFeatures} by choosing weight vectors $\bw_i = s_i\be_{r(i)}$, where 
$r(i)\sim \Unif([n])$ are i.i.d. uniformly random indices, and the scaling factor is  $s_i \sim\normal(0,1)$. Fixing these random 
first-layer weights, we fit the second-layer weights $a_i$ by least squares. The risk achieved is an upper bound on the minimum 
risk in the $\NN$\, model, namely $R_{\NN}(f_{\star}) \equiv \inf_{f\in\cF_{\NN}}\E\{(f_{\star}(\bx)-f(\bx))^2\}$,
and is significantly smaller than the baseline $R_0$.  
(The risk reported in Fig.~\ref{fig:NN-SparseFeatures} can also be interpreted as a `random features' risk. However,
the specific distribution of the vectors $\bw_i$ is tailored to the function $f_{\star}$, and hence not achievable within the \RF\, model.)

\subsection{Summary of main results}
\label{sec:Main}

\begin{description}
\item[Approximation error of \RF\, models.] If $d^{1 + \delta} < N \le  d^{2-\delta}$ for some $\delta > 0$, then the approximation error of \RF\, is asymptotically equivalent to the approximation error of fitting a linear function in the raw covariates $\bx$ (i.e. least squares with the model $f(\bx) = b_0+\<\bbeta, \bx\>$, $b_0\in\reals$, $\bbeta\in\reals^d$). More generally, if $d^{\ell + \delta}\le N \le d^{\ell+1-\delta}$, then \RF\, is equivalent to fitting a linear function over all monomials of degree at most $\ell$ in $\bx$.

  The equivalence between \RF\, regression and polynomial regression holds \emph{pointwise} for target function $f_\star$.
\item[Approximation error of \NT\, models.] If $d^{1 + \delta} \le N \le d^{2-\delta}$, then the approximation error of \NT\, is asymptotically equivalent to the approximation error of fitting a linear function over monomials of degree at most two in $\bx$ (i.e. least squares with the model $f(\bx) = b_0 + \<\bbeta,\bx\> + \<\bx,\bB\bx\>$, $b_0\in\reals$, $\bbeta\in\reals^d$, $\bB\in\reals^{d\times d}$). More generally, if $d^{\ell + \delta} \le N \le d^{\ell+1 - \delta}$, then \NT\, is equivalent to fitting a linear function over all monomials of degree at most $\ell+1$ in $\bx$.

Again, this result holds pointwise over the choice of $f_\star$.
\item[Generalization error of kernel methods.] We study the generalization error of kernel methods under the same data distribution described above, for any rotationally invariant kernel on the sphere $\S^{d-1}(\sqrt{d})$. 
  We prove two results:
  \begin{enumerate}
\item If the sample size is $n \le d^{\ell + 1 - \delta}$, then the generalization error of \emph{any kernel method} is lower bounded by the approximation error of linear regression over monomials of degree at most $\ell$ in $\bx$. 
\item If the sample size satisfies $d^{\ell + \delta} \le n \le d^{\ell+1-\delta}$, then the generalization error of Kernel Ridge Regression (\KRR) is given by the approximation error of linear regression over monomials of degree at most $\ell$ in $\bx$.
  \end{enumerate}
\end{description}
It is worth emphasizing two aspects of this last result. The first one is its generality. 
The \NT\, kernel associated to an infinitely wide  \emph{multi-layers} fully connected neural network is always rotational invariant (assuming an
i.i.d. Gaussian initialization of weights, which is common in practice). Therefore --in the \NT\, regime-- multi-layers neural networks
cannot outperform the trivial predictor on a target function $f_\star(\bx)$ that has vanishing projection onto degree-$\ell$
polynomials, unless the sample size satisfies $n\ge d^{\ell+1-\delta}$.
(For instance, they cannot outperform the trivial predictor  for $f_{\star}(\bx) =x_1^3-3x_1$
unless $n\ge d^{3-\delta}$.)

The second aspect can be summarized as follows.
\begin{description}
\item[Optimality of near interpolators.] For  $d^{\ell + \delta} \le n \le d^{\ell+1-\delta}$,
   the ideal behavior of KRR is achieved for all regularization values $\lambda\le \lambda_*$,
   with $\lambda_*$ depending on $N,d$ and the activation function. In particular, it is achieved by `near interpolators'
   (corresponding to $\lambda\approx 0$) i.e.  functions $\hf$ that have negligible training error.
\end{description}

\section{Approximation error of linearized neural networks}
\label{sec:Approximation}
  
In this section, we state formally our results about the approximation error of $\RF$ and $\NT$ models.
We define the minimum population error for any of the models $\sM\in\{\RF,\NT\}$ by
\begin{align}
R_{\sM}(f_\star, \bW) = \inf_{f\in \cF_{\sM}(\bW)} \E \big[(f_\star(\bx) - f(\bx))^2\big]\, ,\;\;\;\;  \sM\in\{\RF,\NT\}\, .
\end{align}
Notice that this is a random variable because of the random features encoded in the matrix $\bW\in\reals^{N\times d}$. Also, it depends implicitly on $d,N$, but
we will make this dependence explicit only when necessary. 

For $\ell\in\naturals$, we denote by $\proj_{\le \ell}: L^2(\S^{d-1}(\sqrt d))\to L^2(\S^{d-1}(\sqrt{d}))$ the orthogonal projector 
onto the subspace of polynomials of degree at most $\ell$. (We also let $\proj_{>\ell} = \id-\proj_{\le \ell}$.) In other words, $\proj_{\le \ell}f$ is
the function obtained by linear regression of $f$ onto monomials of degree at most $\ell$. 
Throughout this paper `with high probability' means `with probability converging to one
as $d,N\to\infty$'. The notations $s_d = \omega_d (t_d)$, $s_d = o_d(t_d)$, $s_d=O_d(t_d)$, $s_d=\Omega_d(t_d)$ mean, respectively,
$\lim_{d \to \infty} |s_d / t_d| = \infty$, $\lim_{d \to \infty} |s_d / t_d| = 0$, $\lim\sup_{d \to \infty} |s_d / t_d| <\infty$,
$\lim\inf_{d \to \infty} |s_d / t_d| > 0$. Given random variables $X_d$, and deterministic quantities $t_d$, we write $X_d=o_{d,\P}(t_d)$ (and so on) if the above holds in probability.

\subsection{Approximation error of random features models}
\begin{assumption}[Assumptions for the \RF\, model at level $\ell\in\naturals$]\label{ass:activation_lower_upper_RF_v2}
Let $\{ \sigma_d \}_{d \ge 1}$ be a sequence of functions $\sigma_d:\reals\to\reals$. 
\begin{itemize}
\item[(a)] $\sigma_d \in L^2([-\sqrt d, \sqrt d], \tau^1_{d-1})$, where $\tau^1_{d-1}$ is the distribution of $\< \bx, \be\>$ for $\bx \sim \Unif(\S^{d-1}(\sqrt d))$, and $\be = (1, 0, \ldots, 0)^\sT \in \R^d$. 
\item[(b)] We have
\[
\Big[ d^\ell \cdot \min_{k \le \ell} \lambda_{d,k}(\sigma_d)^2 \Big] / \| \sigma_d(\< \be, \cdot\>) \|_{L^2(\S^{d-1}(\sqrt d))}^2 = \Omega_d(1),
\]
where $\lambda_{d,k}(\sigma_d) = \< \sigma_d(\< \be, \cdot\>), Q_k(\sqrt{d} \< \be, \cdot\>) \>_{L^2(\S^{d-1}(\sqrt d))}$, and $Q_k$
is the $k$-th Gegenbauer polynomial (see Section \ref{sec:Background}).
\end{itemize}
\end{assumption}

\begin{theorem}[Risk of the \RF\, model]\label{thm:RF_lower_upper_bound}
Let $\{ f_d \in L^2(\S^{d-1}(\sqrt d))\}_{d \ge 1}$ be a sequence of functions. Let $\bW = (\bw_i)_{i \in [N]}$ with $(\bw_i)_{i \in [N]} \sim \Unif(\S^{d-1})$ independently. Then the following hold.
\begin{itemize}
\item[(a)] Assume $N \le d^{\ell+1-\delta_d}$ for a fixed integer $\ell$ and any sequence $\delta_d$ such that $\delta_d^2\log d\to\infty$
(in particular, $N\le d^{\ell+1-\delta}$ is sufficient for any fixed $\delta>0$). Let $\{ \sigma_d \}_{d \geq 1}$ satisfy Assumption \ref{ass:activation_lower_upper_RF_v2}.(a). Then, for any $\eps > 0$, the following holds with high probability: 
\begin{align}
\Big \vert R_{\RF}(f_{d}, \bW) - R_{\RF}(\proj_{\le \ell} f_d, \bW) - \| \proj_{> \ell} f_d \|_{L^2}^2 \Big \vert &\le \eps \| f_d \|_{L^2} \| \proj_{> \ell} f_d \|_{L^2}\, .
\label{eq:RF_lower_bound}
\end{align}
\item[(b)] Assume $N =\omega_d(d^{\ell})$ for some integer $\ell$, and $ \{ \sigma_d \}_{d \ge 1}$ satisfy Assumption \ref{ass:activation_lower_upper_RF_v2}.(b) at level $\ell$. Then for any $\eps > 0$, the following holds with high probability: 
\begin{align}\label{eqn:RF_upper_bound}
0 \le R_{\RF}(\proj_{\le \ell} f_d, \bW) \le \eps \| \proj_{\le \ell} f_d \|_{L^2}^2.
\end{align}
\end{itemize}
\end{theorem}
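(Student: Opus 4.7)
Since $R_{\RF}(f,\bW) = \|f\|_{L^2}^2 - \|\proj_V f\|_{L^2}^2$ with $V := \mathrm{span}\{\sigma_d(\<\bw_i,\cdot\>): i\le N\} \subset L^2(\S^{d-1}(\sqrt d))$ and $\proj_V$ the $L^2$-orthogonal projection, setting $g=\proj_{\le\ell}f_d$ and $h=\proj_{>\ell}f_d$ (so $g\perp h$) and using Pythagoras gives
\begin{equation*}
R_{\RF}(f_d,\bW) - R_{\RF}(g,\bW) - \|h\|_{L^2}^2 \;=\; -\|\proj_V h\|_{L^2}^2 - 2\<\proj_V g,\proj_V h\>_{L^2}\,.
\end{equation*}
Since $\|\proj_V g\|\le\|g\|\le\|f_d\|$, part (a) reduces to the single estimate $\|\proj_V h\|_{L^2}\le (\eps/3)\|h\|_{L^2}$ with high probability.

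\textbf{Part (a).} Expand features in Gegenbauer polynomials, $\sigma_d(\<\bw,\bx\>)=\sum_{k\ge 0}\lambda_{d,k}(\sigma_d)\,B(d,k)\,Q_k(\sqrt d\<\bw,\bx\>)$, so that the degree-$k$ part of $\sigma_d(\<\bw_i,\cdot\>)$ lies in the space of degree-$k$ spherical harmonics in $\bx$ and the Gram matrix splits as $\bK=\bK^{\le\ell}+\bK^{>\ell}$ with $(\bK^{(k)})_{ij}=\lambda_{d,k}^2 B(d,k)Q_k(\sqrt d\<\bw_i,\bw_j\>)$. Since $h\in L^2_{>\ell}$, writing $b_i=\<\sigma_d(\<\bw_i,\cdot\>),h\>=\<\sigma_d^{>\ell}(\<\bw_i,\cdot\>),h\>$, Loewner monotonicity of the inverse gives $\|\proj_V h\|_{L^2}^2=\bb^{\top}\bK^{-1}\bb\le \bb^{\top}(\bK^{>\ell})^{-1}\bb$. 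The plan is then \emph{(i)} to show $\bK^{>\ell}$ concentrates at $\|\sigma_d^{>\ell}\|_{L^2}^2\,\mathbf{I}_N$ in operator norm, using hypercontractive moment estimates on $Q_k(\sqrt d\<\bw_i,\bw_j\>)$ (via $\E[Q_k^2]\asymp 1/B(d,k)$) together with a union bound over the $\binom N2$ pairs --- this is where the hypothesis $\delta_d^2\log d\to\infty$ enters; and \emph{(ii)} to compute $\E\|\bb\|^2 = N\sum_{k>\ell}\lambda_{d,k}^2\|h_k\|^2$ (where $h_k$ is the degree-$k$ component of $h$) and to use the simple bound $\sum_{k>\ell}\lambda_{d,k}^2\|h_k\|^2\le \|\sigma_d^{>\ell}\|_{L^2}^2\,\|h\|^2/B(d,\ell+1)$ together with Markov's inequality. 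Combining (i) and (ii) yields $\|\proj_V h\|_{L^2}^2=O_{d,\P}(N/d^{\ell+1})\,\|h\|_{L^2}^2 = o_{d,\P}(1)\,\|h\|_{L^2}^2$ under $N\le d^{\ell+1-\delta_d}$.

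\textbf{Part (b).} Decompose $\proj_{\le\ell}f_d=\sum_{k=0}^\ell f_k$ with $f_k=\sum_s c_{k,s}Y_{k,s}$ the degree-$k$ spherical-harmonic component. By triangle inequality it suffices, for each $k\le\ell$, to construct $\hat f^{(k)}=\sum_i\hat a^{(k)}_i\sigma_d(\<\bw_i,\cdot\>)$ with $\E_{\bW}\|\hat f^{(k)}-f_k\|_{L^2}^2=o_d(1)\|f_k\|^2$. Motivated by the Gegenbauer addition formula $\sum_s Y_{k,s}(\bw)Y_{k,s}(\bx)=B(d,k)Q_k(\sqrt d\<\bw,\bx\>)$, the natural (Monte-Carlo-type) candidate is $\hat a_i^{(k)}=(N\lambda_{d,k})^{-1}\sum_s c_{k,s}Y_{k,s}(\bw_i)$. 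This is unbiased for $f_k$ in its degree-$k$ part; a direct variance calculation then bounds the total $L^2$ error by $C N^{-1}\lambda_{d,k}^{-2}\|\sigma_d\|_{L^2}^2\,\|f_k\|^2$ (the sum over $k'\ne k$ of the variance contributions telescopes into $\|\sigma_d\|_{L^2}^2$ via Parseval). Assumption \ref{ass:activation_lower_upper_RF_v2}(b), $d^\ell\lambda_{d,k}^2=\Omega_d(\|\sigma_d\|_{L^2}^2)$, turns this into $O_d(d^\ell/N)\|f_k\|^2=o_d(1)\|f_k\|^2$ under $N=\omega_d(d^\ell)$, and Markov converts the expectation bound to a high-probability one.

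\textbf{Main obstacle.} Because $\sigma_d$ generically has nonvanishing Gegenbauer coefficients at \emph{every} degree, the tail $k\to\infty$ must be handled in bulk: a naive degree-by-degree operator-norm bound on $\bK^{>\ell}$ in (a), or a term-by-term variance bound in (b), loses factors that blow up at large $k$. The delicate step is the high-probability, uniform-in-$k$ diagonal dominance of $\bK^{>\ell}$ in (a), which requires concentration for $Q_k(\sqrt d\<\bw_i,\bw_j\>)$ sharp enough to survive a union bound over $\binom N2$ pairs with $N$ as large as $d^{\ell+1-\delta_d}$ --- precisely where the strength $\delta_d^2\log d\to\infty$ (rather than a fixed $\delta>0$) is exploited.
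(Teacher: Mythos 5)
Your overall architecture coincides with the paper's: the same exact decomposition of the risk difference into $-\|\proj_V h\|^2-2\<\proj_V g,\proj_V h\>$, the same reduction of part (a) to the two estimates $\E\|\bb\|_2^2\lesssim N\,[\max_{k>\ell}\lambda_{d,k}^2]\,\|h\|^2$ and $\bK\succeq\bK^{>\ell}\succeq(1-o(1))\|\sigma_d^{>\ell}\|_{L^2}^2\,\id_N$, and for part (b) the same Monte-Carlo coefficients $\hat a_i=(N\lambda_{d,k})^{-1}\sum_s c_{k,s}Y_{k,s}(\bw_i)$ (which is exactly the paper's $a_i=N^{-1}(\K^{-1}\T f_d)(\btheta_i)$ written degree by degree), with the same variance computation. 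Part (b) and the computation of $\E\|\bb\|^2$ are fine.

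The genuine gap is in the step you yourself identify as delicate: the high-probability lower bound on $\bK^{>\ell}$. Your proposed route --- entrywise moment estimates on $Q_k(\sqrt d\<\bw_i,\bw_j\>)$ followed by a union bound over the $\binom N2$ pairs --- cannot work in the stated regime. A union bound controls only $\max_{i\neq j}|Q_k(\<\btheta_i,\btheta_j\>)|$, which is of order $(\log d/d)^{k/2}$, and the only ways to pass from entrywise control to an operator-norm bound on the hollow matrix $\bDelta_k$ are $\|\bDelta_k\|_{\op}\le N\max_{ij}|\Delta_{k,ij}|\asymp N(\log d/d)^{k/2}$ or $\|\bDelta_k\|_{\op}\le\|\bDelta_k\|_F\asymp N/B(d,k)^{1/2}\asymp N d^{-k/2}$; both diverge for the critical degrees $\ell+1\le k\le 2\ell+2$ once $N\gg d^{k/2}$, whereas the theorem must cover $N$ up to $d^{\ell+1-\delta_d}$. (The paper uses the Frobenius bound only for $k\ge 2\ell+3$, where $N^2/B(d,k)\to 0$.) What is actually needed is $\|\bDelta_k\|_{\op}=o(1)$ for $N$ up to $d^k e^{-A_d\sqrt{\log d}}$, and the paper obtains this (Proposition \ref{prop:Delta_bound}) by the trace method with a growing moment $p\asymp\sqrt{\log d}$, together with a nontrivial combinatorial classification of the closed walks contributing to $\E[\Trace(\bDelta_k^{2p})]$ (the skeletonization of the associated multigraphs, separating tree-like walks from walks whose skeleton has minimum degree $4$). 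This is the technical core of Theorem \ref{thm:RF_lower_upper_bound}.(a), it is exactly where the hypothesis $\delta_d^2\log d\to\infty$ is consumed, and your sketch does not contain a workable substitute for it.
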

See Section \ref{sec:proof_RFK_lower} for the proof of lower bound, and Section \ref{sec:proof_RFK_upper} for the proof of upper bound.

In words, Eq.~\eqref{eq:RF_lower_bound} amounts to say that when $N = O_d(d^{\ell + 1 - \delta_d})$, the risk of the random feature model can be approximately decomposed in two parts, each non-negative,  and each with a simple interpretation:
\begin{align}
 R_{\RF}(f_{d}, \bW) \approx  R_{\RF}(\proj_{\le \ell} f_d, \bW) + \| \proj_{> \ell} f_d \|_{L^2}^2\, .
\end{align}
The second contribution, $\| \proj_{> \ell} f_d \|_{L^2}^2$ is simply the risk achieved by linear regression with respect to polynomials of degree at most $\ell$. The first contribution $R_{\RF}(\proj_{\le \ell} f_d, \bW)$ is the risk of the \RF\,  model when applied to the low-degree component of $f_d$. Equation~\eqref{eqn:RF_upper_bound} implies that when $N = \omega_d(d^{\ell})$, the first contribution $R_{\RF}(\proj_{\le \ell} f_d, \bW)$ vanishes asymptotically. 

If both Assumptions \ref{ass:activation_lower_upper_RF_v2}.$(a)$ and \ref{ass:activation_lower_upper_RF_v2}.$(b)$ hold
and $\omega_d ( d^\ell ) \leq N \leq O_d (d^{\ell + 1 - \delta})$ for some integer $\ell$, we thus obtain
\[
 R_{\RF}(f_{d}, \bW)  =  \| \proj_{> \ell} f_d \|_{L^2}^2 + \| f_d \|_{L^2}^2 \cdot o_{d,\P} (1).
\]
In particular, this shows that \RF\, fits a linear function over polynomials of maximum degree $\ell$.

\begin{remark}\label{rmk:GeneralityRF}
Note that Theorem \ref{thm:RF_lower_upper_bound}.$(a)$ holds under very weak conditions on the activation function, which may depend on the dimension $d$.
The condition   $\sigma_d ( \< \be_1 , \cdot \> ) \in L^2 ( \S^{d-1}  (\sqrt{d} ) )$ can also be rewritten as $\sigma_d\in L^2(\reals, \tau^1_{d-1})$, where
 $\tau^1_{d-1}$ is the one-dimensional projection of the uniform measure over $\S^{d-1}(\sqrt{d})$.  In particular:
\begin{enumerate}
\item[$(i)$] $\tau^1_{d-1}$ is supported on $[-\sqrt{d},\sqrt{d}]$. It is therefore sufficient that $\sup_{|u|\le \sqrt{d}}|\sigma_d(u)| = C_1(d)<\infty$.
\item [$(ii)$] By an explicit calculation, the density of  $\tau^1_{d-1}(\de u) =C_2(d)(1-u^2/d)^{(d-3)/2}\de u$. Since this density is bounded, it is sufficient that $\sigma_d$
is square integrable with respect to the Lebesgue measure on $[-\sqrt{d},\sqrt{d}]$. 
\end{enumerate}
\end{remark}

\begin{remark}\label{rmk:upperBoundConditions}
If the activation $\sigma$ is independent of $d$, Assumption \ref{ass:activation_lower_upper_RF_v2}.$(b)$ is satisfied as long as $\mu_k ( \sigma) \neq 0 $ for $k = 0, \ldots, \ell$, where $\mu_k ( \sigma)$ is the $k$-th Hermite coefficient of $\sigma$ (see Section \ref{sec:Background} for definitions).
\end{remark}

\begin{remark}\label{rmk:SimplerBound}
  The conclusion of Theorem \ref{thm:RF_lower_upper_bound}.$(a)$ can be established\footnote{A first version of this manuscript, posted
    on arXiv, assumed such conditions. } 
by a somewhat simpler proof if the activation function $\sigma$
is independent of $d$ and satisfies the following regularity conditions: $(i)$ $\sigma(u)^2\le c_0\exp(c_1u^2/2)$ for some $c_1<1$;
$(ii)$ $\sigma$ is not a polynomial of degree smaller than $2\ell+3$. Under these conditions, the conclusion holds for $N= o_d(d^{\ell+1})$.
\end{remark}

Note that Assumption \ref{ass:activation_lower_upper_RF_v2}.$(b)$  requires in particular that $\sigma$ is not a polynomial of degree strictly smaller than $\ell$.
 This is easily seen to be a necessary condition, since any linear
combination of polynomials of degree $k<\ell$ is a polynomial of degree $k$. For the same reason, this
condition also arises in the approximation theory of neural networks \cite{pinkus1999approximation}.

\subsection{Approximation error of neural tangent models}

For the \NT\, model, the proof, while following the same scheme as for  \RF, is more challenging. We restrict our setting to a fixed activation function
$\sigma$ (independent of dimensions) which is weakly differentiable, with weak derivative $\sigma'$ that does not grow too fast (in particular, exponential growth is fine). We further require the Hermite decomposition of $\sigma'$ to satisfy a mild `genericity' condition. Recall that the
$k$-th Hermite coefficient of a function $h$ can be defined as  $\mu_k(h) \equiv \E_{G\sim\normal(0,1)}\{h(G) \bbHe_{k}(G)\}$, where $\bbHe_k(x)$ is the $k$-th Hermite polynomial (see Section \ref{sec:Background} for further background).

\begin{assumption}[Assumptions for the \NT\,model at level $\ell\in\naturals$.]\label{ass:activation_lower_upper_NT_v2}
Let $\sigma$ be an activation function $\sigma:\reals\to\reals$.
\begin{itemize}
\item[(a)] The function $\sigma$ is weakly differentiable, with weak derivative $\sigma'$ such that $\sigma'(u)^2\le c_0\exp(c_1u^2/2)$ for some constants $c_0, c_1$, with $c_1<1$.
\item[(b)] The Hermite coefficients $\{\mu_k(\sigma')\}_{k\ge 0}$ are such that there exist $k_1,k_2\ge 2\ell+7$ such that $\mu_{k_1}(\sigma') ,\mu_{k_2}(\sigma')\neq 0$  and
\begin{align}
\frac{\mu_{k_1}(x^2\sigma')}{\mu_{k_1}(\sigma')}\neq \frac{\mu_{k_2}(x^2\sigma')}{\mu_{k_2}(\sigma')} \, .
\end{align}
\item[(c)] The Hermite coefficients of $\sigma$ satisfy $\mu_k ( \sigma) \neq 0$ for any $k \leq \ell + 1$. 
\end{itemize}
\end{assumption}

\begin{theorem}[Risk of the \NT\, model]\label{thm:NT_lower_upper_bound}
Let $\{ f_d \in L^2(\S^{d-1}(\sqrt d))\}_{d \ge 1}$ be a sequence of functions. Let $\bW = (\bw_i)_{i \in [N]}$ with $(\bw_i)_{i \in [N]} \sim \Unif(\S^{d-1})$ independently. We have the following results. 
\begin{itemize}
\item[(a)] Assume $N = o_d(d^{\ell+1})$ for a fixed integer $\ell$, and let $\sigma$ satisfy Assumptions \ref{ass:activation_lower_upper_NT_v2}.(a) and \ref{ass:activation_lower_upper_NT_v2}.(b) at level $\ell$. Then, for any $\eps > 0$, the following holds with high probability: 
\begin{align}
\Big \vert R_{\NT}(f_{d}, \bW) - R_{\NT}(\proj_{\le \ell+1} f_d, \bW) - \| \proj_{> \ell+1} f_d \|_{L^2}^2 \Big \vert &\le \eps \| f_d \|_{L^2} \| \proj_{> \ell+1} f_d \|_{L^2}\, .
\label{eqn:NT_lower_bound}
\end{align}
\item[(b)] Assume $N =\omega_d(d^{\ell})$ for some integer $\ell$, and let $ \sigma$ satisfy Assumptions \ref{ass:activation_lower_upper_NT_v2}.(a) and \ref{ass:activation_lower_upper_NT_v2}.$(c)$ at level $\ell$. Then for any $\eps > 0$, the following holds with high probability: 
\begin{align}
0 \le R_{\NT}(\proj_{\le \ell+1} f_d, \bW) \le \eps \| \proj_{\le \ell+1} f_d \|_{L^2}^2. 
\end{align}
\label{eqn:NT_upper_bound}
\end{itemize}
\end{theorem}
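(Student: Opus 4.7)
My plan follows the strategy of Theorem \ref{thm:RF_lower_upper_bound}, adapted to the $\NT$ setting, where the natural kernel is the $\RF$ kernel for $\sigma'$ multiplied by the extra factor $\langle \bx, \by\rangle / d$. This factor shifts effective degrees by one (hence the threshold $\ell+1$ in place of $\ell$) and couples adjacent Gegenbauer levels, which is where the bulk of the technical work lies.

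First I would rewrite the population risk as $R_{\NT}(f_d,\bW) = \|f_d - \proj_{\cF} f_d\|_{L^2}^2$, where $\proj_{\cF}$ denotes orthogonal projection in $L^2(\S^{d-1}(\sqrt{d}))$ onto $\cF_{\NT}(\bW)$. The decomposition claimed in \eqref{eqn:NT_lower_bound} would follow once we establish that, with high probability, $\cF_{\NT}(\bW)$ is essentially contained in polynomials of degree $\le \ell+1$, while its restriction to such polynomials still approximates the low-degree part of $f_d$ as well as $\cF_{\NT}(\bW)$ itself does. Part~(b) is the dual statement: $\cF_{\NT}(\bW)$ essentially \emph{contains} all polynomials of degree $\le \ell+1$ as soon as $N = \omega_d(d^\ell)$.

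To prove the essential containment in part~(a), I would expand each feature $x_j\,\sigma'(\langle \bw_i, \bx\rangle)$ in the Gegenbauer basis. Each level-$k$ component of $\sigma'$ produces, after multiplication by $x_j$, contributions at levels $k-1$ and $k+1$, so the degree-$m$ part of $\cF_{\NT}(\bW)$ is driven by the $(m-1)$-th and $(m+1)$-th Hermite components of $\sigma'$. For each $m$, let $\bPsi_{(m)}$ be the $Nd \times \dim(V_m)$ matrix of degree-$m$ harmonic coefficients of the features, where $V_m$ is the space of level-$m$ spherical harmonics. The core spectral estimate, proved by Marchenko--Pastur / Wishart-type concentration on the sphere, is twofold: for $m \ge \ell+2$, the operator norm of $\bPsi_{(m)}\bPsi_{(m)}^\top$ is $o_d(1)$ relative to the $L^2$-norm at level $m$, so $\proj_{=m}\cF_{\NT}(\bW)$ covers only a vanishing fraction of $V_m$; for $m \le \ell+1$, the same matrix has its spectrum bounded below on a dominant subspace of $V_m$, so any degree-$m$ target can be fit to $o_d(1)$ error. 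Assumption \ref{ass:activation_lower_upper_NT_v2}(b) enters at the boundary level $m = \ell+2$, where contributions from two Hermite components of $\sigma'$ mix; the existence of $k_1 \neq k_2$ with distinct ratios $\mu_{k_i}(x^2\sigma')/\mu_{k_i}(\sigma')$ lets me form an explicit linear combination that isolates either the $(m-1)$ or $(m+1)$ contribution and precludes a degenerate cancellation. Assumption \ref{ass:activation_lower_upper_NT_v2}(c) plays the symmetric role in part~(b), ensuring that the lower-order Hermite coefficients of $\sigma$ are nonzero so that the approximating function can be built.

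The main obstacle is the coupling at the critical level. Unlike in the RF case, where each Hermite level of $\sigma$ contributes to a single Gegenbauer level and the spectral analysis decouples cleanly, here multiplication by $\langle \bx, \by\rangle$ links levels $k$ and $k \pm 1$. Controlling this coupled system requires either a block-matrix analysis with separate concentration for each block, or a change of basis using the genericity of Assumption \ref{ass:activation_lower_upper_NT_v2}(b) to diagonalize the level-coupled operator. The sub-Gaussian growth condition in Assumption \ref{ass:activation_lower_upper_NT_v2}(a) supplies the exponential moment bounds needed for these concentration inequalities to be uniform in $d$, and in particular justifies truncating the Hermite expansion of $\sigma'$ at a high but finite level so that only finitely many mixed blocks must be handled.
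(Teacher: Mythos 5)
Your high-level plan (mirror the \RF\, proof, with the factor $\<\bx,\by\>/d$ shifting the critical degree by one and coupling adjacent Gegenbauer levels) matches the paper's strategy, but the execution sketch has two genuine gaps. First, the ``essential containment'' framing is wrong: it is \emph{not} true that, for $m\ge \ell+2$, the level-$m$ Gram matrix $\bPsi_{(m)}\bPsi_{(m)}^\sT$ has vanishing operator norm. Each feature $x_j\sigma'(\<\bw_i,\bx\>)$ carries $\Theta(1)$ total $L^2$-mass at high degrees, so the $d\times d$ diagonal blocks of $\bPsi_{(m)}\bPsi_{(m)}^\sT$ already have operator norm $\Omega_d(1)$; in fact the high-degree part of the kernel is what supplies the $\kappa\,\id_{Nd}$ contribution that makes the full Gram matrix invertible. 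What is actually true, and what the proof needs, is a \emph{trace-type} bound: the correlation vector $\bV_{>\ell+1}$ of the features with the \emph{fixed} target $\proj_{>\ell+1}f_d$ has small expected squared norm (Proposition \ref{prop:expected_V_NTK}). This is exactly why the theorem is pointwise in $f_d$ rather than uniform; a containment statement would give a uniform result, which is false.

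Second, you never really address the lower bound $\lambda_{\min}(\bU)\ge\eps$ on the full $Nd\times Nd$ \NT\, Gram matrix, which is the crux of part (a) and the place where Assumption \ref{ass:activation_lower_upper_NT_v2}.(b) is used — and your account of that assumption's role is not correct. The condition involves two \emph{high} Hermite indices $k_1,k_2\ge 2\ell+7$, far above your ``boundary level $m=\ell+2$.'' In the paper one subtracts a small multiple $\delta_t$ of the $k_t$-th Gegenbauer components of $\sigma'$; the remainder yields a PSD kernel, and the subtracted piece yields a block-diagonal matrix with $d\times d$ blocks $\alpha\,\id_d+\beta\,\btheta_i\btheta_i^\sT$ plus an off-diagonal error whose Frobenius norm is $\tilde O_{d,\P}(d^{\ell+3-k_1/2})=o_{d,\P}(1)$ precisely because $k_1\ge 2\ell+7$. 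Positivity of the blocks requires making both $\alpha$ and $\alpha+\beta d$ positive simultaneously; their gradients in $(\delta_1,\delta_2)$ are proportional to $(\mu_{k_t}(\sigma')^2/k_t!)_t$ and $(\mu_{k_t}(\sigma')\mu_{k_t}(x^2\sigma')/k_t!)_t$, and the condition $\mu_{k_1}(x^2\sigma')/\mu_{k_1}(\sigma')\neq\mu_{k_2}(x^2\sigma')/\mu_{k_2}(\sigma')$ is exactly linear independence of these two gradients. Your proposed use of the assumption — forming a combination that ``isolates the $(m-1)$ or $(m+1)$ contribution'' at the critical degree — does not correspond to this mechanism and would not, as described, yield the needed invertibility of $\bU$. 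Part (b) of your sketch is closer in spirit to the paper (which exhibits explicit coefficients $\ba_i=N^{-1}(\K^{-1}\T f_d)(\btheta_i)$ and bounds the expected risk), but it too ultimately rests on the eigenvalues $\Gamma_{d,k}$ being nonzero for $k\le\ell+1$, i.e.\ on Assumption (c), rather than on a Wishart-type spectral bound.
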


See Section \ref{sec:proof_NTK_lower} for the proof of lower bound, and Section \ref{sec:proof_NTK_upper} for the proof of upper bound.

\begin{remark}
It is easy to check that Assumptions \ref{ass:activation_lower_upper_NT_v2}.$(a)$ and  \ref{ass:activation_lower_upper_NT_v2}.$(b)$ hold for all $\ell$, 
for all commonly used activations.

For instance the ReLU activation $\sigma(u) = \max(u,0)$ and its weak derivative $\sigma'(x)=\bfone_{x\ge 0}$ have subexponential growth. Further its Hermite coefficients are $\mu_0(\sigma') = 1/2$ and
\begin{align}
\mu_k(\sigma') = \frac{(-1)^{(k-1)/2}}{\sqrt{2 \pi}}\, (k-2)!! \, \bfone_{k\, \mbox{\tiny\rm odd}}\, .
\end{align}
which satisfy the required condition of Theorem \ref{thm:NT_lower_upper_bound}.$(a)$ for each $\ell$. (In checking the condition, it might be useful to 
notice the relation $\mu_k(x^2\sigma') =\mu_{k+2}(\sigma')+ (2k +1 ) \mu_{k} (\sigma') + k(k-1) \mu_{k-2} (\sigma')$.)

Assumption  \ref{ass:activation_lower_upper_NT_v2}.(c) does not hold for ReLU activation $\sigma(u) = \max(u,0)$,
since $\mu_k(\sigma) = 0$ for $k$ even. However it holds for shifted ReLU $\sigma(u) = \max(u-u_0,0)$,
for a generic value of the shift $u_0$.
\end{remark}

\begin{figure}[!ht]
\centering
\includegraphics[width=0.6\linewidth]{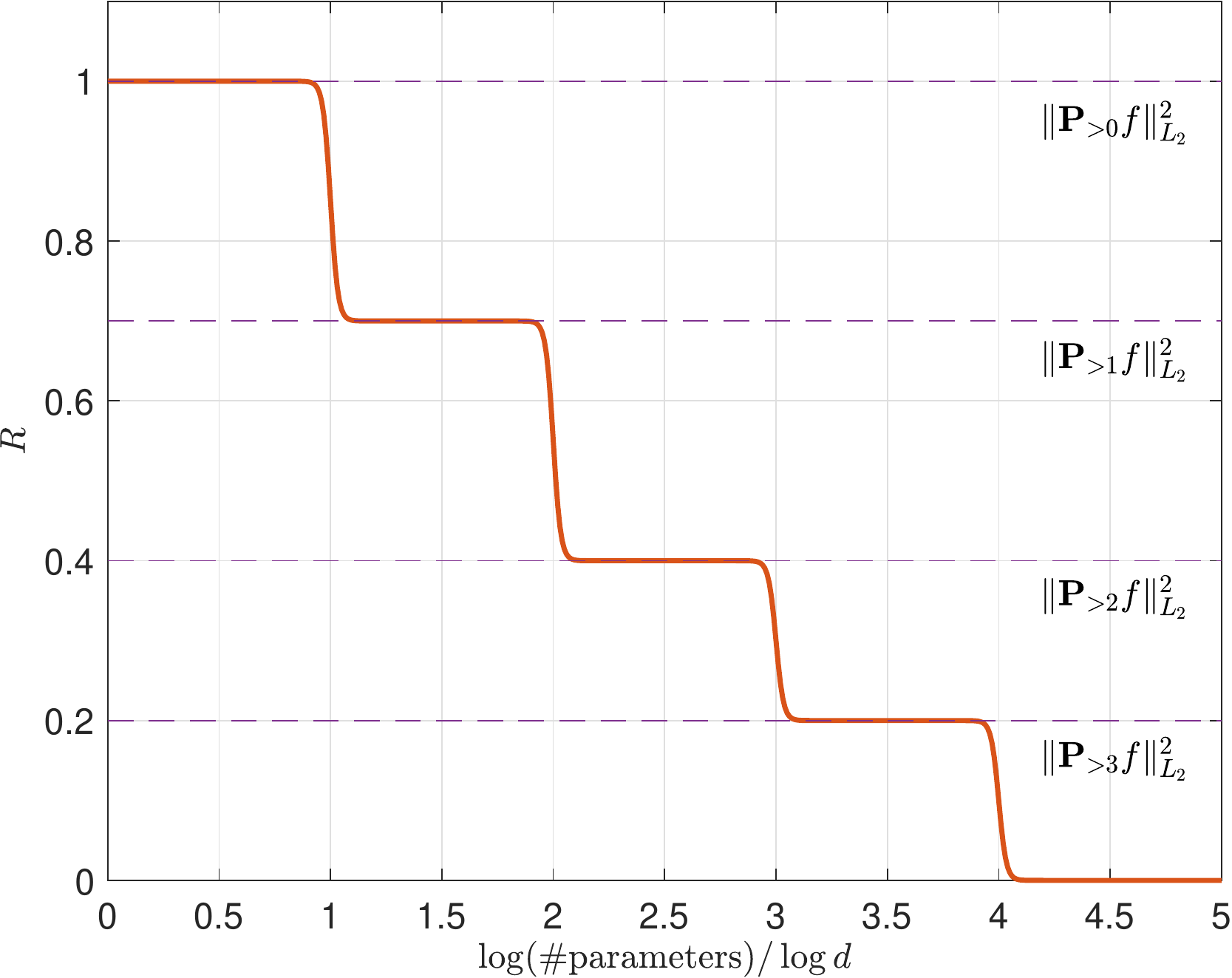}
\caption{A cartoon of the approximation error versus number of parameters in the \RF\, and \NT\, models.} \label{fig:cartoon_staircase}
\end{figure}

Theorems \ref{thm:RF_lower_upper_bound} and \ref{thm:NT_lower_upper_bound} can be illustrated by a cartoon, which we show
as Figure \ref{fig:cartoon_staircase}. In words, the approximation error plotted as a function of
$\log(\# \text{parameters})  / \log d$ follows a staircase: it drops close to integer values of this ratio, with each drop
corresponding to the projection onto homogeneous polynomials of that degree. We can extract three useful statistical insights
from these findings:
\begin{enumerate}
\item There is no difference between plain \RF\, and the more recent \NT\, approach in terms of approximation error,
  once we compare them at fixed number of parameters $p$. All that changes is the relation between number of parameters
  and number of neurons: $p=N$ for \RF, and $p=Nd$ for \NT. The recent work \cite{ghorbani2019limitations} actually shows
  some advantage for the \RF\,
  model, although in a special case.
  It is worth mentioning that the same equivalence holds when we consider the dependence on the sample size $n$, at $N=\infty$,
  see Section \ref{sec:Kernel}. 

  We notice however an important computational advantage for \NT, at constant parameters number. Indeed, the complexity at prediction
  time is $O(Nd)=O(p)$ for \NT, while it is $O(Nd) = O(pd)$ for \RF. 
\item \RF\, or \NT\, models behave similarly to expansions into orthogonal monomial basis. Also in that case, if only  $o_d(d^{\ell+1})$ basis elements are included,
  for a `typical' functions $f_\star$, the approximation error\footnote{Here by `typical' function we mean the following. Choose a function $f_{0,\star}\in L^2(\S^{d-1}(\sqrt{d})$, draw a Haar distributed orthogonal matrix $\bS\in\reals^{d\times d}$, and set $f_\star(\bx) = f_{0,\star}(\bS\bx)$.} is $\|\proj_{>\ell}f_{\star}\|^2$. 
\item Our results also suggest interesting directions to improve random feature expansions.
  First, if $f_{\star}$ is known to primary depends on a small subset of $d_1\ll d$ directions in $\reals^d$, there will be a significant advantage in choosing the random features
 along that $d_1$-dimensional subspace. 
Second, if the data points $\bx_i$ lie close to to such a subspace $V\subseteq \reals^d$, $\dim(V) = d_1$, one might hope that --even if the $\bw_i$
are sampled isotropically in $\reals^d$-- random feature methods will be sensitive to $d_1$ rather than $d$. We plan to report on these topics in a future
publication~\cite{OursUnpub}.
\end{enumerate}

\subsection{Separation between \NN\, and \RF,\, \NT}
\label{sec:Separation}

Theorems \ref{thm:RF_lower_upper_bound} and \ref{thm:NT_lower_upper_bound} imply a separation of approximation
power between two-layers neural networks and their linearization. As a simple example, consider the target function
$f_{\star}(\bx)=\sigma(\<\bw_{\star},\bx\>)$, for $\|\bw_{\star}\|_2=1$.
This can be represented exactly by a neural network with $N=1$, i.e. by a single neuron.
On the other hand, the above results imply that any \RF\, or \NT\, model is bound to have a non-vanishing population error,
if $d^{\ell+\delta}\le N\le d^{\ell+1-\delta}$. Provided $\sigma$ satisfies the Assumptions \ref{ass:activation_lower_upper_RF_v2},
\ref{ass:activation_lower_upper_NT_v2}, we get
\begin{align}
  R_{\RF}(\sigma;\bW)  = \|\sigma_{>\ell}\|^2_{L^2(\reals,\gamma)} +o_{d,\P}(1)\, ,\;\;\;\;\;\;
  R_{\NT}(\sigma;\bW) = \|\sigma_{>\ell+1}\|^2_{L^2(\reals,\gamma)} +o_{d,\P}(1)\, .\label{eq:Separation}
  \end{align}
  Here $\sigma_{>k}(x)$ is the projection of $\sigma$ orthogonal to the subspace of polynomials of maximum degree $k$,
  in  $L^2(\reals,\gamma)$, where $\gamma(\de x) = e^{-x^2/2}\de x/\sqrt{2\pi}$  is the standard Gaussian measure.

  Crucially,
  as proven in \cite{mei2016landscape}, running gradient descent over the space of neural networks consisting of a single neuron
  allows to learn the target function $f_\star(\bx) =\sigma(\<\bw_{\star},\bx\>)$ efficiently.
  In other words, we do not have simply a separation between the function classes $\cF_{\NN}$ and $\cF_{\RF}$ or $\cF_{\NT}$,
  but a separation between linearized neural networks, and neural networks trained by gradient descent.
  
  Essentially the same example was independently considered by Yehudai and Shamir in concurrent work \cite{yehudai2019power}.
  These authors prove that there exist finite constants $c_0,c_1>0$ such that, if  $N\le \exp\{c_1d\}$ and the coefficients
  $a_i, \ba_i$
  have magnitude at most $\exp\{c_1d\}$, then  \emph{there exists} a vector $\bw_\star$ such that, setting $f_{\star}(\bx) = \sigma(\<\bw_{\star},\bx\>)$, then
  $R_{\RF}(f_*;\bW), R_{\NT}(f_*;\bW)\ge c_0$. An important difference with respect to our separation result 
  is in the fact that Eq.~\ref{eq:Separation} holds --once again-- pointwise, i.e. for any fixed $\bw_\star$, while in
  \cite{yehudai2019power} $\bw_\star$ is chosen by an adversary who has knowledge of the vectors $(\bw_i)_{i\le N}$.
  Let us emphasize there are other important differences between our setting and the one of \cite{yehudai2019power},
  and neither of the two analysis implies the other.
  
  The same blueprint can be followed to prove further separation results. For instance, consider
  $f_{\star}(\bx) = \varphi(\bQ^{\sT}\bx)$, for $\bQ\in\reals^{d\times r}$ an orthogonal matrix and $\varphi: \reals^r\to\reals$
  a bounded smooth function, which is not a polynomial. If $r$ is kept constant as $d^{\ell+\delta}\le N\le d^{\ell+1-\delta}$, Theorems
  \ref{thm:RF_lower_upper_bound} and \ref{thm:NT_lower_upper_bound} can be used to show that
  $R_{\RF}(f_*;\bW), R_{\NT}(f_*;\bW)$ are bounded away from zero and to compute their limits.
  On the other hand, by classical results \cite{maiorov1999best} can be used to show that such $f_*(\bx)$ can be approximated arbitrarily
  well by neural networks with $O_d(1)$ neurons (with first layer weights $\bw_i$ in the span of columns of $\bQ$).
  Unfortunately, we are not aware of general results implying that such neural networks can be learnt by gradient descent,
although we expect this to be the case for certain choices of $\varphi$. Whenever such a result is available, 
  it implies a separation between \RF, \NT, and practical neural networks.

\section{Generalization error of kernel methods}
\label{sec:Kernel}


We consider next the limit of very wide networks. Namely,  we let $N\to\infty$ before $n,d\to\infty$. It is known since the work of Rahimi and
Recht \cite{rahimi2008random} that ridge regression over the function class $\cF_{\RF}(\bW)$ converges in this limit
to kernel ridge regression (KRR) with respect to the kernel (here expectation is with respect to $\bw\sim\Unif(\S^{d-1}(1))$)
\begin{align}
  H^{\RF}_d\big(\bx_1,\bx_2\big) := h^{\RF}_d\big(\<\bx_1,\bx_2\>/d\big) = \E\{\sigma(\<\bw,\bx_1\>)\sigma(\bw,\bx_2\>)\big\}\, .
  \label{eq:HRF}
 \end{align}
 Analogously, ridge regression in $\cF_{\NT}(\bW)$ can be shown to converge to KRR with respect to the kernel
\begin{align}
  H^{\NT}_d\big(\bx_1,\bx_2\big) := h^{\NT}_d\big(\<\bx_1,\bx_2\>/d\big) = (\<\bx_1,\bx_2\>/d)\E\{\sigma'(\<\bw,\bx_1\>)\sigma'(\bw,\bx_2\>)\big\}\, .\label{eq:HNT}
 \end{align}
 We will denote the corresponding RKHS by $\cH_{\RF}$ and $\cH_{\NT}$.
 Quantitative estimates on the relation between $\cF_{\RF}(\bW)$ and $\cH_{\RF}$ are obtained in
 \cite{bach2017equivalence}, which shows that the unit ball of $\cH_{\RF}$ is well approximated by the unit ball of
 $\cF_{\RF}(\bW)$ (endowed with the $\ell_2$ norm of the coefficients $(a_i)_{i\le N}$), for $N$ large enough.

 Notice that both kernels $H^{\RF}_d$, $H^{\NT}_d$ are rotationally invariant, namely
 $H_d(\bS\bx_1,\bS\bx_2) = H_d(\bx_1,\bx_2)$ for $H_d\in\{H^{\RF}_d, H^{\NT}_d\}$ and any $d\times d$ orthogonal matrix $\bS$.
 Any rotationally invariant kernel on the sphere $\S^{d-1}(\sqrt{d})$ takes the form
 \begin{align}
H_d(\bx_1, \bx_2) = h_d(\< \bx_1, \bx_2\> / d), \label{eq:GeneralKernel}
 \end{align}
 for some function $h_d:[-1,1]\to \reals$. 
 (The scaling factor $d$ is introduced here to make contact with the normalization used in previous sections, and is not necessary: indeed, $h_d$ can depend itself on $d$.)

 Our results apply to general rotational invariant kernels under very weak conditions on the function $h_d$.
 In particular, they apply to \emph{multilayer neural networks} in the neural tangent regime.
 Namely consider a $L$-layers network with matrix weights $\bW_1\in\reals^{N_1\times d}$, $\bW_2\in\reals^{N_2\times N_1}$,
 \dots $\bW_{L-1}\in\reals^{N_{L-1}\times N_{L-2}}$, $\ba\in\reals^{N_{L-1}}$. As long as all the weights are initialized
 as independent centered  Gaussians, with variance dependent only on the layer, the resulting \NT\, kernel is rotationally
 invariant.
 The recent papers  \cite{du2018gradient,du2018gradient2,allen2018convergence,zou2018stochastic,arora2019fine}
 provide conditions under which the \NT\, approximation is accurate for SGD-trained multilayer  neural networks.

 Section \ref{sec:KRR} presents a lower bound on the prediction error of general kernel methods,
 and Section \ref{sec:UpperKRR} derives an upper bound for kernel ridge regression.

 Throughout this section, we consider the same data model as in the previous sections: we observe pairs  $(y_i, \bx_i)_{i \in [n]}$, with $(\bx_i)_{i \in [n]} \sim \Unif(\S^{d-1} (\sqrt d))$, and $y_i = f_\star(\bx_i) +\eps_i$,
 $f_{\star}\in L^2(\S^{d-1} (\sqrt d))$ and $\eps_i \sim \normal ( 0 ,\tau^2)$ independently.
 
\subsection{Lower bound for general kernel methods}
\label{sec:KRR}

Consider any regression method of the form 
\begin{align}
\hf_{\lambda} = \arg\min_{f}\left\{\sum_{i=1}^n\ell(y_i,f(\bx_i)) +\lambda\|f\|_H^2\right\}\, ,\label{eq:KernelMethods}
\end{align}
where $\|f\|_H$ is the reproducing kernel Hilbert space (RKHS) norm with respect to the kernel $H$ of the form
\eqref{eq:GeneralKernel}. By the representer theorem \cite{berlinet2011reproducing} there exist coefficients
$\hat a_1,\dots, \hat a_n$ such that 
\begin{align}
\hf_{\lambda}(\bx) = \sum_{i=1}^n \hat a_i \, h_d(\<\bx,\bx_i\>/d)\, .  \label{eq:KRRf}
\end{align}
We are therefore led to define the following data-dependent prediction risk function for kernel methods
\begin{align}
R_{H}(f_{\star},\bX) \equiv \min_{\ba} \E_\bx\Big\{ \Big(f_\star(\bx) - \sum_{i=1}^n a_i  h_d(\< \bx_i, \bx\>/d) \Big)^2 \Big\}.
\end{align}

The next theorem provides a decomposition of this generalization error that is analogous to the one given in Theorem \ref{thm:RF_lower_upper_bound}.$(a)$.
Notice however that the controlling factor is not the number of neurons $N$, but instead the sample size $n$.

\begin{theorem}\label{thm:KR-general-main}
Assume $n \le d^{\ell+1-\delta_d}$ for a fixed integer $\ell$ and any sequence $\delta_d$ such that $\delta_d^2\log d\to\infty$
(in particular, $n\le d^{\ell+1-\delta}$ is sufficient for any fixed $\delta>0$).
Let $\{ f_d \in L^2(\S^{d-1}(\sqrt d))\}_{d \ge 1}$ be a sequence of functions,  $\{\bx_i\}_{i \in [n]} \sim_{iid} \Unif(\S^{d-1}(\sqrt d))$ 
with $y_i = f_d(\bx_i)$.  Assume $  h_n (\< \be_1, \,\cdot\,\>/\sqrt{d}) \in L^2 ( \S^{d-1} (\sqrt{d}))$. Then for any $\eps > 0$, with high probability as $d \to \infty$, we have 
\begin{align}
\left|R_{H}(f_{d},\bX) - R_{H}(\proj_{\le\ell} f_{d},\bX) - \| \proj_{> \ell} f_d \|_{L^2}^2 \right|\le \eps \| f_d \|_{L^2} \| \proj_{> \ell } f_d \|_{L^2}.  
\end{align}
\end{theorem}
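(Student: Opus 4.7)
The plan is to mirror the lower-bound argument from the proof of Theorem~\ref{thm:RF_lower_upper_bound}(a), exploiting a formal parallel: kernel regression with a rotationally invariant kernel $H_d(\bx,\by) = h_d(\<\bx,\by\>/d)$ looks like a random-features computation in which the ``activation'' is $h_d$, the ``random weights'' are the data points $\bx_i\sim\Unif(\S^{d-1}(\sqrt d))$, and the role of the width $N$ is played by the sample size $n$.

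First, I would express the risk variationally. Setting $\phi_i(\bx)=h_d(\<\bx_i,\bx\>/d)$,
\begin{align*}
R_{H}(f_d,\bX) = \min_{\ba\in\reals^n}\Big\|f_d - \sum_{i=1}^n a_i\phi_i\Big\|_{L^2}^2 = \|f_d\|_{L^2}^2 - \bb^\top \bK^{+} \bb,
\end{align*}
with $\bK_{ij}=\<\phi_i,\phi_j\>_{L^2}$ and $\bb_i=\<\phi_i,f_d\>_{L^2}$. Expand $h_d$ in Gegenbauer polynomials and decompose each feature as $\phi_i=\phi_i^{\le\ell}+\phi_i^{>\ell}$ according to spherical-harmonic degree. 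Since distinct-degree harmonics are $L^2$-orthogonal, this induces $\bK = \bK^{\le\ell}+\bK^{>\ell}$ and $\bb = \bb^{\le\ell}+\bb^{>\ell}$, with $\bb_i^{\le\ell}=\<\phi_i^{\le\ell},\proj_{\le\ell}f_d\>$ and $\bb_i^{>\ell}=\<\phi_i^{>\ell},\proj_{>\ell}f_d\>$.

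The heart of the argument is to control the high-degree Gram block $\bK^{>\ell}$. By the addition formula its diagonal entries equal the deterministic quantity $\tau := \sum_{k>\ell}\lambda_{d,k}(h_d)^2/B(d,k)$, where $B(d,k)$ is the dimension of the degree-$k$ harmonic subspace; the $L^2$-integrability hypothesis ensures $\tau<\infty$. Off-diagonal entries have zero mean and variance small relative to $\tau^2$ because $\<\bx_i,\bx_j\>/\sqrt d$ concentrates near zero and high-degree Gegenbauer polynomials have small $L^2$-norm. A matrix-Bernstein or Frobenius bound then yields $\|\bK^{>\ell}-\tau\mathbf{I}_n\|_{\mathrm{op}} = o_{d,\P}(\tau)$ under $n\le d^{\ell+1-\delta_d}$, which combined with $\bK\succeq\bK^{>\ell}$ gives $(\bb^{>\ell})^\top\bK^{+}\bb^{>\ell}\le (2/\tau)\|\bb^{>\ell}\|^2$ w.h.p. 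A second-moment computation using orthonormality of the spherical harmonics produces $\E\|\bb^{>\ell}\|^2 = n\sum_{k>\ell}\lambda_{d,k}(h_d)^2\|\proj_k f_d\|_{L^2}^2/B(d,k)^2$, and the elementary ratio bound $\lambda_{d,k}^2/(B(d,k)^2\tau)\le 1/B(d,k)$ (valid because $\tau\ge\lambda_{d,k}^2/B(d,k)$ for each $k$) collapses this to $(2n/B(d,\ell+1))\|\proj_{>\ell}f_d\|_{L^2}^2 = o_d(1)\cdot\|\proj_{>\ell}f_d\|_{L^2}^2$.

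The main obstacle is precisely this last step, because the hypothesis on $h_d$ is extremely weak: its Gegenbauer coefficients $\lambda_{d,k}(h_d)$ may be arbitrarily non-uniform across degrees and depend on $d$, so any argument tied to the specific shape of the kernel would fail. The ratio inequality saves things: the pseudoinverse is of order $\tau^{-1}$ no matter how the kernel's mass is distributed among degrees, while $\|\bb^{>\ell}\|^2$ carries precisely those same coefficients summed in $\tau$. To conclude, write
\[
R_H(f_d,\bX) - R_H(\proj_{\le\ell}f_d,\bX) - \|\proj_{>\ell}f_d\|_{L^2}^2 = -(\bb^{>\ell})^\top\bK^{+}\bb^{>\ell} - 2(\bb^{\le\ell})^\top\bK^{+}\bb^{>\ell}.
\]
The first term on the right is already handled; the cross term is bounded by Cauchy--Schwarz using $(\bb^{\le\ell})^\top\bK^{+}\bb^{\le\ell}\le\|\proj_{\le\ell}f_d\|_{L^2}^2$ (a consequence of $R_H(\proj_{\le\ell}f_d,\bX)\ge 0$) together with the bound above, producing a total error of $o_{d,\P}(\|f_d\|_{L^2}\|\proj_{>\ell}f_d\|_{L^2})$, which matches the claim for any fixed $\eps>0$.
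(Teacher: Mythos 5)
Your overall route is exactly the paper's: the theorem is proved there by observing that with $\sigma_d(u)=h_d(u/\sqrt{d})$ and $\bw_i=\bx_i/\sqrt{d}$ one has $R_H(f_d,\bX)=R_{\RF}(f_d,\bW)$, and then invoking Theorem~\ref{thm:RF_lower_upper_bound}(a) verbatim. What you have done is re-derive that RF lower bound in the kernel notation. The variational identity, the degree decomposition of $\bK$ and $\bb$, the bound $(\bb^{\le\ell})^\sT\bK^{+}\bb^{\le\ell}\le\|\proj_{\le\ell}f_d\|_{L^2}^2$, the second-moment computation for $\bb^{>\ell}$, and the ratio inequality that converts $\lambda_{d,k}^2/\tau$ into $1/B(d,k)$ all match Propositions~\ref{prop:expected_V_RFK} and \ref{prop:kernel_lower_bound_RFK} and the bookkeeping in Section~\ref{sec:OutlineRF} (modulo your different normalization of the Gegenbauer coefficients, which is consistent).

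There is, however, one genuine gap: the claim that ``a matrix-Bernstein or Frobenius bound then yields $\|\bK^{>\ell}-\tau\mathbf{I}_n\|_{\mathrm{op}}=o_{d,\P}(\tau)$'' under $n\le d^{\ell+1-\delta_d}$. For the Gegenbauer Gram matrices $\bW_k=(Q_k^{(d)}(\<\bx_i,\bx_j\>))_{ij}$ with $\ell+1\le k\le 2\ell+2$, the Frobenius estimate gives only $\E\|\bW_k-\id_n\|_F^2=n(n-1)/B(d,k)$, i.e.\ an operator-norm bound of order $n\,d^{-k/2}$, which is far from $o(1)$ when $k=\ell+1$ and $n$ is allowed to approach $d^{\ell+1}$; matrix Bernstein likewise does not reach the sharp $n\ll d^{k}$ threshold. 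This is precisely the technical core of the paper's argument: Proposition~\ref{prop:Delta_bound} establishes $\|\bW_k-\id_n\|_{\op}=o_{d,\P}(1)$ for $n\le d^{k}e^{-A_d\sqrt{\log d}}$ via a trace-moment computation with $p\asymp\sqrt{\log d}$, requiring the graph/skeletonization counting of Section~7.4 (only the degrees $k\ge 2\ell+3$ are handled by the crude Frobenius bound). Without this estimate, or some substitute of comparable strength, the lower bound $\bK\succeq(1-\eps)\tau\id_n$ — and hence the control of $(\bb^{>\ell})^\sT\bK^{+}\bb^{>\ell}$ — is not established in the regime the theorem claims. So the architecture of your proof is right, but the one step you dispatch in a sentence is the one that carries essentially all of the difficulty.
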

\begin{proof}
  This follows immediately from Theorem \ref{thm:RF_lower_upper_bound}.$(a)$. Indeed, setting $\sigma_d(u) = h_d(u/\sqrt{d})$
  and $\bw_i = \bx_i/\sqrt{d}$, we obtain
$R_{H}(f_{d},\bX) =R_{\RF}(f_d,\bW)$, whence the claim follows by applying Eq.~(\ref{eq:RF_lower_bound}).
\end{proof}

\subsection{Upper bound for kernel ridge regression}
\label{sec:UpperKRR}

Kernel ridge regression is one specific way of selecting the coefficients $\hba$ in Eq.~\eqref{eq:KRRf}, namely
by using $\ell(\hy,y) = (\hy-y)^2$ in Eq.~\eqref{eq:KernelMethods}. Solving for the coefficients yields
\[
\hba = (\bH + \lambda \id_n)^{-1} \by,
\]
where the kernel matrix $\bH = (H_{ij})_{ij \in [n]}$ is given by
\[
H_{ij} =  h_d(\<\bx_i, \bx_j\>/d),
\]
and $\by = (y_1, \ldots, y_n)^\sT$. 
The prediction function at location $\bx$ is given by
\[
\hat f_\lambda(\bx) = \by^\sT (\bH + \lambda \id_n)^{-1} \bh(\bx),
\]
where
\[
\bh(\bx) = [ h_d(\< \bx, \bx_1\>/d), \ldots, h_d(\< \bx, \bx_n\>/d)]^\sT. 
\]
The test error of empirical kernel ridge regression is defined as
\[
\begin{aligned}
R_\KR(f_d, \bX, \lambda) \equiv& \E_\bx\Big[ \Big(f_d(\bx) - \by^\sT (\bH + \lambda \id_n)^{-1} \bh(\bx) \Big)^2 \Big]. \\
\end{aligned}
\]
We assume that $\{h_d \}_{d \ge 1}$ are positive-definite kernels, and we consider the associated eigenvalues:
\begin{align}
    \xi_{d, k}(h_d) = \int_{[-\sqrt d , \sqrt d]} h_d\big(x/\sqrt{d}\big) Q_k^{(d)}(\sqrt d x) \tau^1_{d-1}(\de x),\,
\end{align}
where we recall that  $Q_k^{(d)}$ is the $k$-th Gegenbauer polynomial.

\begin{assumption}[Assumption for KRR at level $\ell \in \N$] \label{ass:activation_krr}
Let $\{ h_d \}_{d \ge 1}$ be a sequence of functions $h_d:\reals\to\reals$, such that $H_d (\bx_1,\bx_2) = h_d(\<\bx_1,\bx_2\>/d)$ is a positive semidefinite kernel. 
\begin{itemize}
\item[(a)] $h_d ( \cdot /\sqrt{d}) \in L^2([-\sqrt d, \sqrt d], \tau^1_{d-1})$, where $\tau^1_{d-1}$ is the distribution of $\< \bx, \be\>$ for $\bx \sim \Unif(\S^{d-1}(\sqrt d))$, where $\be = (1, 0, \ldots, 0)^\sT \in \R^d$. 
\item[(b)] There exists a constant $c_{\ell}>0$ such that
 \begin{align}
   \frac{d^{\ell} \min_{k \le \ell} \xi_{d,k} (h_d) }{\sum_{k \ge \ell + 1} \xi_{d,k}(h_d) B(d, k)}\ge c_{\ell}\, .
\end{align}
\end{itemize}
\end{assumption}
\begin{theorem}\label{thm:upper_bound_KRR}
  Assume $\omega_d(d^{\ell} \log d) \le  n \le O_d(d^{\ell +1 - \delta})$ for some integer $\ell$ and $\delta > 0$.
  Let  $\{ f_d \in L^2(\S^{d-1}(\sqrt d))\}_{d \ge 1}$ be a sequence of functions. Let $\{ h_d \}_{d \ge 1}$ be a sequence of kernels
  satisfying Assumption \ref{ass:activation_krr} at level $\ell$. Further define
  \begin{align}
    \lambda_*(d,\ell) := d^{\ell} \min_{k \le \ell} \xi_{d, k}(h_d)\, .
  \end{align}
  If $h_d$ has zero mean (i.e. $\int h_d(\sqrt{d}\<\be_1,\bx\>) \tau_d(\de\bx) = 0$) further assume that $f_d$ is centered
  (i.e. $\int f_d(\bx) \tau_d(\de\bx) = 0$).
  
  Let $\bX = (\bx_i)_{i \in [n]}$ with $(\bx_i)_{i \in [n]} \sim \Unif(\S^{d-1}(\sqrt d))$ independently, and $y_i = f_d(\bx_i) + \eps_i$ and $\eps_i \sim_{iid} \normal(0, \tau^2)$.
  Then for any $\eps > 0$, and any regularization parameter $\lambda\in (0,\lambda_*)$  with high probability we have
  \begin{align}
\vert R_{\KR}(f_d, \bX, \lambda) - \| \proj_{> \ell} f_d \|_{L^2}^2 \vert \le \eps (\| f_d \|_{L^2}^2 + \tau^2). 
\end{align}
\end{theorem}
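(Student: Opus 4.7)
The plan is to analyze KRR via the spectral decomposition of the rotationally invariant kernel. By the Funk--Hecke formula, the eigenfunctions of $H_d$ on $\S^{d-1}(\sqrt d)$ are the spherical harmonics $\{Y_{kj}\}$, with eigenvalue $\xi_{d,k}(h_d)$ on the degree-$k$ subspace (of dimension $B(d,k)$). Letting $\bPsi^{(k)} \in \reals^{n \times B(d,k)}$ denote the matrix with entries $Y_{kj}(\bx_i)$, we have $\bH = \sum_{k \ge 0} \xi_{d,k}(h_d)\, \bPsi^{(k)} (\bPsi^{(k)})^\sT$. I would split this as $\bH = \bH_{\le \ell} + \bH_{>\ell}$: the low-degree block has rank $D_\ell := \sum_{k \le \ell} B(d,k)$, of order $d^\ell$, while the high-degree block will act essentially as a scalar multiple of the identity.

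Since $n = \omega_d(d^\ell \log d) \gg D_\ell$, standard matrix concentration for random feature matrices on the sphere shows that $\tfrac{1}{n} (\bPsi^{(\le \ell)})^\sT \bPsi^{(\le \ell)} \to \id_{D_\ell}$ in operator norm with high probability, so the $D_\ell$ nonzero eigenvalues of $\bH_{\le \ell}$ are $(1+o_{d,\P}(1)) \cdot n\, \xi_{d,k}(h_d)$ for $k \le \ell$. Since $B(d,k) = \omega_d(n)$ for $k > \ell$, each $\bPsi^{(k)} (\bPsi^{(k)})^\sT$ concentrates around $B(d,k)\, \id_n$, and summing over $k > \ell$ yields
\[
\bigl\| \bH_{>\ell} - \kappa_d\, \id_n \bigr\|_{\mathrm{op}} = o_{d,\P}(\kappa_d), \qquad \kappa_d := \sum_{k > \ell} \xi_{d,k}(h_d)\, B(d,k).
\]
Thus KRR experiences an implicit regularization of magnitude $\kappa_d$. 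Assumption \ref{ass:activation_krr}(b) gives $\kappa_d \le c_\ell^{-1} \lambda_*$, so for any $\lambda \in (0, \lambda_*)$ the total effective ridge $\lambda + \kappa_d$ is of order $\lambda_* = d^\ell \min_{k \le \ell} \xi_{d,k}(h_d)$, which equals $o_d(1) \cdot n \min_{k \le \ell} \xi_{d,k}(h_d)$ and is therefore negligible relative to the low-frequency eigenvalues of $\bH$.

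Armed with this spectral picture, I would decompose the test error into $\mathrm{Bias}_{\le \ell} + \mathrm{Bias}_{> \ell} + \mathrm{Var}$ by splitting $f_d = \proj_{\le \ell} f_d + \proj_{> \ell} f_d$. The low-degree bias is $o_{d,\P}(\|\proj_{\le \ell} f_d\|_{L^2}^2)$ because the effective regularization is negligible compared to the low-frequency eigenvalues; the noise term contributes $o_{d,\P}(\tau^2)$ for the same reason combined with $D_\ell \ll n$; and the high-degree contribution is exactly $\|\proj_{>\ell} f_d\|_{L^2}^2 + o_{d,\P}(\|f_d\|_{L^2}^2)$ because, at sample size $n = o_d(d^{\ell+1})$, the vector $(\proj_{>\ell} f_d(\bx_i))_{i \le n}$ is approximately orthogonal to itself and to the range of $\bH_{\le \ell}$, so it is essentially annihilated by $(\bH + \lambda \id_n)^{-1} \bH$. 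The centering hypothesis on $f_d$ (invoked only when $h_d$ is mean-zero) handles the degenerate case where the constant direction lies outside the kernel's range.

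The main technical obstacle is the operator-norm concentration $\|\bH_{>\ell} - \kappa_d\, \id_n\|_{\mathrm{op}} = o_{d,\P}(\kappa_d)$. The defining sum has infinitely many terms, and high-degree Gegenbauer polynomials grow rapidly near $\pm 1$; one must truncate at a finite degree $k_{\max}(d)$, bound the tail via the $L^2(\tau^1_{d-1})$ assumption on $h_d$, and control the truncated sum using moment or Hanson--Wright type bounds for polynomials of uniform vectors on the sphere. The extra $\log d$ factor in the hypothesis $n = \omega_d(d^\ell \log d)$ is precisely what is needed to absorb a union-bound or trace factor in this step. The other ingredients---low-frequency concentration and the bias/variance algebra---are relatively standard once the spectral picture above is in place.
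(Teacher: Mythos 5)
Your plan follows essentially the same route as the paper's proof: decompose the kernel matrix into a low-degree part $\bY_{\le\ell}\bD_{\le\ell}\bY_{\le\ell}^\sT$ plus a high-degree part concentrating (in operator norm) around $\kappa_h\,\id_n$, use matrix-Bernstein concentration of $\bY_{\le\ell}^\sT\bY_{\le\ell}/n$ (where the $\log d$ in $n=\omega_d(d^\ell\log d)$ is indeed spent), and then run the bias/variance algebra showing the effective ridge $\lambda+\kappa_h$ is negligible against $n\,\xi_{d,k}$ while the high-degree signal is essentially annihilated. You also correctly identify the operator-norm concentration of the high-degree block as the main technical obstacle, which the paper resolves via the trace-method bound of Proposition \ref{prop:Delta_bound} for degrees up to $2\ell+2$ and a Frobenius-norm tail bound beyond.
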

See Section \ref{sec:proof_KR} for the proof of this theorem. 
  
\begin{remark}
  Assume  $h_d \to h$ as $d\to\infty$, uniformly over
  $[-\delta,\delta]$, together with its derivatives, and further assume $|h_d(x)|\le c_0\exp(c_1x^2/2)$ for some $c_0>0$, $c_1<1$.
  We expect this to be the case for many kernels of interest, and in particular it can be shown to be the case for $h_d^{\RF}$
  and $h_d^{\NT}$ under mild conditions on the activation $\sigma$. 
   Using Rodrigues' formula described in Section \ref{sec:Gegenbauer}, by an application of integration by part followed by dominated convergence, we get
  \begin{align}
    \xi_{d,k}(h_d) = \frac{1}{d^k}\, h^{(k)}(0) +o_d(d^{-k-1})\, ,
  \end{align}
  where $h^{(k)}$ is the $k$-th derivative of $h$. Notice further that $\xi_{d,k}(h_d)\ge 0$ for all $k$ since $h_d$ is positive
  semidefinite by definition.
  Therefore, as long as $h^{(k)}(0)>0$ for all $k\le \ell$, Assumption  \ref{ass:activation_krr} is satisfied, and $\lambda_*(d,\ell)$
  is bounded away from $0$.
\end{remark}

\begin{remark}
  For $h_d = h_d^{\RF}$ and if the activation $\sigma\in L^2(\reals,\gamma)$ is independent of $d$, we have $\xi_{d,k}(h_d) = \mu_k(\sigma)^2 d^{-k}+o_d(d^{-k-1})$,
  and therefore Assumption \ref{ass:activation_krr} is satisfied as soon as $\mu_k(\sigma)\neq 0$  for all $k\le \ell$. 
\end{remark}

Notice that the setting of Theorem \ref{thm:upper_bound_KRR} is the same as in classical nonparametric regression.
However, classical theory typically establishes minimax consistency rates of the form
$\E\{[\hf(\bx)-f_{\star}(\bx)]^2\}\le C(d)\, n^{-2\beta/(2\beta+d)}$ \cite{tsybakov2008introduction,gyorfi2006distribution}.
In order to guarantee a fixed (small)  error, these bounds require $n\ge \exp\{c\, d\}$.  Modern machine learning
typically have $d\ge 100$ and $n$ between $10^4$ and $10^8$, and it is therefore unrealistic to consider $n$ exponential in $d$. 
This regime motivates a new type of question:
\emph{assuming $n\asymp d^{\alpha}$, what is the minimum prediction error that can be achieved?}
This question is addressed by Theorem \ref{thm:upper_bound_KRR}.

\subsection{Separation between  kernel methods and neural networks}

Repeating the same argument of Section \ref{sec:Separation}, we
see that Theorems \ref{thm:KR-general-main} and
\ref{thm:upper_bound_KRR} imply a separation between kernel methods, with rotationally invariant kernels,
and gradient-descent trained neural networks.

Namely, consider again the target function $f_{\star}(\bx)=\sigma(\<\bw_{\star},\bx\>)$, for $\|\bw_{\star}\|_2=1$.
As proven in  \cite{mei2016landscape}, $f_\star$ can be learnt efficiently by  minimizing the following empirical risk
via gradient descent:
\begin{align*}
  \hR_{\NN}(\bw;\bw_\star) := \frac{1}{n}\sum_{i=1}^n\big(y_i-\sigma(\<\bw,\bx_i\>)\big)^2\, .
  \end{align*}
  Namely, if $n\ge C\, d\log d$ samples are used (and under some technical conditions on $\sigma$), gradient descent
  reaches prediction error of order $(d\log d)/n$

  In contrast, Theorems \ref{thm:KR-general-main} and \ref{thm:upper_bound_KRR} imply that, for any integer $\ell$,
  and any $d^{\ell+\delta}\le n\le d^{\ell+1-\delta}$, any kernel method has test error bounded away from zero. Namely
\begin{align}
  R_{H}(\sigma;\bX)  = \|\sigma_{>\ell}\|^2_{L^2(\reals,\gamma)} +o_{d,\P}(1)\, .
  \end{align}
  This test error is achieved by kernel ridge regression. 
  
  \subsection{Near-optimality of interpolators}
  
Let us emphasize some important statistical aspects of Theorem \ref{thm:upper_bound_KRR}. KRR is proved to achieve near optimal prediction error
(matching the lower bound of Theorem \ref{thm:KR-general-main}) \emph{pointwise}, i.e. per given function $f_d$. What is the nature of
the predictor $\hf_{\lambda}$?  Theorems \ref{thm:KR-general-main} and \ref{thm:upper_bound_KRR} imply
that, in $\ell_2$ sense, $\hf_{\lambda}$ must be close to a low-degree approximation of $f_d$, namely $\proj_{\le \ell}f_d$.

Optimal test error is achieved for any $\lambda<\lambda_*$. In particular, by taking $\lambda\to 0$, we obtain an
\emph{interpolator}, i.e. a predictor that interpolates the data $(y_i,\bx_i)$. This remark is made quantitative
in the following bound on the empirical risk 
\begin{align}
\hR_{\KR}(f_d,\bX,\lambda) :=\frac{1}{n}\sum_{i=1}^n(y_i-\hf_{\lambda}(\bx))^2\, .\label{eq:EmpRiskKRR}
\end{align}
\begin{theorem} \label{thm:empirical_risk_interpolator}
   Assume $\omega_d(d^{\ell} \log d) \le  n \le O_d(d^{\ell +1 - \delta})$ for some integer $\ell$ and $\delta > 0$.
  Under the same assumptions of Theorem \ref{thm:upper_bound_KRR}, if $\lambda<\lambda_*$, then
  \begin{align}
    \hR_{\KR}(f_d,\bX,\lambda) \le (1 + o_{d,\P}(1) ) ( \| f_d \|_{L^2}^2 + \tau^2) \left(\frac{\lambda}{\lambda + \kappa_h} \right)^2\, ,
  \end{align}
  where $\kappa_h = \sum_{k \ge \ell + 1} \xi_{d,k}(h_d) B(d, k)$.
\end{theorem}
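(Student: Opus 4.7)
The plan is to exploit the closed-form expression for the KRR residual on the training set. Writing the fitted values as $\hat f_\lambda(\bX) = \bH(\bH + \lambda\id_n)^{-1}\by$, the residual vector equals
\begin{align}
\by - \bH(\bH + \lambda\id_n)^{-1}\by = \lambda\,(\bH + \lambda\id_n)^{-1}\by,
\end{align}
so the empirical risk in \eqref{eq:EmpRiskKRR} becomes
\begin{align}
\hR_\KR(f_d,\bX,\lambda) = \frac{\lambda^2}{n}\,\by^\sT(\bH + \lambda\id_n)^{-2}\by \;\le\; \frac{\lambda^2}{(\lambda + \lambda_{\min}(\bH))^2}\cdot\frac{\|\by\|_2^2}{n}.
\end{align}
The claim therefore reduces to two estimates: a lower bound $\lambda_{\min}(\bH) \ge \kappa_h(1-o_{d,\P}(1))$, and the concentration $n^{-1}\|\by\|_2^2 = (\|f_d\|_{L^2}^2 + \tau^2)(1+o_{d,\P}(1))$.

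For the eigenvalue bound, I would reuse the spherical-harmonic decomposition of the kernel matrix already developed for Theorem \ref{thm:upper_bound_KRR}. Decomposing $h_d$ in the Gegenbauer basis yields $\bH = \bH_{\le \ell} + \bH_{>\ell}$, where $\bH_{\le\ell}$ collects the degree $\le \ell$ components (a positive semidefinite matrix of rank at most $B(d,\le\ell) = O_d(d^\ell) = o_d(n)$), and $\bH_{>\ell}$ the degree $>\ell$ tail. In the regime $n\le d^{\ell+1-\delta}$ the high-degree tail concentrates on a multiple of the identity, namely
\begin{align}
\bH_{>\ell} = \kappa_h\,\id_n + \bDelta, \qquad \|\bDelta\|_{\rm op} = o_{d,\P}(\kappa_h),
\end{align}
which is precisely the spectral estimate that drives the proof of Theorem \ref{thm:upper_bound_KRR}. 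Since $\bH_{\le\ell}\succeq 0$, this gives $\lambda_{\min}(\bH) \ge \kappa_h(1-o_{d,\P}(1))$.

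For the second ingredient, I would expand $n^{-1}\|\by\|_2^2 = n^{-1}\sum_i f_d(\bx_i)^2 + 2n^{-1}\sum_i f_d(\bx_i)\eps_i + n^{-1}\sum_i \eps_i^2$ and apply standard concentration: the first term has mean $\|f_d\|_{L^2}^2$ and concentrates by Chebyshev (using $f_d \in L^2$ plus the uniform norm control on the sphere used elsewhere in Section \ref{sec:proof_KR}), the Gaussian cross term is $o_{d,\P}(\|f_d\|_{L^2}\tau)$ by independence, and $n^{-1}\sum_i \eps_i^2 = \tau^2(1+o_{d,\P}(1))$ by the law of large numbers. Combining the two pieces gives
\begin{align}
\hR_\KR(f_d,\bX,\lambda) \le (1 + o_{d,\P}(1))\,\frac{\lambda^2}{(\lambda+\kappa_h)^2}\,(\|f_d\|_{L^2}^2 + \tau^2),
\end{align}
which is the stated bound.

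The only substantive step is the eigenvalue lower bound $\lambda_{\min}(\bH) \ge \kappa_h(1-o_{d,\P}(1))$: the low-degree component can have eigenvalues as large as $\Theta(n)$ but is rank-deficient, so the relevant floor on $\lambda_{\min}(\bH)$ has to come entirely from the high-degree component, whose operator-norm deviation from $\kappa_h\id_n$ must be controlled via a matrix concentration inequality on the degree-$>\ell$ Gegenbauer expansion. This is precisely the hard estimate already proved for Theorem \ref{thm:upper_bound_KRR} in Section \ref{sec:proof_KR}, so in the write-up I would cite it rather than redo it.
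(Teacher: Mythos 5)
Your proof is correct and follows essentially the same route as the paper's: the residual identity $\by - \bH(\bH+\lambda\id_n)^{-1}\by = \lambda(\bH+\lambda\id_n)^{-1}\by$, the eigenvalue floor $\bH \succeq \kappa_h(1-o_{d,\P}(1))\id_n$ imported from the decomposition $\bH = \bY_{\le\ell}\bD_{\le\ell}\bY_{\le\ell}^\sT + \kappa_h(\id_n+\bDelta_h)$ established in the proof of Theorem \ref{thm:upper_bound_KRR}, and the law of large numbers for $\|\by\|_2^2/n$. Your treatment of the concentration of $\|\by\|_2^2/n$ is in fact slightly more explicit than the paper's, which simply invokes the law of large numbers.
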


\begin{proof}[Proof of Theorem \ref{thm:empirical_risk_interpolator}]
Recall that the empirical risk of KRR is given by Eq.~\eqref{eq:EmpRiskKRR},
where $ \hat \boldf_{\lambda} = ( \hat{f}_\lambda ( \bx_1 )  , \ldots , \hat{f}_\lambda ( \bx_n ))$ can be rewritten as 
\[
 \hat \boldf_{\lambda} = \bH (\bH + \lambda \id_n )^{-1} \by.
\]
Therefore,
\[
\begin{aligned}
\hR_{\KR}(f_d,\bX,\lambda) & = \| [\id_n - \bH (\bH + \lambda \id_n )^{-1}] \by \|_2^2/n \\
& = \lambda^2 \| (\bH + \lambda \id_n )^{-1}\by \|_2^2 /n.
\end{aligned}
\]
From the proof of Theorem \ref{thm:upper_bound_KRR}, we have the following lower bound on the eigenvalues $\bH + \lambda \id_n  \succeq (\kappa_h + \lambda + o_{d,\P} (1) ) \id_n $. We deduce that with high probability 
\[
\hR_{\KR}(f_d,\bX,\lambda) \leq (1+o_{d,\P}(1))(\lambda / (\kappa_h+\lambda) )^2 \| \by \|_2^2 / n \leq (1+o_{d,\P}(1)) ( \| f_d \|_{L^2}^2 + \tau^2 ) (\lambda /  (\kappa_h+\lambda) )^2,
\]
where we simply used the law of large numbers $\| \by \|_2^2 / n \to \| f_d \|_{L^2}^2 + \tau^2$.
\end{proof}

\subsection{A conjecture for generalization error of random features model}

Consider random features regression with finite sample size and a finite number of neurons.
We fit data $\{(y_i, \bx_i)\}_{i\le n}$ using ridge regression in the random features ($\RF$) model, with 
(where $\bw_i \sim_{iid} \Unif(\S^{d-1}(1))$)
\begin{align}
\hba(\lambda) = \argmin_{\ba\in\reals^N} \left\{\frac{1}{n}\sum_{j=1}^n \Big( y_j- \sum_{i=1}^N a_i \sigma(\< \bw_i, \bx_j\>) \Big)^2  +  
\frac{N\lambda}{d}\, \| \ba \|_2^2\right\}\, . \label{eq:Ridge}
\end{align}
Under the same data model of the previous sections, we are interested in the test prediction error
\begin{align}\label{eqn:prediction_risk_first_definition}
R_\RF(f_d, \bX, \bW, \lambda) = \E_{\bx}\Big[ \Big(f_d(\bx) - \sum_{i=1}^N \hat a_i(\lambda) \sigma(\< \bw_i, \bx\> ) \Big)^2\Big]\, .
\end{align} 
Theorem \ref{thm:RF_lower_upper_bound} characterized the test error $R_\RF(f_d, \bX, \bW, \lambda)$ in the population limit
$n = \infty$, whereas Theorems \ref{thm:KR-general-main} and \ref{thm:upper_bound_KRR} characterize the same quantity
in the case when $N = \infty$.

What happens when both $n$ and $N$ are finite?
In the proportional regime  $N \propto d$ and $n \propto d$, the precise asymptotics of $R_\RF(f_d, \bX, \bW, \lambda)$
was calculated in \cite{mei2019generalization}. 

What happens beyond the proportional asymptotics? We conjecture that the limiting factor is given by the smallest
of $n$ and $N$. Namely, if $d^{\ell+\delta}\le \min(n,N) \le d^{\ell+1-\delta}$ for some positive $\delta$,
then the prediction error is the same as the one of fitting a degree-$\ell$ polynomial, i.e. 
$R_\RF(f_d, \bX, \bW, \lambda) = \|\proj_{>\ell}f_d\|_{L^2}^2 +\|f_d\|_{L^2}^2 \cdot o_{d, \P}(1)$.
We leave this conjecture to future work.

\section{Further related work}
\label{sec:Related}

Donoho and Johnstone \cite{donoho1989projection} study an
approximation problem analogous to the one we considered in Section \ref{sec:Approximation}, although 
in $d=2$ dimensions. Their problem essentially reduces to determining rates of approximation on the unit circle,
with the technical difference that the $\bw_i$'s are equi-spaced along the circle instead of being random.
As for other references mentioned in Section \ref{sec:Parenthesis}, the lower bounds of \cite{donoho1989projection}
are worst case over differentiable functions.

The limitations of kernel methods in high-dimension are studied by El Karoui in \cite{el2010spectrum}
(see also \cite{el2010information}), which analyzes kernel random matrices of the form $\bH= (h(\<\bx_i,\bx_j\>/d))_{i,j\le n}$.
The analysis of \cite{el2010spectrum} is limited to the proportional asymptotics $n \propto d$.
and establishes that in this regime $\bH$ is well approximated
by the Gram matrix of raw feature vectors plus a diagonal term:
$\bH \approx (h(1)-h'(0))\id_n+h'(0)\bG$, where $\bG= (\<\bx_i,\bx_j\>/d)_{i,j\le n}$.
This result is related to our Theorems \ref{thm:KR-general-main} and \ref{thm:upper_bound_KRR},
which deal with kernel methods. However our results
analyze  general polynomial scalings $n = O_d(d^{\ell+1-\delta})$, while \cite{el2010spectrum} assumes $n =\Theta_d(d)$.
Also \cite{el2010spectrum} analyzes the spectrum of $\bH$ but not the prediction error of kernel methods.
Finally,  a large part of our technical work is devoted to \RF\, and \NT\, models, cf. Theorems
\ref{thm:RF_lower_upper_bound} and \ref{thm:NT_lower_upper_bound}, which are not touched upon by  \cite{el2010spectrum}.

Recent work of Vempala and Wilmes \cite{vempala2018polynomial} analyzes what amounts to an \RF\,  model.
These authors prove that \RF\, can learn a degree-$\ell$ polynomial from $n= d^{O(\ell)}$ samples using $N =d^{O(\ell)}$
neurons, and that at least $d^{\Omega(\ell)}$ queries are needed within the statistical query model.
While related,  our setting is not directly comparable to theirs. Notice further that we obtain a sharper tradeoff, since we
obtain the precise exponents of  $d$.

After the present paper appeared as a preprint, several authors presented important contributions to the same line of work.
In particular, Liang, Rakhlin, and Zhai \cite{liang2019risk} studies kernel ridge regression in $d$ dimension using
$n = O_d(d^\gamma)$ samples. Assuming the target function has bounded RKHS norm, they derive upper and lower bounds
on the rate of convergence of the generalization error. 
This result is related to our Theorem \ref{thm:KR-general-main}.  
The most important difference is that we do not assume that the target function has bounded RKHS norm.
Instead we obtain the precise asymptotics of the  generalization error in a regime in which it is non-vanishing. As illustrated in Section \ref{sec:Numerical}, this asymptotic analysis captures indeed the actual behavior
in practically reasonable settings.

From a technical viewpoint, several of our calculations make use of harmonic analysis over the $d$-dimensional sphere, as it is natural given that $\bx_i$'s are uniform over the sphere. Spherical harmonics expansion appear in related contexts, e.g. in \cite{donoho1989projection,bach2017breaking,vempala2018polynomial}. 

Let us finally mention that an alternative approach to the analysis of two-layers neural networks in the wide limit, 
was developed in \cite{mei2018mean,rotskoff2018neural,sirignano2018mean,chizat2018global,mei2019mean} using mean field theory.
Unlike in the neural tangent approach, the evolution of network weights is described beyond the linear regime in this theory.

\section{Technical background}
\label{sec:Background}

In this section we introduce some notation and technical background which will be useful for the proofs in the next sections.
In particular, we will use decompositions in (hyper-)spherical harmonics on the  $\S^{d-1}(\sqrt{d})$ and in orthogonal polynomials
on the real line. All of the properties listed below are classical: we will however prove a few facts that are slightly less standard. 
We refer the reader to \cite{costas2014spherical,szego1939orthogonal,chihara2011introduction}
for further information on these topics.
As mentioned above, expansions in spherical harmonics were used in the past in the statistics literature, for instance in 
 \cite{donoho1989projection,bach2017breaking}.

\subsection{Functional spaces over the sphere}

For $d \ge 1$, we let $\S^{d-1}(r) = \{\bx \in \R^{d}: \| \bx \|_2 = r\}$ denote the sphere with radius $r$ in $\reals^d$.
We will mostly work with the sphere of radius $\sqrt d$, $\S^{d-1}(\sqrt{d})$ and will denote by $\tau_{d-1}$  the uniform probability measure on $\S^{d-1}(\sqrt d)$. 
All functions in the following are assumed to be elements of $ L^2(\S^{d-1}(\sqrt d) ,\tau_{d-1})$, with scalar product and norm denoted as $\<\,\cdot\,,\,\cdot\,\>_{L^2}$
and $\|\,\cdot\,\|_{L^2}$:
\begin{align}
\<f,g\>_{L^2} \equiv \int_{\S^{d-1}(\sqrt d)} f(\bx) \, g(\bx)\, \tau_{d-1}(\de \bx)\,.
\end{align}

For $\ell\in\integers_{\ge 0}$, let $\tilde{V}_{d,\ell}$ be the space of homogeneous harmonic polynomials of degree $\ell$ on $\reals^d$ (i.e. homogeneous
polynomials $q(\bx)$ satisfying $\Delta q(\bx) = 0$), and denote by $V_{d,\ell}$ the linear space of functions obtained by restricting the polynomials in $\tilde{V}_{d,\ell}$
to $\S^{d-1}(\sqrt d)$. With these definitions, we have the following orthogonal decomposition
\begin{align}
L^2(\S^{d-1}(\sqrt d) ,\tau_{d-1}) = \bigoplus_{\ell=0}^{\infty} V_{d,\ell}\, . \label{eq:SpinDecomposition}
\end{align}
The dimension of each subspace is given by
\begin{align}
\dim(V_{d,\ell}) = B(d, \ell) = \frac{2 \ell + d - 2}{\ell} { \ell + d - 3 \choose \ell - 1} \, .
\end{align}
For each $\ell\in \integers_{\ge 0}$, the spherical harmonics $\{ Y_{\ell, j}^{(d)}\}_{1\le j \in \le B(d, \ell)}$ form an orthonormal basis of $V_{d,\ell}$:
\[
\<Y^{(d)}_{ki}, Y^{(d)}_{sj}\>_{L^2} = \delta_{ij} \delta_{ks}.
\]
Note that our convention is different from the more standard one, that defines the spherical harmonics as functions on $\S^{d-1}(1)$.
It is immediate to pass from one convention to the other by a simple scaling. We will drop the superscript $d$ and write $Y_{\ell, j} = Y_{\ell, j}^{(d)}$ whenever clear from the context.

We denote by $\proj_k$  the orthogonal projections to $V_{d,k}$ in $L^2(\S^{d-1}(\sqrt d),\tau_{d-1})$. This can be written in terms of spherical harmonics as
\begin{align}
\proj_k f(\bx) \equiv& \sum_{l=1}^{B(d, k)} \< f, Y_{kl}\>_{L^2} Y_{kl}(\bx). 
\end{align}
We also define
$\proj_{\le \ell}\equiv \sum_{k =0}^\ell \proj_k$, $\proj_{>\ell} \equiv \id -\proj_{\le \ell} = \sum_{k =\ell+1}^\infty \proj_k$,
and $\proj_{<\ell}\equiv \proj_{\le \ell-1}$, $\proj_{\ge \ell}\equiv \proj_{>\ell-1}$.

\subsection{Gegenbauer polynomials}
\label{sec:Gegenbauer}

The $\ell$-th Gegenbauer polynomial $Q_\ell^{(d)}$ is a polynomial of degree $\ell$. Consistently
with our convention for spherical harmonics, we view $Q_\ell^{(d)}$ as a function $Q_{\ell}^{(d)}: [-d,d]\to \reals$. The set $\{ Q_\ell^{(d)}\}_{\ell\ge 0}$
forms an orthogonal basis on $L^2([-d,d],\tilde\tau^1_{d-1})$, where $\tilde\tau^1_{d-1}$ is the distribution of $\sqrt{d}\<\bx,\be_1\>$ when $\bx\sim \tau_{d-1}$,
satisfying the normalization condition:
\begin{align}
\< Q^{(d)}_k(\sqrt{d}\< \be_1, \cdot\>), Q^{(d)}_j(\sqrt{d}\< \be_1, \cdot\>) \>_{L^2(\S^{d-1}(\sqrt d))} = \frac{1}{B(d,k)}\, \delta_{jk} \, .  \label{eq:GegenbauerNormalization}
\end{align}
In particular, these polynomials are normalized so that  $Q_\ell^{(d)}(d) = 1$. 
As above, we will omit the superscript $d$ when clear from the context.

Gegenbauer polynomials are directly related to spherical harmonics as follows. Fix $\bv\in\S^{d-1}(\sqrt{d})$ and 
consider the subspace of  $V_{\ell}$ formed by all functions that are invariant under rotations in $\reals^d$ that keep $\bv$ unchanged.
It is not hard to see that this subspace has dimension one, and coincides with the span of the function $Q_{\ell}^{(d)}(\<\bv,\,\cdot\,\>)$.

We will use the following properties of Gegenbauer polynomials
\begin{enumerate}
\item For $\bx, \by \in \S^{d-1}(\sqrt d)$
\begin{align}
\< Q_j^{(d)}(\< \bx, \cdot\>), Q_k^{(d)}(\< \by, \cdot\>) \>_{L^2} = \frac{1}{B(d,k)}\delta_{jk}  Q_k^{(d)}(\< \bx, \by\>).  \label{eq:ProductGegenbauer}
\end{align}
\item For $\bx, \by \in \S^{d-1}(\sqrt d)$
\begin{align}
Q_k^{(d)}(\< \bx, \by\> ) = \frac{1}{B(d, k)} \sum_{i =1}^{ B(d, k)} Y_{ki}^{(d)}(\bx) Y_{ki}^{(d)}(\by). \label{eq:GegenbauerHarmonics}
\end{align}
\item Recurrence formula 
\begin{align}
\frac{t}{d}\,  Q_k^{(d)}(t) = \frac{k}{2k + d - 2} Q_{k-1}^{(d)}(t) + \frac{k + d - 2}{2k + d - 2} Q_{k+1}^{(d)}(t). \label{eq:RecursionG}
\end{align}
\item Rodrigues' formula
\begin{align}
Q_k^{(d)}(t) = (-1/2)^k d^k \frac{\Gamma((d - 1)/2)}{\Gamma(k + (d - 1)/2)} \Big( 1 -  \frac{t^2}{d^2} \Big)^{(3-d)/2} \Big( \frac{\de }{\de t}\Big)^k \Big(1 - \frac{t^2}{d^2} \Big)^{k + (d-3)/2}. 
\label{eq:Rogrigues_formula}
\end{align}
\end{enumerate}
Note in particular that property 2 implies that --up to a constant-- $Q_k^{(d)}(\< \bx, \by\> )$ is a representation of the projector onto 
the subspace of degree -$k$ spherical harmonics
\begin{align}
(\proj_k f)(\bx) = B(d,k) \int_{\S^{d-1}(\sqrt{d})} \, Q_k^{(d)}(\< \bx, \by\> )\,  f(\by)\, \tau_{d-1}(\de\by)\, .\label{eq:ProjectorGegenbauer}
\end{align}
For a function $\sigma \in L^2([-\sqrt d, \sqrt d], \tau^1_{d-1})$ (where $\tau^1_{d-1}$ is the distribution of $\< \bx_1, \bx_2 \> / \sqrt d$ when $\bx_1, \bx_2 \sim_{iid} \Unif(\S^{d-1}(\sqrt d))$), denoting its spherical harmonics coefficients $\lambda_{d, k}(\sigma)$ to be 
\begin{align}\label{eqn:technical_lambda_sigma}
\lambda_{d, k}(\sigma) = \int_{[-\sqrt d , \sqrt d]} \sigma(x) Q_k^{(d)}(\sqrt d x) \tau^1_{d-1}(\de x),
\end{align}
then we have the following equation holds in $L^2([-\sqrt d, \sqrt d],\tau^1_{d-1})$ sense
\[
\sigma(x) = \sum_{k = 0}^\infty \lambda_{d, k}(\sigma) B(d, k) Q_k^{(d)}(\sqrt d x). 
\]

To  any rotationally invariant kernel $H_d(\bx_1, \bx_2) = h_d(\< \bx_1, \bx_2\> / d)$,
with $h_d(\sqrt{d}\, \cdot \, ) \in L^2([-\sqrt{d},\sqrt{d}],\tau^1_{d-1})$,
we can associate a self adjoint operator $\cuH_d:L^2(\S^{d-1}(\sqrt{d}))\to L^2(\S^{d-1}(\sqrt{d}))$
via
\begin{align}
\cuH_df(\bx) := \int_{\S^{d-1}(\sqrt{d})} h_d(\<\bx,\bx_1\>/d)\, f(\bx_1) \, \tau_{d-1}(\de \bx_1)\, .
\end{align}
By rotational invariance,   the space $V_{k}$ of homogeneous polynomials of degree $k$ is an eigenspace of
$\cuH_d$, and we will denote the corresponding eigenvalue by $\xi_{d,k}(h_d)$. In other words
$\cuH_df(\bx) := \sum_{k=0}^{\infty} \lambda_{d,k}(h_d) \proj_{k}f$.   The eigenvalues can be computed via
\begin{align}
  \xi_{d, k}(h_d) = \int_{[-\sqrt d , \sqrt d]} h_d\big(x/\sqrt{d}\big) Q_k^{(d)}(\sqrt d x) \tau^1_{d-1}(\de x)\, .
\end{align}

\subsection{Hermite polynomials}
\label{sec:Hermite}

The Hermite polynomials $\{\bbHe_k\}_{k\ge 0}$ form an orthogonal basis of $L^2(\reals,\gamma)$, where $\gamma(\de x) = e^{-x^2/2}\de x/\sqrt{2\pi}$ 
is the standard Gaussian measure, and $\bbHe_k$ has degree $k$. We will follow the classical normalization (here and below, expectation is with respect to
$G\sim\normal(0,1)$):
\begin{align}
\E\big\{\bbHe_j(G) \,\bbHe_k(G)\big\} = k!\, \delta_{jk}\, .
\end{align}
As a consequence, for any function $g\in L^2(\reals,\gamma)$, we have the decomposition
\begin{align}\label{eqn:sigma_He_decomposition}
g(x) = \sum_{k=0}^{\infty}\frac{\mu_k(g)}{k!}\, \bbHe_k(x)\, ,\;\;\;\;\;\; \mu_k(g) \equiv \E\big\{g(G)\, \bbHe_k(G)\}\, .
\end{align}

The Hermite polynomials can be obtained as high-dimensional limits of the Gegenbauer polynomials introduced in the previous section. Indeed, the Gegenbauer polynomials (up to a $\sqrt d$ scaling in domain) are constructed by Gram-Schmidt orthogonalization of the monomials $\{x^k\}_{k\ge 0}$ with respect to the measure 
$\tilde\tau^1_{d-1}$, while Hermite polynomial are obtained by Gram-Schmidt orthogonalization with respect to $\gamma$. Since $\tilde\tau^1_{d-1}\Rightarrow \gamma$
(here $\Rightarrow$ denotes weak convergence),
it is immediate to show that, for any fixed integer $k$, 
\begin{align}
\lim_{d \to \infty} \Coeff\{ Q_k^{(d)}( \sqrt d x) \, B(d, k)^{1/2} \} = \Coeff\left\{ \frac{1}{(k!)^{1/2}}\,\bbHe_k(x) \right\}\, .\label{eq:Gegen-to-Hermite}
\end{align}
Here and below, for $P$ a polynomial, $\Coeff\{ P(x) \}$ is  the vector of the coefficients of $P$. As a consequence,
for any fixed integer $k$, we have
\begin{align}\label{eqn:mu_lambda_relationship}
\mu_k(\sigma) = \lim_{d \to \infty} \lambda_{d,k}(\sigma) (B(d, k)k!)^{1/2}, 
\end{align}
where $\mu_k(\sigma)$ and $\lambda_{d,k}(\sigma)$ are given in Eq. (\ref{eqn:sigma_He_decomposition}) and (\ref{eqn:technical_lambda_sigma}).

\subsection{Notations}
Throughout the proofs, $O_d(\, \cdot \, )$  (resp. $o_d (\, \cdot \,)$) denotes the standard big-O (resp. little-o) notation, where the subscript $d$ emphasizes the asymptotic variable. We denote $O_{d,\P} (\, \cdot \,)$ (resp. $o_{d,\P} (\, \cdot \,)$) the big-O (resp. little-o) in probability notation: $h_1 (d) = O_{d,\P} ( h_2(d) )$ if for any $\eps > 0$, there exists $C_\eps > 0 $ and $d_\eps \in \Z_{>0}$, such that
\[
\begin{aligned}
\P ( |h_1 (d) / h_2 (d) | > C_{\eps}  ) \le \eps, \qquad \forall d \ge d_{\eps},
\end{aligned}
\]
and respectively: $h_1 (d) = o_{d,\P} ( h_2(d) )$, if $h_1 (d) / h_2 (d)$ converges to $0$ in probability.

We will occasionally hide logarithmic factors  using the  $\Tilde O_d (\, \cdot\, )$ notation (resp. $\Tilde o_d (\, \cdot \, )$): $h_1(d) = \tilde O_d(h_2(d))$ if there exists a constant $C$ 
such that $h_1(d) \le C(\log d)^C h_2(d)$. Similarly, we will denote $\Tilde O_{d,\P} (\, \cdot\, )$ (resp. $\Tilde o_{d,\P} (\, \cdot \, )$) when considering the big-O in probability notation up to a logarithmic factor.

\section{Proof of Theorem \ref{thm:RF_lower_upper_bound}.(a): \RF\, model lower bound}
\label{sec:proof_RFK_lower}

\subsection{Proof of Theorem \ref{thm:RF_lower_upper_bound}.(a): Outline}
\label{sec:OutlineRF}

Recall that $(\bw_i)_{i \in [N]} \sim \Unif(\S^{d-1})$ independently. We define $\btheta_i = \sqrt d \cdot \bw_i$ for $i \in [N]$, so that $(\btheta_i)_{i \in [N]} \sim \Unif(\S^{d-1}(\sqrt d))$ independently. Let $\bW = (\bw_1, \ldots, \bw_N)$, and $\bTheta = (\btheta_1, \ldots, \btheta_N)$. We denote $\E_\btheta$ to be the expectation operator with respect to $\btheta \sim \Unif(\S^{d-1}(\sqrt d))$, $\E_\bx$ to be the expectation operator with respect to $\bx \sim \Unif(\S^{d-1}(\sqrt d))$, and $\E_\bw$ to be the expectation operator with respect to $\bw \sim \Unif(\S^{d-1}(1))$.  

Define the random vectors $\bV = (V_1, \ldots, V_N)^\sT$, $\bV_{\le \ell} = (V_{1, \le \ell}, \ldots, V_{N, \le \ell})^\sT$, $\bV_{> \ell} = (V_{1, >\ell}, \ldots, V_{N, >\ell})^\sT$, with
\begin{align}
V_{i, \le \ell} \equiv& \E_{\bx}[[\proj_{\le \ell} f_d](\bx) \sigma_d(\< \btheta_i, \bx\>/\sqrt d )],\\
V_{i, > \ell} \equiv& \E_{\bx}[[\proj_{> \ell} f_d](\bx) \sigma_d(\< \btheta_i, \bx\> /\sqrt d)],\\
V_i \equiv& \E_{\bx}[f_d(\bx) \sigma_d (\< \btheta_i, \bx\>/\sqrt d)] = V_{i, \le \ell} + V_{i, > \ell}. 
\end{align}
Define the random matrix $\bU = (U_{ij})_{i, j \in [N]}$, with 
\begin{align}
U_{ij} = \E_{\bx}[\sigma_d (\< \bx, \btheta_i\>/\sqrt d) \sigma_d (\< \bx, \btheta_j\> /\sqrt d)]. \label{eq:KernelMatrix}
\end{align}
In what follows, we write $R_{\RF}(f_d) = R_{\RF}(f_d,\bW) =  R_{\RF}(f_d,\bTheta/\sqrt{d})$ for the random features risk, omitting the dependence on the weights $\bW = \bTheta/\sqrt{d}$.
By the definition and a simple calculation, we have 
\[
\begin{aligned}
R_{\RF}(f_d) =& \min_{\ba \in \R^N} \Big\{ \E_{\bx}[f_d(\bx)^2] - 2 \< \ba, \bV \> + \< \ba, \bU \ba\> \Big\} = \E_{\bx}[f_d(\bx)^2] - \bV^\sT \bU^{-1} \bV,\\
R_{\RF}(\proj_{\le \ell} f_d) =& \min_{\ba \in \R^N} \Big\{ \E_{\bx}[\proj_{\le \ell} f_d(\bx)^2] - 2 \< \ba, \bV_{\le \ell} \> + \< \ba, \bU \ba\> \Big\} = \E_{\bx}[\proj_{\le \ell} f_d(\bx)^2] - \bV_{\le \ell}^\sT \bU^{-1} \bV_{\le \ell}. 
\end{aligned}
\]

By orthogonality, we have
\[
\E_{\bx}[f_d(\bx)^2] = \E_{\bx}[[\proj_{\le \ell}f_d](\bx)^2] + \E_{\bx}[[\proj_{> \ell} f_d](\bx)^2], 
\]
which gives
\begin{equation}\label{eqn:decomposition_risk}
\begin{aligned}
& \Big\vert R_{\RF}(f_d) - R_{\RF}(\proj_{\le \ell} f_d) - \E_{\bx}[[\proj_{> \ell} f_d](\bx)^2] \Big\vert \\
=& \Big\vert \bV_{\le \ell}^\sT \bU^{-1} \bV_{\le \ell} - \bV^\sT \bU^{-1} \bV \Big\vert = \Big\vert \bV_{\le \ell}^\sT \bU^{-1} \bV_{\le \ell} - (\bV_{\le \ell} + \bV_{> \ell})^\sT \bU^{-1} (\bV_{\le \ell} + \bV_{> \ell}) \Big\vert\\
=& \Big\vert 2 \bV^\sT \bU^{-1} \bV_{>\ell} - \bV_{>\ell}^\sT \bU^{-1} \bV_{>\ell} \Big\vert \le 2 \| \bU^{-1/2} \bV_{> \ell} \|_2 \| \bU^{-1/2} \bV \|_{2} +  \| \bU^{-1} \|_{\op} \| \bV_{> \ell}\|_2^2\\
\le&  2 \| \bU^{-1/2} \|_{\op} \| \bV_{> \ell} \|_2 \| f_d \|_{L^2}+  \| \bU^{-1} \|_{\op} \| \bV_{> \ell}\|_2^2,
\end{aligned}
\end{equation}
where the last inequality used the fact that
\[
0 \le R_{\RF}(f_d) = \| f_d \|_{L^2}^2 - \bV^\sT \bU^{-1} \bV, 
\]
so that
\[
\| \bU^{-1/2} \bV \|_2^2 = \bV^\sT \bU^{-1} \bV \le \| f_d \|_{L^2}^2. 
\]

We claim that we have 
\begin{align}
 \| \bU^{-1/2} \|_{\op} \| \bV_{> \ell} \|_2  =& o_{d,\P} (\| \proj_{> \ell} f_d \|_{L^2} ), \label{eqn:bound_U_V_RFK}
\end{align}
This is achieved by the Proposition \ref{prop:expected_V_RFK} and \ref{prop:kernel_lower_bound_RFK} stated below. 

We will denote below by $\lambda_k(\sigma_d)$, $k\ge 0$, the coefficients of $\sigma_d$ in the basis of Gegenbauer polynomials.
Explicitly, since $\sigma_d(\< \be, \cdot\>) \in L^2(\S^{d-1}(\sqrt d))$, we can expand $\sigma_d$ as
\begin{align}
\sigma_d (x_1) = \sum_{k=0}^\infty B(d, k) \lambda_k(\sigma_d ) Q_k(\sqrt d\, x_1), \label{eq:SigmaGegen1}
\end{align}
where
\begin{align}
\lambda_k(\sigma_d ) = \< \sigma_d (\< \be, \, \cdot\,\>), Q_k(\sqrt d \< \be, \,\cdot\,\>) \>_{L^2}. \label{eq:SigmaGegen2}
\end{align}

\begin{proposition}[Expected norm of $\bV$]\label{prop:expected_V_RFK}
Let $\{ \sigma_d \}_{d\ge 1}$ be a sequence of activation functions with $\sigma_d ( \< \be , \cdot \>) \in L^2 ( \S^{d-1} (\sqrt{d}))$. Define $\cE_{\ge \ell}$ by
\[
\cE_{\ge \ell} \equiv  \E_{\btheta}[\< \proj_{\ge \ell} f_\star, \sigma_d (\<\btheta, \cdot\>/\sqrt d) \>_{L^2}^2].
\]
Then
\[
\cE_{\ge \ell} \le \Big[ \max_{k \geq \ell} \lambda_k (\sigma_d)^2 \Big] \cdot \| \proj_{\ge \ell} f_\star \|_{L^2}^2 \, .
\]
\end{proposition}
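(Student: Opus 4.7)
The plan is to exploit the Gegenbauer expansion of $\sigma_d$ together with the reproducing-kernel identity \eqref{eq:GegenbauerHarmonics}, which diagonalizes the operator $f \mapsto \<f,\sigma_d(\<\btheta,\cdot\>/\sqrt d)\>_{L^2}$ in the spherical harmonic basis. Once that diagonalization is in hand, the proposition reduces to Parseval's identity plus the elementary bound $\sum_k a_k b_k \le (\max_k a_k) \sum_k b_k$ for nonnegative $b_k$.

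First, I would substitute $x_1 = \<\btheta,\bx\>/\sqrt d$ into the Gegenbauer expansion \eqref{eq:SigmaGegen1} and use \eqref{eq:GegenbauerHarmonics} to write
\begin{equation*}
\sigma_d(\<\btheta,\bx\>/\sqrt d)
\;=\; \sum_{k=0}^{\infty} B(d,k)\,\lambda_k(\sigma_d)\, Q_k^{(d)}(\<\btheta,\bx\>)
\;=\; \sum_{k=0}^{\infty} \lambda_k(\sigma_d) \sum_{i=1}^{B(d,k)} Y_{ki}(\btheta)\,Y_{ki}(\bx)\, .
\end{equation*}
Taking the $L^2(\bx)$ inner product of this representation against $\proj_k f_\star = \sum_j \<f_\star,Y_{kj}\>_{L^2} Y_{kj}$ and using orthonormality of the spherical harmonics $\{Y_{ki}\}_{k,i}$, only the matching degree-$k$ block survives, and one obtains the clean identity
\begin{equation*}
\<\proj_k f_\star,\; \sigma_d(\<\btheta,\cdot\>/\sqrt d)\>_{L^2}
\;=\; \lambda_k(\sigma_d)\,\proj_k f_\star(\btheta)\, .
\end{equation*}
Summing over $k \ge \ell$ yields $\<\proj_{\ge \ell} f_\star,\sigma_d(\<\btheta,\cdot\>/\sqrt d)\>_{L^2} = \sum_{k\ge \ell}\lambda_k(\sigma_d)\proj_k f_\star(\btheta)$.

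Squaring and taking the expectation over $\btheta \sim \Unif(\S^{d-1}(\sqrt d))$, the cross terms $\E_\btheta[\proj_k f_\star(\btheta)\,\proj_{k'} f_\star(\btheta)]$ vanish for $k\ne k'$ by the orthogonal decomposition \eqref{eq:SpinDecomposition}. Hence
\begin{equation*}
\cE_{\ge \ell} \;=\; \sum_{k \ge \ell} \lambda_k(\sigma_d)^2\,\|\proj_k f_\star\|_{L^2}^2
\;\le\; \Big[\max_{k \ge \ell}\lambda_k(\sigma_d)^2\Big]\sum_{k \ge \ell}\|\proj_k f_\star\|_{L^2}^2
\;=\; \Big[\max_{k \ge \ell}\lambda_k(\sigma_d)^2\Big]\,\|\proj_{\ge \ell} f_\star\|_{L^2}^2\, ,
\end{equation*}
which is the claimed bound. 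There is no real obstacle here: the whole proof is a two-line consequence of the spherical Funk--Hecke style identity once it is applied carefully, and the Gegenbauer normalization in \eqref{eq:GegenbauerNormalization} together with $\sigma_d(\<\be,\cdot\>) \in L^2(\S^{d-1}(\sqrt d))$ (equivalently $\sum_k B(d,k)\lambda_k(\sigma_d)^2 < \infty$) ensures that all interchanges of sum and integral are legitimate.
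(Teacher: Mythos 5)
Your proof is correct and follows essentially the same route as the paper: expand $\sigma_d$ in Gegenbauer polynomials, use the addition formula/orthogonality to show $\<\proj_k f_\star,\sigma_d(\<\btheta,\cdot\>/\sqrt d)\>_{L^2}=\lambda_k(\sigma_d)\,\proj_k f_\star(\btheta)$, and conclude by Parseval plus the trivial bound by the maximum coefficient. The paper writes the same computation with explicit Kronecker deltas over the double sum in $s,t$, but the substance is identical.
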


\begin{proposition}[Lower bound on the kernel matrix]\label{prop:kernel_lower_bound_RFK}
Assume $N \le d^{\ell+1}/e^{A_d\sqrt{\log d}}$ for a fixed integer $\ell$ and any $A_d\to\infty$
(in particular, $N\le d^{\ell+1-\delta}$ is sufficient for any fixed $\delta>0$). Let  $(\btheta_i)_{i \in [N]} \sim \Unif(\S^{d-1}(\sqrt d))$ independently, and $\{ \sigma_d \}_{d\ge 1}$ be a sequence of activation functions with $\sigma_d ( \< \be , \cdot \>) \in L^2 ( \S^{d-1} (\sqrt{d}))$. Let $\bU \in \R^{N \times N}$ be the  kernel matrix defined by Eq.~\eqref{eq:KernelMatrix}.
Then for any $\eps \in ( 0, 1)$, 
\[
\begin{aligned}
\lambda_{\min}(\bU) \ge& (1- \eps) \Big[ \sum_{k = \ell +1}^\infty \lambda_k (\sigma_d)^2 \cdot B(d,k) \Big], \\
\end{aligned}
\]
with high probability as $d \to \infty$. 
\end{proposition}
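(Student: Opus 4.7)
\textbf{Proof plan for Proposition~\ref{prop:kernel_lower_bound_RFK}.}

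The plan is first to diagonalize the kernel matrix $\bU$ in the Gegenbauer basis and exploit positive semidefiniteness to isolate a ``high-degree'' sub-kernel $\bU_{>\ell}$ whose diagonal equals the desired lower bound; it then remains to show that $\bU_{>\ell}$ is close to a scalar multiple of $\id_N$ in operator norm. Using \eqref{eq:SigmaGegen1}--\eqref{eq:SigmaGegen2} to expand $\sigma_d$ and applying the product formula \eqref{eq:ProductGegenbauer} to integrate out $\bx$ gives
\[
U_{ij}=\sum_{k=0}^{\infty}B(d,k)\,\lambda_k(\sigma_d)^2\,Q_k^{(d)}\bigl(\langle\btheta_i,\btheta_j\rangle\bigr).
\]
The addition formula \eqref{eq:GegenbauerHarmonics} shows that each matrix $\bQ_k:=(Q_k^{(d)}(\langle\btheta_i,\btheta_j\rangle))_{ij}$ factors as $B(d,k)^{-1}\bY_k\bY_k^{\sT}\succeq 0$ with unit diagonal (since $Q_k^{(d)}(d)=1$), so that dropping the $k\le\ell$ terms yields
\[
\bU\;\succeq\;\bU_{>\ell}\;=\;D_{>\ell}\,\id_N+\bA,\qquad D_{>\ell}:=\sum_{k>\ell}B(d,k)\,\lambda_k(\sigma_d)^2,
\]
where $\bA$ has zero diagonal. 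It thus suffices to show $\|\bA\|_{\op}\le \eps\,D_{>\ell}$ with high probability.

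To control $\bA$, fix a truncation level $M=M(d,\eps)\ge\ell+1$ large enough that $D_{>M}:=\sum_{k>M}B(d,k)\lambda_k(\sigma_d)^2\le(\eps/2)\,D_{>\ell}$; such an $M$ exists (and can be taken to grow at most polylogarithmically in $d$ for well-behaved sequences) since $\sigma_d(\langle\be,\cdot\rangle)\in L^2(\S^{d-1}(\sqrt d))$. Splitting $\bA=\bA^{\le M}+\bA^{>M}$, the tail is easy: $\sum_{k>M}B(d,k)\lambda_k^2\bQ_k\succeq 0$ forces $\bA^{>M}\succeq-D_{>M}\id_N\succeq-(\eps/2)D_{>\ell}\id_N$. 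For the main piece, introduce $\bZ\in\reals^{N\times D_M}$ obtained by stacking the blocks $\lambda_k(\sigma_d)\bY_k$ over $\ell<k\le M$, so that $\bA^{\le M}=\bZ\bZ^{\sT}-D_{\ell,M}\id_N$ with $D_{\ell,M}:=\sum_{\ell<k\le M}B(d,k)\lambda_k^2$. The rows $\bz_i$ of $\bZ$ are i.i.d., all of constant squared norm $\|\bz_i\|_2^2=D_{\ell,M}$ (by the addition formula), mean zero, and with covariance $\boldsymbol{\Sigma}=\mathrm{diag}(\lambda_k^2\,\id_{B(d,k)})_{\ell<k\le M}$ whose operator norm satisfies $\|\boldsymbol{\Sigma}\|_{\op}=\max_{k>\ell}\lambda_k^2\le D_{>\ell}/B(d,\ell+1)$.

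The core technical step is to establish the Wigner-type bound
\[
\bigl\|\bZ\bZ^{\sT}-D_{\ell,M}\id_N\bigr\|_{\op}\;=\;\tilde O_{d,\P}\bigl(\sqrt{N\,D_{\ell,M}\,\|\boldsymbol{\Sigma}\|_{\op}}\,\bigr),
\]
which I would prove by a moment method: for $p\asymp\log d$, compute $\E\bigl[\Trace((\bZ\bZ^{\sT}-D_{\ell,M}\id_N)^{2p})\bigr]$ and expand along closed walks $i_1\to i_2\to\cdots\to i_{2p}\to i_1$; the zero-mean property $\E\langle\bz_i,\bz_j\rangle=0$ for $i\neq j$ together with the independence of the $\btheta_i$'s forces each vertex to be visited at least twice in any contributing walk, producing the extra $\sqrt N$ factor over a direct Frobenius calculation. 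Substituting $\|\boldsymbol{\Sigma}\|_{\op}\le D_{>\ell}/B(d,\ell+1)$ yields $\|\bA^{\le M}\|_{\op}\le\tilde O_{d,\P}\bigl(D_{>\ell}\sqrt{N/B(d,\ell+1)}\bigr)$, and the hypothesis $N\le d^{\ell+1}/e^{A_d\sqrt{\log d}}$ together with $B(d,\ell+1)\asymp d^{\ell+1}/(\ell+1)!$ forces this to be $o_{d,\P}(D_{>\ell})$, which combined with the tail bound gives $\lambda_{\min}(\bU_{>\ell})\ge(1-\eps)\,D_{>\ell}$.

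The main obstacle is precisely this operator-norm estimate. A naive Frobenius-norm computation, using $\E[\bA_{ij}^2]=\sum_{k>\ell}B(d,k)\lambda_k^4$ for $i\neq j$, only yields $\|\bA^{\le M}\|_{\op}\le\tilde O_{d,\P}\bigl(N\sqrt{D_{>\ell}/B(d,\ell+1)}\bigr)$, which loses a factor of $\sqrt N$ and would only cover $N\ll d^{(\ell+1)/2}$--far short of the desired regime $N\ll d^{\ell+1}$. Recovering the $\sqrt N$ improvement demands the combinatorial trace-moment expansion described above, in the spirit of proofs of the Marchenko--Pastur / semicircle law, and the polylogarithmic cost of choosing $p\asymp\log d$ is exactly what is absorbed by the slack factor $e^{A_d\sqrt{\log d}}$ in the hypothesis on $N$.
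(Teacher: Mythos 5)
Your overall architecture coincides with the paper's: expand $\bU$ in Gegenbauer degrees, drop the positive semidefinite low-degree block to get $\bU\succeq\bU_{>\ell}$, and show the high-degree block concentrates around $\big[\sum_{k>\ell}\lambda_k(\sigma_d)^2B(d,k)\big]\id_N$ via a trace--moment method whose polylogarithmic cost is absorbed by the slack $e^{A_d\sqrt{\log d}}$ (the paper takes $p\asymp\sqrt{\log d}$ rather than $\log d$, and proves the key operator-norm estimate one degree at a time, as Proposition~\ref{prop:Delta_bound} for the matrices $\bW_k=(Q_k^{(d)}(\<\btheta_i,\btheta_j\>))_{ij}$, exploiting the reproducing identity $\E_{\btheta_j}[Q_k(\<\btheta_i,\btheta_j\>)Q_k(\<\btheta_j,\btheta_m\>)]=B(d,k)^{-1}Q_k(\<\btheta_i,\btheta_m\>)$ to collapse walks; your mixed-degree matrix $\bZ\bZ^{\sT}$ loses this clean collapsing and forces you to carry a degree label on every edge, which is doable but strictly harder than summing the single-degree bounds over the finitely many relevant $k$).

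The genuine gap is in your treatment of the tail of Gegenbauer degrees. You choose $M$ so that $D_{>M}\le(\eps/2)D_{>\ell}$ and assert such an $M$ grows at most polylogarithmically "for well-behaved sequences" — but the proposition assumes only $\sigma_d(\<\be,\cdot\>)\in L^2(\S^{d-1}(\sqrt d))$, with $\sigma_d$ changing with $d$. If, say, $\sigma_d$ is (proportional to) a single Gegenbauer polynomial $Q_{k_d}^{(d)}$ with $k_d\to\infty$, then $D_{>M}=D_{>\ell}$ for every $M<k_d$, so your truncation level must grow with $d$ at an uncontrolled rate; your "main piece" then contains degrees $k\to\infty$, where the moment-method constants (which in Lemma~\ref{lem:M_estimate} and the Hermite-limit comparison \eqref{eq:Gegen-to-Hermite} depend on $k$) are not uniform, and the argument as written does not close. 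The paper avoids this entirely by splitting at the \emph{fixed} threshold $2\ell+3$: for $\ell+1\le k\le 2\ell+2$ it invokes the moment bound, while for all $k\ge 2\ell+3$ a crude Frobenius estimate suffices, since $\E\big[\sup_{k\ge 2\ell+3}\|\bW_k-\id_N\|_{\op}^2\big]\le N(N-1)\sum_{k\ge 2\ell+3}B(d,k)^{-1}=O(N^2d^{-2\ell-3})=o_d(1)$ already for $N=o_d(d^{\ell+1})$, with no fine information about the Gegenbauer polynomials needed. Replacing your trace-defined cutoff $M$ by this fixed degree threshold (and then either summing single-degree bounds or keeping your bundled $\bZ$ over the finitely many degrees $\ell+1,\dots,2\ell+2$) repairs the argument.
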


The proof of Proposition \ref{prop:kernel_lower_bound_RFK} relies on the following tight bound on the operator norm of the Gegenbauer polynomials of the Gram matrix:

\begin{proposition}[Bound on the Gram matrix]\label{prop:Delta_bound}
Let $N \le d^{k}/e^{A_d\sqrt{\log d}}$ for a fixed integer $k$ and any $A_d\to\infty$. Let $(\btheta_i)_{i \in [N]} \sim \Unif(\S^{d-1}(\sqrt d))$
independently, and $Q_k^{(d)}$ be the $k$'th Gegenbauer polynomial
with domain $[-d, d]$. Consider the random matrix $\bW =
(\bW_{ij})_{i, j \in [N]} \in \R^{N \times N}$, with $\bW_{ij} =
Q_k^{(d)}(\< \btheta_i, \btheta_j\>)$. Then we have 
\[
\lim_{d, N \to \infty} \E[\| \bW - \id_d \|_{\op}] = 0. 
\]
\end{proposition}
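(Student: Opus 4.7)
The plan is to bound the operator norm by the trace moment method. Set $\bM := \bW - \id_N$: since $Q_k^{(d)}(d)=1$ the diagonal vanishes, and $M_{ij} = Q_k^{(d)}(\langle \btheta_i,\btheta_j\rangle)$ for $i\neq j$. The deterministic inequality $\|\bM\|_{\op}^{2p} \leq \Tr(\bM^{2p})$ together with Jensen yields
$$\E[\|\bM\|_{\op}] \leq \big(\E[\Tr(\bM^{2p})]\big)^{1/(2p)}$$
for every integer $p\geq 1$. I would eventually choose $p = \lfloor \sqrt{\log d}\rfloor$, which is the scale at which the hypothesis $N \leq d^k e^{-A_d\sqrt{\log d}}$ buys just enough room to beat the combinatorial blowup.

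The key identity is the addition formula \eqref{eq:GegenbauerHarmonics}, $Q_k^{(d)}(\langle \btheta_i, \btheta_j\rangle) = B(d,k)^{-1}\sum_{\alpha=1}^{B(d,k)} Y_{k\alpha}^{(d)}(\btheta_i) Y_{k\alpha}^{(d)}(\btheta_j)$. Expanding $\Tr(\bM^{2p})$ as a sum over closed walks $(i_1,\dots,i_{2p})$ on $[N]$ with $i_s\neq i_{s+1}$ cyclically, labeling each edge $s$ by $\alpha_s\in[B(d,k)]$, and using independence of the $\btheta_i$ to factorize over vertices,
$$\E[\Tr(\bM^{2p})] = \frac{1}{B(d,k)^{2p}}\sum_{\bi,\balpha}\prod_{v}\E_{\btheta}\Big[\prod_{s:\, v\in\{i_s,i_{s+1}\}}Y_{k\alpha_s}^{(d)}(\btheta)\Big],$$
where the outer product is over distinct vertices of the walk. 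By orthonormality of the $Y_{k\alpha}^{(d)}$, each vertex integral vanishes unless the incident labels pair off. A Wigner-semicircle-style analysis then identifies the dominant term: closed walks that traverse every edge exactly twice and whose skeleton is a tree on $p+1$ distinct vertices. For such walks, matched-label configurations contribute $B(d,k)^p$ and each vertex integral $\E_\btheta[\prod_j Y_{k\alpha_j}^{(d)}(\btheta)^2]$ is controlled via the pointwise identity $\sum_\alpha Y_{k\alpha}^{(d)}(\btheta)^2 \equiv B(d,k)$ and Cauchy--Schwarz. A Catalan-type count of tree walks yields
$$\E[\Tr(\bM^{2p})]\;\leq\; C_k^{\,p}\,(2p)^{2p}\,\frac{N^{p+1}}{B(d,k)^p},$$
and taking the $(2p)$-th root with $N/B(d,k) \lesssim k!\, e^{-A_d\sqrt{\log d}}$ and $p\asymp \sqrt{\log d}$ gives a bound of order $\sqrt{\log d}\cdot e^{-(A_d-k)\sqrt{\log d}/2}$, which tends to zero since $A_d\to\infty$.

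The hard part is the bookkeeping for non-tree walks: those that retraverse some edge more than twice, or in which several edge-pairs at a single vertex share a common $\alpha$-label. These require bounding higher-order integrals $\E_\btheta[\prod_j Y_{k\alpha_j}^{(d)}(\btheta)^{m_j}]$ with some $m_j\geq 3$, which I would handle via hypercontractive inequalities for spherical harmonics on $\S^{d-1}$ together with the pointwise bound $\|Y_{k\alpha}^{(d)}\|_\infty \lesssim B(d,k)^{1/2}$; combined with a careful walk enumeration one shows these terms contribute at most a constant factor beyond the tree count. It is precisely this bookkeeping that consumes the $e^{A_d\sqrt{\log d}}$ slack in the hypothesis.
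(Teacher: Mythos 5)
Your overall strategy coincides with the paper's: bound $\E\|\bW-\id_N\|_{\op}$ by $\E[\Trace((\bW-\id_N)^{2p})]^{1/(2p)}$ with $p\asymp\sqrt{\log d}$, enumerate closed walks, and isolate the tree-like walks (each edge traversed twice) as the ``nice'' contribution of order $(Cp)^{O(p)}N^{p+1}/B(d,k)^p$. That part of your argument is sound and matches the paper's Step 4 (Type 1 sequences), up to the cosmetic difference that the paper integrates out degree-$2$ vertices directly via the Gegenbauer reproducing identity $\E_{\btheta_{i_2}}[\Delta_{i_1i_2}\Delta_{i_2i_3}]=B(d,k)^{-1}\Delta_{i_1i_3}$ rather than expanding into individual spherical harmonics.

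The gap is in your treatment of the non-tree walks, and it is not merely bookkeeping. First, your guiding claim --- that these walks ``contribute at most a constant factor beyond the tree count'' --- is quantitatively false in the relevant regime. In the paper's analysis the non-tree (Type 2) walks contribute a separate term of order $C^p(N/d^k)^2$, which for $N=d^ke^{-2A_d\sqrt{\log d}}$ and $p\asymp\sqrt{\log d}$ actually \emph{exceeds} the tree contribution by a factor of roughly $d^k$; its $(2p)$-th root is $\approx C\exp(-cA_d^2)$, which tends to zero only because $A_d\to\infty$. So the two families of walks must be estimated separately, each consuming the hypothesis in a different way, and a bound of the form ``non-tree $\le C\cdot$ tree'' cannot be correct. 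Second, the tools you propose for the high-degree vertices are not obviously adequate: after expanding via the addition formula you must control sums of the form $\sum_{\alpha_1,\dots,\alpha_{2m}}\big|\E_{\btheta}[\prod_j Y_{k\alpha_j}(\btheta)]\big|$ over $B(d,k)^{2m}$ label choices, and neither hypercontractivity (which bounds a single such integral dimension-freely) nor the sup-norm bound $\|Y_{k\alpha}\|_\infty\le B(d,k)^{1/2}$ (which is far too lossy when raised to high powers) controls this sum with absolute values; the fourth-order integrals do not vanish for distinct labels, so there is no clean pairing structure to exploit. The paper sidesteps this entirely by never decomposing into individual harmonics: after skeletonizing away all degree-$2$ vertices, it applies H\"older's inequality to the residual walk, $|M_{\bi}|\le\prod_s\E[\Delta_{i_si_{s+1}}^{2m}]^{1/(2m)}$, reducing everything to the scalar moment bound $\E[Q_k^{(d)}(\<\btheta,\btheta'\>)^{2m}]\le(Cm^k d^{-k})^m$ (proved by comparing Gegenbauer coefficients to Hermite coefficients and using sub-Gaussian moments of $\<\btheta,\btheta'\>/\sqrt{d}$), and then uses that every skeleton vertex has degree at least $4$ to get $v(\sk(\bi))\le e(\sk(\bi))/2$. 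To complete your proof you would need either to supply this H\"older-type reduction or to carry out the label-sum combinatorics in full --- and in either case to correct the relative size of the non-tree contribution.
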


The proofs of these three propositions are provided in the next sections. Proposition \ref{prop:expected_V_RFK}  implies
\[
\begin{aligned}
\E[\| \bV_{> \ell} \|_2^2] = N \cE_{\ge \ell + 1} \le& N \cdot \Big[ \max_{k \geq \ell+1 } \lambda_k (\sigma_d)^2 \Big]  \| \proj_{\ge \ell + 1} f_d \|_{L^2}^2.\\
\end{aligned}
\]
From Proposition \ref{prop:kernel_lower_bound_RFK}, we have with high probability
\[
\| \bU^{-1} \|_{\op} \le 2 \Big[ \sum_{k = \ell + 1}^{\infty} \lambda_k( \sigma_d)^2 \cdot B(d, k) \Big]^{-1}.
\]
Then by Markov inequality, we have with high probability 
\[
\begin{aligned}
& \| \bU^{-1} \|_{\op} \| \bV_{> \ell} \|_2^2 / \| \proj_{\ge \ell + 1} f_d \|_{L^2}^2\\
 \le& 2  N \Big[\max_{k \ge \ell + 1} \lambda_k( \sigma_d)^2\Big] \cdot \Big[ \sum_{k = \ell + 1}^{\infty} \lambda_k( \sigma_d)^2 \cdot B(d, k) \Big]^{-1}\\
 \leq & 2 N \max_{k \ge \ell + 1} B(d, k)^{-1} .
\end{aligned}
\]
Equation (\ref{eqn:bound_U_V_RFK}) follows by noting that $B(d,k)$ is non-decreasing in $k$ (see Lemma \ref{lem:non_decreasing_N} below) and $B(d, \ell + 1) = \Theta_d(d^{\ell + 1})$, and recalling $N = o_d(d^{\ell+1})$. Combining with Eq. (\ref{eqn:decomposition_risk}), the theorem holds. 

\begin{lemma}\label{lem:non_decreasing_N}
The number $B(d, k)$ of independent degree-$k$ spherical harmonics on $\S^{d-1}$ is non-decreasing in $k$ for any fixed $d \ge 2$. 
\end{lemma}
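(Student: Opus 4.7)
The plan is to verify the inequality $B(d,k+1)\ge B(d,k)$ directly from the explicit formula $B(d,k) = \frac{2k+d-2}{k}\binom{k+d-3}{k-1}$ (valid for $k\ge 1$, with the convention $B(d,0)=1$). This is really a finite-dimensional combinatorial inequality, so no spherical harmonic machinery is needed; the proof should be a one-paragraph manipulation.

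First, handle the base case $k=0$: $B(d,0)=1\le d=B(d,1)$ for $d\ge 2$. For the inductive step $k\ge 1$, I will form the ratio
\[
\frac{B(d,k+1)}{B(d,k)} \;=\; \frac{(2k+d)/(k+1)}{(2k+d-2)/k}\cdot\frac{\binom{k+d-2}{k}}{\binom{k+d-3}{k-1}}.
\]
Using the elementary identity $\binom{k+d-2}{k} = \frac{k+d-2}{k}\binom{k+d-3}{k-1}$, this simplifies to
\[
\frac{B(d,k+1)}{B(d,k)} \;=\; \frac{(2k+d)(k+d-2)}{(k+1)(2k+d-2)}.
\]
It remains to verify $(2k+d)(k+d-2) \ge (k+1)(2k+d-2)$ for $d\ge 2$ and $k\ge 1$. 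Expanding both sides and subtracting yields
\[
(2k+d)(k+d-2) - (k+1)(2k+d-2) \;=\; (d-2)(2k+d-1),
\]
which is nonnegative for all $d\ge 2$ and $k\ge 1$ (and vanishes precisely when $d=2$, consistent with the known fact $B(2,k)=2$ for every $k\ge 1$).

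The only potential obstacle is purely algebraic bookkeeping in the expansion; once the factor $(d-2)(2k+d-1)$ is recognized, the conclusion is immediate. No probabilistic or analytic tool is required.
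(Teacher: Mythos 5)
Your proof is correct. The ratio computation checks out: with $B(d,k)=\frac{2k+d-2}{k}\binom{k+d-3}{k-1}$ and the identity $\binom{k+d-2}{k}=\frac{k+d-2}{k}\binom{k+d-3}{k-1}$, one gets $\frac{B(d,k+1)}{B(d,k)}=\frac{(2k+d)(k+d-2)}{(k+1)(2k+d-2)}$, and the difference of numerator and denominator does expand to $2kd-4k+d^2-3d+2=(d-2)(2k+d-1)\ge 0$ for $d\ge 2$, $k\ge 1$; the base case $B(d,0)=1\le d=B(d,1)$ is also fine. This is a genuinely different route from the paper's: there, the authors quote from \cite[Section 4.1]{costas2014spherical} the decomposition $B(d,k)=K(d-1,k)+K(d-1,k-1)$ with $K(d-1,j)=\binom{d-2+j}{j}$ together with the cumulative-sum identity $K(d,k)=\sum_{j=0}^k K(d-1,j)$, so that $B(d,k+1)-B(d,k)=K(d-1,k+1)-K(d-1,k-1)$ is a sum of two non-negative terms and monotonicity is read off with no algebra. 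The paper's argument is shorter but leans on cited identities; yours is fully self-contained, yields the exact ratio, and identifies precisely when equality holds (namely $d=2$, matching $B(2,k)=2$ for all $k\ge1$). Either is acceptable here.
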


\begin{proof}[Proof of Lemma \ref{lem:non_decreasing_N}]
By \cite[Section 4.1]{costas2014spherical}, we have 
\[
B(d, k) = K(d-1, k) + K(d-1, k-1). 
\]
and 
\[
K(d, k) = \sum_{j = 0}^k K(d-1, j),
\]
where $K(d-1, j) = {d-2+j \choose j}$ is non-negative for $d \ge 2$. This immediately shows that $B(d, k)$ is non-decreasing in $k$. 
\end{proof}

\subsection{Proof of Proposition \ref{prop:expected_V_RFK}}

The quantity $\cE_{\ge \ell}$ can be rewritten as
\[
\begin{aligned}
\cE_{\ge \ell} \equiv&  \E_{\btheta}[\< \proj_{\ge \ell} f_\star, \sigma_d (\<\btheta, \cdot\>/\sqrt d) \>_{L^2}^2] \\
=&  \sum_{s, t \ge \ell} \E_{\btheta}[\E_\bx[\proj_{s} f_\star(\bx) \sigma_d (\<\btheta, \bx\> / \sqrt d) \>] \E_\bx[\proj_{t} f_\star(\bx) \sigma_d (\<\btheta, \bx\> / \sqrt d) \>]]. 
\end{aligned}
\]
First we calculate $\E_\bx[\proj_{k} f_\star(\bx) \sigma_d (\<\btheta, \bx\> / \sqrt d) \>]$. Note the spherical harmonics expansion of $\proj_k f_\star$ gives
\[
\proj_{k} f_\star(\bx) = \sum_{l = 1}^{B(d, k)} \lambda_{kl}(f_\star) Y_{kl}(\bx), 
\]
and the Gegenbauer expansion of $\sigma_d$ gives
\[
\sigma_d (\<\btheta, \bx\>/\sqrt{d}) = \sum_{u = 0}^\infty \lambda_u(\sigma_d) B(d, u) Q_u(\< \btheta, \bx\>).
\]
By the fact that 
\[
\E_\bx[Q_u(\< \btheta, \bx\>) Y_{kl}(\bx)] = \frac{1}{B(d, u)} \sum_{s = 1}^{B(d, u)} Y_{us}( \btheta) \E_\bx\Big[  Y_{us}(\bx) Y_{kl}(\bx) \Big] = \frac{1}{B(d, u)} Y_{ul}(\btheta) \delta_{uk} ,
\]
we have 
\[
\begin{aligned}
\E_\bx[\proj_{k} f_\star(\bx) \sigma_d (\<\btheta, \bx\> / \sqrt d) \>] & = \sum_{l = 1}^{B(d, k)} \lambda_{kl}(f_\star) \lambda_k (\sigma_d)B(d,k) \E_\bx[Q_k(\< \btheta, \bx\>) Y_{kl}(\bx)]\\
&  = \sum_{l = 1}^{B(d, k)} \lambda_{kl}(f_\star) \lambda_k (\sigma_d) Y_{kl}(\btheta). 
\end{aligned}
\]
We deduce that
\[
\begin{aligned}
\cE_{\ge \ell} = & \sum_{s, t \ge \ell} \lambda_s (\sigma_d) \lambda_{t} (\sigma_d) \sum_{l = 1}^{B(d, s)}  \sum_{u = 1}^{B(d, t)} \lambda_{sl}(f_\star) \lambda_{tu}(f_\star) \delta_{st} \delta_{lu} \\
 \leq &\Big[ \max_{k \geq \ell} \lambda_k (\sigma_d)^2\Big] \cdot \sum_{k \ge \ell}\sum_{l = 1}^{B(d, k)} \lambda_{kl}(f_\star)^2 \\
 = & \Big[ \max_{k \geq \ell} \lambda_k (\sigma_d)^2\Big] \cdot \| \proj_{\ge \ell} f_\star \|_{L^2}^2. 
\end{aligned}
\]
This proves the proposition.

\subsection{Proof of Proposition \ref{prop:kernel_lower_bound_RFK}}

Recall the expansion of $\sigma_d$ in terms of Gegenbauer
polynomials, see Eqs.~\eqref{eq:SigmaGegen1} and \eqref{eq:SigmaGegen2}.
From the properties of Gegenbauer polynomials, we have
\[
\begin{aligned}
\E_\bx [ \sigma_d (\<  \btheta, \bx \> / \sqrt{d}) \sigma_d (\< \btheta ' , \bx \> /\sqrt{d} ) ] & = \sum_{k= 0}^\infty \lambda_{k} (\sigma_d)^2 B(d,k)^2 \E_\bx  [ Q_k (\<  \btheta, \bx \> ) Q_k (\<  \btheta ', \bx \> )] \\
& = \sum_{k= 0}^\infty \lambda_{k} (\sigma_d)^2 B(d,k) Q_k (\<  \btheta, \btheta' \> ).
\end{aligned}
\]
We can therefore decompose $\bU$:
\[
\bU = \sum_{k= 0}^\infty \lambda_{k} (\sigma_d)^2 B(d,k) \cdot \bW_k,
\]
where $\bW_k = (W_{k, ij})_{i, j \in [N]}$ with $W_{k, ij} = Q_{k}(\< \btheta_i, \btheta_j\>)$. 

Define
\[
\begin{aligned}
\hat \bU \equiv& \sum_{k= 0}^\ell \lambda_{k} (\sigma_d)^2 B(d,k) \cdot \bW_k,\\
\bar \bU \equiv& \sum_{k=\ell + 1}^{\infty} \lambda_k( \sigma_d)^2 \cdot B(d, k) \cdot \bW_k. 
\end{aligned}
\]
Note that 
\[
(\hat \bU)_{ij} = \E_\bx[\hat \sigma(\< \btheta_i, \bx\> / \sqrt d) \hat \sigma(\< \btheta_j, \bx\>/ \sqrt d)], 
\]
where $\hat \sigma$ is given by 
\[
\hat \sigma ( x ) = \sum_{k = 0}^\ell \lambda_{k}(\sigma_d) B(d, k) Q_k(\sqrt d x).
\]
As a result, we have $\hat \bU \succeq 0$, and hence
\[
\bU = \hat \bU + \bar \bU \succeq \bar \bU. 
\]

In the following, we give a lower bound for $\bar \bU$. Note we have 
\begin{equation}\label{eqn:operator_norm_bound_U_minus_sum}
\Big\| \bar \bU - \Big[\sum_{k=\ell + 1}^{\infty} \lambda_k( \sigma_d)^2 \cdot B(d, k)\Big] \id_N \Big\|_{\op} \le \Big[ \sum_{k = \ell + 1}^\infty \lambda_k(\sigma_d)^2 \cdot B(d, k) \Big] \cdot \Big[ \sup_{k \ge \ell + 1} \| \bW_k - \id_N \|_{\op} \Big]. 
\end{equation}
By Proposition \ref{prop:Delta_bound}, we have 
\begin{equation}\label{eqn:W_first_2l_terms}
\sup_{\ell + 1 \le k \le 2 \ell + 2} \| \bW_k - \id_N \|_{\op} = o_{d, \P}(1).
\end{equation}
Further we have
\[
\begin{aligned}
&\E\Big[\sup_{k \ge 2 \ell + 3} \| \bW_k - \id_N \|_{\op}^2 \Big] \le   \E\Big[\sum_{k \ge 2 \ell + 3} \| \bW_k - \id_N \|_F^2 \Big] \\
 =&  N(N-1) \sum_{k \ge 2 \ell + 3} \E [ Q_k ( \< \btheta , \btheta ' \> )^2] = N(N-1) \sum_{k \ge 2 \ell + 3} B(d, k)^{-1}.
\end{aligned}
\]
For $d$ sufficiently large, there exists $C>0$  such that for any $ p \geq m \equiv 2 \ell+3$:
\[
\begin{aligned}
\frac{B(d,m)}{B(d,p)} = \prod_{k = m}^{p-1} \frac{(2k+d-2)}{(2k +d)}\cdot \frac{(k+1)}{(k+d-2)}  \leq  \prod_{k = m}^{p-1} \frac{1}{1 + (d-3)/(k+1)} \leq \prod_{k = m}^{p-1} e^{- \frac{m+1}{d-2 + m}\cdot \frac{d - 2 }{k+1}} \leq \frac{C}{p^2}\, . 
\end{aligned}
\]
Hence, there exists constant $C'$, such that for large $d$, we have
\[
\sum_{k \ge 2 \ell + 3} B(d, k)^{-1} \le C' \cdot B(d, 2 \ell + 3)^{-1}. 
\]
Recalling that $B(d, 2\ell + 3) = \Theta_d(d^{2\ell + 3})$, and $N = o_d(d^{\ell+1})$, we deduce
\begin{equation}\label{eqn:W_after_2l_terms}
\E\Big[\sup_{k \ge 2 \ell + 3} \| \bW_k - \id_N \|_{\op}^2 \Big] = o_{d}(1).
\end{equation}
Combining Eq. (\ref{eqn:W_first_2l_terms}) and (\ref{eqn:W_after_2l_terms}) we get
\begin{equation}\label{eqn:W_all_terms}
\sup_{k \ge  \ell + 1} \| \bW_k - \id_N \|_{\op} = o_{d, \P}(1). 
\end{equation}
Plug Eq. (\ref{eqn:W_all_terms}) into Eq. (\ref{eqn:operator_norm_bound_U_minus_sum}), we get with high probability
\[
\bar \bU \succeq (1 - \eps)\Big[ \sum_{k = \ell + 1}^{\infty} \lambda_k(\sigma_d)^2 \cdot B(d, k) \Big] \id_N.
\]
Hence the proposition follows.

\subsection{Proof of Proposition \ref{prop:Delta_bound}}

\noindent
{\bf Step 1. Bounding operator norm by moments.}

We define $\bDelta = \bW - \id_d$. Then we have
\[
\begin{aligned}
\bDelta = \begin{cases}
0, & i = j, \\
Q_k^{(d)}(\< \btheta_i, \btheta_j\>), & i \neq j.
\end{cases}
\end{aligned}
\]
For any sequence of integers $p=p(d)$, we have 
\begin{align}
\E[\|\bDelta\|_{\op}]\le \E[\Trace(\bDelta^{2p})^{1/(2p)}]\le \E[\Trace(\bDelta^{2p})]^{1/(2p)}\label{eq:MomentBound}
\end{align}
To prove the proposition, it suffices to show that for any sequence $A_d \to \infty$, we have
\begin{equation}
\lim_{d, N \to \infty, N = O_d(d^{k}  e^{- A_d \sqrt{ \log d} } )} \E[\Trace( \bDelta^{2p} ) ]^{1/(2p)} = 0. 
\end{equation}
In the following, we calculate $\E[\Trace(\bDelta^{2p})]$. 
We have
\[
\begin{aligned}
\E[\Trace(\bDelta^{2p})] = \sum_{\bi = (i_1, \ldots, i_{2p}) \in [N]^{2p}} \E[\Delta_{i_1 i_2} \Delta_{i_2 i_3} \ldots \Delta_{i_{2p} i_1}]. 
\end{aligned}
\]
To calculate this quantity, we will apply repeatedly the following identity, which is an immediate consequence of  Eq.~\eqref{eq:ProductGegenbauer}.
For any $i_1, i_2, i_3$ distinct, we have 
\[
\E_{\btheta_{i_2}}[\Delta_{i_1 i_2} \Delta_{i_2 i_3}] = \frac{1}{B(d,k)}\Delta_{i_1 i_3}. 
\]
Throughout the proof, we will denote by $C,C',C''$ constants that may depend on $k$ but not on $p,d,N$. The value of these constants is allowed to change from line to line.

\noindent
{\bf Step 2. The induced graph and equivalence of index sequences. }

For any index sequence $\bi = (i_1, i_2, \ldots, i_{2p}) \in [N]^{2p}$, we defined an undirected multigraph $G_\bi = (V_\bi, E_\bi)$ associated to index sequence $\bi$. The vertex set $V_\bi$ is the set of distinct elements in $i_1, \ldots, i_{2p}$. The edge set $E_{\bi}$ is formed as follows: for any $j \in [2 p]$  we add an edge between $i_j$ and $i_{j+1}$ (with convention $2 p + 1 \equiv 1$). Notice that this could be a self-edge, or a repeated edge:  $G_\bi = (V_\bi, E_\bi)$  will be --in general-- a multigraph. We denote $v(\bi) = \vert V_\bi \vert$ to be the number of vertices of $G_\bi$, and $e(\bi) = \vert E_\bi \vert$ to be the number of edges (counting  multiplicities). In particular,  $e(\bi) = k$ for $\bi \in [N]^k$. We define
\[
\cT_\star(p) = \{\bi \in [N]^{2p}: G_\bi \text{ does not have self edge}\}. 
\]

For any two index sequences $\bi_1, \bi_2$, we say they are equivalent $\bi_1 \asymp \bi_2$, if  the two graphs $G_{\bi_1}$ and $G_{\bi_2}$
are isomorphic, i.e. there exists an edge-preserving bijection of their vertices (ignoring vertex labels). We denote the equivalent class of $\bi$ to be 
\[
\cC(\bi) = \{ \bj: \bj \asymp \bi\}.
\]
We define the quotient set $\cQ(p)$ by
\[
\cQ(p) = \{ \cC(\bi): \bi \in [N]^{2p} \}. 
\]

For any integer $k \ge 2$ and $\bi = (i_1, \ldots, i_k) \in [N]^k$, we define
\[
M_{\bi} = \E[\Delta_{i_1 i_2} \cdots \Delta_{i_k i_1}].
\]

\begin{lemma}\label{lem:equivalent_class}
The following properties holds for all sufficiently large $N$ and $d$:
\begin{itemize}
\item[$(a)$] For any equivalent index sequences $\bi = (i_1, \ldots, i_{2p}) \asymp \bj = (j_1, \ldots, j_{2p})$, we have $M_{\bi} = M_{\bj}$. 
\item[$(b)$] For any index sequence $\bi \in [N]^{2p} \setminus \cT_\star(p)$, we have $M_{\bi} = 0$. 
\item[$(c)$] For any index sequence $\bi \in \cT_\star(p)$, the degree of any vertex in $G_\bi$ must be even. 
\item[$(d)$] The number of equivalent classes $\vert \cQ(p) \vert \le (2p)^{2p}$. 
\item[$(e)$] Recall that $v(\bi) = \vert V_\bi\vert$ denotes the number  of distinct elements in $\bi$. Then, for any 
$\bi\in [N]^{2p}$, the number of elements in the corresponding equivalence class satisfies $\vert \cC(\bi)\vert \le v(\bi)^{v(\bi)} \cdot N^{v(\bi)}\le p^p N^{v(\bi)}$. 
\end{itemize}
\end{lemma}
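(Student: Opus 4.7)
The five parts of Lemma~\ref{lem:equivalent_class} are essentially combinatorial bookkeeping about the multigraph $G_\bi$ associated to an index sequence $\bi$, together with the observation that each $M_\bi$ depends on $\bi$ only through that graph's isomorphism type. The plan is to treat the parts in the natural order $(a),(b),(c),(d),(e)$, and the only part that requires real attention is $(c)$, which hinges on recognizing the closed walk structure.

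For part $(a)$, I would argue as follows. The random variables $\btheta_1,\ldots,\btheta_N$ are i.i.d.\ and $\Delta_{ij}=Q_k^{(d)}(\langle\btheta_i,\btheta_j\rangle)\cdot\mathbf 1_{i\neq j}$ is a symmetric function of the pair $(\btheta_i,\btheta_j)$. Therefore, for any bijection $\pi$ on $[N]$ and any sequence $\bi$, the joint distribution of $(\Delta_{\pi(i),\pi(j)})$ equals that of $(\Delta_{ij})$, which implies $M_\bi$ depends only on the multiset of (unordered) edges $\{(i_j,i_{j+1})\}_{j=1}^{2p}$ up to relabeling of vertices, i.e.\ only on the isomorphism class of $G_\bi$. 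Part $(b)$ is immediate from the definition $\Delta_{ii}=0$: a self-edge forces one factor in the product defining $M_\bi$ to vanish.

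Part $(c)$ is the conceptually sharpest observation. The index sequence $i_1,i_2,\ldots,i_{2p},i_1$ is by construction a closed walk of length $2p$ in $G_\bi$, traversing each edge of the multigraph $G_\bi$ exactly according to its multiplicity. Since $G_\bi$ admits an Eulerian closed walk (this very one), every vertex must have even degree counted with edge multiplicity; this is the standard parity argument (the walk enters and leaves each visited vertex the same number of times). Note this uses $\bi\in\cT_\star(p)$ only to identify $G_\bi$ with a graph where multiplicities come purely from repeated non-loop edges, so the degree of each $v\in V_\bi$ in $G_\bi$ equals the number of times $v$ appears in the cyclic sequence $(i_1,\ldots,i_{2p})$ multiplied by $2$; in particular it is even.

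For part $(d)$, I would encode the equivalence class of $\bi$ by its \emph{canonical form}: walk through $i_1,\ldots,i_{2p}$ from left to right and relabel each newly encountered index by the smallest positive integer not yet used. The resulting sequence lies in $\{1,\ldots,2p\}^{2p}$, depends only on $\cC(\bi)$, and determines $\cC(\bi)$ uniquely (it is an invariant of the graph together with the walk order, but it is constant on the equivalence class since we only care about graph isomorphism types of the induced multigraph). This gives $|\cQ(p)|\le(2p)^{2p}$. For part $(e)$, an element $\bj\in\cC(\bi)$ is obtained by $(i)$ choosing an ordered tuple of $v(\bi)$ distinct labels in $[N]$ to play the role of the $v(\bi)$ abstract vertices (at most $N^{v(\bi)}$ choices) and $(ii)$ choosing how to identify these labels with the abstract vertices consistently with the graph isomorphism (at most $v(\bi)!\le v(\bi)^{v(\bi)}$ choices). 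The final bound $v(\bi)^{v(\bi)}\le p^p$ follows since $v(\bi)\le 2p$ and, more sharply, since by part $(c)$ every vertex has degree at least $2$, we get $v(\bi)\le e(\bi)=2p/2\cdot(\text{something})$; in any case the weaker bound $v(\bi)\le p$ suffices for the stated inequality. The main obstacle, such as it is, is keeping the cyclic convention $i_{2p+1}\equiv i_1$ straight when invoking the Eulerian argument in $(c)$; everything else is mechanical.
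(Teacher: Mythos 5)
Your route is the same as the paper's: parts $(a)$--$(c)$ are handled by exchangeability of the $\btheta_i$'s, by $\Delta_{ii}=0$, and by the closed-walk parity argument (the paper simply declares these ``straightforward''), and for $(d)$ and $(e)$ you use exactly the paper's devices, namely the canonical relabeling into $\{1,\dots,2p\}^{2p}$ and the ``choose $v(\bi)$ labels in $[N]$, then an identification'' count. One small correction to $(d)$: the canonical form is \emph{not} constant on an equivalence class (two equivalent sequences are merely isomorphic as graphs, and different Eulerian traversals yield different canonical strings). What saves the argument is the direction you also state: the canonical form of a representative \emph{determines} the class, so picking one representative per class gives an injection of $\cQ(p)$ into $\{1,\dots,2p\}^{2p}$, which is all that is needed.

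The step that actually fails is your justification of the last inequality in $(e)$. You assert that ``the weaker bound $v(\bi)\le p$ suffices,'' but $v(\bi)\le p$ is \emph{stronger} than $v(\bi)\le 2p$, it does not follow from every vertex having degree at least $2$ (that only gives $2v(\bi)\le\sum_v\deg(v)=4p$), and it is false in general: a sequence with all $2p$ indices distinct has $v(\bi)=2p$, and even the type-1 sequences used downstream have $v(\bi)=p+1$, so $v(\bi)^{v(\bi)}\le p^p$ does not hold for them. (In fairness, the paper's own proof asserts this inequality with no more justification; what is actually needed downstream is only a bound of the form $\vert\cC(\bi)\vert\le (Cp)^{Cp}N^{v(\bi)}$, which is true and harmless where it is used.) A second, shared, subtlety: both your count and the paper's bound $\vert\psi^{-1}(\bk)\vert\le v(\bi)!$ implicitly assume that fixing the labels and the graph isomorphism determines the sequence $\bj$, but a labeled multigraph with $2p$ edges can admit many distinct Eulerian closed walks (e.g.\ the complete graph on $5$ vertices already has more than $5!$ such walk sequences), so an additional factor bounded by, say, $(2p)^{2p}$ for the choice of traversal must be paid; this is again absorbed by the $(Cp)^{Cp}$ slack in Steps 4--5, but neither your write-up nor the paper's closes the step exactly as stated.
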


\begin{proof}
Properties $(a)$, $(b)$ and $(c)$ are straightforward. Note that $v(\bi) \le 2p$ for any $\bi\in [N]^{2p}$. For property $(d)$, notice that to each  distinct equivalence class
we can associate, in an injective manner, a string of length $2p$ over an alphabet of size $2p$ (simply follow the elements in $\bi$ in order, and replace the 
labels by some canonical ones, e.g. $\{1,2,3,\dots\}$ in order of appearance). 
Therefore the number of classes  is bounded as
\[
\vert \cQ \vert \le (2p)^{2p}
\]

For property $(e)$, we need to bound the number of elements in $\cC(\bi)$ for representative $\bi$ with degree $v(\bi)$. Define a mapping $\psi: \cC(\bi) \to [N]^{v(\bi)}$ as follows.
For $\bi\in [N]^{2p}$, $\psi(\bi)$ is a vector of the distinct elements in $\bi$, listed  in increasing order.
For any $\bk \in [N]^{v(\bi)}$, the pre-image $\psi^{-1}(\bk)$ contains at most $v(\bi)! \le v(\bi)^{v(\bi)}$ elements. As a result, we have 
\[
\vert \cC(\bi) \vert \le \sum_{\bk \in [N]^{v(\bi)}} \vert \psi^{-1}(\bk) \vert \le p^p \cdot N^{v(\bi)}. 
\]
This proves property $(e)$. 
\end{proof}

In view of property $(a)$ in the last lemma, given an equivalence class $\cC=\cC(\bi)$, we will write $M_{\cC} = M_{\bi}$ for the corresponding value common to the equivalence class $\cC$.

\noindent
{\bf Step 3. The skeletonization process. }

For  multi-graph $G$, we say that one of its vertices is \emph{redundant}, if it has degree 2. 
For any index sequence $\bi \in \cT_\star(p) \subset [N]^{2p}$ (i.e. such that $G_\bi$ does not have self-edges), we denote by $r(\bi) \in \N_+$ to be the redundancy of $\bi$, and by
 $\sk(\bi)$ to be the skeleton of $\bi$, both defined  by the following skeletonization process. Let $\bi_0 = \bi \in [N]^{2p}$. 
For any integer $s\ge 0$, if $G_{\bi_s}$ has no redundant vertices then stop and set $\sk(\bi)= \bi_s$.
Otherwise, select a redundant vertex $\bi_s(\ell)$ arbitrarily (the $\ell$-th element of $\bi_s$). If 
$\bi_s(\ell-1) \neq \bi_s(\ell+1)$, then remove $\bi_s(\ell)$ from the graph (and from the sequence), together with its adjacent edges, and connect $\bi_s(\ell-1)$ and
$\bi_s(\ell+1)$ with an edge, and denote $\bi_{s+1}$ to be the resulting index sequence, i.e., $\bi_{s+1} = (\bi_s(1), \ldots, \bi_s(\ell - 1), \bi_s(\ell + 2), \ldots, \bi_s(\endd))$. If $\bi_s(\ell-1) = \bi_s(\ell+1)$, then remove $\bi_s(\ell)$ from the graph (and from the sequence), together with its adjacent edges, and denote $\bi_{s+1}$ to be the resulting index sequence, i.e., $\bi_{s+1} = (\bi_s(1), \ldots, \bi_s(\ell - 1), \bi_s(\ell + 1), \bi_s(\ell + 2), \ldots, \bi_s(\endd))$. (Here $\ell+1$, and $\ell-1$ have to be interpreted modulo $\vert \bi_s \vert$, the length of $\bi_s$.) The redundancy of  $\bi$, denoted by $r(\bi)$,  is the number of vertices removed during the skeletonization process. 

It is easy to see that the outcome of this process is independent of the order in which we select vertices.
\begin{example}
For illustration, we give two examples of skeletonization processes:
\begin{itemize}
\item  Let $\bi = (1, 2, 1, 3, 4, 3)$, and set $\bi_0 = \bi$. First notice that $\{2, 4\}$ are redundant vertices and we can remove them in arbitrary order to get  
$\bi_2 = (1, 3)$. Then notice that $3$ is redundant whence we get $\bi_3 = \{1\}$. Hence we have $r(\bi) = 3$, and $\sk(\bi) = (1)$. 
\item Consider the skeletonization process of $\bj = (1, 2, 3, 2, 4, 3)$. Take $\bj_0 = \bj$. First notice that $\{1, 4\}$ are redundant vertices and can be removed in arbitrary 
order to get  $\bj_2 = (2, 3, 2, 3)$. We see that there is no further redundant vertex in $G_{\bj_1}$, so that $r(\bj) = 2$, and $\sk(\bj) = \bj_1 = (2, 3, 2, 3)$.
\end{itemize}
\end{example}

\begin{lemma}\label{lem:skeleton}
For the above skeletonization process, the following properties hold:
\begin{itemize}
\item[$(a)$] If $\bi \asymp \bj \in [N]^p$, then $\sk(\bi) \asymp \sk(\bj)$. That is, the skeletons of equivalent index sequences are equivalent. 
\item[$(b)$] For any $\bi = (i_1, \ldots, i_k) \in [N]^k$, define
\[
M_{\bi} = \begin{cases} \E[\Delta_{i_1 i_2} \cdots \Delta_{i_k i_1}] & k \ge 2, \\
1&  k =1\, .
\end{cases}
\]
Then we have
\begin{align*}
M_{\bi} = \frac{M_{\sk(\bi)} }{ B(d, k)^{r(\bi)}}. 
\end{align*}
\item[$(c)$] For any $\bi \in \cT_\star(p) \subset [N]^{2p}$, its skeleton is either formed by a single element, or an index sequence whose graph has the property that 
every vertex has degree greater or equal to $4$. 
\end{itemize}
\end{lemma}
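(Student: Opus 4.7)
The plan is to prove the three parts in the order (a), (b), (c), with (b) being the heart of the matter. Part (a) follows from the observation that the skeletonization is determined purely by graph-theoretic data: at each step one picks a degree-two vertex in the current multigraph and performs a local surgery that depends only on the multigraph and the cyclic walk, not on the vertex labels. An equivalence $\bi \asymp \bj$ provides an edge-preserving bijection between $G_\bi$ and $G_\bj$ compatible with the cyclic walks; running the skeletonization in lockstep on both sides and invoking the confluence noted after the definition yields $\sk(\bi) \asymp \sk(\bj)$.

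For (b), I would induct on $r(\bi)$. The base case $r(\bi) = 0$ is trivial since $\sk(\bi) = \bi$. For the inductive step, pick any redundant vertex $v$ of $G_\bi$; since its degree is two, it occupies exactly one position $\ell$ of the cyclic sequence. Integrating over $\btheta_v$ first (using independence of the $\btheta_i$'s) affects only the two factors $\Delta_{\bi(\ell-1), v}$ and $\Delta_{v, \bi(\ell+1)}$. The Gegenbauer identity
\begin{equation*}
\E_{\btheta_v}\bigl[Q_k^{(d)}(\< \bx, \btheta_v \>)\, Q_k^{(d)}(\< \btheta_v, \by \>)\bigr] = \frac{1}{B(d,k)}\,Q_k^{(d)}(\< \bx, \by \>),
\end{equation*}
which is an immediate consequence of Eq.~\eqref{eq:ProductGegenbauer}, together with its specialization $\E_{\btheta_v}[Q_k^{(d)}(\< \bx, \btheta_v\>)^2] = 1/B(d,k)$ obtained by taking $\by = \bx$ and using $Q_k^{(d)}(d)=1$, supplies exactly the reduction needed. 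In the generic case $\bi(\ell-1) \neq \bi(\ell+1)$, the first identity yields $\Delta_{\bi(\ell-1), \bi(\ell+1)}/B(d,k)$, matching the surgery that removes $v$ and inserts an edge between its two distinct neighbors; in the degenerate case $\bi(\ell-1) = \bi(\ell+1)$, the second identity yields $1/B(d,k)$ with no surviving $\Delta$, matching the surgery that removes $v$ and both parallel edges. Either way, one removal produces a factor $1/B(d,k)$ and a new sequence with redundancy $r(\bi)-1$ having the same skeleton, so the inductive hypothesis combined with (a) and confluence gives $M_{\bi} = M_{\sk(\bi)}/B(d,k)^{r(\bi)}$.

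For (c), I would verify two invariants of the surgery: the parity of every vertex degree, and connectivity of the multigraph. In the generic case each of the two distinct neighbors loses one edge and gains one (net zero), while in the degenerate case the common neighbor loses exactly two edges, so parities are preserved. Connectivity is clear in the generic case (an edge-subdivision reversal) and also in the degenerate case, since $v$ is attached to its single neighbor $u$ only by two parallel edges, so removing $v$ together with those edges keeps the remainder connected. Since every vertex of $G_\bi$ has even degree by Lemma~\ref{lem:equivalent_class}(c) and $G_\bi$ is connected (being the underlying graph of a closed walk), the skeleton is connected, has only even degrees, and by the termination condition has no vertex of degree two; hence every degree lies in $\{0,4,6,\ldots\}$. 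A degree-zero vertex in a connected graph can only arise if the skeleton is a single vertex, which gives the stated dichotomy. The main obstacle I foresee is handling the edge case $\bi_s(\ell-1)=\bi_s(\ell+1)$ cleanly: a literal deletion as in the paper's notation produces two consecutive equal entries whose naive graph carries a spurious self-loop, so the modified sequence must be read as representing the surgered multigraph (equivalently, consecutive duplicates should be further collapsed, as seems to happen implicitly in the worked example $(1,2,1,3,4,3) \mapsto (1)$); formalizing this so that the bookkeeping in both (b) and (c) remains clean is the only place that requires real attention.
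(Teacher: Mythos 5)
Your proof is correct and follows essentially the same route as the paper: part (a) from graph isomorphism, part (b) by peeling off degree-two vertices one at a time via the Gegenbauer product identity \eqref{eq:ProductGegenbauer} (whose $j_1=j_2$ specialization, using $Q_k^{(d)}(d)=1$, covers your degenerate case), and part (c) from the invariance of even degrees, connectivity, and absence of self-edges under the surgery. Your extra care about the case $\bi_s(\ell-1)=\bi_s(\ell+1)$ — reading the surgered sequence as the multigraph with the consecutive duplicate collapsed — is the right reading of the paper's definition and is consistent with its worked example.
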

\begin{proof}
Property $(a)$ holds by the definition of equivalence which is graph isomorphism. Property $(b)$ used the fact that, if $i \neq j_1$ and $i \neq j_2$, we have 
\[
\E_{\btheta_i}[Q_k^{(d)}(\< \btheta_{j_1}, \btheta_i\>) Q_k^{(d)}(\<\btheta_{j_2}, \btheta_i\>)] = \frac{1}{B(d, k)} Q_k^{(d)}(\< \btheta_{j_1}, \btheta_{j_2}\>),
\]
so that deleting a redundant vertex will contribute a $1/B(d, k)$ factor. 

To show property $(c)$, note that any intermediate index sequence $\bi_s$ in the skeletonization process is such that $G_{\bi_s}$ only has even degree vertices, is connected,
and has no self-edges (by induction). Hence, $G_{\sk(\bi)}$ only has even degree vertices, is connected, and has no self-edges. Note that $G_{\sk(\bi)}$ 
cannot have degree-2 vertices, 
and has at least one vertex (because the last vertex is not removed). Therefore, as long as $\sk(\bi)$ contains at least two vertices, $G_{\sk(\bi)}$ can only contain vertices with degree greater or equal to $4$. 
\end{proof}

Given an index sequence $\bi \in \cT_\star(p)  \subset [N]^{2p}$, we say
$\bi$ is of type 1, if $\sk(\bi)$ contains only one index. We say  $\bi$ is of type 2 if $\sk(\bi)$ has more than one index (so that by Lemma 
\ref{lem:skeleton}, $G_{\sk(\bi)}$ can only contain vertices with degree greater or equal to $4$). Denote the class of type 1 index sequence (respectively type 2 index sequence) 
by $\cT_1(p)$ (respectively $\cT_2(p)$). 
We also denote by $\tcT_a(p)$, $a\in\{1,2\}$ the set of equivalence
classes of sequences in $\cT_a(p)$. This definition makes sense since the equivalence class of the skeleton of a sequence only depends on the equivalence class of the sequence itself.

\noindent
{\bf Step 4. Type 1 index sequences. }

Recall that $v(\bi)$ is the number of vertices in $G_\bi$, and  $e(\bi)$ is the number of edges in $G_\bi$ (which coincides with the length of $\bi$). 
We consider $\bi \in \cT_1(p)$. Since for $\bi \in \cT_1(p)$, every edge of $G_\bi$ must be at most a double edge. Indeed, if $(u_1,u_2)$ had multiplicity larger than $2$ in $G_{\bi}$,
neither $u_1$ nor $u_2$ could be deleted during the skeletonization process, contradicting the assumption that $\sk(\bi)$ contains a single vertex. 
Therefore, we must have $\min_{\bi \in \cT_1} v(\bi) = p + 1$. According the Lemma \ref{lem:skeleton}.$(b)$, for every $\bi \in \cT_1(p)$, we have 
\[
M_\bi = 1/B(d, k)^{v(\bi)-1}. 
\]
Note by Lemma \ref{lem:equivalent_class}.$(e)$, the number of elements in the equivalence class of $\bi$ is $\vert \cC(\bi) \vert \le p^p \cdot N^{v(\bi)}$. Hence we get 
\begin{equation}
\max_{\bi \in \cT_1(p)} \big[\vert \cC(\bi) \vert \vert M_{\bi}\vert \big] \le \sup_{\bi \in \cT_1(p)}  \big[ p^p N^{v(\bi)} / B(d, k)^{v(\bi) - 1}\big] = p^p N^{p + 1} / B(d, k)^{p}\, .
\end{equation}
Therefore
\begin{align}
& \sum_{\bi\in\cT_1(p)} M_{\bi} = \sum_{\cC \in \tcT_1(p)} \vert\cC \vert\,  \vert M_{\cC}\vert\\
\le& |\cQ(p)|p^p \frac{N^{p + 1}}{ B(d, k)^{p}} \le (Cp)^{3p}\frac{N^{p+1}}{d^{kp}}\, . \label{eqn:bound_T1}
\end{align}
where in the last step we used Lemma \ref{lem:equivalent_class} and the fact that $B(d,k)\ge C_0 d^k$ for some $C_0>0$.

\noindent
{\bf Step 5. Type 2 index sequences. }

We have the following simple lemma bounding $M_\bi$. This bound is useful when  $\bi$ is a skeleton.
\begin{lemma}\label{lem:M_estimate}
There exists constants $C$ and $d_0$ depending uniquely on $k$ such that, for any $d\ge d_0(k)$, and any index sequence $\bi \in [N]^m$ with  
$2\le m\le d/(4k)$, we have 
\[
\vert M_\bi \vert \le \left(C m^k\cdot d^{- k }\right)^{m/2}\, .
\]
\end{lemma}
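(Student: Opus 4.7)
The plan is to reduce the bound to a single one-dimensional moment estimate via Hölder, then to bound that moment using spherical hypercontractivity.

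First I would discard the trivial case: if $i_j = i_{j+1}$ for some $j$ (indices modulo $m$), then $\Delta_{i_j i_{j+1}} = 0$ by definition and $M_\bi = 0$, so the bound holds. Otherwise, every factor inside the expectation is of the form $Q_k^{(d)}(\langle \btheta_{i_j}, \btheta_{i_{j+1}} \rangle)$ with $i_j \neq i_{j+1}$. I would apply the generalized Hölder inequality at exponent $m$:
\[
|M_\bi| \;=\; \left| \E\!\left[ \prod_{j=1}^m \Delta_{i_j i_{j+1}} \right] \right| \;\le\; \prod_{j=1}^m \bigl\| \Delta_{i_j i_{j+1}} \bigr\|_{L^m}.
\]
By rotational invariance of the law of $(\btheta_i)_{i \in [N]}$, each factor on the right has the same distribution as $g(\btheta) := Q_k^{(d)}(\sqrt d \,\langle \be_1, \btheta\rangle)$ with $\btheta \sim \Unif(\S^{d-1}(\sqrt d))$, so the inequality collapses to $|M_\bi| \le \| g \|_{L^m}^m$. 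This is a single-variable estimate and is independent of the combinatorial structure of $\bi$, which is exactly what we need.

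Next I would estimate $\|g\|_{L^m}$. The function $g$ lies in the degree-$k$ spherical-harmonic subspace $V_{d,k}$, and by the Gegenbauer normalization~\eqref{eq:GegenbauerNormalization} satisfies $\|g\|_{L^2}^2 = 1/B(d,k)$. I would invoke the spherical hypercontractive inequality (Beckner's inequality on $\S^{d-1}$): for every $f \in V_{d,k}$ and $m \ge 2$,
\[
\|f\|_{L^m} \;\le\; (m-1)^{k/2}\, \|f\|_{L^2},
\]
with a constant independent of the ambient dimension. Combined with the standard estimate $B(d,k) \ge c_k d^k$ (valid for $d$ large and $k$ fixed, with $c_k > 0$), this yields
\[
\|g\|_{L^m} \;\le\; \frac{(m-1)^{k/2}}{\sqrt{B(d,k)}} \;\le\; \frac{C_k\, m^{k/2}}{d^{k/2}},
\]
and therefore $|M_\bi| \le \|g\|_{L^m}^m \le (C_k^2\, m^k / d^k)^{m/2} \le (C\, m^k/d^k)^{m/2}$, which is the claimed bound. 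The assumption $m \le d/(4k)$ plays two roles: it keeps $C\, m^k/d^k$ bounded (so the stated bound is nontrivial), and it lies comfortably within the regime where the dimension-independent constant in the spherical hypercontractive estimate is available.

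The main obstacle is the hypercontractive input, which I am treating as a black box. If one prefers a self-contained argument, the natural substitute is to use the convergence~\eqref{eq:Gegen-to-Hermite} of Gegenbauer coefficients to Hermite coefficients to rewrite $\sqrt{B(d,k)\, k!}\,g(\btheta)$ as $\bbHe_k(\langle \be_1,\btheta\rangle)$ plus a polynomial remainder whose coefficients vanish with $d$, then to transport the $L^m$ estimate to $L^m(\gamma)$ where Hermite hypercontractivity $\|\bbHe_k\|_{L^m(\gamma)} \le (m-1)^{k/2}\sqrt{k!}$ is classical. Quantifying the error between the sphere marginal $\tau^1_{d-1}$ and the Gaussian $\gamma$ uniformly over polynomials of degree up to $km$ is where the condition $m \le d/(4k)$ appears naturally, since higher-degree moments of $\tau^1_{d-1}$ deviate from Gaussian moments by factors of order $(km)^2/d$.
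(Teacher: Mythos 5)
Your proof is correct, and its opening move coincides with the paper's: both apply the generalized H\"older inequality to collapse $|M_\bi|$ to the $m$-th moment of a single off-diagonal entry $Q_k^{(d)}(\langle\btheta,\btheta'\rangle)$, whose law is that of $Q_k^{(d)}(\sqrt{d}\,\langle\be_1,\btheta\rangle)$ for $\btheta\sim\Unif(\S^{d-1}(\sqrt{d}))$. Where you genuinely diverge is in how that one-dimensional quantity is controlled. The paper proceeds elementarily: it expands $Q_k^{(d)}(\sqrt{d}x)^{2m}$ in monomials, bounds each coefficient by $O(B(d,k)^{-m/2})$ using the Gegenbauer-to-Hermite convergence \eqref{eq:Gegen-to-Hermite}, and bounds the moments of the one-dimensional marginal $\tau^1_{d-1}$ by (essentially) Gaussian moments for degrees up to $d/2$ --- this is where the hypothesis $m\le d/(4k)$ is actually consumed. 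You instead invoke Beckner's dimension-free hypercontractive inequality for degree-$k$ spherical harmonics, $\|f\|_{L^m}\le (m-1)^{k/2}\|f\|_{L^2}$, together with $\|Q_k^{(d)}(\sqrt{d}\langle\be_1,\cdot\rangle)\|_{L^2}^2=1/B(d,k)$ from \eqref{eq:GegenbauerNormalization} and $B(d,k)=\Omega_d(d^k)$. This is a legitimate and noticeably cleaner route --- the inequality is a genuine theorem with constant independent of the ambient dimension, and your explicit handling of the degenerate case $i_j=i_{j+1}$ is a point the paper glosses over --- at the price of importing a nontrivial external result; your suggested ``self-contained substitute'' in the last paragraph is, in effect, the paper's actual argument. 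One small correction: the spherical hypercontractive inequality holds for every $m\ge 2$ with no restriction tied to $d$, so in your main argument the condition $m\le d/(4k)$ plays no role beyond keeping $Cm^k/d^k$ bounded; it is only in the elementary route that the Gaussian approximation of $\tau^1_{d-1}$ forces a cap on $m$.
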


\begin{proof}
By Holder's inequality, we have 
\[
\vert M_\bi \vert =\vert \E[\Delta_{i_1, i_2} \Delta_{i_2 i_3} \cdots \Delta_{i_m i_1}] \vert \le \prod_{s \in [m]} \E[\vert \Delta_{i_s i_{s+1}}\vert^{m}]^{1/{m}} \le \prod_{s \in [m]} \E[ \Delta_{i_s i_{s+1}}^{2m}]^{1/{2m}}. 
\]
The lemma following by the claim that (for $d\ge d_0(k)$)
\[
\E[ \Delta_{i j}^{2m}] = \E[ Q_k^{(d)}(\<\btheta_{i}, \btheta_j\>)^{2m}] = \left(C\, m^k \cdot d^{- k }\right)^m\, .
\]

In the following, we will write $\Coeff\{q(x);x^{\ell}\}$ for the coefficient of $x^{\ell}$ in the polynomial $q(x)$.
To show the above claim, recall that we have, for any $\ell$,
\[
\lim_{d \to \infty} \Coeff\{ Q_k^{(d)}(\sqrt d x) B(d, k)^{1/2} ;x^{\ell}\} = \Coeff\{ \bbHe_k(x) / \sqrt{k!} ;x^{\ell}\}\, .
\]
Therefore there exists a constant $C_0$ such that for all $d$ large enough
\[
B(d, k)^{1/2}\max_{\ell\le k}\left|\Coeff\{ Q_k^{(d)}(\sqrt d x) ;x^{\ell} \}\right| \le C_0\, .
\]
As a consequence, for any integer $m$, we have
\begin{align}
\max_{\ell\le km}\left|\Coeff\{ Q_k^{(d)}(\sqrt d x)^m ;x^{\ell} \}\right|&\le k^m\max_{\ell\le k}\left|\Coeff\{ Q_k^{(d)}(\sqrt d x) ;x^{\ell} \}\right|^m\\
& \le k^mC_0^m \, B(d,k)^{-m/2}\, .\label{eq:BoundCoefficients}
\end{align}
Define the random variable $G_d = \< \btheta_i, \btheta_j\> / \sqrt d$ for $\btheta_i, \btheta_j \sim \Unif(\S^{d-1}(\sqrt d))$. The probability distribution of 
$G_d$ is given by $\tau^1_{d-1}$ given in Eq.~\eqref{eq:taud-def} below. Hence defining $A_d\equiv \Gamma(d-1)/(2^{d-2}\sqrt{d} \,\Gamma((d-1)/2)^2)$, we have
(since $A_d\le 1$ for all $d$ large enough)
\begin{align*}
\E \{G_d^{2\ell}\}&= A_d\, \int_{[-\sqrt{d},\sqrt{d}]} \left(1-\frac{x^2}{d}\right)^{\frac{d-3}{2}}\, x^{2\ell}\de x \le  \int_{\reals} e^{-\frac{d-3}{2d}x^2}\,  x^{2\ell}\de x \\
&\le \sqrt{2 \pi} \left(1-\frac{3}{d}\right)^{-\ell- 1/2}\E\{G^{2\ell}\}\, ,
\end{align*}
where $G\sim\normal(0,1)$. Therefore, for all $\ell\le d/2$,
\begin{align}
\E \{G_d^{2\ell}\} \le 100\, \frac{(2\ell)!}{\ell!2^{\ell}}\le 100\, \ell^{\ell}\, . \label{eq:BoundMoments}
\end{align}
Combining the above two upper bounds \eqref{eq:BoundCoefficients} and \eqref{eq:BoundMoments}, we have 
\begin{align*}
\E[Q_k^{(d)}(\< \btheta_i, \btheta_j\>)^{2m}] &= \sum_{j=0}^{2km}\Coeff\big\{Q_k^{(d)}(\< \btheta_i, \btheta_j\>)^{2m};x^j\big\}\, \E\{G_d^j\}\\
& \le \sum_{\ell=0}^{km}\Coeff\big\{Q_k^{(d)}(\< \btheta_i, \btheta_j\>)^{2m};x^{2\ell}\big\}\, \E\{G_d^{2\ell}\}\\
& \le C_1^m B(d,k)^{-m}\sum_{\ell=0}^{km}\ell^{\ell}\le 2C_2^m B(d,k)^{-m} (km)^{km}\le C_3^m m^{km} B(d,k)^{-m}\, .
\end{align*}
By noting that $B(d, k) \ge C_0 d^k$ for some $C_0>0$, this proves the claim. 
\end{proof}

Suppose $\bi \in \cT_2(p)$, and denote $v(\bi)$ to be the number of vertices in $G_\bi$. We have, for a sequence $p = o_d (d) $ 
\begin{align*}
|M_\bi| &\stackrel{(1)}{=} \frac{|M_{\sk(\bi)}|}{B(d, k)^{r(\bi)}}\\
&\stackrel{(2)}{\le}  \left(\frac{Ce(\sk(\bi))}{d}\right)^{k \cdot e( \sk(\bi))/2} (C'd)^{- r(\bi) k} \\
& \stackrel{(3)}{\le}   \left(\frac{Cp}{d}\right)^{k \cdot e(\sk(\bi))/2} (C'd)^{- r(\bi) k}   \\
& \stackrel{(4)}{\le}   \left(\frac{Cp}{d}\right)^{k \cdot v(\sk(\bi))} (C'd)^{- r(\bi) k}   \\
&\stackrel{(5)}{\le} C^{v(\bi)}p^{k \cdot v(\sk(\bi))}  d^{- (v(\sk(\bi)) + r(\bi)) \cdot k}\\
&\stackrel{(6)}{\le}(Cp)^{k\cdot v(\bi)}  d^{- v(\bi) k}\, .
\end{align*}
Here $(1)$ holds by Lemma \ref{lem:skeleton}.$(b)$; $(2)$ by Lemma \ref{lem:M_estimate}, and the fact that $\sk(\bi) \in [N]^{e(\sk(\bi))}$, together by $B(d,k)\ge C_0 d^k$;
$(3)$ because $e(\sk(\bi))\le 2p$; $(4)$ by Lemma \ref{lem:skeleton}.$(c)$, implying  that for $\bi \in \cT_2(p)$,  each vertex of $G_{\sk(\bi)}$ has degree greater or equal to $4$, so that $v(\sk(\bi)) \le e(\sk(\bi))/2$ (notice that for $d\ge d_0(k)$ we can assume $Cp/d<1$). 
Finally, $(5)$ follows since $r(\bi), v(\sk(\bi))\le v(\bi)$, and $(6)$   the definition of $r(\bi)$ implying $r(\bi) = v(\bi) - v(\sk(\bi))$. 

Note by Lemma \ref{lem:equivalent_class}.$(e)$, the number of elements in equivalent class $\vert \cC(\bi) \vert \le p^{v(\bi)} \cdot N^{v(\bi)}$. 
Since $v(\bi)$ depends only on the equivalence class of $\bi$, we will write, with a slight abuse of notation $v(\bi) = v(\cC(\bi))$. 
Notice that the number of equivalence classes with $v(\cC) = v$ is upper bounded by the number multi-graphs with $v$ vertices and  $2p$ edges, which is
at most $v^{4p}$.
Hence we get 
\begin{align}
\sum_{\bi\in\cT_2(p)} M_{\bi} &\le\sum_{\cC\in\tcT_2(p)}\vert \cC \vert \vert M_{\cC}\vert \\
&\le \sum_{\cC\in\tcT_2(p)}  (Cp)^{(k+1) v(\cC)} \left(\frac{N}{d^k}\right)^{v(\cC)} \\
& \le \sum_{v=2}^{2p} v^{4p} \left(\frac{CNp^{k+1}}{d^k}\right)^{v}. 
\end{align}
Define $\eps = CNp^{k+1} /d^k$. We will assume hereafter that $p$ is selected such that
\begin{align}
2p\le -\log\left(\frac{CNp^{k+1} }{d^k}\right)\,. \label{eq:p-condition}
\end{align}
By calculus and condition (\ref{eq:p-condition}), the function $F(v) = v^{4p}\eps^v$ is maximized over $v\in [2,2p]$ at $v=2$, whence
\begin{align}
\sum_{\bi\in\cT_2(p)} M_{\bi} &\le 2p\, F(2) \le C^p \left(\frac{N}{d^k}\right)^{2}\, .\label{eqn:bound_T2}
\end{align}

\noindent
{\bf Step 6. Concluding the proof. }

Using  Eqs. (\ref{eqn:bound_T1}) and (\ref{eqn:bound_T2}), we have, for any $p = o_d (d)$ satisfying Eq.~\eqref{eq:p-condition}, we have
\begin{align}
\E[\Trace(\bDelta^{2p})] & = \sum_{\bi = (i_1, \ldots, i_{2p}) \in [N]^{2p}} M_\bi = \sum_{\bi \in \cT_1(p)} M_{\bi}+\sum_{\bi\in\cT_2(p)}  M_{\bi}\\
& \le  (Cp)^{3p}\frac{N^{p+1}}{d^{kp}} +  C^p \left(\frac{N}{d^k}\right)^{2}\, .
\end{align}
Form Eq.~\eqref{eq:MomentBound}, we obtain
\begin{align}
\E[\|\bDelta\|_{\op}] \le C \left\{ p^{3/2} N^{1/(2p)}\sqrt{\frac{N}{d^{k}}}  + \left(\frac{N}{d^{k}}\right)^{1/p} \right\}. 
\end{align}
Finally setting $N = d^k e^{-2A\sqrt{\log d}}$ and $p = (k/A)\sqrt{\log d}$, this yields
\begin{align}
\E[\|\bDelta\|_{\op}] \le C \left\{ e^{-\frac{A}{4}\sqrt{\log d}} +e^{-2A^2/k}\right\}\,.
\end{align}
Therefore, as long as $A\to \infty$, we have $\E[\|\bDelta\|_{\op}]\to 0$. It is immediate to check that the above choice of $p$
satisfies the required conditions $p = o_d (d)$ and Eq.~\eqref{eq:p-condition} for all $d$ large enough.

\section{Proof of Theorem \ref{thm:RF_lower_upper_bound}.(b): \RF\, model upper bound}\label{sec:proof_RFK_upper}

Recall that $(\bw_i)_{i \in [N]} \sim \Unif(\S^{d-1})$ independently. We define $\btheta_i = \sqrt d \cdot \bw_i$ for $i \in [N]$, so that $(\btheta_i)_{i \in [N]} \sim \Unif(\S^{d-1}(\sqrt d))$ independently. Let $\bW = (\bw_1, \ldots, \bw_N)$, and $\bTheta = (\btheta_1, \ldots, \btheta_N)$. We denote $\E_\btheta$ to be the expectation operator with respect to $\btheta \sim \Unif(\S^{d-1}(\sqrt d))$, $\E_\bx$ to be the expectation operator with respect to $\bx \sim \Unif(\S^{d-1}(\sqrt d))$, and $\E_\bw$ to be the expectation operator with respect to $\bw \sim \Unif(\S^{d-1}(1))$.  

Without loss of generality, assume that $\lbrace f_d \rbrace_{d \geq 0 }$ are polynomials of degree at most $\ell$, i.e.\,$f_d = \proj_{\leq \ell } f_d$. We denote the expansion of $\sigma_d$ in terms of Gegenbauer polynomials by (for $\btheta, \bx \in \S^{d-1}(\sqrt d)$)
\[
\sigma_d ( \< \btheta, \bx \> /\sqrt{d} )  = \sum_{m=0}^\infty \lambda_{m} (\sigma_d) B(d,m) Q^{(d)}_m (\< \btheta , \bx \> ),
\]
where
\[
\lambda_{m} ( \sigma_d) = \< \sigma_d( \< \be , \cdot \>) , Q_m^{(d)} ( \sqrt{d} \< \be , \cdot \> ) \>_{L^2(\S^{d-1}(\sqrt d))}.
\]

Denote $\mathcal{L} = L^2 ( \S^{d-1} ( \sqrt{d}) \rightarrow \R)$. We introduce the operator $\T : \cL \to \cL$, such that for any $g \in \cL$
\[
\T g ( \btheta ) = \< \sigma_d ( \< \btheta , \cdot \>/ \sqrt{d}), g \>_{L^2} = \E_{\bx} [  \sigma_d ( \< \btheta , \bx \>/ \sqrt{d}) g (\bx)].
\]
In particular, for any $k\in \N$ and $1 \leq u \leq B(d,k)$, we have
\begin{equation}\label{eq:diag_T}
\T Y^{(d)}_{ku} (\btheta) = \sum_{m=0}^\infty \lambda_{m} (\sigma_d) B(d,m) \E_\bx [ Q^{(d)}_m (\< \btheta , \bx \> ) Y^{(d)}_{ku} ( \bx ) ] = \lambda_{k} (\sigma_d) Y^{(d)}_{ku} ( \btheta). 
\end{equation}
It is easy to check that $\T^*$ (the adjoint operator) has the same expression as $\T$ with $\bx$ and $\btheta$ swapped. We define the operator $\K: \cL \to \cL$ as $\K \equiv \T \T^*$. For any $g \in \cL$, we have
\[
\K g ( \btheta) = \E_{\btheta'} [ K (\btheta , \btheta ' ) g ( \btheta ' ) ],
\]
where 
\[
K( \btheta , \btheta ' )  = \E_{\bx} [ \sigma_d ( \< \btheta , \bx \> /\sqrt{d}) \sigma_d ( \< \btheta '  , \bx \> /\sqrt{d})  ].
\]

We will restrict ourselves to the subspace $V_{d,\leq\ell}$ of polynomials of degree less or equal to $\ell$. We have for $0 \leq k \leq \ell$ and $1 \leq u \leq B(d,k)$, 
\begin{equation}\label{eq:diag_K}
\K Y^{(d)}_{ku} = \lambda_{k} (\sigma_d)^2 Y^{(d)}_{ku}. 
\end{equation}
Hence $\lbrace Y^{(d)}_{ku} \rbrace_{ 0 \leq k \leq \ell, 1 \leq u \leq B(d,k)}$ is an orthogonal basis that diagonalizes $\K$ on $V_{d,\leq\ell}$. By Assumption \ref{ass:activation_lower_upper_RF_v2}.(b), we deduce that $\K$ is a bijection from $V_{d,\leq\ell}$ to itself for $d$ sufficiently large. In particular, its restricted inverse $\K^{-1}\vert_{V_{d, \leq\ell}}$ is well defined.

Consider $\hat{f}_{\RF} (\bx ; \bTheta, \ba ) = \sum_{i = 1}^N a_i \sigma_d (\<\btheta_i , \bx \> / \sqrt d)$. We can expand the risk achieved at parameter $\ba$ as
\[
\begin{aligned}
\E_\bx [ (f_d (\bx) - \hat{f}_{\RF} (\bx ; \bTheta, \ba) )^2 ] = & \| f_d \|^2_{L^2} - 2 \sum_{i=1}^N a_i  \< \sigma_d ( \< \btheta_i , \cdot \>/ \sqrt{d}), f \>_{L^2} \\
& +  \sum_{i,j=1}^N a_i a_j \< \sigma_d( \< \btheta_i , \cdot \>/\sqrt{d}), \sigma_d( \< \btheta_j , \cdot \>/\sqrt{d}) \>_{L^2}.
\end{aligned}
\]
Let us define $\alpha ( \btheta) \equiv (\K^{-1} \T f_d) (\btheta)$ and choose $a_i = N^{-1} \alpha ( \btheta_i)$. We consider the expectation over $\bTheta$ of the \RF\,risk:
\[
\begin{aligned}
\E_{\bTheta} [ R_{\RF}(f_d , \bTheta / \sqrt d) ] =& \E_{\bTheta} \Big[ \inf_{\ba \in \R^N} \E_{\bx} [(f_d (\bx) - \hat f_{\RF} (\bx; \bTheta, \ba))^2] \Big]\\
\leq & \E_{\bTheta, \bx} \Big[(f_d (\bx) - \hat f_{\RF} (\bx; \bTheta, \ba) )^2 \Big] \Big\vert_{a_i = N^{-1} \alpha(\btheta_i)}\\
=& \| f_d \|^2_{L^2} - 2 \< \K^{-1} \T f_d ,\T f_d \>_{L^2}+ \< \K^{-1} \T f_d , \K [\K^{-1} \T f_d]\> \\
&+ \frac{1}{N} \big[  \E_{\btheta} [K(\btheta,\btheta) ( \K^{-1} \T f_d ( \btheta ))^2] - \< \K^{-1} \T f_d , \K [\K^{-1}\T f_d]\>\big].
\end{aligned}
\]
It is easy to check that $\T^* \K^{-1} \T \vert_{V_{d,\leq\ell}} = \id \vert_{V_{d,\leq\ell}}$. Hence
\[
\begin{aligned}
\E_{\bTheta} [ R_{\RF}(f_d , \bTheta / \sqrt d) ]  \leq & \| f_d \|^2_{L^2} - 2 \| f_d \|^2_{L^2} + \| f_d \|^2_{L^2} + \frac{1}{N} \Big[ \sup_{\btheta} \vert K (\btheta , \btheta) \vert \Big]  \E_{\btheta} [ ( \K^{-1} \T f_d ( \btheta ))^2]\\
= &  \frac{ \E_\bx[\sigma_d(\< \be, \bx\>)^2]}{N} \| \K^{-1} \T f_d  \|_{L^2}^2.
\end{aligned}
\]
Recall the decomposition of $f_d$ in terms of spherical harmonics (and note we assumed $f_d$ is a degree $\ell$ polynomial)
\[
f_d ( \bx )  = \sum_{k=0}^\ell \sum_{u = 1}^{B(d,k)} \lambda^{(d)}_{ku} (f_d) Y^{(d)}_{ku} (\bx ),
\]
and the equations \eqref{eq:diag_T} and \eqref{eq:diag_K}, we get
\[
\| \K^{-1} \T f_d  \|_{L^2}^2 = \sum_{k=0}^\ell  \frac{1}{\lambda_{k} (\sigma_d)^2 }  \| \proj_k f_d \|^2_{L^2}.
\]
As a result, we deduce that
\[
\E_{\bTheta} [ R_{\RF}(f_d , \bTheta / \sqrt d) ]  \leq \| \sigma_d \|_{L^2}^2  \Big[  \sum_{k = 0}^\ell 1/ (N \lambda_{k}(\sigma_d)^2) \Big] \| f_d \|^2_{L^2}.
\]
Hence, by Assumption \ref{ass:activation_lower_upper_RF_v2}.(b), and from the assumption that $N = \omega_d ( d^\ell )$, we deduce that the risk $R_{\RF}(f_d , \bTheta / \sqrt d) / \| f_d \|_{L^2}^2$ converges in $L^1$ to $0$, and therefore in probability.

\section{Proof of Theorem \ref{thm:NT_lower_upper_bound}.(a): \NT\, model lower bound}
\label{sec:proof_NTK_lower}

\subsection{Preliminaries}

We begin with some notations and simple remarks. 
\begin{lemma}\label{lemma:square_integrable}
Assume $\sigma$ is an activation function with $\sigma(u)^2\le c_0 \, \exp(c_1\, u^2/2)$ for some constants $c_0 > 0$ and $c_1<1$.
Then 
\begin{enumerate}
\item[$(a)$] $\E_{G \sim \normal(0, 1)}[\sigma(G)^2] < \infty$. 
\item[$(b)$] Let $\|\bw\|_2=1$. Then there exists $d_0=d_0(c_1)$ such that, for $\bx \sim \Unif(\S^{d-1}(\sqrt  d))$,
\begin{align}
\sup_{d \ge d_0} \E_{\bx}[\sigma(\<\bw,\bx\>)^2] < \infty\, .
\end{align}
\item[$(c)$] Let $\|\bw\|_2=1$. Then there exists a coupling of $G\sim\normal(0,1)$ and $\bx\sim \Unif(\S^{d-1}(\sqrt  d))$ such that
\begin{align}
\lim_{d \to \infty} \E_{\bx, G} [(\sigma(\<\bw,\bx\>) - \sigma(G))^2] =& 0.
\end{align}
\end{enumerate}
\end{lemma}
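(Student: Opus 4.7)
The plan is to handle the three claims in sequence, exploiting the fact that the marginal law of $\<\bw,\bx\>$ on the sphere concentrates like $\gamma=\normal(0,1)$ and that the exponential growth bound $\sigma(u)^2 \le c_0 e^{c_1 u^2/2}$ is \emph{subcritical} for both measures, i.e. the Gaussian weight $e^{-x^2/2}$ (or its sphere analogue) dominates.

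For part $(a)$, a one-line Gaussian integral suffices: by hypothesis,
\begin{align*}
\E[\sigma(G)^2] \;\le\; c_0 \int_{\reals} \frac{1}{\sqrt{2\pi}} e^{-(1-c_1)x^2/2}\, dx \;=\; \frac{c_0}{\sqrt{1-c_1}},
\end{align*}
which is finite exactly because $c_1<1$.

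For part $(b)$, I would use the explicit density of $U_d := \<\bw,\bx\>$, namely $\tau^1_{d-1}(du) = Z_d^{-1}(1-u^2/d)^{(d-3)/2}\,du$ on $[-\sqrt d,\sqrt d]$, where $Z_d \to \sqrt{2\pi}$. Combining the hypothesis with the pointwise bound $(1-u^2/d)^{(d-3)/2} \le e^{-(d-3)u^2/(2d)}$ (a consequence of $1+t\le e^t$) gives
\begin{align*}
\E_{\bx}[\sigma(U_d)^2] \;\le\; \frac{c_0}{Z_d}\int_{-\sqrt d}^{\sqrt d} \exp\!\left(-\tfrac{1}{2}\bigl(1-\tfrac{3}{d}-c_1\bigr)u^2\right)du.
\end{align*}
Taking $d_0 := \lceil 6/(1-c_1)\rceil$, one has $1-3/d-c_1 \ge (1-c_1)/2 > 0$ for all $d\ge d_0$, so the integral is bounded uniformly in $d$; since $Z_d$ is also bounded away from $0$ for $d$ large, the uniform bound follows.

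For part $(c)$, I pass through a Skorokhod coupling. The weak convergence $\tau^1_{d-1}\Rightarrow \gamma$ is a standard fact (measure concentration on the sphere), and since $\gamma$ has continuous CDF $\Phi$, the quantile coupling $U_d := F_d^{-1}(V)$, $G:=\Phi^{-1}(V)$ with $V\sim\Unif(0,1)$ satisfies $U_d\to G$ almost surely. One realizes $\bx$ on $\S^{d-1}(\sqrt d)$ consistently with this $U_d$ by setting $\bx = U_d\,\bw + \sqrt{d-U_d^2}\,\bv$, where $\bv$ is drawn independently and uniformly on the unit sphere of the hyperplane $\bw^{\perp}$. Since any weakly differentiable $\sigma$ on $\reals$ admits a continuous representative (as is assumed in the ambient NT context where this lemma is invoked), $\sigma(U_d) \to \sigma(G)$ almost surely. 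To upgrade a.s.\ convergence to $L^2$ convergence I would establish uniform integrability: pick any $p\in(1,1/c_1)$, so that $pc_1<1$; then the computation of $(b)$ with $c_1$ replaced by $pc_1$ gives $\sup_{d\ge d_0'}\E[\sigma(U_d)^{2p}]<\infty$, and $(a)$ with the same replacement gives $\E[\sigma(G)^{2p}]<\infty$. Hence $\{\sigma(U_d)^2\}_d$ is uniformly integrable, and Vitali's convergence theorem delivers $\E[(\sigma(U_d)-\sigma(G))^2]\to 0$.

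The only substantive obstacle is the interaction in $(b)$ and $(c)$ between the Gaussian-like density of $\tau^1_{d-1}$ (effective variance $1-3/d\to 1$) and the growth rate $e^{c_1 u^2/2}$ of $\sigma^2$: both uniform bounds rely crucially on the strict inequality $c_1<1$, and part $(c)$ needs a sliver of extra room (some $p>1$ with $pc_1<1$) to secure uniform integrability along the coupling. Everything else --- the explicit density formula, $1+t\le e^t$, Skorokhod representation, Vitali --- is standard.
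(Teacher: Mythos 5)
Parts $(a)$ and $(b)$ coincide with the paper's argument: the explicit density $\tau^1_{d-1}(\de u)=C_d(1-u^2/d)^{(d-3)/2}\de u$ with $C_d\to(2\pi)^{-1/2}$, the bound $(1-u^2/d)^{(d-3)/2}\le e^{-(d-3)u^2/(2d)}$, and the observation that $1-3/d-c_1$ is bounded away from $0$ once $d\ge d_0(c_1)$. For part $(c)$ your coupling (quantile coupling of $\<\bw,\bx\>$ with $G$, then rebuilding $\bx$ in the orthogonal complement of $\bw$) differs from the paper's (which sets $x_1=G\sqrt{d}/\sqrt{G^2+\|\bxi\|_2^2}$ for an independent Gaussian $\bxi$), but both yield $\<\bw,\bx\>\to G$ almost surely, and your uniform-integrability/Vitali upgrade from a.s.\ to $L^2$ convergence (via a $2p$-th moment bound with $pc_1<1$) is a legitimate substitute for the paper's dominated-convergence step.

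There is, however, one genuine gap in $(c)$: the step $\sigma(U_d)\to\sigma(G)$ a.s.\ requires $\sigma$ to be continuous (or at least continuous $\gamma$-a.e.), and this is \emph{not} among the lemma's hypotheses --- the only assumption is the measurable growth bound $\sigma(u)^2\le c_0e^{c_1u^2/2}$. Your patch, ``$\sigma$ is weakly differentiable in the ambient NT context, hence has a continuous representative,'' does not close the hole, because the lemma is repeatedly invoked in the paper with $\sigma'$ (the \emph{weak derivative}) in place of $\sigma$, e.g.\ in the proofs of Propositions \ref{prop:expected_V_NTK} and \ref{prop:kernel_lower_bound_NTK} and in Lemma \ref{lem:convergence_of_coefficients}; a weak derivative carries no continuity guarantee (for ReLU it is $\bfone_{u\ge0}$, and in general it is only an equivalence class in $L^1_{\rm loc}$). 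For an arbitrary measurable $\sigma$, a.s.\ convergence of the arguments does not give a.s.\ convergence of the images. The paper avoids this by a preliminary reduction: since the densities of $\<\bw,\bx\>$ (for all large $d$) and of $G$ are uniformly bounded, one can choose a single bounded continuous $\sigma_M$ with $\E[(\sigma-\sigma_M)^2]\le 1/M$ simultaneously under every one of these laws, prove the claim for $\sigma_M$ (where your argument, or dominated convergence, applies), and let $M\to\infty$. Inserting that truncation step in front of your coupling repairs the proof; as written, part $(c)$ is only established for continuous $\sigma$.
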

\begin{proof}
Claim 1 is obvious.

For claim 2, note that the probability distribution of $\<\bw,\bx\>$ when $\bx \sim \Unif(\S^{d-1}(\sqrt  d))$ is given by
\begin{align}
\tau^1_{d-1}(\de x) &= C_d\, \left(1-\frac{x^2}{d}\right)^{\frac{d-3}{2}}\bfone_{x\in [- \sqrt d, \sqrt d]}\de x\, ,\label{eq:taud-def}\\
C_d & = \frac{\Gamma(d-1)}{2^{d-2}\sqrt{d} \,\Gamma((d-1)/2)^2}\, . \
\end{align}
A simple calculation shows that $C_d\to (2\pi)^{-1/2}$ as $d\to\infty$, and hence $\sup_d C_d\le \overline{C}<\infty$. 
Therefore
\begin{align}
\E_{\bx}[\sigma(\<\bw,\bx\>)^2]  &\le \overline{C} \int_{- \sqrt d}^{\sqrt d} \left(1-\frac{x^2}{d}\right)^{\frac{d-3}{2}}\, e^{c_1x^2/2}\de x\\
&\le \overline{C} \int_{\reals} e^{-\frac{d-3}{2d}x^2}\, e^{c_1x^2/2}\de x\le C'\, ,
\end{align}
where the last inequality holds provided $d\ge d_0 = 10/(1-c_1)$.

Finally, for point 3, without loss of generality we will take $\bw=\be_1$, so that $\<\bw,\bx\> = x_1$.
By the same argument  given above (and since both $G$ and $x_1$ have densities bounded uniformly in $d$), for any $M>0$ 
we can choose $\sigma_M$ bounded continuous so that
for any $d$, 
\begin{align}
\E_{\bx, G} [(\sigma(x_1) - \sigma(G))^2] \le \E_{\bx, G} [(\sigma_M(x_1) - \sigma_M(G))^2]+\frac{1}{M}\, .
\end{align}
It is therefore sufficient to prove the claim for $\sigma_M$. Letting $\bxi\sim\normal(0,\id_{d-1})$, independent of $G$,
we construct the coupling via
\begin{align}
x_1 = \frac{G\sqrt{d}}{\sqrt{G^2+\|\bxi\|_2^2}}\, ,\;\;\; \;\;\bx' = \frac{\bxi\sqrt{d}}{\sqrt{G^2+\|\bxi\|_2^2}}\, ,
\end{align}
where we set $\bx = (x_1,\bx')$. We thus have $x_1\to G$ almost surely, and the claim follows by weak convergence. 
\end{proof}

We denote the Hermite decomposition of $\sigma$ by
\begin{align}
\sigma(x) = \sum_{k=0}^\infty \frac{\mu_k(\sigma)}{ k!} \bbHe_k(x)\, ,\;\;\;\;\;
\mu_k(\sigma) \equiv \E_{G \sim \normal(0, 1)}[\sigma(G) \bbHe_k(G) ].
\end{align}

We state separately the assumptions of Theorem \ref{thm:NT_lower_upper_bound}.(a) for future reference.
\begin{assumption}[Integrability condition]\label{ass:NT-square_integrable}
The activation function $\sigma$ is weakly differentiable with weak derivative $\sigma'$. 
There exist constants $c_0$, $c_1$, with $c_0 > 0$ and $c_1<1$ such that, for all $u\in\reals$, $\sigma'(u)^2\le c_0\, \exp(c_1u^2/2)$.
\end{assumption}

\begin{assumption}[Level-$\ell$ non-trivial Hermite components]\label{ass:NT-non_trivial_components}
Recall that $\mu_k(h) \equiv \E_{G\sim\normal(0,1)}[h(G)\bbHe_k(G)]$ denote the $k$-th coefficient of the Hermite expansion of 
$h\in L_2(\reals,\gamma)$ (with $\gamma$ the standard Gaussian measure).

Then there exists $k_1,k_2\ge 2\ell+7$ such that $\mu_{k_1}(\sigma') ,\mu_{k_2}(\sigma')\neq 0$ and
\begin{align}
\frac{\mu_{k_1}(x^2\sigma')}{\mu_{k_1}(\sigma')}\neq \frac{\mu_{k_2}(x^2\sigma')}{\mu_{k_2}(\sigma')} \, .
\end{align}
\end{assumption}
It is also useful to notice that the Hermite coefficients of $x^2\sigma'(x)$
can be computed from the ones of $\sigma'(x)$ using the relation $\mu_k (x^2 \sigma ' )  = \mu_{k+2} (\sigma') + [1+2k] \mu_k(\sigma') + k(k-1) \mu_{k-2} (\sigma') $.

\subsection{Proof of Theorem \ref{thm:NT_lower_upper_bound}.(a): Outline}

The proof for the \NT\, model follows the same scheme as for the \RF\, case. However, several steps are 
technically more challenging. We will follow the same notations introduced in Section \ref{sec:OutlineRF}. In particular
$\E_{\bx}, \E_{\bw}, \E_{\btheta}$ will denote, respectively, expectation with respect to $\bx\sim\Unif(\S^{d-1}(\sqrt{d}))$,
$\bw\sim\Unif(\S^{d-1}(1))$, $\btheta\sim\Unif(\S^{d-1}(\sqrt{d}))$.

We define  the random vector $\bV = (\bV_1, \ldots, \bV_N)^\sT\in\reals^{Nd}$, where, for each $j\le N$, $\bV_j\in\reals^d$, and analogously
$\bV_{\le \ell + 1} = (\bV_{1, \le \ell + 1}, \ldots, \bV_{N, \le \ell + 1})^\sT\in\reals^{Nd}$, $\bV_{> \ell + 1} = (\bV_{1, > \ell + 1}, \ldots, \bV_{N, > \ell + 1})^\sT\in\reals^{Nd}$, as follows
\[
\begin{aligned}
\bV_{i, \le \ell + 1} =& \E_{\bx}[[\proj_{\le \ell + 1} f_d](\bx) \sigma'(\< \btheta_i, \bx\>/\sqrt d) \bx],\\
\bV_{i, > \ell + 1} =& \E_{\bx}[[\proj_{> \ell + 1} f_d](\bx) \sigma'(\< \btheta_i, \bx\>/\sqrt d) \bx],\\
\bV_i =& \E_{\bx}[f_d(\bx) \sigma'(\< \btheta_i, \bx\>/\sqrt d) \bx] = \bV_{i, \le \ell + 1} + \bV_{i, > \ell + 1}. \\
\end{aligned}
\]
We define the random matrix $\bU = (\bU_{ij})_{i, j \in [N]}\in\reals^{Nd\times Nd}$, where, for each $i,j\le N$, $\bU_{ij}\in\reals^{d\times d}$, 
is given by
\begin{align}
\bU_{ij} = \E_{\bx}[\sigma'(\< \bx, \btheta_i\>/\sqrt d) \sigma'(\< \bx, \btheta_j\>/\sqrt d) \bx \bx^\sT]. 
\label{eq:NT-Kernel}
\end{align}
Proceeding as for the \RF\, model, we obtain 
\[
\begin{aligned}
& \Big\vert R_{\NT}(f_d) - R_{\NT}(\proj_{\le \ell + 1} f_d) - \| \proj_{> \ell + 1} f_d \|^2_{L^2} \Big\vert \\
\le& 2  \| f_d \|_{L^2} \| \bU^{-1/2} \|_{\op} \| \bV_{> \ell + 1} \|_2 +  \| \bU^{-1} \|_{\op} \| \bV_{>\ell + 1}\|_2^2.
\end{aligned}
\]

We claim that we have
\begin{align}
\| \bV_{> \ell + 1} \|_2 / \| \proj_{> \ell + 1} f_d \|_{L^2} =& o_{d,\P}(1), \label{eqn:bound_V_NTK}\\
\| \bU^{-1} \|_{\op} =& O_{d,\P} (1),\label{eqn:bound_inverse_U_NTK}
\end{align}
This is achieved in the following two propositions.
\begin{proposition}[Expected norm of $\bV$]\label{prop:expected_V_NTK}
Let $\sigma$ be an activation function satisfying Assumption \ref{ass:NT-square_integrable}. Define
\[
\begin{aligned}
\cE_{\ge \ell} \equiv& \E_{\btheta}[ \< \E_\bx[ \proj_{\ge \ell} f_\star(\bx) \sigma'(\<\btheta, \bx\>/\sqrt d) \bx], \E_\bx[ \proj_{\ge \ell} f_\star(\bx) \sigma'(\<\btheta, \bx\>/\sqrt d) \bx] \>]\\
=& \E_{\bx, \bx' } [\proj_{\ge \ell} f_\star(\bx) \proj_{\ge \ell} f_\star(\bx') \E_\btheta[\sigma'(\< \btheta, \bx\>/\sqrt d) \sigma'(\< \btheta, \bx'\>/\sqrt d) \< \bx, \bx'\>] ].
\end{aligned}
\]
where expectation is with respect to $\bx, \bx' \sim_{i.i.d.} \Unif(\S^{d-1}(\sqrt d))$.
Then there exists a constant $C$ (depending only on the constants in Assumption \ref{ass:NT-square_integrable}) such that, for any $\ell \ge 1$ and $d \ge 6$, 
\[
\cE_{\ge \ell} \le \frac{Cd}{B(d,\ell)}\, \| \proj_{\ge \ell} f_\star \|_{L^2(\S^{d-1}(\sqrt d))}^2 \, .
\]
\end{proposition}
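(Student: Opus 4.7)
The plan is to mirror the scheme that worked for Proposition~\ref{prop:expected_V_RFK}: express $\cE_{\ge \ell}$ as an expectation against a rotationally invariant kernel on the sphere, expand in Gegenbauer polynomials, and then absorb the extra factor $\<\bx,\bx'\>$ via the three-term recurrence \eqref{eq:RecursionG}. Setting $h \equiv \proj_{\ge\ell} f_\star$, the second display in the proposition already rewrites
\[
\cE_{\ge\ell} = \E_{\bx,\bx'}\Bigl[h(\bx)\,h(\bx')\, \<\bx,\bx'\>\, \E_\btheta\bigl[\sigma'(\<\btheta,\bx\>/\sqrt d)\, \sigma'(\<\btheta,\bx'\>/\sqrt d)\bigr]\Bigr].
\]
Expanding $\sigma'(\<\btheta,\bx\>/\sqrt d) = \sum_k \lambda_k(\sigma')\, B(d,k)\, Q_k^{(d)}(\<\btheta,\bx\>)$ and integrating in $\btheta$ via the reproducing identity \eqref{eq:ProductGegenbauer} turns the inner expectation into $\sum_k \lambda_k(\sigma')^2\, B(d,k)\, Q_k^{(d)}(\<\bx,\bx'\>)$, exactly the Gegenbauer series that appeared in the \RF\, calculation.

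Next, I would apply the recurrence \eqref{eq:RecursionG} to write
\[
\<\bx,\bx'\>\, Q_k^{(d)}(\<\bx,\bx'\>) = d\left[\frac{k}{2k+d-2}\, Q_{k-1}^{(d)}(\<\bx,\bx'\>) + \frac{k+d-2}{2k+d-2}\, Q_{k+1}^{(d)}(\<\bx,\bx'\>)\right],
\]
and then use the projector identity $\E_{\bx,\bx'}[h(\bx)\,h(\bx')\, Q_j^{(d)}(\<\bx,\bx'\>)] = \|\proj_j h\|_{L^2}^2 / B(d,j)$, a direct consequence of \eqref{eq:GegenbauerHarmonics}. Collecting terms by the index $j$ carried by $\|\proj_j h\|_{L^2}^2$, and noting that $\proj_j h = 0$ for $j < \ell$, yields
\[
\cE_{\ge\ell} = \sum_{j \ge \ell} \alpha_j\, \|\proj_j h\|_{L^2}^2, \qquad \alpha_j = \frac{d}{B(d,j)}\Bigl[\lambda_{j+1}(\sigma')^2\, B(d,j+1)\, \frac{j+1}{2j+d} + \lambda_{j-1}(\sigma')^2\, B(d,j-1)\, \frac{j+d-3}{2j+d-4}\Bigr].
\]

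The last step is a uniform bound on $\alpha_j$. The essential input is Parseval,
\[
\sum_{k \ge 0} \lambda_k(\sigma')^2\, B(d,k) = \bigl\|\sigma'(\<\be_1,\cdot\>)\bigr\|_{L^2(\S^{d-1}(\sqrt d))}^2,
\]
whose right-hand side is bounded by a $d$-independent constant $C$ thanks to the sub-Gaussian growth in Assumption~\ref{ass:NT-square_integrable} together with Lemma~\ref{lemma:square_integrable}.(b). This gives $\lambda_k(\sigma')^2\, B(d,k) \le C$ for every $k$, and since both $\frac{j+1}{2j+d}$ and $\frac{j+d-3}{2j+d-4}$ lie in $[0,1]$ for $j \ge 1$ and $d \ge 6$, we obtain $\alpha_j \le 2Cd/B(d,j)$. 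Monotonicity of $B(d,\cdot)$ in its second argument (Lemma~\ref{lem:non_decreasing_N}) then yields $\alpha_j \le 2Cd/B(d,\ell)$ for all $j \ge \ell$, and summing delivers the proposition.

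The only substantive obstacle is the uniform-in-$k$ control $\lambda_k(\sigma')^2\, B(d,k) \le C$, which is precisely what Assumption~\ref{ass:NT-square_integrable} is tailored to provide through Parseval; without such a bound one would need to peel off high-frequency contributions of $\sigma'$ by hand, since the individual Gegenbauer coefficients of $\sigma'$ carry no a priori decay in $k$.
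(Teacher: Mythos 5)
Your proposal is correct and follows essentially the same route as the paper: expand $\sigma'$ in Gegenbauer polynomials, integrate out $\btheta$ via the reproducing identity, absorb the factor $\<\bx,\bx'\>$ with the three-term recurrence, and bound the resulting coefficients (your $\alpha_j$ is exactly the paper's $\Gamma_{d,j}/B(d,j)$) uniformly by Parseval plus Lemma \ref{lemma:square_integrable}.(b) and the monotonicity of $B(d,\cdot)$.
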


\begin{proposition}[Lower bound on the kernel matrix]\label{prop:kernel_lower_bound_NTK}
Let $N = o_d(d^{\ell+1})$ for some $\ell \in \integers_{>0}$, and  $(\btheta_i)_{i \in [N]} \sim \Unif(\S^{d-1}(\sqrt d))$ independently. Let $\sigma$ be an activation that satisfies
 Assumption \ref{ass:NT-square_integrable} and Assumption \ref{ass:NT-non_trivial_components}. 
Let $\bU \in \R^{Nd \times Nd}$ be the  kernel matrix with $i,j$ block $\bU_{ij}\in\reals^{d\times d}$ defined by Eq.~\eqref{eq:NT-Kernel}. Then there exists a constant $\eps > 0$ that depends on the activation function $\sigma$, such that 
\[
\begin{aligned}
\lambda_{\min}(\bU) \ge& \eps \\
\end{aligned}
\]
with high probability as $d \to \infty$. 
\end{proposition}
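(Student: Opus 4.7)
The plan is to follow the blueprint of Proposition \ref{prop:kernel_lower_bound_RFK}, but the presence of the $\bx\bx^\sT$ factor---arising from the $d$-dimensional parameters $\ba_i$ in the \NT\,class---requires a careful treatment of the ``radial'' and ``tangential'' components of each $\ba_i$. First I would recast $\lambda_{\min}(\bU) = \inf_{\|\ba\|_2=1}\|F_\ba\|_{L^2}^2$, where $F_\ba(\bx) = \sum_{i=1}^N \sigma'(\<\btheta_i,\bx\>/\sqrt d)\,\<\ba_i,\bx\>$. Decomposing each $\ba_i = c_i\btheta_i/\sqrt d + \bb_i$ with $\bb_i \perp \btheta_i$ (so $\|\ba\|_2^2 = \sum_i(c_i^2 + \|\bb_i\|_2^2)$) splits $F_\ba = F^{(c)} + F^{(b)}$, with $F^{(c)}(\bx) = \sum_i c_i\, y_i\sigma'(y_i)$ and $F^{(b)}(\bx) = \sum_i \<\bb_i,\bx\>\,\sigma'(y_i)$, where $y_i = \<\btheta_i,\bx\>/\sqrt d$.

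Next, expanding $\sigma'$ and $y\sigma'$ in the Gegenbauer basis of $L^2(\S^{d-1}(\sqrt d))$ and using the reproducing property $\E_\bx[Q_k^{(d)}(\<\btheta_i,\bx\>)Q_k^{(d)}(\<\btheta_j,\bx\>)] = Q_k^{(d)}(\<\btheta_i,\btheta_j\>)/B(d,k)$, I would express $\|F^{(c)}\|_{L^2}^2 = \sum_k \lambda_k(y\sigma')^2\, B(d,k)\, c^\sT \bW_k c$, where $(\bW_k)_{ij} = Q_k^{(d)}(\<\btheta_i,\btheta_j\>)$. Proposition \ref{prop:Delta_bound} gives $\|\bW_k - \id_N\|_\op = o_{d,\P}(1)$ for every $k \ge 2\ell+7$ (comfortably satisfying $N = o_d(d^{\ell+1}) \ll d^k/e^{A_d\sqrt{\log d}}$); the finitely many contributions from modes $k \le 2\ell+6$ are handled by summing them explicitly and contribute nonnegatively, while the high-$k$ tails are controlled through the decay of $B(d,k)^{-1}$. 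Combined with the Gegenbauer-to-Hermite limit and Parseval, this yields $\|F^{(c)}\|_{L^2}^2 = (\E_G[G^2\sigma'(G)^2] + o_{d,\P}(1))\|c\|_2^2$. A parallel calculation, using the decomposition $\bx\bx^\sT = \id_d + (\bx\bx^\sT - \id_d)$ (a constant plus a degree-$2$ spherical harmonic) to evaluate the $L^2$-norm of $F^{(b)}$, gives the corresponding leading-order bound in terms of $\|\bb\|_2^2$.

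The main obstacle is the cross term $\<F^{(c)}, F^{(b)}\>_{L^2}$ together with the off-diagonal block contributions to $\|F^{(b)}\|_{L^2}^2$: these are bilinear forms whose entries involve $\<\bb_j,\btheta_i\>$ for $i\neq j$, which are not individually negligible. To control them I would project $F_\ba$ onto each spherical harmonic subspace $V_{d,p}$; the degree-$p$ component receives contributions only from the degree-$(p\pm 1)$ Gegenbauer modes of $\sigma'$, since $Q_k^{(d)}(\<\btheta_i,\bx\>)\<\ba_i,\bx\>$ has harmonic support in $V_{d,k-1}\oplus V_{d,k+1}$. The resulting quadratic forms at different degrees $p$ couple $(c,\bb)$ through Hermite coefficients of $\sigma'$ and of $x^2\sigma'$, the latter entering via the recursion $\mu_k(x^2\sigma') = \mu_{k+2}(\sigma') + (2k+1)\mu_k(\sigma') + k(k-1)\mu_{k-2}(\sigma')$. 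Assumption \ref{ass:activation_lower_upper_NT_v2}.(b) supplies exactly the non-degeneracy needed: choosing the two degrees $p$ close to $k_1$ and to $k_2$ yields two lower bounds on $\|F_\ba\|_{L^2}^2$ as linear combinations of $\|c\|_2^2$ and $\|\bb\|_2^2$ that are linearly independent precisely when $\mu_{k_1}(x^2\sigma')/\mu_{k_1}(\sigma') \neq \mu_{k_2}(x^2\sigma')/\mu_{k_2}(\sigma')$. Inverting this $2\times 2$ system produces the uniform lower bound $\|F_\ba\|_{L^2}^2 \ge \eps\|\ba\|_2^2$ with high probability, for some $\eps > 0$ depending only on $\sigma$.
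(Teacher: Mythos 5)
Your proposal takes a genuinely different route from the paper. The paper never analyzes the quadratic form directly: it constructs a surrogate activation $\hat\sigma'$ by damping the $k_1$-th and $k_2$-th Gegenbauer modes of $\sigma'$ by factors $(1-\delta_t)$, uses $\bU\succeq\bU-\hat\bU=\bar\bU$ (since $\hat\bU\succeq 0$), observes that the off-diagonal blocks of $\bar\bU$ involve only Gegenbauer modes of index $\ge k_1-2\ge 2\ell+5$ and are therefore $o_{d,\P}(1)$ in Frobenius norm, and then shows that the diagonal blocks $\alpha\id_d+\beta\btheta_i\btheta_i^\sT$ can be made uniformly positive definite by choosing $(\delta_1,\delta_2)$ along a direction where both limits $F_1(\bdelta)$ (controlling $\alpha$, i.e.\ the tangential directions) and $F_2(\bdelta)$ (controlling $\alpha+\beta d$, i.e.\ the radial direction) are positive; the determinant condition in Assumption \ref{ass:NT-non_trivial_components} is exactly what guarantees such a direction exists. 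Your radial/tangential split $\ba_i=c_i\btheta_i/\sqrt d+\bb_i$ and the restriction to high-degree harmonic projections $\proj_p$ with $p$ near $k_1,k_2$ is a legitimate alternative skeleton, and several of its ingredients are sound: the $i\ne j$ contributions at high degree are indeed negligible by the same $\max_{i\ne j}|Q_k^{(d)}(\<\btheta_i,\btheta_j\>)|=\tilde O_{d,\P}(d^{-k/2})$ estimate, and the diagonal $c$--$\bb$ cross terms vanish exactly because $\E_\bx[g(\<\btheta_i,\bx\>)\<\bb_i,\bx\>]=0$ when $\bb_i\perp\btheta_i$. (One side remark: your claimed identity $\|F^{(c)}\|_{L^2}^2=(\E_G[G^2\sigma'(G)^2]+o_{d,\P}(1))\|c\|_2^2$ is false as a two-sided statement — for $k\le\ell$ the matrices $\bW_k$ have operator norm up to order $N/B(d,k)$, e.g.\ $\bW_0=\bfone\bfone^\sT$ — but only the lower bound is needed, so this is cosmetic.)

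The genuine gap is at the decisive step: you have not shown that Assumption \ref{ass:NT-non_trivial_components} actually delivers a strictly positive coefficient in front of \emph{both} $\|c\|_2^2$ and $\|\bb\|_2^2$. Because the diagonal cross terms vanish, each high degree $p$ contributes a form $A_p\sum_ic_i^2+B_p\sum_i\|\bb_i\|_2^2$ with $A_p,B_p\ge 0$; what you need is some accessible high $p$ with $A_p>0$ and some $p'$ with $B_{p'}>0$ — this is a positivity statement about specific coefficients, not the ``linear independence of two lower bounds'' you invoke. Neither positivity is automatic from $\mu_{k_1}(\sigma'),\mu_{k_2}(\sigma')\ne 0$: the radial coefficient satisfies $A_p\to\mu_p(x\sigma')^2/p!$ with $\mu_{k_1\pm1}(x\sigma')=\mu_{k_1\pm1\mp1\cdot 0}\ldots=\mu_{k_1+2}(\sigma')+(k_1+1)\mu_{k_1}(\sigma')$ (resp.\ $\mu_{k_1}(\sigma')+(k_1-1)\mu_{k_1-2}(\sigma')$), which can vanish for suitable neighboring Hermite coefficients; and $B_p$ is a possibly degenerate PSD quadratic form in $(\lambda_{p-1}(\sigma'),\lambda_{p+1}(\sigma'))$ — the two projections $\proj_p(\<\bb,\bx\>Q_{p\pm1}(\<\btheta,\bx\>))$ lie in a one-dimensional space of order-one zonal harmonics and can cancel. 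Ruling out the simultaneous vanishing of all these quantities over the accessible degrees $p\in\{k_1\pm1,k_2\pm1\}$ is precisely where the determinant condition must be used, via an explicit computation of the limits in terms of $\mu_{k_t}(\sigma')$ and $\mu_{k_t}(x^2\sigma')$ (this is the content of the paper's $F_1,F_2$ and their gradients at $\bdelta=\bzero$). As written, your proposal asserts that the assumption ``supplies exactly the non-degeneracy needed'' without performing this computation, so the argument is incomplete at its only nontrivial point.
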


These two propositions will be proven in the next sections. Proposition \ref{prop:expected_V_NTK} shows that 
\[
\begin{aligned}
\E[\| \bV_{> \ell + 1}\|_2^2] \le& \frac{C\,N d}{ B(d, \ell + 2)} \| \proj_{> \ell + 1} f_d \|_2^2 .
\end{aligned}
\]
Note $B(d, \ell + 2) = \Theta_d(d^{\ell + 2})$, and $N = o_d(d^{\ell+1})$. By Markov inequality, we have Eq. (\ref{eqn:bound_V_NTK}).  Equation (\ref{eqn:bound_inverse_U_NTK}) follows simply by Proposition \ref{prop:kernel_lower_bound_NTK}. This proves the theorem.

\subsection{Proof of Proposition \ref{prop:expected_V_NTK}} 

We denote the Gegenbauer decomposition of $\sigma'(\< \be, \cdot\>)$ by
\[
\sigma'( \< \be, \bx\>) = \sum_{k=0}^\infty B(d, k) \lambda_k(\sigma) Q_k(\sqrt d\< \be, \bx\>), 
\]
where
\[
\lambda_k(\sigma') = \< \sigma'(\< \be, \cdot\>), Q_k(\sqrt d \< \be, \cdot\>) \>_{L^2}. 
\]
By Lemma \ref{lemma:square_integrable}, applied to function $\sigma'$ (instead of $\sigma$), under Assumption \ref{ass:NT-square_integrable},
we have $\| \sigma'(\< \be, \cdot\>) \|_{L^2}^2 \le C$ (for $C$ a constant independent of $d$). We therefore have (recalling the 
normalization of the Gegenbauer polynomials in Eq.~\eqref{eq:GegenbauerNormalization})
\begin{equation}\label{eqn:bound_of_lambda_sigma_NTK}
\sum_{k=0}^\infty \lambda_k(\sigma')^2  B(d, k) = \| \sigma'(\< \be, \cdot\>) \|_{L^2}^2 \le C. 
\end{equation}
We define the \NT\, kernel by
\[
H(\bx, \bx') = \E_{\btheta}\big[\sigma'(\< \btheta, \bx\>/\sqrt d) \sigma'(\< \btheta, \bx'\>/\sqrt d)\big]\< \bx, \bx'\>. 
\]
Then
\begin{equation}\label{eqn:expression_Kernel_NTK}
\begin{aligned}
H(\bx, \bx') =& \E_{\btheta} \Big[ \sum_{k=0}^\infty B(d, k) \lambda_k(\sigma') Q_k(\< \btheta, \bx\>) \sum_{l=0}^\infty B(d, l) \lambda_l(\sigma') Q_l(\< \btheta, \bx'\>) \Big]\< \bx, \bx'\>\\
=& \sum_{k=0}^\infty B(d, k)^2 \lambda_k(\sigma')^2 \E_{\btheta} \Big[Q_k( \< \btheta, \bx\> ) Q_k(\< \btheta, \bx'\>) \Big] \< \bx, \bx'\> \\
=& \sum_{k=0}^\infty d \cdot B(d, k) \lambda_k(\sigma')^2 Q_k(\< \bx, \bx'\>) \< \bx, \bx'\>/d\, ,
\end{aligned}
\end{equation}
where in the last step we used Eq.~\eqref{eq:ProductGegenbauer}.
By the recurrence relationship for Gegenbauer polynomials \eqref{eq:RecursionG},  we have 
\[
\frac{t}{d}\, Q_k(t) =  s_{d, k} Q_{k-1}(t) + t_{d, k} Q_{k+1}(t),
\]
where 
\[
\begin{aligned}
s_{d, k} =& \frac{k}{2k + d - 2}, \\
t_{d, k} =& \frac{k + d - 2}{2k + d - 2}.
\end{aligned}
\]
We use the convention that $t_{d, -1} = 0$. This gives
\begin{equation}\label{eqn:bound_s_t}
\sup_{d \ge 6, k\ge 0} [s_{d, k+1} + t_{d, k-1}] = \sup_{d \ge 6, k\ge 0}\Big[ \frac{k + 1}{2k + d} + \frac{k + d - 3}{2k + d - 4} \Big] \le 2.
\end{equation}
Hence we get
\[
\begin{aligned}
H(\bx, \bx') =& \sum_{k=0}^\infty d \cdot B(d, k) \lambda_k(\sigma')^2 Q_k(\< \bx, \bx'\>) \< \bx, \bx'\>/d\\
=&\sum_{k=0}^\infty d \cdot B(d, k) \lambda_k(\sigma')^2 [s_{d, k} Q_{k-1}(\< \bx, \bx'\> ) + t_{d, k} Q_{k+1}(\< \bx, \bx'\> )]\\
=& \sum_{k=0}^\infty  \Gamma_{d, k} Q_k(\< \bx, \bx'\>),
\end{aligned}
\]
where
\[
\begin{aligned}
\Gamma_{d, k} =& d \cdot [t_{d, k-1} \lambda_{k-1}(\sigma')^2 B(d, k-1) + s_{d, k+1} \lambda_{k+1}(\sigma')^2 B(d, k+1)] \le 2 d C\, .
\end{aligned}
\]
The last inequality follows by Eqs. (\ref{eqn:bound_of_lambda_sigma_NTK}) and (\ref{eqn:bound_s_t}). 

We define
\[
\begin{aligned}
\cE_k \equiv&\, \E_{\btheta}[ \< \E_\bx[ \proj_{k} f_\star(\bx) \sigma'(\<\btheta, \bx\>/\sqrt d) \bx], \E_\bx[ \proj_{k} f_\star(\bx) \sigma'(\<\btheta, \bx\>/\sqrt d) \bx] \>]\\
=& \E_{\bx, \bx'}[ [\proj_{k} f_\star](\bx) H(\bx, \bx') [\proj_k f_\star](\bx')]. 
\end{aligned}
\]
Using the fact that the kernel $H$ preserve the decomposition \eqref{eq:SpinDecomposition}, we have 
\[
\cE_{\ge \ell} = \sum_{k \ge \ell} \cE_k. 
\]
Note by Eq. (\ref{eqn:expression_Kernel_NTK}), we have  (as always, expectations are with respect to $\bx,\bx'\sim  \Unif(\S^{d-1}(\sqrt d))$ independently)
\[
\begin{aligned}
\cE_k=& \E_{\bx, \bx'}[ [\proj_{k} f_\star](\bx) H(\bx, \bx') [\proj_k f_\star](\bx')]\\
=& \E_{\bx, \bx'}\Big[ \sum_{l=1}^{B(d,k)} \lambda_{kl}(f_\star) Y_{kl}(\bx)  \Gamma_{d, k}  Q_k(\< \bx, \bx'\> ) \sum_{s=1}^{B(d,k)} \lambda_{ks}(f_\star) Y_{ks}(\bx') \Big]\\
=& \Gamma_{d, k}  \sum_{l=1}^{B(d,k)}\sum_{s=1}^{B(d,k)} \lambda_{kl}(f_\star) \lambda_{ks}(f_\star) \E_{\bx, \bx' }\Big[  Y_{kl}(\bx)   Q_k(\< \bx, \bx'\>)   Y_{ks}(\bx') \Big]\\
=& \frac{\Gamma_{d, k}}{B(d, k)} \times  \sum_{l=1}^{B(d,k)}\sum_{s=1}^{B(d,k)} \lambda_{kl}(f_\star) \lambda_{ks}(f_\star) \delta_{ls}\\
=& \frac{\Gamma_{d, k}}{B(d, k)} \times \| \proj_k f_\star \|_{L^2}^2 \le \frac{2 C d}{ B(d, k)} \cdot \| \proj_k f_\star \|_{L^2}^2. 
\end{aligned}
\]
where the fourth equality used the fact that $\E_{\bx, \bx' }[  Y_{kl}(\bx)   Q_k(\< \bx, \bx'\>)   Y_{ks}(\bx')] = \delta_{ls} / B(d, k)$. 

Hence we have 
\begin{align*}
\cE_{\ge \ell} = \sum_{k= \ell }^\infty \cE_k \le \frac{2 d C}{B(d, \ell)} \cdot \| \proj_{\ge \ell} f_\star \|_{L^2}^2, 
\end{align*}
where we used the fact that $B(d, k)$ is non-decreasing in $k$ given by Lemma \ref{lem:non_decreasing_N}. This concludes the proof.

\subsection{Proof of Proposition \ref{prop:kernel_lower_bound_NTK}}

\subsubsection{Auxiliary lemmas}

In the proof of this proposition, we will need the following lemmas. 

\begin{lemma}\label{lem:gegenbauer_derivatives}
Let $\psi: \R \to \R$ be a function such that $\psi ( \< \be , \cdot \>) \in L^2 (\S^{d-1} (\sqrt{d}) )$ and $\psi ( \< \be , \cdot \>) \< \be, \cdot\> \in L^2 (\S^{d-1} (\sqrt{d}) )$. Let $\lbrace \lambda_{k,d} (\psi) \rbrace_{k = 0 }^\infty$ be the coefficients of its expansion in terms of the $d$-th order Gegenbauer polynomials
\[
\psi (x) = \sum_{k \ge 0} \lambda_{k,d} (\psi) B(d,k) Q_k^{(d)} (\sqrt{d} x), \qquad 
\lambda_{k,d} (\psi) = \E_{\bx \sim \Unif(\S^{d-1} (\sqrt{d}) ) } [\psi(x_1) Q^{(d)}_k (\sqrt{d} x_1 ) ].
\]
Then we can write 
\[
x \psi (x) = \sum_{k \ge 0} \lambda^{(1)}_{k,d} (\psi) B(d,k) Q_k^{(d)} (\sqrt{d} x),
\]
with the new coefficients given by
\[
\lambda^{(1)}_{0,d} (\psi)  = \sqrt{d}  \lambda_{1,d} (\psi) , \qquad \lambda^{(1)}_{k,d} (\psi)  = \sqrt{d}\frac{k+d-2}{2k+d-2} \lambda_{k+1,d} (\psi)  + \sqrt{d} \frac{k}{2k+d-2} \lambda_{k-1,d} (\psi) .
\]
\end{lemma}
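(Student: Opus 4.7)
The plan is a direct computation using the Gegenbauer three-term recurrence~\eqref{eq:RecursionG}. Setting $t = \sqrt{d}\, x$ in that recurrence gives the ``multiplication by $x$'' rule
\[
x\, Q_k^{(d)}(\sqrt{d}\, x) = \sqrt{d}\,s_{d,k}\, Q_{k-1}^{(d)}(\sqrt{d}\, x) + \sqrt{d}\,t_{d,k}\, Q_{k+1}^{(d)}(\sqrt{d}\, x),
\]
with $s_{d,k} = k/(2k+d-2)$, $t_{d,k} = (k+d-2)/(2k+d-2)$, and the convention that the $Q_{-1}^{(d)}$ term is absent. Since both $\psi(\<\be,\cdot\>)$ and $\psi(\<\be,\cdot\>)\<\be,\cdot\>$ lie in $L^2(\S^{d-1}(\sqrt d))$ by hypothesis, the given Gegenbauer expansion of $\psi$ converges in $L^2$, so one can multiply termwise by $x$ and obtain a series still converging in $L^2$ to $x\,\psi(x)$.

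Substituting the multiplication rule for each $Q_k^{(d)}$ yields
\[
x\,\psi(x) = \sum_{k\ge 0} \lambda_{k,d}(\psi)\, B(d,k)\,\sqrt{d}\,\bigl[s_{d,k}\, Q_{k-1}^{(d)}(\sqrt{d}\, x) + t_{d,k}\, Q_{k+1}^{(d)}(\sqrt{d}\, x)\bigr].
\]
I would then reindex and group by the fixed Gegenbauer order $k$: the coefficient of $Q_k^{(d)}(\sqrt{d}\, x)$ receives a contribution from index $k-1$ in $\psi$'s expansion (via $t_{d,k-1}$) and from index $k+1$ (via $s_{d,k+1}$). To rewrite this in the normalized form $\lambda^{(1)}_{k,d}(\psi)\, B(d,k)$, I use the ratios
\[
\frac{B(d,k+1)}{B(d,k)} = \frac{(2k+d)(k+d-2)}{(2k+d-2)(k+1)}, \qquad \frac{B(d,k-1)}{B(d,k)} = \frac{(2k+d-4)\,k}{(2k+d-2)(k+d-3)},
\]
which follow immediately from the closed form $B(d,k) = (2k+d-2)(k+d-3)!/[k!(d-2)!]$.

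A short cancellation then gives
\[
\frac{B(d,k+1)}{B(d,k)}\, s_{d,k+1} = \frac{k+d-2}{2k+d-2}, \qquad \frac{B(d,k-1)}{B(d,k)}\, t_{d,k-1} = \frac{k}{2k+d-2},
\]
which are exactly the two coefficients appearing in the claimed formula for $\lambda^{(1)}_{k,d}(\psi)$. The $k=0$ case is the boundary instance of the same computation: only the term with $k+1=1$ survives, contributing $\lambda_{1,d}(\psi)\, B(d,1)\,\sqrt{d}\cdot s_{d,1} = \lambda_{1,d}(\psi)\, d\cdot \sqrt{d}/d = \sqrt{d}\,\lambda_{1,d}(\psi)$, matching $\lambda^{(1)}_{0,d}(\psi)$ since $B(d,0)=1$. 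There is no real obstacle here: the only step beyond pure algebra is the $L^2$ justification for interchanging multiplication by $x$ with the infinite sum, which is immediate from the two hypotheses on $\psi$.
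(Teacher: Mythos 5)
Your proof is correct and follows essentially the same route as the paper's: apply the three-term recurrence \eqref{eq:RecursionG} to $x\,Q_k^{(d)}(\sqrt{d}\,x)$, reindex the resulting double contribution to each Gegenbauer order, and absorb the ratios $B(d,k\pm1)/B(d,k)$ into the coefficients, with the $k=0$ term handled as the boundary case. The brief $L^2$ justification for the termwise manipulation is a harmless (and reasonable) addition that the paper leaves implicit.
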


\begin{proof}
We recall the following two formulas for $k\ge 1$ (see Section \ref{sec:Gegenbauer}):
\[
\begin{aligned}
\frac{x}{d} Q^{(d)}_{k} (x) & = \frac{k}{2k +d - 2} Q^{(d)}_{k-1} (x) + \frac{k+d-2}{2k+d-2} Q^{(d)}_{k+1} (x), \\
B(d,k) & = \frac{2k+d-2}{k} \binom{k+d-3}{k-1}. 
\end{aligned}
\]
Furthermore, we have $Q^{(d)}_0 (x) = 1$, $Q^{(d)}_1 (x)  = x/d$ and therefore therefore 
$x Q^{(d)}_0 (x) = d Q^{(d)}_1 (x)$. We insert these expressions in the expansion of the function $\psi$
\[
\begin{aligned}
x \psi (x) =&  \sum_{k \ge 0} \lambda_{k,d} (\psi) B(d,k) x Q_k^{(d)} (\sqrt{d} x  )\\
= & \lambda_{0,d} (\psi) B(d,0) \sqrt{d} Q^{(d)}_1 ( \sqrt{d} x) + \sum_{k = 0}^\infty \lambda_{k+1,d} (\psi) \frac{k+1}{2k+d} B(d,k+1) \sqrt{d} Q^{(d)}_k (\sqrt{d} x) \\
&+  \sum_{k = 2}^\infty \lambda_{k-1,d} (\psi) \frac{k+d-3}{2k+d-4} B(d,k-1) \sqrt{d} Q^{(d)}_k (\sqrt{d} x) \\
= & \sum_{k = 0}^\infty \lambda^{(1)}_{k,d} (\psi) B(d,k)  Q^{(d)}_k (x).
\end{aligned}
\]
Matching the coefficients of the expansion yields
\[
\begin{aligned}
 \lambda^{(1)}_{0,d} (\psi)  =&  \sqrt{d} \lambda_{1,d} (\psi)  \frac{1}{d} \frac{B(d,1)}{B(d,0) } =  \sqrt{d} \lambda_{1,d} (\psi) \\
 \lambda^{(1)}_{k,d} (\psi)  = & \sqrt{d} \frac{k+d-2}{2k+d-2}  \lambda_{k+1,d} (\psi)  + \sqrt{d} \frac{k}{2k+d-2}  \lambda_{k-1,d} (\psi) .
\end{aligned}
\]
\end{proof}

Similarly, we can write the decomposition of $x^2 \psi(x)$ to be
\[
x^2 \psi (x) = \sum_{k \ge 0} \lambda^{(2)}_{k,d} (\psi) B(d,k) Q_k^{(d)} (\sqrt{d} x ),
\]
where the coefficients are given by the same relation as in the above lemma
\[
\lambda^{(2)}_{0,d} (\psi)  = \sqrt{d} \lambda^{(1)}_{1,d} (\psi) , \qquad \lambda^{(2)}_{k,d} (\psi)  = \sqrt{d}\frac{k+d-2}{2k+d-2} \lambda^{(1)}_{k+1,d} (\psi)  + \sqrt{d} \frac{k}{2k+d-2} \lambda^{(1)}_{k-1,d} (\psi) .
\]

\begin{lemma}\label{lem:decomposition_kernel_NTK}
Let $\bu : \S^{d-1} (\sqrt{d} ) \times \S^{d-1} (\sqrt{d} )  \to \R^{d \times d}$ be a matrix-valued function defined by
\[
\bu (\btheta_1 , \btheta_2 ) = \E_{\bx}[\sigma'(\<\btheta_1, \bx\>/\sqrt{d}) \sigma'(\<\btheta_2, \bx\>/\sqrt{d}) \bx \bx^\sT].
\]
Then there exist functions $u_1,u_2,u_3 : [-1,1] \to \R$ such that
\[
\bu (\btheta_1 , \btheta_2 ) = u_1(\< \btheta_1, \btheta_2\>/d) \id_d + u_2(\< \btheta_1, \btheta_2\>/d) [\btheta_1 \btheta_2^\sT + \btheta_2 \btheta_1^\sT] + u_3(\< \btheta_1, \btheta_2\>/d)[\btheta_1 \btheta_1^\sT + \btheta_2 \btheta_2^\sT].
\]
\end{lemma}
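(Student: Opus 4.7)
The plan is to prove this as a consequence of two symmetries of the kernel $\bu$: rotational equivariance and invariance under swapping $\btheta_1 \leftrightarrow \btheta_2$.

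First I would observe that for any orthogonal $\bR \in O(d)$, the change of variables $\bx \mapsto \bR^\sT \bx$ in the expectation yields the equivariance
\[
\bu(\bR\btheta_1, \bR\btheta_2) = \bR\, \bu(\btheta_1, \btheta_2)\, \bR^\sT,
\]
and that the integrand is symmetric in the pair $(\btheta_1, \btheta_2)$, so $\bu(\btheta_1, \btheta_2) = \bu(\btheta_2, \btheta_1)$. I would then restrict attention to $(\btheta_1, \btheta_2)$ in ``generic position'' (i.e., linearly independent, so that $V := \mathrm{span}(\btheta_1, \btheta_2)$ is a $2$-dimensional subspace). The final claim for the non-generic configurations will follow by continuity of $\bu$ as a function of $(\btheta_1, \btheta_2)$.

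Next I would exploit the stabilizer subgroup $H := \{\bR \in O(d) : \bR\btheta_1 = \btheta_1,\ \bR\btheta_2 = \btheta_2\}$, which acts trivially on $V$ and as the full group $O(V^\perp) \cong O(d-2)$ on $V^\perp$. Equivariance forces $\bR\, \bu(\btheta_1,\btheta_2)\, \bR^\sT = \bu(\btheta_1, \btheta_2)$ for all $\bR \in H$. A standard Schur-type argument (the only $O(d-2)$-invariant symmetric bilinear form on $V^\perp$ is a multiple of $\id_{V^\perp}$, and the $V$-$V^\perp$ cross block must vanish since the only $O(d-2)$-invariant vector in $V^\perp$ is $0$) gives
\[
\bu(\btheta_1, \btheta_2) \;=\; c(\btheta_1,\btheta_2)\, P_{V^\perp} \;+\; \bA(\btheta_1,\btheta_2),
\]
where $\bA$ is symmetric with image in $V$ and $P_{V^\perp} = \id_d - P_V$ is the orthogonal projector onto $V^\perp$.

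I would then expand both pieces in the natural basis of rank-$\leq 2$ matrices supported on $V$. Since $\|\btheta_1\|_2 = \|\btheta_2\|_2 = \sqrt d$ and $\<\btheta_1,\btheta_2\>$ is the only rotational invariant, solving the $2 \times 2$ system gives
\[
P_V \;=\; \alpha\bigl(\<\btheta_1,\btheta_2\>/d\bigr)\bigl(\btheta_1\btheta_1^\sT + \btheta_2\btheta_2^\sT\bigr) + \beta\bigl(\<\btheta_1,\btheta_2\>/d\bigr)\bigl(\btheta_1\btheta_2^\sT+\btheta_2\btheta_1^\sT\bigr),
\]
with explicit $\alpha, \beta$. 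Similarly $\bA$ is a priori a combination of $\btheta_1\btheta_1^\sT$, $\btheta_2\btheta_2^\sT$, and $\btheta_1\btheta_2^\sT+\btheta_2\btheta_1^\sT$ with scalar coefficients. The swap symmetry $\bu(\btheta_1,\btheta_2) = \bu(\btheta_2,\btheta_1)$ then forces the coefficients of $\btheta_1\btheta_1^\sT$ and $\btheta_2\btheta_2^\sT$ to be equal. Combining the two terms produces an expression of the claimed form $u_1 \id_d + u_2(\btheta_1\btheta_2^\sT+\btheta_2\btheta_1^\sT) + u_3(\btheta_1\btheta_1^\sT + \btheta_2\btheta_2^\sT)$. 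Finally, the rotational equivariance applied to $\bR$ chosen inside the diagonal stabilizer of the pair shows that $u_1, u_2, u_3$ depend on $(\btheta_1, \btheta_2)$ only through the invariant $\<\btheta_1, \btheta_2\>/d$, yielding functions on $[-1,1]$ as claimed.

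The main subtlety, rather than any single hard calculation, is making the Schur-type reduction on $V^\perp$ rigorous (handling the cross-block cancellation using both the $O(d-2)$ action \emph{and} the reflection $\bx \mapsto -\bx$ on $V^\perp$) and checking that the degenerate configurations $\btheta_1 = \pm\btheta_2$ are absorbed by continuity; the rest is linear algebra once the symmetries are identified.
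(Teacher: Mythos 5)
Your proposal is correct, and it reaches the conclusion by a more conceptual route than the paper. Both arguments rest on the same two symmetries --- rotational equivariance $\bu(\bO\btheta_1,\bO\btheta_2)=\bO\,\bu(\btheta_1,\btheta_2)\,\bO^\sT$ and invariance under swapping $\btheta_1\leftrightarrow\btheta_2$ --- but the paper exploits them concretely rather than abstractly: it rotates to a frame where $\btheta_1=(\sqrt d,0,\dots,0)$ and $\btheta_2=(\gamma\sqrt d,\sqrt{1-\gamma^2}\sqrt d,0,\dots,0)$, reads off the block structure of $\bu$ entrywise (your Schur-type reduction on $V^\perp$ corresponds there to the vanishing of the $\E[\cdots x_1x_3]$-type entries and the exchangeability of $x_3,\dots,x_d$), verifies one linear relation inside the top $2\times 2$ block, and then solves an explicit $3\times 3$ linear system expressing $u_1,u_2,u_3$ in terms of the three rotational invariants $\Trace(\bu)$, $\<\btheta_1,\bu\btheta_2\>$ and $\<\btheta_1,\bu\btheta_1\>$, cf.\ Eq.~\eqref{eq:KernelFormulaeDiff}. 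Your coordinate-free version is cleaner as an existence proof, but the paper's computational detour is not wasted: the explicit inversion formulas \eqref{eq:KernelFormulaeDiff} and \eqref{eq:KernelFormulaeEqual} are precisely what Steps 4 and 5 of the proof of Proposition~\ref{prop:kernel_lower_bound_NTK} use to bound $\bar u_1,\bar u_2,\bar u_3$; if one adopts your argument, that linear system still has to be derived afterwards for the downstream estimates.

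One small imprecision: the degenerate configurations $\btheta_2=\pm\btheta_1$ are not really ``absorbed by continuity.'' The coefficients $u_i(\gamma)$ you construct come from expanding in a basis that degenerates as $\gamma\to\pm 1$ (the projector onto $V$ carries a $1/(1-\gamma^2)$ factor), so nothing guarantees they admit finite limits there, and the lemma asks for functions defined on all of $[-1,1]$. The fix is trivial and is what the paper does: at $\btheta_1=\btheta_2=\btheta$ the stabilizer is the full orthogonal group of the hyperplane orthogonal to $\btheta$, the same invariance argument gives $\bu(\btheta,\btheta)=\alpha\,\id_d+\beta\,\btheta\btheta^\sT$, and one simply sets $u_1(1)=\alpha$ and $u_2(1)+u_3(1)=\beta/2$ (similarly at $\gamma=-1$). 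Since the lemma claims no regularity of the $u_i$, this pointwise assignment is all that is needed.
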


\begin{proof}

\noindent
{\bf Case 1: $\btheta_1 \neq \btheta_2$.} 

We first consider the case $\btheta_1 \neq \btheta_2$. We will denote $\gamma = \< \btheta_1 , \btheta_2 \>/d < 1$ for convenience. Given any three functions $u_1 , u_2 , u_3: (-1, 1) \to \R$, we define
\[
\Tilde \bu ( \btheta_1 , \btheta_2 ) = u_1(\< \btheta_1, \btheta_2\>/d) \id_d + u_2(\< \btheta_1, \btheta_2\>/d) [\btheta_1 \btheta_2^\sT + \btheta_2 \btheta_1^\sT] + u_3(\< \btheta_1, \btheta_2\>/d)[\btheta_1 \btheta_1^\sT + \btheta_2 \btheta_2^\sT].
\]
Let us rotate $\bu$ and $\Tilde \bu$ such that $\btheta_1 = (\sqrt{d},0,\ldots , 0) $ and $\btheta_2 = (\gamma \sqrt{d} , \sqrt{1 - \gamma^2} \sqrt{d} , 0 , \ldots , 0 )$. We can rewrite
\[
\begin{aligned}
\bu (\btheta_1 , \btheta_2) = \begin{bmatrix} 
\bu_{1:2,1:2} & \bzero \\
\bzero & \E_{\bx}[\sigma'(x_1) \sigma'(\gamma x_1 + \sqrt{1 - \gamma^2} x_2 ) x_3^2] \id_{d-2}
\end{bmatrix},
\end{aligned}
\]
where 
\[
\bu_{1:2,1:2} = \begin{bmatrix} 
 \E_{\bx}[\sigma'(x_1) \sigma'(\gamma x_1 + \sqrt{1 - \gamma^2} x_2 ) x_1^2] & \E_{\bx}[\sigma'(x_1) \sigma'(\gamma x_1 + \sqrt{1 - \gamma^2} x_2 ) x_1 x_2] \\
\E_{\bx}[\sigma'(x_1) \sigma'(\gamma x_1 + \sqrt{1 - \gamma^2} x_2 ) x_1 x_2] & \E_{\bx}[\sigma'(x_1) \sigma'(\gamma x_1 + \sqrt{1 - \gamma^2} x_2 ) x_2^2] 
\end{bmatrix} .
\]
Similarly, we can write  
\[
\begin{aligned}
\Tilde \bu (\btheta_1 , \btheta_2) = \begin{bmatrix}
\Tilde \bu_{1:2,1:2} & \bzero \\
\bzero & u_1 (\gamma) \id_{d-2}
\end{bmatrix},
\end{aligned}
\]
where 
\[
\Tilde \bu_{1:2,1:2} = \begin{bmatrix} 
u_1(\gamma) +2\gamma d u_2(\gamma)  +(1+\gamma^2) d u_3(\gamma)  & \sqrt{1 - \gamma^2} d u_2(\gamma)  +\gamma \sqrt{1-\gamma^2} d u_3 (\gamma) \\
\sqrt{1 - \gamma^2} d u_2(\gamma)  +\gamma \sqrt{1-\gamma^2} d u_3 (\gamma) & u_1(\gamma)  +(1 - \gamma^2) d u_3(\gamma) 
\end{bmatrix} .
\]
We check in both cases that:
\[
\begin{aligned}
\bu_{11} - \bu_{22} - 2 \frac{\gamma}{ \sqrt{1- \gamma^2}} \bu_{12}  = &  \E_{\bx}[\sigma'(x_1) \sigma'(\gamma x_1 + \sqrt{1 - \gamma^2} x_2 ) \lbrace x_1^2 - (\gamma x_1 + \sqrt{1 - \gamma^2} x_2 )^2\rbrace ] /(1 - \gamma^2) = 0, \\
\Tilde \bu_{11} - \Tilde \bu_{22} - 2 \frac{\gamma}{ \sqrt{1- \gamma^2}} \Tilde \bu_{12}  = & 0.
\end{aligned}
\]
We conclude that $\bu$ and  $\Tilde \bu$ are equal if and only if
\[
\begin{aligned}
\Trace ( \bu (\btheta_1 , \btheta_2 ) ) = & \Trace ( \Tilde \bu (\btheta_1 , \btheta_2 ) ) =  d u_1 (\gamma) + 2 \gamma d u_2 (\gamma) + 2 d u_3 (\gamma ), \\
\< \btheta_1 , \bu (\btheta_1 , \btheta_2) \btheta_2 \> = &  \< \btheta_1 , \Tilde \bu (\btheta_1 , \btheta_2) \btheta_2 \>  = \gamma d u_1 (\gamma) + (1 + \gamma^2) d^2 u_2 (\gamma) + 2 d^2 \gamma u_3 (\gamma ),\\
\< \btheta_1 , \bu (\btheta_1 , \btheta_2)  \btheta_1 \> = & \< \btheta_1 , \Tilde \bu (\btheta_1 , \btheta_2)  \btheta_1 \>  =  d u_1 (\gamma) + 2 d^2 \gamma u_2 (\gamma) + d^2 (1 + \gamma^2)   u_3 (\gamma ).
\end{aligned}
\]
We can therefore choose for $\gamma < 1$
\begin{align}
\begin{bmatrix} 
u_1 ( \gamma)  \\
u_2 ( \gamma)  \\
u_3 ( \gamma)  
\end{bmatrix} = d^{-1} \begin{bmatrix} 
1 & 2\gamma & 2 \\
\gamma & d(1+ \gamma^2) & 2 d\gamma  \\
1 & 2d\gamma & d( 1 + \gamma^2 )
\end{bmatrix}^{-1} \times \begin{bmatrix} 
\Trace ( \bu (\btheta_1 , \btheta_2 ) ) \\
\< \btheta_1 , \bu (\btheta_1 , \btheta_2) \btheta_2 \> \\
\< \btheta_1 , \bu (\btheta_1 , \btheta_2)  \btheta_1 \> 
\end{bmatrix} . \label{eq:KernelFormulaeDiff}
\end{align}

\noindent
{\bf Case 2: $\btheta_1 = \btheta_2$.} 

Similarly, for some fixed $\alpha$ and $\beta$, we define 
\[
\Tilde \bu ( \btheta_1 , \btheta_1 ) = \alpha \id_d + \beta \btheta_1 \btheta_1^\sT.
\]
We can show that the matrices $\bu$ and $\Tilde \bu $ are equal if and only if
\begin{align}
\begin{bmatrix} 
\alpha \\
\beta
\end{bmatrix} = d^{-1} \begin{bmatrix} 
1 & 1 \\
1 & d  
\end{bmatrix}^{-1} \times \begin{bmatrix} 
\Trace ( \bu ( \btheta_1 , \btheta_1 ) ) \\
\< \btheta_1 ,\bu ( \btheta_1 , \btheta_1 ) \btheta_1 \> 
\end{bmatrix} . \label{eq:KernelFormulaeEqual}
\end{align}
We can therefore fix $u_1 (1 ) = \alpha$ and $u_2(1) + u_3(1) = \beta/2$.
\end{proof}

\begin{lemma}\label{lem:convergence_of_coefficients}
Let $\sigma$ be an activation function such that $\sigma (u) \leq c_0 \exp(c_1 u^2)$ for some constants $c_0,c_1$, with $c_1 <1$. Let the Hermite and Gegenbauer decompositions of $\sigma$ be
\begin{align}
\sigma(x) &= \sum_{k \ge 0} \frac{\mu_k(\sigma)}{k!}\, \bbHe_k(x),\\
\sigma(x) &= \sum_{k \ge 0} \lambda_{k, d}(\sigma) B(d, k) Q_k^{(d)}(\sqrt d x)\, .
\end{align}
Then we have for any fixed $k$,
\[
\lim_{d \to \infty} \lambda_{d, k}^2(\sigma) B(d, k) = \frac{\mu_k(\sigma)^2}{ k!}\, .
\]
\end{lemma}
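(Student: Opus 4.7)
The plan is to express both quantities as integrals and pass to the limit via dominated convergence. Starting from the definition in~\eqref{eqn:technical_lambda_sigma}, and setting $P_{d,k}(x) := B(d,k)^{1/2}\, Q_k^{(d)}(\sqrt d\, x)$, I would write
\[
\lambda_{d,k}(\sigma)\, B(d,k)^{1/2} \;=\; \int_{-\sqrt d}^{\sqrt d} \sigma(x)\, P_{d,k}(x)\, \rho_d(x)\, \de x,
\]
where $\rho_d(x) = C_d(1-x^2/d)^{(d-3)/2}\mathbf{1}_{|x|\le \sqrt d}$ is the density of $\tau^1_{d-1}$ (with $C_d \to (2\pi)^{-1/2}$). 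The goal becomes to show that this integral tends to $\mu_k(\sigma)/\sqrt{k!}$; squaring then yields the lemma.

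The two inputs to the convergence are: (i) by \eqref{eq:Gegen-to-Hermite}, the polynomial $P_{d,k}$ has fixed degree $k$ and its coefficients converge to those of $\bbHe_k(x)/\sqrt{k!}$ as $d\to\infty$; consequently $P_{d,k}(x) \to \bbHe_k(x)/\sqrt{k!}$ pointwise, and there exists a constant $C_k$ such that $|P_{d,k}(x)| \le C_k(1+|x|)^k$ uniformly in $d$ large; (ii) by elementary manipulations, $\rho_d(x) \le C\exp(-(d-3)x^2/(2d))$ on its support, hence for any $\alpha<1/2$ we have $\rho_d(x)\le C'e^{-\alpha x^2}$ uniformly for $d$ sufficiently large, and $\tau^1_{d-1}\Rightarrow\gamma$ weakly.

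Combining these with the growth hypothesis on $\sigma$ (under which the exponent $c_1$ is strictly smaller than what $\rho_d$'s Gaussian-like decay can absorb), the integrand admits a uniform integrable dominating function of the form $C(1+|x|)^k \exp(-\beta x^2)$ with $\beta>0$. Dominated convergence then gives
\[
\int \sigma(x)\, P_{d,k}(x)\, \rho_d(x)\, \de x \;\longrightarrow\; \int \sigma(x)\, \frac{\bbHe_k(x)}{\sqrt{k!}}\, \gamma(\de x) \;=\; \frac{\mu_k(\sigma)}{\sqrt{k!}},
\]
and squaring both sides yields $\lim_{d\to\infty}\lambda_{d,k}(\sigma)^2 B(d,k) = \mu_k(\sigma)^2/k!$.

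The main (and essentially the only) technical obstacle is verifying the uniform integrable domination: one must carefully combine the uniform sub-Gaussian upper bound on $\rho_d$ with the growth bound on $\sigma$ and the polynomial bound on $P_{d,k}$ to produce a single integrable majorant valid for all $d$ large. All other steps---pointwise convergence of $P_{d,k}$, weak convergence $\tau^1_{d-1}\Rightarrow\gamma$, and the identification of the limit with $\mu_k(\sigma)/\sqrt{k!}$---are routine given the tools already developed in Section~\ref{sec:Background}.
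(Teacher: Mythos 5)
Your argument is correct, but it takes a genuinely different route from the paper's. The paper proves the lemma by reducing to monomials: it writes $\lambda_{k,d}(\sigma)[B(d,k)k!]^{1/2}=\E_{\bx}[\sigma(x_1)\,B(d,k)^{1/2}(k!)^{1/2}Q_k^{(d)}(\sqrt d x_1)]$, invokes the coefficient convergence \eqref{eq:Gegen-to-Hermite}, and then controls $\E_{\bx}[\sigma(x_1)x_1^k]-\E_G[\sigma(G)G^k]$ via Cauchy--Schwarz together with the explicit coupling of $x_1$ and $G$ constructed in Lemma \ref{lemma:square_integrable}.(c) (namely $x_1=G\sqrt d/\sqrt{G^2+\|\bxi\|_2^2}$). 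You instead work directly with the density $\rho_d$ of $\tau^1_{d-1}$ and apply dominated convergence, using the pointwise limits $P_{d,k}\to\bbHe_k/\sqrt{k!}$ and $\rho_d\to$ Gaussian density together with a uniform sub-Gaussian majorant. Your route is arguably more self-contained (it does not need the coupling lemma) and makes the single technical obstacle explicit; the paper's route outsources exactly that obstacle to Lemma \ref{lemma:square_integrable}, whose proof contains the same sub-Gaussian bound on $\rho_d$ that you use. One caveat applies equally to both arguments: the domination $|\sigma(x)|\,(1+|x|)^k e^{-\alpha x^2}\in L^1$ with $\alpha<1/2$ requires the growth exponent of $\sigma$ to be strictly below $1/2$, which does not follow from the hypothesis as literally stated ($\sigma(u)\le c_0\exp(c_1u^2)$, $c_1<1$); the intended hypothesis is evidently the squared form $\sigma(u)^2\le c_0\exp(c_1u^2/2)$ used in Assumption \ref{ass:NT-square_integrable} and in Lemma \ref{lemma:square_integrable}, under which your majorant is integrable and the proof goes through.
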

\begin{proof}
Recall the correspondence \eqref{eq:Gegen-to-Hermite} between Gegenbauer and Hermite polynomials.
Note for any monomial $m_k(x) = x^k$, by Lemma \ref{lemma:square_integrable}.$(c)$, we have
\[
\begin{aligned}
&\lim_{d \to \infty} \vert \E_{\bx \sim \Unif(\S^{d-1}(\sqrt d))}[\sigma(x_1) m_k(x_1)] - \E_G[\sigma(G) m_k(G)] \vert\\
\le& \lim_{d \to \infty} \E[\sigma(G)^2]^{1/2} \E[(x_1^k - G^k)^2]^{1/2} + \E[(\sigma(G) - \sigma(x_1))^2]^{1/2} \E[x^{2k}]^{1/2} = 0. 
\end{aligned} 
\]
This gives for any fixed $k$, we have
\[
\begin{aligned}
&\lim_{d \to \infty} \lambda_{k, d}(\sigma) [B(d, k) k!]^{1/2} = \lim_{d \to \infty} \E_{\bx}[ \sigma(x_1) Q_k^{(d)}(\sqrt d x_1)] [B(d, k) k!]^{1/2} \\
=& \lim_{d \to \infty} \E_{\bx}[\sigma(x_1) \bbHe_k(x_1)] =\E_{G}[\sigma(G) \bbHe_k(G)] = \mu_k(\sigma).
\end{aligned}
\]
This proves the lemma. 
\end{proof}

\begin{lemma}\label{lem:gegenbauer_coefficients}
For any fixed $k$, let $Q_k^{(d)}(x)$ be the $k$-th Gegenbauer polynomial. We expand 
\[
Q_k^{(d)}(x) = \sum_{s = 0}^k p_{k, s}^{(d)} x^s. 
\]
Then we have 
\[
p_{k, s}^{(d)} = O_d(d^{-k/2 - s/2}). 
\]
\end{lemma}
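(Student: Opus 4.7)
The plan is to deduce this directly from the Gegenbauer-to-Hermite limit relation \eqref{eq:Gegen-to-Hermite} that was recorded earlier in Section~\ref{sec:Hermite}, together with an elementary asymptotic estimate on $B(d,k)$.

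First I would write $Q_k^{(d)}(x) = \sum_{s=0}^k p_{k,s}^{(d)} x^s$ and perform the change of variable $x \mapsto \sqrt{d}\, x$, obtaining
\[
Q_k^{(d)}(\sqrt d\, x) \cdot B(d,k)^{1/2} = \sum_{s=0}^k \bigl(p_{k,s}^{(d)}\, d^{s/2}\, B(d,k)^{1/2}\bigr)\, x^s.
\]
Reading off the $s$-th coefficient and invoking Eq.~\eqref{eq:Gegen-to-Hermite}, for each fixed $s \le k$ we have
\[
\lim_{d\to\infty} p_{k,s}^{(d)}\, d^{s/2}\, B(d,k)^{1/2} \;=\; \Coeff\!\left\{ \frac{\bbHe_k(x)}{\sqrt{k!}};\, x^s\right\},
\]
which is a finite constant depending only on $k$ and $s$. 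In particular $p_{k,s}^{(d)}\, d^{s/2}\, B(d,k)^{1/2} = O_d(1)$.

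The second input is the asymptotic size of $B(d,k)$. Using the explicit formula $B(d,k) = \tfrac{2k+d-2}{k}\binom{k+d-3}{k-1}$ for $k \ge 1$, each of the $k-1$ factors of the binomial coefficient is $\Theta_d(d)$ for fixed $k$, and the prefactor contributes another $\Theta_d(d)$, so $B(d,k) = \Theta_d(d^k)$ and therefore $B(d,k)^{1/2} = \Theta_d(d^{k/2})$. (The case $k=0$ is trivial since $Q_0^{(d)} \equiv 1$.)

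Combining the two, $p_{k,s}^{(d)} \, d^{s/2} \cdot \Theta_d(d^{k/2}) = O_d(1)$, which rearranges to $p_{k,s}^{(d)} = O_d(d^{-k/2 - s/2})$, as claimed. There is no substantive obstacle here: the argument is a one-line consequence of \eqref{eq:Gegen-to-Hermite} and the fact that $B(d,k) \asymp d^k$ at fixed $k$. The only care needed is to distinguish between the coefficients of $Q_k^{(d)}(x)$ (the object of the lemma) and the coefficients of the rescaled polynomial $Q_k^{(d)}(\sqrt d\, x)$ appearing in \eqref{eq:Gegen-to-Hermite}, which differ precisely by the factor $d^{s/2}$ that produces the extra $d^{-s/2}$ in the final bound.
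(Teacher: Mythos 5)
Your argument is correct and is essentially the paper's own proof: both read off the $x^s$-coefficient of $Q_k^{(d)}(\sqrt d\,x)\,B(d,k)^{1/2}$ from Eq.~\eqref{eq:Gegen-to-Hermite} to conclude $p_{k,s}^{(d)}\,d^{s/2}\,B(d,k)^{1/2}=O_d(1)$, then invoke $B(d,k)=\Theta_d(d^k)$. No issues.
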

\begin{proof}
Using the correspondence \eqref{eq:Gegen-to-Hermite} between Gegenbauer and Hermite polynomials
we have 
\[
[p_{k, s}^{(d)} \times (\sqrt{d})^s] \times B(d, k)^{1/2} = O_d(1).
\]
This gives
\[
p_{k, s}^{(d)} = O_d(1/ [ d^{s/2}\cdot B(d, k)^{1/2}]) = O_d(d^{- s/2 - k/2}). 
\]
This proves the lemma. 
\end{proof}

\begin{lemma}\label{lem:random_matrix_bound}
Let $N = o_d(d^{\ell+1})$ for a fixed integer $\ell$. Let $(\bw_i)_{i \in [N]} \sim \Unif(\S^{d-1})$ independently. Denote a matrix $\bDelta^{(k)} = (\Delta_{ij}^{(k)})_{i, j \in [N]}$ with 
\[
\Delta_{ij}^{(k)} = \begin{cases}
0, & ~~~~ i = j,\\
\< \bw_i, \bw_j\>^k, &~~~~ i \neq j.
\end{cases}
\]
Then as $d \to \infty$, we have
\[
\| \bDelta^{(k)} \|_{\op} = o_{d,\P} ((\log d)^{k/2} d^{\ell+1 - k/2}). 
\]
\end{lemma}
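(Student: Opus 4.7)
The plan is to expand the monomial $\<\bw_i,\bw_{i'}\>^k = (\<\btheta_i,\btheta_{i'}\>/d)^k$ (where $\btheta_i = \sqrt d\,\bw_i$) in the basis of Gegenbauer polynomials, reducing $\|\bDelta^{(k)}\|_{\op}$ to the operator norms of the individual Gegenbauer matrices $(\bW_s)_{i,i'} := Q_s^{(d)}(\<\btheta_i,\btheta_{i'}\>)$. Iterating the three-term recurrence \eqref{eq:RecursionG} starting from $Q_0^{(d)}\equiv 1$ produces an expansion
\begin{align*}
(t/d)^k \;=\; \sum_{s=0}^k c_{s,k}^{(d)}\,Q_s^{(d)}(t),
\end{align*}
with recursion $c_{s,k+1}^{(d)} = t_{d,s-1}\,c_{s-1,k}^{(d)} + s_{d,s+1}\,c_{s+1,k}^{(d)}$, where $s_{d,j}=j/(2j+d-2)$ and $t_{d,j}=(j+d-2)/(2j+d-2)$. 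Any ``path'' in this recursion from height $0$ to height $s$ in $k$ steps contains exactly $(k-s)/2$ down-steps, each contributing a factor $s_{d,j} = O(k/d)$, while up-steps contribute $t_{d,j}\le 1$; a straightforward induction therefore yields $|c_{s,k}^{(d)}| \le C_k\, d^{-(k-s)/2}$. Evaluating the expansion at $t=d$ and using $Q_s^{(d)}(d)=1$ gives $\sum_{s=0}^k c_{s,k}^{(d)} = 1$, so the constant diagonal contribution cancels exactly and
\begin{align*}
\bDelta^{(k)} \;=\; \sum_{s=0}^k c_{s,k}^{(d)}\,(\bW_s - \id_N).
\end{align*}

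For the Gegenbauer matrices, Proposition \ref{prop:Delta_bound} is \emph{not} directly applicable in the relevant regime (it requires $N \le d^s/e^{A_d\sqrt{\log d}}$, which fails for $s \le \ell$). Instead, the orthogonality relation \eqref{eq:ProductGegenbauer} gives $\E\|\bW_s - \id_N\|_F^2 = N(N-1)\cdot\E[Q_s^{(d)}(\<\btheta,\btheta'\>)^2] = N(N-1)/B(d,s)$, so Markov's inequality yields
\begin{align*}
\|\bW_s - \id_N\|_{\op} \;\le\; \|\bW_s - \id_N\|_F \;\le\; C\,\sqrt{\log d}\;N/\sqrt{B(d,s)}
\end{align*}
with probability at least $1 - 1/\log d$. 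Since the number of terms $k+1$ is fixed, a union bound over $s=0,\dots,k$ retains high probability.

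Combining the triangle inequality with the bound on $|c_{s,k}^{(d)}|$ and $B(d,s) = \Theta_d(d^s)$ gives, with high probability,
\begin{align*}
\|\bDelta^{(k)}\|_{\op} \;\le\; \sum_{s=0}^k |c_{s,k}^{(d)}|\,\|\bW_s - \id_N\|_{\op} \;\le\; C_k\,\sqrt{\log d}\,N\sum_{s=0}^k d^{-(k-s)/2}\cdot d^{-s/2} \;=\; O_k\!\left(\sqrt{\log d}\;\frac{N}{d^{k/2}}\right).
\end{align*}
Since $N = o_d(d^{\ell+1})$ and $\sqrt{\log d} \le (\log d)^{k/2}$ for $k\ge 1$, this gives $\|\bDelta^{(k)}\|_{\op} = o_{d,\P}((\log d)^{k/2} d^{\ell+1-k/2})$. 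The case $k=0$ is handled directly, since $\bDelta^{(0)} = \bfone\bfone^\sT - \id_N$ has operator norm $N-1 = o_d(d^{\ell+1})$. The only delicate technical point is the inductive bound on $|c_{s,k}^{(d)}|$, which could be isolated into a small auxiliary lemma in the spirit of Lemma \ref{lem:gegenbauer_coefficients}; everything else uses only Gegenbauer orthogonality and Markov's inequality.
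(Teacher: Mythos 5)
Your proof is correct, but it takes a genuinely different route from the paper's. The paper's argument is a two-line affair: it bounds the tail of a single inner product using the explicit density of a coordinate of a uniform point on the sphere, takes a union bound over the $N^2 \le d^{2\ell+2}$ pairs to get $\max_{i\neq j}|\<\bw_i,\bw_j\>| = O_{d,\P}(\sqrt{(\log d)/d})$, and then invokes the crude inequality $\|\bDelta^{(k)}\|_{\op}\le N\,\|\bDelta^{(k)}\|_{\max} = N\,(\max_{i\ne j}|\<\bw_i,\bw_j\>|)^k$; the $(\log d)^{k/2}$ in the statement is exactly the $k$-th power of the $\sqrt{\log d}$ from the union bound. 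You instead expand $(t/d)^k$ in Gegenbauer polynomials, control the coefficients $c^{(d)}_{s,k}$ by a path-counting induction on the three-term recurrence, observe that $\sum_s c^{(d)}_{s,k}=1$ (so the diagonal cancels and $\bDelta^{(k)}=\sum_s c^{(d)}_{s,k}(\bW_s-\id_N)$), and bound each $\|\bW_s-\id_N\|_{\op}$ by its Frobenius norm via the orthogonality relation \eqref{eq:ProductGegenbauer} and Markov. All of these steps check out: the coefficient bound $|c^{(d)}_{s,k}|\le C_k d^{-(k-s)/2}$ is right (each of the $(k-s)/2$ down-steps contributes $s_{d,j}=O(k/d)$, up-steps contribute at most $1$, and there are at most $2^k$ paths), each summand contributes $d^{-(k-s)/2}\cdot N B(d,s)^{-1/2} = \Theta(N d^{-k/2})$, and you correctly note that Proposition \ref{prop:Delta_bound} cannot be applied for $s\le\ell$ so the Frobenius bound is the right substitute. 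Your route is longer but reuses the paper's harmonic-analysis machinery and in fact yields the marginally sharper bound $O_{d,\P}(\sqrt{\log d}\;N d^{-k/2})$, i.e.\ a single $\sqrt{\log d}$ factor rather than $(\log d)^{k/2}$; the paper's route is more elementary and shorter, at the cost of the extra logarithms, which are immaterial for how the lemma is used. Both establish the stated claim.
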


\begin{proof}
Let us consider  $\bw \sim \Unif(\S^{d-1})$, and $w_1$ its first coordinate.
We have $ w_1$ which has density $f(x) = (C_d\sqrt{d})  (1-x^2)^{(d-3)/2}$ on $[-1,+1]$, cf. Eq.~\eqref{eq:taud-def}:
\[
\P ( | w_1 | > t ) =2 (C_d\sqrt{d}) \int_t^1 (1-x^2)^{(d-3)/2} \de x \le  2 (C_d\sqrt{d})  (1 - t^2)^{(d-3)/2} \le 2\sqrt{d} e^{- (d-3) t^2/2 } ,
\]
where the last inequality holds for all $d$ large enough, since $C_d\to(2\pi)^{-1/2}$ as $d\to\infty$.
Hence, we have:
\[
\P\Big( \max_{i \neq j} \vert \< \bw_i , \bw_j \> \vert > t \Big) \le N^2 \P ( \vert \< \be_1 , \bw \> \vert > t ) \le C \exp \lbrace  (2 \ell+3) \log (d) - (d-3) t^2/2 \rbrace.
\]
Taking $t = O ( \log (d)^{1/2} d^{-1/2}) $, we get
\[
\max_{i \neq j} \vert  \< \bw_i , \bw_j \> \vert = O_{d,\P} ( \log (d)^{1/2} d^{-1/2}) .
\]
Using the following bound:
\[
\| \bDelta^{(k)} \|_{\op} \le N \| \bDelta^{(k)} \|_{\max}  = N \Big(\max_{i \neq j} \vert \< \bw_i , \bw_j \> \vert \Big)^k =o_{d,\P} ( (\log d)^{k/2} d^{\ell+1 - k/2}),
\]
which concludes the proof.
\end{proof}

\subsubsection{Proof of Proposition \ref{prop:kernel_lower_bound_NTK}}

\noindent
{\bf Step 1. Construction of the activation function $\hat \sigma$.}

By  Assumption \ref{ass:NT-square_integrable} and Lemma \ref{lemma:square_integrable} (applied to $\sigma'$ instead of $\sigma$),
 we have $\sigma' ( \< \be , \cdot \> ) \in L^2 (\S^{d-1} (\sqrt{d}))$ and we consider its expansion in terms of Gegenbauer polynomials
(as always, expectation is taken with respect to $\bx \sim  \Unif(\S^{d-1} (\sqrt{d}))$ with $x_1=\<\bx,\be_1\>$):
\[
\sigma' (x ) = \sum_{k=0}^\infty \lambda_{k,d} (\sigma ' ) B(d,k) Q_k (\sqrt{d}x), \qquad
\lambda_{k,d} (\sigma ' ) = \E_{\bx} [ \sigma' (x_1) Q_k (\sqrt{d} x_1) ].
\]
Let  $k_2 > k_1 \ge 2 \ell + 7$ be two indices that satisfy the conditions of Assumption \ref{ass:NT-non_trivial_components}. Using the Gegenbauer coefficients of $\sigma'$, we define $\hat \sigma':[-d,d]\to\reals$ by
\begin{equation}\label{eqn:hat_sigma_definition}
\hat \sigma' (x ) = \sum_{k\neq k_1, k_2} \lambda_{k,d} (\sigma ' ) B(d,k) Q_k (\sqrt{d} x) + \sum_{t = 1,2} (1 -\delta_t) \lambda_{k_t,d} (\sigma ' ) B(d,k_t) Q_{k_t} (\sqrt{d} x),
\end{equation}
for some $\delta_1, \delta_2$ that we will fix later (with $|\delta_t|\le 1$).

\noindent
{\bf Step 2. The functions $\bu, \hat \bu$ and $\bar \bu$.}

Let $\bu$ and $\hat \bu$ be the matrix-valued functions associated respectively to $\sigma'$ and $\hat \sigma'$
\begin{align}
\bu ( \btheta_1 , \btheta_2 ) & = \E_{\bx}[\sigma'(\<\btheta_1, \bx\>/\sqrt{d}) \sigma'(\<\btheta_2, \bx\>/\sqrt{d}) \bx \bx^\sT]\, , \label{eq:bu} \\
\hat \bu ( \btheta_1 , \btheta_2 ) & = \E_{\bx}[\hat \sigma'(\<\btheta_1, \bx\>/\sqrt{d}) \hat \sigma'(\<\btheta_2, \bx\>/\sqrt{d}) \bx \bx^\sT]\, .\label{eq:hbu}
\end{align}
From Lemma \ref{lem:decomposition_kernel_NTK}, there exists functions $u_1, u_2, u_3$ and $\hat u_1, \hat u_2, \hat u_3$, such that 
\[
\begin{aligned}
\bu ( \btheta_1 , \btheta_2 ) = &  u_1(\< \btheta_1, \btheta_2\>/d) \id_d + u_2(\< \btheta_1, \btheta_2\>/d) [\btheta_1 \btheta_2^\sT + \btheta_2 \btheta_1^\sT] + u_3(\< \btheta_1, \btheta_2\>/d)[\btheta_1 \btheta_1^\sT + \btheta_2 \btheta_2^\sT], \\
\hat \bu ( \btheta_1 , \btheta_2 ) = &  \hat u_1(\< \btheta_1, \btheta_2\>/d) \id_d + \hat u_2(\< \btheta_1, \btheta_2\>/d) [\btheta_1 \btheta_2^\sT + \btheta_2 \btheta_1^\sT] + \hat u_3(\< \btheta_1, \btheta_2\>/d)[\btheta_1 \btheta_1^\sT + \btheta_2 \btheta_2^\sT]. 
\end{aligned}
\]
We define $\bar \bu = \bu - \hat \bu $. Then we can write
\begin{equation}\label{eqn:bar_u_expression}
\bar \bu ( \btheta_1 , \btheta_2 ) =  \bar u_1(\< \btheta_1, \btheta_2\>/d) \id_d + \bar u_2(\< \btheta_1, \btheta_2\>/d) [\btheta_1 \btheta_2^\sT +\btheta_2 \btheta_1^\sT] + \bar u_3(\< \btheta_1, \btheta_2\>/d)[\btheta_1 \btheta_1^\sT + \btheta_2 \btheta_2^\sT],
\end{equation}
where $\bar u_k = u_k - \hat u_k$ for $k =1,2,3$.

\noindent
{\bf Step 3. Construction of the kernel matrices.}

Let $\bU, \hat \bU , \bar \bU \in \R^{Nd \times Nd}$ with $i,j$-th block (for $i, j \in [N]$) given by
\begin{align}
\bU_{ij} & = \bu ( \btheta_i , \btheta_j )\, , \label{eq:bU}\\
\hat \bU_{ij} & = \hat \bu ( \btheta_i , \btheta_j )\, , \label{eq:hbU}\\
\bar \bU_{ij} & = \bar \bu ( \btheta_i , \btheta_j ) =  \bu ( \btheta_i , \btheta_j ) - \hat \bu ( \btheta_i , \btheta_j )\, . \label{eqn:bar_U_expression}
\end{align}
Note that we have $\bU = \hat \bU + \bar \bU$. By Eq. (\ref{eq:hbU}) and (\ref{eq:hbu}), it is easy to see that $\hat \bU \succeq 0$. Then we have $\bU \succeq \bar \bU$. In the following, we would like to lower bound matrix $\bar \bU$. 

We decompose $\bar \bU  $ as
\[
\bar \bU = \bD + \bDelta,
\]
where $\bD \in \R^{dN \times dN}$ is a block-diagonal matrix, with
\begin{equation}\label{eqn:expression_D_matrix}
\bD = \diag(\bar \bU_{11} , \ldots , \bar \bU_{NN}),
\end{equation}
and $\bDelta \in \R^{dN \times dN}$ is formed by blocks $\bDelta_{ij}\in\reals^{d\times d}$ for $i, j \in [n]$,
defined by
\begin{equation}\label{eqn:expression_Delta_matrix}
\bDelta_{ij} = \begin{cases}
0, & ~~~~ i = j,\\
\bar \bU_{ij}, &~~~~ i \neq j.
\end{cases}
\end{equation}
In the rest of the proof, we will prove that $ \| \bDelta \|_{\op} = o_{d,\P} (1)$ and for $\eps$ small enough $\bD \succeq \eps \id_{Nd}$ with high probability.

\noindent
{\bf Step 4. Prove that $ \| \bDelta \|_{\op} = o_{d,\P} (1)$.}

Denoting $\gamma_{ij} = \< \btheta_i , \btheta_j \>/d < 1$, we get, from Eq.~\eqref{eq:KernelFormulaeDiff},
\begin{align}
\begin{bmatrix} 
\bar u_1 ( \gamma_{ij})  \\
\bar u_2 ( \gamma_{ij})  \\
\bar u_3 ( \gamma_{ij})  
\end{bmatrix} = \begin{bmatrix} 
u_1 ( \gamma_{ij}) - \hat u_1 ( \gamma_{ij})  \\
u_2 ( \gamma_{ij}) - \hat u_2 ( \gamma_{ij})  \\
u_3 ( \gamma_{ij}) - \hat u_3 ( \gamma_{ij})  
\end{bmatrix} 
&= d^{-1} \bM_{ij} \begin{bmatrix} 
\Trace ( \bar \bU_{ij} ) \\
\<\btheta_i , \bar  \bU_{ij} \btheta_j \> \\
\< \btheta_i , \bar \bU_{ij} \btheta_i \> 
\end{bmatrix}\, ,
\label{eq:u_bar_coeff}\\
\bM_{ij} & \equiv  \begin{bmatrix} 
1 & 2\gamma_{ij} & 2 \\
\gamma_{ij} & d(1+ \gamma_{ij}^2) & 2 d\gamma_{ij}  \\
1 & 2d\gamma_{ij} & d( 1 + \gamma_{ij}^2 )
\end{bmatrix}^{-1}\, . \label{eq:u_bar_coeff-Mmatrix}
\end{align}
Using the notations of Lemma \ref{lem:gegenbauer_derivatives}, we get
\[
\begin{aligned}
\Trace ( \bU_{ij} ) = &  \E_{\bx}[\sigma'(\<\btheta_i, \bx\>/\sqrt{d}) \sigma'(\<\btheta_j, \bx\>/\sqrt{d}) \|\bx\|_2^2 ]  \\
=& \sum_{k=0}^\infty d \cdot \lambda_{k,d} (\sigma ' )^2 B(d,k) Q_k (\< \btheta_i , \btheta_j \>), \\
\< \btheta_i , \bU_{ij} \btheta_j \> = & \E_{\bx}[\sigma'(\<\btheta_i, \bx\> / \sqrt{d}) \<\btheta_i, \bx\> \sigma'(\<\btheta_j, \bx\> /\sqrt{d} ) \<\btheta_j, \bx\> ] \\
=& \sum_{k=0}^\infty  d\cdot \lambda_{k,d}^{(1)} (\sigma ' )^2 B(d,k) Q_k (\< \btheta_i , \btheta_j \>), \\
\< \btheta_i , \bU_{ij} \btheta_i \> = & \E_{\bx}[\sigma'(\<\btheta_i, \bx\>/\sqrt{d}) \<\btheta_i, \bx\>^2 \sigma'(\<\btheta_j, \bx\>/\sqrt{d} )  ] \\
=& \sum_{k=0}^\infty  d\cdot \lambda_{k,d}^{(2)} (\sigma ' ) \lambda_{k,d} (\sigma ' ) B(d,k) Q_k (\< \btheta_i , \btheta_j \>).
\end{aligned}
\]
We get similar expressions for $\hat \bU_{ij} $ with $\lambda_{k,d} (\sigma' ) $ replaced by $\lambda_{k,d} (\hat \sigma' ) $. Because we defined $\sigma'$ and $\hat \sigma'$ by only modifying the $k_1$-th and $k_2$-th coefficients, we get 
\begin{equation}
\begin{aligned}
\Trace ( \bar \bU_{ij} ) = \Trace ( \bU_{ij}  - \hat \bU_{ij} ) = & \sum_{t = 1, 2} d [\lambda_{k_t,d}(\sigma ')^2 - (1 - \delta_t)^2 \lambda_{k_t,d}(\sigma ')^2 ] B(d,k_t) Q_k (\gamma_{ij}).
\end{aligned}
\label{eq:trace_ubar}
\end{equation}
Recalling that $\lambda_{k,d}^{(1)}$ only depend on $\lambda_{k-1,d}$ and $\lambda_{k+1,d}$ (Lemma \ref{lem:gegenbauer_derivatives}), we get
\begin{equation}
\begin{aligned}
 \< \btheta_i ,  \bar \bU_{ij}  \btheta_j \>
= &  d \sum_{k \in \lbrace k_1 -1 , k_1 +1 \rbrace\cup \lbrace k_2 -1 , k_2 +1 \rbrace} [\lambda_{k,d}^{(1)} (\sigma ')^2 - \lambda_{k,d}^{(1)} (\hat \sigma ')^2 ] B(d,k) Q_k (\<\btheta_{i},\btheta_j\>),\\
 \< \btheta_i ,  \bar  \bU_{ij}  \btheta_j \>
= & d \sum_{k \in \lbrace k_1 -2, k_1 , k_1 +2 \rbrace\cup \lbrace k_2 -2 , k_2 , k_2 +2 \rbrace}[\lambda_{k,d}^{(2)} (\sigma ')\lambda_{k,d} (\sigma ') - \lambda_{k,d}^{(2)} (\hat \sigma ')\lambda_{k,d} (\hat \sigma ') ] B(d,k) Q_k (\<\btheta_{i},\btheta_j\>).
\end{aligned}
\label{eq:quadratic_ubar}
\end{equation}
By Assumption \ref{ass:NT-square_integrable} and the convergence in Lemma \ref{lem:convergence_of_coefficients}, for any fixed $k$,
\begin{equation}
\lim_{d \rightarrow \infty} \lambda_{k,d} (\sigma ' ) [B(d,k) k!]^{1/2} = \mu_k (\sigma ').
\label{eq:lambda_limit}
\end{equation}
Using the expression of $B(d,k)$ we get
\begin{equation}
\begin{aligned}
\lim_{d \rightarrow \infty} \lambda_{k,d}^{(1)} (\sigma ' ) [B(d,k) k!]^{1/2} & = \mu_{k+1} (\sigma') + k \mu_{k-1} (\sigma') = \mu_k( x \sigma ' ),\\
\label{eq:lambdaone_limit}
\lim_{d \rightarrow \infty} \lambda_{k,d}^{(2)} (\sigma ' ) [B(d,k) k!]^{1/2} & = \mu_{k+2} + (2k +1 ) \mu_{k} (\sigma') + k(k-1) \mu_{k-2} (\sigma') = \mu_k( x^2 \sigma ' ).
\end{aligned}
\end{equation}

From Lemma \ref{lem:gegenbauer_coefficients}, we recall that the coefficients of the $k$-th Gegenbauer polynomial $Q_k^{(d)} ( x ) = \sum_{s=0}^k p^{(d)}_{k,s} x^s$
satisfy
\begin{align}
p^{(d)}_{k, s} = O_d (d^{-k/2 -s/2})\, .
\end{align}
Furthermore, we have shown in Lemma \ref{lem:random_matrix_bound} that $\max_{i \neq j} |\< \btheta_i , \btheta_j \> | = O_{d,\P} (\sqrt{d \log d} )$. 
We deduce that
\begin{equation}
\max_{i \neq j} \vert Q_k^{(d)} ( \<\btheta_{i},\btheta_j\>) \vert = \Tilde O_{d,\P} ( d^{-k/2}).
\label{eq:sup_gegenbauer_polynomials}
\end{equation}
Plugging the estimates \eqref{eq:lambda_limit}, \eqref{eq:lambdaone_limit} and \eqref{eq:sup_gegenbauer_polynomials} into
Eqs.~\eqref{eq:trace_ubar} and \eqref{eq:quadratic_ubar}, we obtain that
\begin{equation}
\begin{aligned}
\max_{i \neq j} \Big\lbrace \big|\Trace (\bar \bU_{ij}  )\big|  , \;\big|\< \btheta_i ,  \bar \bU_{ij}  \btheta_j \>\big|   ,\;
\big|\< \btheta_i ,  \bar \bU_{ij} \btheta_i \> \big| \Big\rbrace = \Tilde O_{d,\P} ( d^{2 - k_1/2}).
\label{eq:boundUbar}
\end{aligned} 
\end{equation}
From Eq.~\eqref{eq:u_bar_coeff-Mmatrix}, using the fact that $\max_{i\neq j}|\gamma_{ij}| = O_{d,\P} (\sqrt{(\log d)/d})$ and Cramer's rule for matrix inversion, it 
is easy to see that 
\begin{align}\label{eqn:M_matrix_bound}
\max_{i\neq j}\max_{l,k \in [3]}\big|(\bM_{ij})_{lk}\big| = O_{d,\P} (1)\, .
\end{align}
We deduce from \eqref{eq:boundUbar} \eqref{eq:u_bar_coeff} and \eqref{eqn:M_matrix_bound} that
\begin{equation}\label{eqn:bound_bar_u}
\max_{i \neq j } \max_{k = 1,2,3} \vert \bar u_k (\gamma_{ij}) \vert = \Tilde O_{d,\P} ( d^{1-k_1/2}). 
\end{equation}
As a result, combining Eq. (\ref{eqn:bound_bar_u}) with Eq. (\ref{eqn:bar_U_expression}) and (\ref{eqn:bar_u_expression}), we get
\[
\begin{aligned}
 \max_{i\neq j}\| \bar \bU_{ij} \|_{F}^2 =& \max_{i\neq j} \| ( \bar u_1(\gamma_{ij} ) \id_d + \bar u_2(\gamma_{ij}) [\btheta_i \btheta_j^\sT +\btheta_j \btheta_i^\sT] + \bar u_3(\gamma_{ij})[\btheta_i \btheta_i^\sT + \btheta_j \btheta_j^\sT]) \|_F^2 \\
 \le & \max_{i \neq j } \max_{k = 1,2,3} \vert \bar u_k (\gamma_{ij}) \vert^2 \Big[ \max_{i\neq j} [ \| \id_d \|_F^2 + \| \btheta_i \btheta_j^\sT +\btheta_j \btheta_i^\sT \|_F^2 + \| \btheta_i \btheta_i^\sT + \btheta_j \btheta_j^\sT \|_F^2 ] \Big] = \Tilde O_{d,\P} (d^{4 - k_1}).
 \end{aligned}
\]
By the expression of $\bDelta$ given by \eqref{eqn:expression_Delta_matrix}, we conclude that
\[
\| \bDelta \|_{\op}^2 \le \| \bDelta \|^2_{F} = \sum_{i,j = 1, i \neq j}^N \| \bar \bU_{ij}   \|_{F}^2 = \Tilde o_{d,\P} ( d^{2\ell+6 - k_1 }) .
\]
Since $k_1 \ge 2\ell +7$, we deduce that $ \| \bDelta \|_{\op} = o_{d,\P} (1)$.

\noindent
{\bf Step 5. Proving that $\bD \succeq \eps \id_{Nd}$.}~

By Lemma \ref{lem:decomposition_kernel_NTK}, we can express $\bar \bU_{ii}$ by
\[
\bar \bU_{ii} = \alpha \id_d + \beta \btheta_i \btheta_i^\sT
\]
with $\alpha$, $\beta$ independent of $i$, and given by Eq.~\eqref{eq:KernelFormulaeEqual}, namely
\begin{equation}
\begin{bmatrix} 
\alpha \\
\beta
\end{bmatrix} = [d(d-1)]^{-1} \begin{bmatrix} 
d & - 1 \\
- 1 & 1  
\end{bmatrix} \times \begin{bmatrix} 
\Trace ( \bar \bU_{ii}) \\
\< \btheta_i , \bar \bU_{ii} \btheta_i \> 
\end{bmatrix} .
\label{eq:u_bar_cst}
\end{equation}
(Notice that $\Trace ( \bar \bU_{ii}) $ and $\< \btheta_i , \bar \bU_{ii}  \btheta_i \>$
 are independent of $i$ by construction, cf. Eqs.~\eqref{eq:bu}, \eqref{eq:hbu} and \eqref{eq:bU}, \eqref{eq:hbU}.) 
By the definition of $\bD$ given in Eq. (\ref{eqn:expression_D_matrix}), We deduce that:
\begin{align}
\lambda_{\min} ( \bD) &= \min ( \alpha , \alpha + \beta d ) = \min \left\{ \frac{1}{d-1}\Trace ( \bar \bU_{ii} )- \frac{1}{d(d-1)}\< \btheta_i , \bar \bU_{ii} \btheta_i \> ,  \frac{1}{d}\< \btheta_i , \bar \bU_{ii} \btheta_i \> \right\}.
\end{align}
We claim that, under the assumptions of Proposition \ref{prop:kernel_lower_bound_NTK}, and denoting $\bdelta=(\delta_1,\delta_2)$ (where $\delta_1, \delta_2$ first appears in the definition of $\hat \sigma$ in Eq. (\ref{eqn:hat_sigma_definition}), and till now $\delta_1, \delta_2$ are still not determined)
\begin{align}
\lim_{d\to\infty}\frac{1}{d-1}\Trace(\bar \bU_{ii}) & = F_1(\bdelta)\, ,\label{eq:ClaimF1}\\
\lim_{d\to\infty}\frac{1}{d}\< \btheta_i , \bar \bU_{ii} \btheta_i \>& = F_2(\bdelta)\, ,\label{eq:ClaimF2}
\end{align}
where $F_1(\bzero) = F_2(\bzero)=0$ and $\nabla F_1(\bzero), \nabla F_2(\bzero)\neq \bzero$, $\det (\nabla F_1(\bzero) ,\nabla F_2(\bzero)) \neq 0$. 
Before proving this claim, let us show that it allows to finish the proof of Proposition \ref{prop:kernel_lower_bound_NTK}.
Since $\det (\nabla F_1(\bzero) ,\nabla F_2(\bzero)) \neq 0$, there exists a unit-norm vector $\bv$, such that $\<\bv,\nabla F_1(\bzero)\>>0$, and $\<\bv,\nabla F_2(\bzero)\>>0$.
Now we choose $\delta_1, \delta_2$ (first appears in the definition of $\hat \sigma$ in Eq. (\ref{eqn:hat_sigma_definition})): we set $\bdelta= (\delta_1, \delta_2) = \delta_0\bv$ with some $\delta_0>0$ small enough. This yields $F_1(\bdelta)>0$, $F_2(\bdelta)>0$. Define $\eps=\min(F_1(\bdelta),F_2(\bdelta))/2$,
we have 
\begin{align}
\liminf_{d\to\infty}\lambda_{\min} ( \bD) \ge \min(F_1(\bdelta),F_2(\bdelta)) =2 \eps\, ,
\end{align}
and therefore, with high probability,
\begin{align}
\bU =  \hat \bU + \bD +\bDelta \succeq \bzero + \eps \id_{Nd} - \| \bDelta \|_{\op} \id_{Nd} \succeq \frac{\eps}{2}\id_{Nd} \, .
\end{align}

We are left with the task of proving that the limits in Eqs.~\eqref{eq:ClaimF1}, \eqref{eq:ClaimF2} exist, with the desired properties. 
Using Eqs.~\eqref{eq:trace_ubar} and  \eqref{eq:quadratic_ubar}, we get:
\begin{align}
\frac{1}{d-1}\Trace ( \bar \bU_{ii} ) & = \frac{d}{d-1}\sum_{t \in\{1,2\}} [ \lambda_{k_t,d} (\sigma ' )^2 - (1 - \delta_t)^2 \lambda_{k_t,d} (\sigma ' )^2 ] B(k_t,d), \\
\frac{1}{d}\< \btheta_i , \bar \bU_{ii} \btheta_i \>   & =\sum _{k \in \lbrace k_1 -1 , k_1 +1 \rbrace\cup \lbrace k_2 -1 , k_2 +1 \rbrace} 
[ \lambda_{k,d}^{(1)} (\sigma ' )^2 - \lambda_{k,d}^{(1)} (\hat \sigma ' )^2 ] B(k,d)\, .
\end{align}
Using Eq.~\eqref{eq:lambdaone_limit}, we get that the limits \eqref{eq:ClaimF1}, \eqref{eq:ClaimF2} exist. Further, letting $\mu_k \equiv \mu_k(\sigma')$,
we have
\begin{align}
F_1(\bdelta) & = \sum_{t \in\{ 1,2\}}\delta_t (2 - \delta_t) \frac{\mu_{k_t}^2 }{k_t !}\, ,
\end{align}
while, for $k_2\neq k_1+2$
\begin{align*}
F_2(\bdelta) & =\sum_{t \in\{ 1,2\}}\left\{\frac{1}{(k_t-1)!} \Big[(\mu_{k_t}+(k_t-1)\mu_{k_t-2})^2-((1-\delta_t)\mu_{k_t}+(k_t-1)\mu_{k_t-2})^2\Big]\right.\\
&\phantom{AAAAA}+\left. \frac{1}{(k_t+1)!}\Big[(\mu_{k_t+2}+(k_t+1)\mu_{k_t})^2-(\mu_{k_t+2}+(1-\delta_t)(k_t+1)\mu_{k_t})^2\Big]
\right\}\, ,
\end{align*}
while, for $k_2= k_1+2$
\begin{align*}
F_2(\bdelta)  = &\frac{1}{(k_1-1)!} \Big[(\mu_{k_1}+(k_1-1)\mu_{k_1-2})^2-((1-\delta_1)\mu_{k_1}+(k_1-1)\mu_{k_1-2})^2\Big]\\
&+\frac{1}{(k_1+1)!} \Big[(\mu_{k_1+2}+(k_1+1)\mu_{k_1})^2-((1-\delta_2)\mu_{k_1+2}+(1-\delta_1)(k_1+1)\mu_{k_1})^2\Big]\\
&+\frac{1}{(k_2+1)!}\Big[(\mu_{k_2+2}+(k_2+1)\mu_{k_2})^2-(\mu_{k_2+2}+(1-\delta_2)(k_2+1)\mu_{k_2})^2\Big]\, .
\end{align*}
It is easy to check $F_1(\bzero) = F_2(\bzero) =0$, and to compute the gradients,
using the identity $\mu_k( x^2 \sigma ' )=\mu_{k+2}(\sigma') + (2k +1 ) \mu_{k} (\sigma') + k(k-1) \mu_{k-2} (\sigma')$, we get
\begin{align}
\nabla F_1(\bzero)& =\left(\frac{2\mu_{k_1}(\sigma')^2}{k_1!}; \; \frac{2\mu_{k_2}(\sigma')^2}{k_2!}\right)\, ,\\
\nabla F_2(\bzero) & =\left(\frac{2\mu_{k_1}(\sigma')\mu_{k_1}(x^2\sigma')}{k_1!}; \; \frac{2\mu_{k_2}(\sigma')\mu_{k_2}(x^2\sigma')}{k_2!}\right)\, .
\end{align}
Under Assumption \ref{ass:NT-non_trivial_components}, we have $\nabla F_1(\bzero), \nabla F_2(\bzero)\neq \bzero$ and 
$\det (\nabla F_1(\bzero) ,\nabla F_2(\bzero)) \neq 0$ completing the proof.

\section{Proof of Theorem \ref{thm:NT_lower_upper_bound}.(b): \NT\, model upper bound}\label{sec:proof_NTK_upper}

The proof for the \NT\, model follows the same scheme as for the \RF\, case. However, several steps are 
technically more challenging. We will follow the same notations introduced in Section \ref{sec:OutlineRF}. In particular
$\E_{\bx}, \E_{\bw}, \E_{\btheta}$ will denote, respectively, expectation with respect to $\bx\sim\Unif(\S^{d-1}(\sqrt{d}))$,
$\bw\sim\Unif(\S^{d-1}(1))$, $\btheta\sim\Unif(\S^{d-1}(\sqrt{d}))$.

Let us assume that $\lbrace f_d \rbrace$ are polynomials of degree at most $\ell + 1$, i.e.\,$f_d = \proj_{\leq \ell + 1 } f_d$. 

Denote $\mathcal{L} = L^2 ( \S^{d-1} ( \sqrt{d}) \rightarrow \R)$ and $\mathcal{L}_d = L^2 ( \S^{d-1} ( \sqrt{d}) \rightarrow \R^d )$. We introduce the operator $\T : \cL \to \cL_d$, such that for any $g \in \cL$,
\[
\T g ( \btheta ) = \E_{\bx} [ \bx \sigma'  ( \< \btheta , \bx \> / \sqrt{d}  ) g ( \bx) ].
\]
It easy to check that the adjoint operator $\T^* : \cL_d \to \cL$ verifies for any $\bg \in \cL_d$,
\[
\T^* \bg ( \bx) = \E_{\btheta} [  \sigma'  ( \< \btheta , \bx \> / \sqrt{d}  ) \< \bx, \bg ( \btheta)\> ].
\]
We define the operator $\K : \cL_d \to \cL_d$ as $\K \equiv \T \T^*$. For $\bg \in \cL_d$, we can write 
\[
\K \bg ( \btheta ) = \E_{\btheta'} [ \bK ( \btheta , \btheta ' ) \bg ( \btheta' ) ],
\]
where 
\[
\bK ( \btheta , \btheta ' ) = \E_{\bx} [ \bx \bx^\sT \sigma'  ( \< \btheta , \bx \> / \sqrt{d}  )\sigma'  ( \< \btheta' , \bx \> / \sqrt{d}  )].
\]
Furthermore, we define $\bbH: \cL \to \cL$ as $\bbH \equiv \T^* \T$. For $g \in \cL$, we can write 
\[
\bbH g ( \bx ) = \E_{\bx'} [ H ( \bx , \bx ' ) g ( \bx' ) ],
\]
where 
\[
\begin{aligned}
H ( \bx , \bx ' )= & \E_{\btheta} [ \sigma'  ( \< \btheta , \bx \> / \sqrt{d}  )\sigma'  ( \< \btheta, \bx' \> / \sqrt{d}  )] \< \bx , \bx ' \>\\
=& \sum_{m\geq 0 } \Gamma_{d,m} Q^{(d)}_m ( \< \bx , \bx ' \>),
\end{aligned}
\]
and $\Gamma_{d,m}$ can be computed using the Gegenbauer recursion formula Eq. (\ref{eq:RecursionG}),
\[
\Gamma_{d,m} = d \cdot [ t_{d,m-1} \lambda_{d,m-1} ( \sigma ' )^2 B(d , m-1) + s_{d,m+1} \lambda_{d,m+1} ( \sigma ' )^2 B(d , m+1)],
\]
with 
\[
s_{d,m} = \frac{m}{2m+d-2}, \qquad t_{d,m} = \frac{m+d-2}{2m+d-2}.
\]
In particular, it is easy to check that 
\[
\bbH Y^{(d)}_{ku} ( \bx ) = \frac{\Gamma_{d,k}}{B(d,k)} Y^{(d)}_{ku} ( \bx ). 
\]

We consider the subspace of $\cL_d$ corresponding to $\T ( V_{d,\leq\ell+1} )$, the image of $ V_{d,\leq\ell+1}$ by operator $\T$. One can check that $\lbrace \T Y^{(d)}_{ku} \rbrace_{ 0 \leq k \leq \ell+1, 1 \leq u \leq B(d,k)}$ is an orthogonal basis of this subspace. Furthermore
\begin{equation}\label{eq:diag_vectK}
\K \T Y^{(d)}_{ku} = \T ( \T^* \T Y^{(d)}_{ku} ) = \frac{\Gamma_{d,k}}{B(d,k)} \T Y^{(d)}_{ku}.
\end{equation} 
Hence this basis diagonalizes $\K$. By Eq. (\ref{eqn:mu_lambda_relationship}), we have 
\begin{equation}\label{eqn:Gamma_limit}
\begin{aligned}
\lim_{d \to \infty}( \Gamma_{d,m} / d ) =&  \lim_{d \to \infty} [ t_{d,m-1}  \lambda_{d,m-1} ( \sigma ' )^2 B(d , m-1)] + \lim_{d \to \infty}  [ s_{d,m+1}  \lambda_{d, m+1} ( \sigma ' )^2 B(d , m+1)]\\
=& \mu_{m-1}(\sigma')^2 / (m-1)! =  \mu_{m}(\sigma)^2 / (m-1)!. 
\end{aligned}
\end{equation}
By Assumption \ref{ass:activation_lower_upper_NT_v2}.(b), we have $\Gamma_{d,k} \neq 0$ for any $k \leq \ell+1$ when $d$ is sufficiently large. Hence, the restricted inverse $\K^{-1} \vert_{\T ( V_{d,\leq\ell+1} )}$ is well defined for $d$ sufficiently large.

Consider $\hat{f}_\NT (\bx ; \bTheta, \ba ) = \sum_{i = 1}^N \< \ba_i , \bx \> \sigma' (\<\btheta_i , \bx \>)$. We can expand the risk at parameter $\ba$ as
\[
\begin{aligned}
\E_\bx [ (f (\bx) - \hat{f}_\NT (\bx) )^2 ] = & \| f_d \|^2_{L^2} -2 \sum_{i=1}^N \E_{\bx} [ \<\ba_i , \bx \> \sigma ' ( \< \btheta_i , \bx \> / \sqrt{d}  ) f (\bx) ] \\
& +  \sum_{i,j=1}^N \E_{\bx} [ \<\ba_i , \bx \> \<\ba_j , \bx \> \sigma ' ( \< \btheta_i , \bx \> / \sqrt{d}  ) \sigma ' ( \< \btheta_j , \bx \> / \sqrt{d}  ) ].
\end{aligned}
\]
Let us define $\balpha ( \btheta) \equiv \K^{-1} \T f_d (\btheta)$ and choose $\ba_i = N^{-1} \balpha ( \btheta_i)$. We consider the expectation over $\bTheta$ of the \NT\,risk:
\[
\begin{aligned}
\E_{\bTheta} [ R_{\NT}(f_d , \bTheta / \sqrt d) ] \leq & \E_{\bTheta, \bx} \Big[(f_d (\bx) - \hat f_\NT (\bx; \bTheta, \ba ))^2 \Big] \Big\vert_{\ba_i = \balpha(\btheta_i) / N}\\
=& \| f_d \|^2_{L^2} - 2 \< \K^{-1} \T f_d ,\T f_d \>_{L^2}+ \<\K^{-1} \T f_d , \K [\K^{-1} \T f_d]\> \\
&+ \frac{1}{N} \big[  \E_{\btheta} [\<  \K^{-1} \T f_d ( \btheta ), \bK(\btheta,\btheta)  \K^{-1} \T f_d ( \btheta )\>] - \<\K^{-1} \T f_d , \K [\K^{-1} \T f_d]\>\big].
\end{aligned}
\]
It is easy to check that $\T^* \K^{-1} \T \vert_{V_{d,\leq\ell+1}} = \id_{V_{d,\leq\ell+1}}$. By Lemma \ref{lem:decomposition_kernel_NTK}, we have $\bK ( \btheta , \btheta ) = \alpha_d \id + \beta_d \btheta \btheta^\sT$ with 
\[
\begin{bmatrix} 
\alpha_d \\
\beta_d
\end{bmatrix} = [(d-1)]^{-1} \begin{bmatrix} 
d & - 1 \\
- 1 & 1  
\end{bmatrix} \times \begin{bmatrix} 
\| \sigma ' ( \< \be , \cdot \> ) \|_{L^2(\S^{d-1})}^2 \\
\|  \< \be , \cdot \> \sigma ' ( \< \be , \cdot \> ) \|_{L^2(\S^{d-1})}^2
\end{bmatrix} .
\]
From Assumption \ref{ass:activation_lower_upper_NT_v2}.(a) and Lemma \ref{lemma:square_integrable}.(b) applied to $\sigma' ( \< \be , \cdot \> )$ and $\<\be , \cdot \> \sigma ' ( \< \be , \cdot \> )$, we get $\alpha_d = O_d(1)$ and $\beta_d = O_d (d^{-1})$. We deduce that the operator norm verifies $\| \bK ( \btheta , \btheta ) \|_{\op} = \alpha_d + \beta_d \| \btheta \|_2^2 =  O_d (1)$.

Hence, there exists a constant $C>0$ such that 
\[
\E_{\bTheta} [ R_{\NT}(f_d , \bTheta / \sqrt d) ] \leq  \frac{C}{N } \| \K^{-1} \T f_d \|^2_{L^2}.
\]
Using the decomposition of $f_d$ in terms of harmonic polynomials (note we assumed $f_d$ is a degree $\ell+1$ polynomial) and Eq. \eqref{eq:diag_vectK}, we have
\[
\<  \K^{-1} \T f_d , \K^{-1} \T f_d \> = \sum_{k=0}^{\ell + 1} \frac{B(d,k)}{\Gamma_{d,k}} \| \proj_k f_d \|_{L^2}^2.
\]
By Eq. (\ref{eqn:Gamma_limit}), for any fixed $k \leq \ell+1$, we have $\Gamma_{d,k} = \Omega_d (d)$. Hence we get
\[
\E_{\bTheta} [ R_{\NT}(f_d , \bTheta / \sqrt d) ] \leq O_d \Big( \frac{d^{\ell +1}}{d \cdot N} \Big) \cdot \| f_d \|_{L^2}^2.
\]
Hence, from the assumption that $N = \omega_d ( d^\ell )$, we deduce that $R_{\NT}(f_d , \bTheta / \sqrt d)/\| f_d \|_{L^2}^2$ converges in $L^1$ to $0$, and therefore in probability.

\section{Proof of Theorem \ref{thm:upper_bound_KRR}: risk for \KR}\label{sec:proof_KR}

\subsection{Proof of Theorem \ref{thm:upper_bound_KRR}}

\noindent
{\bf Step 1. Rewrite the $\by$, $\bE$, $\bH$, $\bM$ matrices. }

The test error of empirical kernel ridge regression gives
\[
\begin{aligned}
R_{\KR}(f_d, \bX, \lambda) \equiv& \E_\bx\Big[ \Big( f_d(\bx) - \by^\sT (\bH + \lambda \id_n)^{-1} \bh(\bx) \Big)^2 \Big]\\
=& \E_\bx [ f_d(\bx)^2] - 2 \by^\sT (\bH + \lambda \id_n)^{-1} \bE + \by^\sT (\bH + \lambda \id_n)^{-1} \bM (\bH + \lambda \id_n)^{-1} \by,
\end{aligned}
\]
where $\bE = (E_1, \ldots, E_n)^\sT$, $\bM = (M_{ij})_{ij \in [n]}$ and $\bH = (H_{ij})_{ij \in [n]}$ with
\[
\begin{aligned}
E_i =& \E_\bx[f_d(\bx) h_d(\< \bx, \bx_i\>/d)], \\
M_{ij} =& \E_{\bx}[h_d(\< \bx_i, \bx\>/d) h_d(\< \bx_j, \bx\>/d)], \\
H_{ij} = & h_d ( \< \bx_i , \bx_j \> /d ).
\end{aligned}
\]

Let $B = \sum_{k =0}^\ell B(d, k)$. Define
\[
\begin{aligned}
\bD_k =&  \xi_k(h_d) \id_{B(d, k)}, \\
\bY_k =& (Y_{kl}(\bx_i))_{i \in [n], l \in [B(d, k)]} \in \R^{n \times B(d, k)},\\
\blambda_k =& ( \lambda_{k1}(f_d), \ldots, \lambda_{k B(d, k)}(f_d))^\sT \in \R^{B(d, k)}, \\
\bD_{\le \ell} =& \diag(\xi_0(h_d) \id_{B(d, 0)}, \ldots, \xi_\ell(h_d) \id_{B(d, \ell)}) \in \R^{B \times B}\\
\bY_{\le \ell} =& (\bY_0, \ldots, \bY_\ell) \in \R^{n \times B}, \\
\blambda_{\le \ell} =& (\blambda_0^\sT, \ldots, \blambda_\ell^\sT)^\sT \in \R^{B}. 
\end{aligned}
\]
Let the spherical harmonics decomposition of $f_{d}$ be 
\[
f_{d}(\bx) = \sum_{k = 0}^\infty \sum_{l = 1}^{B(d, k)} \lambda_{kl}(f_d) Y_{kl}(\bx) , 
\]
and the Gegenbauer decomposition of $h_{d}$ be 
\[
h_d(\< \bx, \by\> / d) = \sum_{k=0}^\infty \xi_k(h_d) B(d, k) Q_k^{(d)}(\< \bx, \by\>).
\]

We decompose the vectors and matrices $\boldf$, $\bE$, $\bH$, and $\bM$ in terms of spherical harmonics
\[
\begin{aligned}
\boldf =& \bY_{\le \ell} \blambda_{\le \ell} + \sum_{k = \ell + 1}^\infty \bY_k \blambda_k, \\
\bE =&\bY_{\le \ell} \bD_{\le \ell} \blambda_{\le \ell} + \sum_{k = \ell + 1}^\infty \bY_k \bD_k \blambda_k ,\\
\bH =& \bY_{\le \ell} \bD_{\le \ell} \bY_{\le \ell}^\sT + \sum_{k=\ell + 1}^\infty \bY_k \bD_k \bY_k^\sT, \\
\bM =& \bY_{\le \ell} \bD_{\le \ell}^2 \bY_{\le \ell}^\sT + \sum_{k=\ell + 1}^\infty \bY_k \bD_k^2 \bY_k^\sT. 
\end{aligned}
\]
By Proposition \ref{prop:Delta_bound} and Eq.~\eqref{eqn:W_all_terms}, the kernel $\bH$ and $\bM$ can be rewritten as  
\[
\begin{aligned}
\bH =& \bY_{\le \ell} \bD_{\le \ell} \bY_{\le \ell}^\sT + \kappa_h (\id_n + \bDelta_h), \\
\bM =& \bY_{\le \ell} \bD_{\le \ell}^2 \bY_{\le \ell}^\sT + \kappa_u (\id_n + \bDelta_u), 
\end{aligned}
\]
where
\[
\begin{aligned}
\kappa_h =& \sum_{k \ge \ell + 1} \xi_k(h_d) B(d, k),\\
\kappa_u =& \sum_{k \ge \ell + 1} \xi_k(h_d)^2 B(d, k),
\end{aligned}
\]
and 
\[
\max\{ \| \bDelta_u \|_{\op}, \| \bDelta_h \|_{\op} \} = o_{d, \P}(1).
\]

\noindent
{\bf Step 2. Decompose the risk}

Recalling $\by = \boldf + \beps$, we decompose the risk as follows
\[
\begin{aligned}
R_{\KR}(f_d, \bX, \lambda) =& \| f_d \|_{L^2}^2  - 2 T_1 + T_2 +  T_3 - 2 T_4 + 2 T_5. 
\end{aligned}
\]
where
\[
\begin{aligned}
T_1 =& \boldf^\sT (\bH + \lambda \id_n)^{-1} \bE, \\
T_2 =&  \boldf^\sT (\bH + \lambda \id_n)^{-1} \bM (\bH + \lambda \id_n)^{-1} \boldf, \\
T_3 =& \beps^\sT (\bH + \lambda \id_n)^{-1} \bM (\bH + \lambda \id_n)^{-1} \beps,\\ 
T_4 =& \beps^\sT (\bH + \lambda \id_n)^{-1} \bE,\\
T_5 =& \beps^\sT (\bH + \lambda \id_n)^{-1} \bM (\bH + \lambda \id_n)^{-1} \boldf. 
\end{aligned}
\]
Further, we denote $\boldf_{\le \ell}$, $\boldf_{> \ell}$, $\bE_{\le \ell}$, and $\bE_{> \ell}$, 
\[
\begin{aligned}
\boldf_{\le \ell} =&  \bY_{\le \ell} \blambda_{\le \ell},& \bE_{\le \ell} =&  \bY_{\le \ell} \bD_{\le \ell} \blambda_{\le \ell}, \\
\boldf_{> \ell} =&  \sum_{k = \ell + 1}^\infty \bY_k \blambda_k,& \bE_{> \ell} =&  \sum_{k = \ell + 1}^\infty \bY_k \bD_k \blambda_k. 
\end{aligned}
\]

\noindent
{\bf Step 3. Term $T_2$}

Note we have 
\[
T_2 = T_{21}  + T_{22} + T_{23}, 
\]
where
\[
\begin{aligned}
T_{21} =&\boldf_{\le \ell}^\sT (\bH + \lambda \id_n)^{-1} \bM (\bH + \lambda \id_n)^{-1} \boldf_{\le \ell},\\
T_{22} =& 2 \boldf_{\le \ell}^\sT (\bH + \lambda \id_n)^{-1} \bM (\bH + \lambda \id_n)^{-1} \boldf_{> \ell},\\
T_{23} =&\boldf_{> \ell}^\sT (\bH + \lambda \id_n)^{-1} \bM (\bH + \lambda \id_n)^{-1} \boldf_{> \ell}.\\
\end{aligned}
\]
By Lemma \ref{lem:key_H_U_H_bound}, we have 
\begin{equation}\label{eqn:H_U_H_bound}
\| n (\bH + \lambda \id_n)^{-1} \bM (\bH + \lambda \id_n)^{-1} - \bY_{\le \ell} \bY_{\le \ell}^\sT / n\|_{\op} = o_{d, \P}(1),
\end{equation}
hence
\[
\begin{aligned}
T_{21} =& \blambda_{\le \ell} \bY_{\le \ell}^\sT (\bH + \lambda \id_n)^{-1} \bM (\bH + \lambda \id_n)^{-1} \bY_{\le \ell} \blambda_{\le \ell} \\
=& \blambda_{\le \ell}^\sT \bY_{\le \ell}^\sT \bY_{\le \ell} \bY_{\le \ell}^\sT \bY_{\le \ell} \blambda_{\le \ell} / n^2 + [\| \bY_{\le \ell} \blambda_{\le \ell} \|_2^2  / n] \cdot o_{d, \P}(1). 
\end{aligned}
\]
By Lemma \ref{lem:concentration_YY}, we have (with $\| \bDelta \|_2 = o_{d, \P}(1)$)
\[
\begin{aligned}
\blambda_{\le \ell}^\sT \bY_{\le \ell}^\sT \bY_{\le \ell} \bY_{\le \ell}^\sT\bY_{\le \ell} \blambda_{\le \ell} / n^2 =& \blambda_{\le \ell}^\sT(\id_B + \bDelta)^2 \blambda_{\le \ell}  = \| \blambda_{\le \ell} \|_2^2 (1 + o_{d, \P}(1)). 
\end{aligned}
\]
Moreover, we have 
\[
\| \bY_{\le \ell} \blambda_{\le \ell} \|_2^2 / n = \blambda_{\le \ell}^\sT(\id_B + \bDelta) \blambda_{\le \ell}  = \| \blambda_{\le \ell} \|_2^2 (1 + o_{d, \P}(1)). 
\]
As a result, we have
\begin{align}\label{eqn:term_R21}
T_{21} =& \| \blambda_{\le \ell} \|_{2}^2 (1 + o_{d, \P}(1)) = \| \proj_{\le \ell} f_d \|_{L^2}^2 (1 + o_{d, \P}(1)). 
\end{align}

By Eq. (\ref{eqn:H_U_H_bound}) again, we have 
\[
\begin{aligned}
T_{23} =& \Big(\sum_{k\ge \ell+1} \blambda_k^\sT \bY_k^\sT \Big) (\bH + \lambda \id_n)^{-1} \bM (\bH + \lambda \id_n)^{-1}\Big(\sum_{k\ge \ell+1} \bY_k \blambda_k \Big)\\
=&  \Big(\sum_{k\ge \ell+1} \blambda_k^\sT \bY_k^\sT \Big) \bY_{\leq \ell} \bY_{\leq \ell}^\sT \Big(\sum_{k\ge \ell+1} \bY_k \blambda_k \Big) / n^2 +  \Big[\Big\|\sum_{k\ge \ell+1} \bY_k \blambda_k \Big\|_2^2 / n\Big] \cdot o_{d, \P}(1).
\end{aligned}
\]
By Lemma \ref{lem:YYYY_expectation}, we have 
\[
\begin{aligned}
\E\Big[\Big(\sum_{k\ge \ell+1} \blambda_k^\sT \bY_k^\sT \Big) \bY_{\le \ell} \bY_{\le \ell}^\sT \Big(\sum_{k\ge \ell+1} \bY_k \blambda_k \Big) \Big] /n^2 =& \sum_{u, v \ge \ell+1} \blambda_u^\sT\{ \E [(  \bY_u^\sT  \bY_{\le \ell} \bY_{\le \ell}^\sT  \bY_v ) ] /n^2 \} \blambda_v \\
=& \frac{B}{n} \sum_{k = \ell + 1}^\infty \| \blambda_k \|_2^2. \\
\end{aligned}
\]
Moreover
\[
\E \Big[\Big\|\sum_{k\ge \ell+1} \bY_k \blambda_k \Big\|_2^2 / n\Big] = \sum_{k = \ell + 1}^\infty \| \blambda_k \|_2^2 = \| \proj_{> \ell} f_d \|_{L^2}^2. 
\]
This gives
\begin{align}\label{eqn:term_R23}
T_{23} = o_{d, \P}(1) \cdot \| \proj_{> \ell} f_d \|_{L^2}^2. 
\end{align}
Using Cauchy Schwarz inequality for $T_{22}$, we get
\begin{align}\label{eqn:term_R22}
T_{22} \le 2 (T_{21} T_{23})^{1/2} = o_{d, \P}( 1) \cdot \| \proj_{\le \ell} f_d \|_{L^2}  \| \proj_{> \ell} f_d \|_{L^2} .  
\end{align}
As a result, combining Eqs. (\ref{eqn:term_R21}), (\ref{eqn:term_R22}) and (\ref{eqn:term_R23}), we have 
\begin{equation}\label{eqn:KRR_term_T2}
T_2 =  \| \proj_{\le \ell} f_d \|_{L^2}^2 + o_{d, \P}(1) \cdot \| f_d \|_{L^2}^2. 
\end{equation}

\noindent
{\bf Step 4. Term $T_{1}$. }

Note we have 
\[
T_1 = T_{11} + T_{12} + T_{13}, 
\]
where
\[
\begin{aligned}
T_{11} =& \boldf_{\le \ell}^\sT (\bH + \lambda \id_n)^{-1} \bE_{\le \ell}, \\
T_{12} =& \boldf_{> \ell}^\sT (\bH + \lambda \id_n)^{-1} \bE_{\le \ell}, \\
T_{13} =& \boldf^\sT (\bH + \lambda \id_n)^{-1} \bE_{> \ell}. \\
\end{aligned}
\]
By Lemma \ref{lem:lem_for_error_bound_R11}, we have 
\[
\| \bY_{\le \ell}^\sT( \bH + \lambda \id_n)^{-1}\bY_{\le \ell} \bD_{\le \ell} - \id_B \|_{\op} = o_{d, \P}(1). 
\]
so that 
\begin{equation}\label{eqn:term_R11}
T_{11} = \blambda_{\le \ell}^\sT  \bY_{\le \ell}^\sT( \bH + \lambda \id_n)^{-1}\bY_{\le \ell} \bD_{\le \ell} \blambda_{\le \ell} = \| \blambda_{\le \ell} \|_2^2 (1 + o_{d, \P}(1)) = \| \proj_{\le \ell} f_d \|_2^2 (1 + o_{d, \P}(1)). 
\end{equation}

Using Cauchy Schwarz inequality for $T_{12}$, and by the expression of $\bM = \bY_{\le \ell} \bD_{\le \ell}^2 \bY_{\le \ell}^\sT + \kappa_u(\id_n + \bDelta_u)$ with $\| \bDelta_u \|_{\op} = o_{d, \P}(1)$, we get with high probability
\begin{equation}\label{eqn:term_R12}
\begin{aligned}
\vert T_{12} \vert =& \Big\vert \sum_{k=\ell + 1}^\infty \blambda_k^\sT \bY_k^\sT (\bH + \lambda \id_n)^{-1} \bY_{\le \ell} \bD_{\le \ell}  \blambda_{\le \ell} \Big\vert \\
\le& \Big\| \sum_{k=\ell + 1}^\infty \blambda_k^\sT \bY_k^\sT (\bH + \lambda \id_n)^{-1} \bY_{\le \ell} \bD_{\le \ell} \Big\|_2 \| \blambda_{\le \ell} \|_2 \\
=& \Big[ \Big( \sum_{k=\ell + 1}^\infty \blambda_k^\sT \bY_k^\sT \Big) (\bH + \lambda \id_n)^{-1} \bY_{\le \ell} \bD_{\le \ell}^2 \bY_{\le \ell}^\sT (\bH + \lambda \id_n)^{-1} \Big( \sum_{k=\ell + 1}^\infty \blambda_k^\sT \bY_k^\sT \Big)\Big]^{1/2} \| \blambda_{\le \ell} \|_2\\
\le & \Big[ \Big( \sum_{k=\ell + 1}^\infty \blambda_k^\sT \bY_k^\sT \Big) (\bH + \lambda \id_n)^{-1} \bM (\bH + \lambda \id_n)^{-1} \Big( \sum_{k=\ell + 1}^\infty \blambda_k^\sT \bY_k^\sT \Big) \Big]^{1/2} \| \blambda_{\le \ell} \|_2\\
=&  T_{23}^{1/2} \| \blambda_{\le \ell} \|_2 = o_{d, \P}(1) \cdot \| \proj_{\le \ell} f_d \|_{L^2} \| \proj_{> \ell} f_d \|_{L^2}.
\end{aligned}
\end{equation}

For term $T_{13}$, we have 
\[
\begin{aligned}
\vert T_{13} \vert =&  \vert \boldf^\sT (\bH + \lambda \id_n)^{-1} \bE_{> \ell}\vert \le  \| \boldf \|_2 \| (\bH + \lambda \id_n)^{-1} \|_{\op} \| \bE_{> \ell} \|_2. 
\end{aligned}
\]
Note we have $\E[\| \boldf \|_2^2] = n \| f_d \|_{L^2}^2$, and $\| (\bH + \lambda \id_n)^{-1} \|_{\op} \le 2/(\kappa_h + \lambda)$ with high probability, and 
\[
\E [ \| \bE_{> \ell} \|_2^2 ] = n \sum_{k = \ell + 1}^\infty \xi_k(h_d)^2 \| \proj_{k} f_d \|_{L^2}^2 \le n \Big[ \max_{k \ge \ell + 1} \xi_k(h_d)^2\Big] \| \proj_{> \ell} f_d \|_{L^2}^2. 
\]
As a result, we have
\begin{equation}\label{eqn:term_R13}
\begin{aligned}
\vert T_{13} \vert \le& O_d(1) \cdot  \| \proj_{> \ell} f_d \|_{L^2} \| f_d \|_{L^2} \Big[n^2 \max_{k \ge \ell + 1} \xi_k(h_d)^2 \Big]^{1/2} / (\kappa_h + \lambda) \\
=& O_d(1) \cdot \| \proj_{> \ell} f_d \|_{L^2} \| f_d \|_{L^2} \Big[ n \max_{k \ge \ell + 1} \xi_k(h_d) \Big]/ \Big( \sum_{k \ge \ell + 1} \xi_k(h_d) B(d, k) + \lambda \Big)\\
=& o_{d, \P}(1) \cdot \| \proj_{> \ell} f_d \|_{L^2} \| f_d \|_{L^2},
\end{aligned}
\end{equation}
where the last equality used the fact that $\omega_d(d^\ell \log d)\le n \le O_d(d^{\ell + 1 - \delta})$ and Assumption \ref{ass:activation_krr}. Combining Eqs. (\ref{eqn:term_R11}), (\ref{eqn:term_R12}) and (\ref{eqn:term_R13}), we get 
\begin{equation}\label{eqn:KRR_term_T1}
T_1 =  \| \proj_{\le \ell} f_d \|_{L^2}^2 + o_{d, \P}(1) \cdot \| f_d \|_{L^2}^2. 
\end{equation}

\noindent
{\bf Step 5. Terms $T_3, T_4$ and $T_5$. } 

By Lemma \ref{lem:key_H_U_H_bound} again, we have 
\[
\begin{aligned}
\E_\beps[T_3] / \tau^2 =& \tr((\bH + \lambda \id_n)^{-1} \bM (\bH + \lambda \id_n)^{-1}) = \tr(\bY_{\le \ell} \bY_{\le \ell}^\sT / n^2) + o_{d, \P}(1), 
\end{aligned}
\]
By Lemma \ref{lem:concentration_YY}, we have 
\[
\tr(\bY_{\le \ell} \bY_{\le \ell}^\sT / n^2) = \tr(\bY_{\le \ell}^\sT \bY_{\le \ell}) / n^2 = n B / n^2 + o_{d, \P}(1) = o_{d, \P}(1). 
\]
This gives
\begin{align}\label{eqn:term_varT3}
T_3 = o_{d, \P}(1) \cdot \tau^2. 
\end{align}

Let us consider $T_4$ term:
\[
\begin{aligned}
\E_{\beps} [T_4^2 ]/\tau^2 = & \E_{\beps} [ \beps^\sT (\bH + \lambda \id_n)^{-1} \bE \bE^\sT  (\bH + \lambda \id_n)^{-1} \beps ]/\tau^2 \\
= & \bE^\sT  (\bH + \lambda \id_n)^{-2 }  \bE.
\end{aligned}
\]
Notice that by Lemma \ref{lem:concentration_YY}, Lemma \ref{lem:key_H_U_H_bound} and the definition of $\bM$,  for any integer $L$:
\[
\begin{aligned}
\| \bD_{\leq L} \bY_{\leq L }^{\sT} (\bH + \lambda \id_n)^{-2 } \bY_{\leq L } \bD_{\leq L} \|_{\op} = & \| (\bH + \lambda \id_n)^{-1  }\bY_{\leq L } \bD_{\leq L}^2  \bY_{\leq L }^{\sT} (\bH + \lambda \id_n)^{-1  } \|_{\op} \\
\leq& \| (\bH + \lambda \id_n )^{-1} \bM ( \bH + \lambda \id_n )^{-1}  \|_{\op}.\\
\leq &  \| \bY_{\leq \ell} \bY_{\leq \ell}^{\sT} / n \|_{\op}/n + o_{d, \P} (1) \cdot / n \\
 = & o_{d,\P} (1)
\end{aligned}
\]
Hence, 
\[ 
\begin{aligned}
\bE^\sT  (\bH + \lambda \id_n)^{-2 }  \bE = & \lim_{L \to \infty} \bE^\sT_{\leq L}  (\bH + \lambda \id_n)^{-2 }  \bE_{\leq L} \\
= & \lim_{L \to \infty} \blambda^\sT_{\leq L}  [\bD_{\leq L} \bY_{\leq L }^{\sT} (\bH + \lambda \id_n)^{-2 } \bY_{\leq L } \bD_{\leq L}] \blambda_{\leq L} \\
\leq & \| (\bH + \lambda \id_n )^{-1} \bM ( \bH + \lambda \id_n )^{-1}  \|_{\op} \cdot \lim_{L \to \infty} \| \blambda_{\leq L} \|_2^2 \\
\leq & o_{d,\P}(1) \cdot \| f_d \|_{L^2}^2,
\end{aligned}
\]
which gives
\begin{align}\label{eqn:term_varT4}
T_4 = o_{d, \P}(1) \cdot \tau \| f_d \|_{L^2} = o_{d,\P} (1) \cdot ( \tau^2 +  \| f_d \|_{L^2}^2 ). 
\end{align}
We decompose $T_5$ using $\boldf = \boldf_{\leq \ell} + \boldf_{> \ell}$,
\[
T_5 = T_{51} + T_{52},
\]
where
\[
\begin{aligned}
T_{51} = & \beps^\sT (\bH + \lambda \id_n )^{-1} \bM ( \bH + \lambda \id_n )^{-1}  \boldf_{\leq \ell}, \\
T_{52} = & \beps^\sT (\bH + \lambda \id_n )^{-1} \bM ( \bH + \lambda \id_n )^{-1}  \boldf_{> \ell} .
\end{aligned}
\]
First notice that
\[
\begin{aligned}
\| \bM^{1/2} ( \bH + \lambda \id_n)^{-2} \bM^{1/2} \|_{\op} =& \| (\bH + \lambda \id_n)^{-1} \bM (\bH + \lambda \id_n)^{-1} \|_{\op} =  o_{d,\P} (1) .
\end{aligned}
\]
Then by Lemma \ref{lem:key_H_U_H_bound}, we get
\[
\begin{aligned}
\E_{\beps} [T_{51}^2 ]/\tau^2 = & \E_{\beps} [ \beps^\sT (\bH + \lambda \id_n)^{-1} \bM (\bH + \lambda \id_n)^{-1} \boldf_{\leq \ell} \boldf_{\leq \ell}^\sT (\bH + \lambda \id_n)^{-1} \bM (\bH + \lambda \id_n)^{-1} \beps ]/\tau^2 \\
= & \boldf^\sT_{\leq \ell} [ (\bH + \lambda \id_n)^{-1} \bM (\bH + \lambda \id_n)^{-1} ]^2 \boldf_{\leq \ell} \\
\leq &  \| \bM^{1/2} ( \bH + \lambda \id_n)^{-2} \bM^{1/2} \|_{\op} \| \bM^{1/2} ( \bH + \lambda \id_n)^{-1 } \boldf_{\leq \ell} \|_{2}^2 \\
= &  o_{d,\P}(1)  \cdot T_{21} \\
= &  o_{d,\P} (1)  \cdot \| \proj_{\leq \ell } f_d \|_{L^2}^2.
\end{aligned}
\]
Similarly, we get 
\[
\begin{aligned}
\E_{\beps} [T_{52}^2 ]/\tau^2   = &   o_{d,\P} (1) \cdot T_{23} 
= &  o_{d,\P}  (1) \cdot \| \proj_{> \ell } f_d \|_{L^2}^2.
\end{aligned}
\]
By Markov's inequality, we deduce that
\begin{align}\label{eqn:term_varT5}
T_5 = o_{d, \P}(1) \cdot \tau ( \| \proj_{\leq \ell} f_d \|_{L^2} + \| \proj_{> \ell} f_d \|_{L^2} )= o_{d,\P} (1) \cdot ( \tau^2 +  \| f_d \|_{L^2}^2 ). 
\end{align}

\noindent
{\bf Step 6. Finish the proof. }

Combining Eqs. (\ref{eqn:KRR_term_T1}), (\ref{eqn:KRR_term_T2}), (\ref{eqn:term_varT3}), (\ref{eqn:term_varT4}) and (\ref{eqn:term_varT5}), we have
\[
\begin{aligned}
R_{\KR}( f_d, \bX, \lambda) = \| f_d \|_{L^2}^2 - 2 T_{1} + T_{2} + T_3 - 2 T_4 + 2T_5 = \| \proj_{> \ell} f_d \|_{L^2}^2 + o_{d, \P}(1) \cdot (\| f_d \|_{L^2}^2 + \tau^2),
\end{aligned}
\]
which concludes the proof.

\subsection{Auxiliary results}

\begin{lemma}\label{lem:concentration_YY}
Let $\{ Y_{kl} \}_{k \in \N, l \in [B(d, k)]}$ be the collection of spherical harmonics on $L^2(\S^{d-1}(\sqrt d))$. Let $(\bx_i)_{i \in [n]} \sim_{iid} \Unif(\S^{d-1}(\sqrt d))$. Denote
\[
\begin{aligned}
\bY_k = (Y_{kl}(\bx_i))_{i \in [n], l \in [B(d, k)]} \in \R^{n \times B(d, k)}. 
\end{aligned}
\]
Denote $B = \sum_{k=0}^\ell B(d, k)$, and
\[
\bY = (\bY_0, \ldots,  \ldots, \bY_\ell) \in \R^{n \times B}. 
\]
Then as long as $n / (B\log B) \to \infty$ as $d \to \infty$, we have 
\[
\bY^\sT \bY / n  =  \id_B + \bDelta, 
\]
with $\bDelta \in \R^{B \times B}$ and $\E[\| \bDelta \|_{\op}] = o_{d}(1)$. 
\end{lemma}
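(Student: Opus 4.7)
The plan is to write $\bY^\sT \bY / n$ as a sum of i.i.d.\ rank-one random matrices and apply the matrix Bernstein inequality. Let $\by_i = (Y_{kl}(\bx_i))_{0 \le k \le \ell,\, 1 \le l \le B(d,k)} \in \R^B$ denote the $i$-th row of $\bY$. Then
\[
\bY^\sT \bY / n = \frac{1}{n} \sum_{i=1}^n \by_i \by_i^\sT, \qquad \E[\by_i\by_i^\sT] = \id_B,
\]
the second identity following from the orthonormality of the spherical harmonics $\{Y_{kl}\}$ under $\tau_{d-1}$. Hence $\E[\bDelta] = 0$ and it suffices to control $\|\bDelta\|_{\op} = \bigl\|(1/n)\sum_i \bM_i\bigr\|_{\op}$ with $\bM_i \equiv \by_i\by_i^\sT - \id_B$.

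The first key step is a \emph{deterministic} bound on $\|\by_i\|_2^2$, obtained from the Gegenbauer addition formula \eqref{eq:GegenbauerHarmonics}:
\[
\|\by_i\|_2^2 \;=\; \sum_{k=0}^{\ell} \sum_{l=1}^{B(d,k)} Y_{kl}^{(d)}(\bx_i)^2 \;=\; \sum_{k=0}^{\ell} B(d,k)\, Q_k^{(d)}(\< \bx_i, \bx_i\>) \;=\; \sum_{k=0}^{\ell} B(d,k) \;=\; B,
\]
using $\<\bx_i,\bx_i\> = d$ and $Q_k^{(d)}(d) = 1$. Consequently, almost surely,
\[
\|\bM_i\|_{\op} \le \|\by_i\|_2^2 + 1 = B+1 \le 2B, \qquad \E[\bM_i^2] \;=\; \E[\|\by_i\|_2^2\, \by_i\by_i^\sT] - \id_B \;=\; (B-1)\,\id_B.
\]
Thus, setting $\bS_n \equiv \sum_{i=1}^n \bM_i$, we have $\max_i \|\bM_i\|_{\op} \le 2B$ and matrix variance $\|\sum_i \E[\bM_i^2]\|_{\op} \le nB$.

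The second step is to invoke the matrix Bernstein inequality (see e.g.\ Tropp, \emph{User-friendly tail bounds for sums of random matrices}): for any $t > 0$,
\[
\Pr\bigl(\|\bDelta\|_{\op} \ge t\bigr) \;=\; \Pr\bigl(\|\bS_n\|_{\op} \ge n t\bigr) \;\le\; 2B \exp\!\left(-\frac{n^2 t^2/2}{nB + (2B/3)\, nt}\right).
\]
Choosing $t = t_d$ with $t_d \to 0$ slowly enough (e.g.\ $t_d = \sqrt{(B\log B)/n} \cdot \omega_d(1)$), the exponent diverges under the hypothesis $n/(B\log B) \to \infty$, yielding $\Pr(\|\bDelta\|_{\op} \ge t_d) \le 1/B$, say. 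Finally, combining this high-probability bound with the deterministic upper bound $\|\bDelta\|_{\op} \le \|\bY^\sT\bY/n\|_{\op} + 1 \le B + 1$ (since $\lambda_{\max}(\by_i\by_i^\sT) = B$) and integrating the tail,
\[
\E[\|\bDelta\|_{\op}] \;\le\; t_d + (B+1)\cdot\Pr(\|\bDelta\|_{\op} \ge t_d) \;=\; o_d(1),
\]
as required.

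No step here is expected to be a serious obstacle; the whole argument is essentially the standard matrix Bernstein concentration for the sample covariance, and the only nontrivial ingredient is the deterministic identity $\|\by_i\|_2^2 = B$ coming from the Gegenbauer addition formula, which makes the variance proxy and the uniform bound both sharp.
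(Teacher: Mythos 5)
Your proof is correct and follows essentially the same route as the paper: the same rank-one decomposition, the same deterministic identity $\|\by_i\|_2^2=B$ from the Gegenbauer addition formula, the same variance computation $\E[\bM_i^2]=(B-1)\id_B$, and the same application of matrix Bernstein followed by integrating the tail. The only nitpick is that your stated probability bound ``$\le 1/B$'' is too weak for the final step (it would make $(B+1)\cdot\Pr$ of order one); you should instead take the bound to be $o(1/B)$, e.g.\ $B^{-2}$, which your own Bernstein estimate readily provides under $n/(B\log B)\to\infty$.
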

\begin{proof}[Proof of Lemma \ref{lem:concentration_YY}. ] 

Let $\bPsi = \bY^\sT \bY/n \in \R^{B \times B}$. We can rewrite $\bPsi$ as
\[
\bPsi = \frac{1}{n} \sum_{i=1}^n \bsh_i \bsh_i^\sT, 
\]
where
\[
\bsh_i = [1, Y_{11}(\bx_i), \ldots, Y_{1 B(d, 1)}(\bx_i), Y_{21}(\bx_i), \ldots, Y_{\ell B(d, \ell)}(\bx_i)]^\sT \in \R^B. 
\]

We use matrix Bernstein inequality. Denote $\bX_i = \bsh_i \bsh_i - \id_B \in \R^{B \times B}$. Then we have $\E[\bX_i] = \bzero$, and 
\[
\| \bX_i \|_{\op} \le \| \bsh_i \|_2^2 + 1 = \sum_{k = 0}^\ell \sum_{l \in [B(d, k)]}Y_{k l}(\bx_i)^2 + 1 = \sum_{k = 0}^\ell B(d, k) Q_k ( \< \bx_i , \bx_i \>) + 1 = B + 1,
\]
where we use formula \eqref{eq:GegenbauerHarmonics} and the normalization $Q_{k} (d) = 1$. Denote $V = \| \sum_{i = 1}^n \E[\bX_i^2] \|_{\op}$. Then we have 
\[
V  = n \| \E[(\bsh_i \bsh_i^\sT - \id_B)^2] \|_{\op} = n \| \E[ \bsh_i \bsh_i^\sT \bsh_i \bsh_i^\sT - 2 \bsh_i \bsh_i^\sT + \id_B ] \|_{\op} = n \| (B-1) \id_B\|_{\op} = n (B-1), 
\]
where we used $\bsh_i^\sT \bsh_i  = \| \bsh_i \|_2^2 = B$ and $\E [  \bsh_i (\bx_i)  \bsh_i^\sT ( \bx_i) ] = ( \E [ Y_{kl} (\bx_i) Y_{rs} (\bx_i) ] )_{kl,rs} = \id_B$. 
As a result, we have for any $t > 0$, 
\[
\P( \| \bPsi - \id_B \|_{\op} \ge t ) \le B \exp\{ - n^2 t^2 / [2 n (B - 1)  + 2 (B + 1)n t/ 3] \} \le \exp\{ - (n/B) t^2 / [10  (1 + t)] + \log B \}. 
\]
Integrating the tail bound proves the lemma.
\end{proof}

\begin{lemma}\label{lem:YYYY_expectation}
Let $\{ Y_{kl} \}_{k \in \N, l \in [B(d, k)]}$ be the collection of spherical harmonics on $L^2(\S^{d-1}(\sqrt d))$. Let $(\bx_i)_{i \in [n]} \sim_{iid} \Unif(\S^{d-1}(\sqrt d))$. Denote 
\[
\begin{aligned}
\bY_k = (Y_{kl}(\bx_i))_{i \in [n], l \in [B(d, k)]} \in \R^{n \times B(d, k)}. 
\end{aligned}
\]
Then for $u, s, v \in \N$ and $u \neq v$, we have 
\[
\E[ \bY_u^\sT  \bY_s \bY_s^\sT  \bY_v ] = \bzero. 
\]
For $u, s \in \N$, we have 
\[
\E[ \bY_u^\sT  \bY_s \bY_s^\sT  \bY_u ] = [ B(d, s) n + n(n-1) \delta_{us} ] \id_{B(d, u)}. 
\]
\end{lemma}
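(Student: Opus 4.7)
The plan is to compute the $(l_1,l_2)$ entry of the matrix $\bY_u^\sT \bY_s \bY_s^\sT \bY_v \in \R^{B(d,u) \times B(d,v)}$ explicitly and take expectation, exploiting two classical identities from Section \ref{sec:Gegenbauer}: the reproducing kernel identity for Gegenbauer polynomials \eqref{eq:GegenbauerHarmonics} and the Gegenbauer representation of the projector $\proj_k$ in Eq.~\eqref{eq:ProjectorGegenbauer}.

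Expanding the matrix product and summing over the inner index first, the $(l_1,l_2)$ entry is
\[
\sum_{i,j = 1}^n Y_{u l_1}(\bx_i) \Big[ \sum_{m=1}^{B(d,s)} Y_{sm}(\bx_i) Y_{sm}(\bx_j) \Big] Y_{v l_2}(\bx_j) = B(d,s) \sum_{i,j=1}^n Y_{u l_1}(\bx_i) \, Q_s^{(d)}(\< \bx_i,\bx_j\>) \, Y_{v l_2}(\bx_j),
\]
by \eqref{eq:GegenbauerHarmonics}. I will split the double sum into the diagonal piece $i = j$ and the off-diagonal piece $i \neq j$, then take expectation.

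For the diagonal piece $i = j$, using $Q_s^{(d)}(d) = 1$ and orthonormality of the spherical harmonics in $L^2(\S^{d-1}(\sqrt d))$, each term contributes $\delta_{uv} \delta_{l_1 l_2}$, giving a total contribution $B(d,s)\, n\, \delta_{uv} \delta_{l_1 l_2}$. For the off-diagonal piece, by independence I first integrate out $\bx_j$: Eq.~\eqref{eq:ProjectorGegenbauer} yields
\[
\E_{\bx_j}\big[Q_s^{(d)}(\<\bx_i,\bx_j\>)\, Y_{v l_2}(\bx_j)\big] = \frac{1}{B(d,s)}\, [\proj_s Y_{v l_2}](\bx_i) = \frac{\delta_{sv}}{B(d,s)} Y_{v l_2}(\bx_i),
\]
and then integrating out $\bx_i$ yields another factor $\delta_{uv} \delta_{l_1 l_2}$. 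Summing over the $n(n-1)$ pairs $i \neq j$ produces the contribution $n(n-1)\, \delta_{uv}\delta_{sv}\delta_{l_1 l_2}$.

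Adding the two pieces, I obtain
\[
\E\big[\bY_u^\sT \bY_s \bY_s^\sT \bY_v\big]_{l_1 l_2} = \big[B(d,s)\, n + n(n-1)\, \delta_{us}\big]\, \delta_{uv}\, \delta_{l_1 l_2},
\]
which vanishes when $u \neq v$ and, when $u = v$, equals $[B(d,s) n + n(n-1)\delta_{us}]\,\id_{B(d,u)}$ as claimed. There is no genuine obstacle here: the calculation is entirely driven by the reproducing kernel formula and the Gegenbauer-projector identity, so the only care needed is bookkeeping when separating the $i = j$ and $i \neq j$ contributions (in particular remembering that $Q_s^{(d)}(d) = 1$ in the diagonal case).
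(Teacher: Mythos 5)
Your proof is correct and follows essentially the same route as the paper's: split the double sum into the $i=j$ and $i\neq j$ contributions, use the addition formula $\sum_m Y_{sm}(\bx_i)Y_{sm}(\bx_j)=B(d,s)Q_s^{(d)}(\<\bx_i,\bx_j\>)$ together with $Q_s^{(d)}(d)=1$ for the diagonal term, and orthogonality for the off-diagonal term. The only cosmetic difference is that you handle the $i\neq j$ term via the Gegenbauer projector identity, whereas the paper factorizes the expectation directly by independence and orthonormality of the $Y_{kl}$; the two computations are identical in substance.
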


\begin{proof}[Proof of Lemma \ref{lem:YYYY_expectation}]
We have
\begin{equation}\label{eqn:YYYY_expectation}
\begin{aligned}
\E[ \bY_u^\sT  \bY_s \bY_s^\sT  \bY_v] =& \sum_{i,j\in[n]} \sum_{m \in [B(d, s)]} (\E[ Y_{up}(\bx_i) \Big(Y_{s m}(\bx_i) Y_{sm}(\bx_j)\Big) Y_{vq}(\bx_j) ] )_{p \in [B(d, u)], q \in [B(d, v)]}\\
=& \sum_{i\in[n]} \Big(\E \Big[ Y_{up}(\bx_i)  \Big(\sum_{m \in [B(d, s)]} Y_{s m}(\bx_i) Y_{sm}(\bx_i)\Big) Y_{vq}(\bx_i) \Big] \Big)_{p \in [B(d, u)], q \in [B(d, v)]}\\
& + \sum_{i \neq j\in[n]} \sum_{m \in [B(d, s)]} (\E[ Y_{up}(\bx_i) Y_{s m}(\bx_i) Y_{sm}(\bx_j) Y_{vq}(\bx_j) ] )_{p \in [B(d, u)], q \in [B(d, v)]}\\
=& B(d, s)  \sum_{i\in[n]} (\E[ Y_{up}(\bx_i) Y_{vq}(\bx_i)] )_{p \in [B(d, u)], q \in [B(d, v)]} \\
& + \sum_{i \neq j\in[n]}  \sum_{m \in [B(d, s)]} ( \delta_{u s} \delta_{pm} \delta_{sv} \delta_{qm})_{p \in [B(d, u)], q \in [B(d, v)]}\\
=& ( B(d, s) n \delta_{uv} \delta_{pq} + n(n-1) \delta_{us} \delta_{sv} \delta_{pq} )_{p \in [B(d, u)], q \in [B(d, v)]}. 
\end{aligned}
\end{equation}
This proves the lemma. 
\end{proof}

\begin{lemma}\label{lem:key_H_U_H_bound}
Let $\{ h_d \}_{d \ge 1}$ be a sequence of functions satisfying Assumption \ref{ass:activation_krr}. Let $\omega_d(d^{\ell} \log d) \le n \le O_d(d^{\ell + 1 - \delta})$. We have 
\[
\| n (\bH + \lambda \id_n)^{-1} \bM (\bH + \lambda \id_n)^{-1} - \bY \bY^\sT / n \|_{\op} = o_{d, \P}(1). 
\]
\end{lemma}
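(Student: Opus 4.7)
My plan is a two-stage argument: first, I will explicitly compute $n\bK_0^{-1}\bM_0\bK_0^{-1}$ for the idealized matrices
\[
\bK_0 \equiv \bY\bD\bY^\sT + \tilde\kappa\,\id_n\, , \qquad \bM_0 \equiv \bY\bD^2\bY^\sT + \kappa_u\,\id_n\, , \qquad \tilde\kappa \equiv \kappa_h + \lambda\, ,
\]
(abbreviating $\bY \equiv \bY_{\le \ell}$, $\bD \equiv \bD_{\le \ell}$) and show it equals $\bY\bY^\sT/n$ up to $o_{d,\P}(1)$ in operator norm, assuming $\lambda \le \lambda_*(d,\ell)$ as in Theorem \ref{thm:upper_bound_KRR}; then I will handle the perturbations $\kappa_h \bDelta_h$ and $\kappa_u\bDelta_u$. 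The central algebraic identity is $\bK_0 \bY = \bY(\bD\bG + \tilde\kappa\id_B)$ with $\bG \equiv \bY^\sT\bY$; inverting gives $\bK_0^{-1}\bY = \bY(\bD\bG + \tilde\kappa\id_B)^{-1}$, and combining with its transpose yields
\[
\bK_0^{-1}\bM_0\bK_0^{-1} = \bY\,\bW\bW^\sT\,\bY^\sT + \kappa_u\bK_0^{-2}, \qquad \bW \equiv (\bD\bG + \tilde\kappa\id_B)^{-1}\bD.
\]

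The core of stage 1 is to show $\bW\bW^\sT = (1/n^2)(\id_B + o_{d,\P}(1))$ in operator norm. Writing $\bG = n\id_B + n\bDelta_Y$ with $\|\bDelta_Y\|_{\op} = o_{d,\P}(1)$ from Lemma \ref{lem:concentration_YY}, we have $\bD\bG + \tilde\kappa\id_B = (n\bD + \tilde\kappa\id_B) + n\bD\bDelta_Y$. Define $\bW_0 \equiv (n\bD + \tilde\kappa\id_B)^{-1}\bD$ (diagonal with entries $\xi_{d,k}(h_d)/(n\xi_{d,k}(h_d) + \tilde\kappa)$) and ${\boldsymbol P} \equiv (n\bD + \tilde\kappa\id_B)^{-1} n\bD$ (diagonal with entries $n\xi_{d,k}(h_d)/(n\xi_{d,k}(h_d) + \tilde\kappa) \in [0,1]$). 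The key observation is $\|{\boldsymbol P}\|_{\op} \le 1$. Then $\bW$ satisfies $(\id_B + {\boldsymbol P}\bDelta_Y)\bW = \bW_0$, hence $\bW = (\id_B + {\boldsymbol P}\bDelta_Y)^{-1}\bW_0$ and $\|\bW - \bW_0\|_{\op} = o_{d,\P}(1/n)$ since $\|\bW_0\|_{\op} \le 1/n$. By Assumption \ref{ass:activation_krr}.(b), $\lambda_* \ge c_\ell \kappa_h$, so $\tilde\kappa \le (1 + 1/c_\ell)\lambda_*$; combined with $n = \omega_d(d^\ell\log d)$ this gives $\tilde\kappa/(n\min_{k\le\ell}\xi_{d,k}(h_d)) \le (1 + 1/c_\ell)d^\ell/n = o_d(1)$, so each diagonal entry of $\bW_0^2$ equals $(1/n^2)(1 + o_d(1))$ uniformly in $k\le\ell$. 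Hence $\bW\bW^\sT = (1/n^2)(\id_B + o_{d,\P}(1))$. Conjugating by $\bY$ (with $\|\bY\|_{\op}^2 = \|\bG\|_{\op} = O_{d,\P}(n)$) gives $n\bY\bW\bW^\sT\bY^\sT = \bY\bY^\sT/n + o_{d,\P}(1)$. For the residual $n\kappa_u\bK_0^{-2}$, the bound $\xi_{d,k}(h_d) \le \kappa_h/B(d,\ell+1)$ for $k \ge \ell+1$ (using $\sum_{j\ge\ell+1}\xi_{d,j}(h_d)B(d,j) = \kappa_h$, positivity, and Lemma \ref{lem:non_decreasing_N}) gives $\kappa_u \le \kappa_h^2/B(d,\ell+1)$, and with $\tilde\kappa \ge \kappa_h$, $n \le O_d(d^{\ell+1-\delta})$, $B(d,\ell+1) = \Theta_d(d^{\ell+1})$, we obtain $n\kappa_u/\tilde\kappa^2 \le n/B(d,\ell+1) = O_d(d^{-\delta}) = o_d(1)$. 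As a byproduct, $\|\bK_0^{-1}\bM_0\bK_0^{-1}\|_{\op} = O_{d,\P}(1/n)$.

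For stage 2, writing $\bH + \lambda\id_n = \bK_0 + \kappa_h\bDelta_h$ with $\|\kappa_h\bDelta_h\|_{\op} = o_{d,\P}(\kappa_h) \le o_{d,\P}(\tilde\kappa)$ and $\|\bK_0^{-1}\|_{\op} \le 1/\tilde\kappa$, we have $(\bH + \lambda\id_n)^{-1} = {\boldsymbol F}\,\bK_0^{-1}$ where ${\boldsymbol F} \equiv (\id_n + \bK_0^{-1}\kappa_h\bDelta_h)^{-1}$ satisfies $\|{\boldsymbol F}\|_{\op} = 1 + o_{d,\P}(1)$ and $\|{\boldsymbol F} - \id_n\|_{\op} = o_{d,\P}(1)$; similarly $\|\bM - \bM_0\|_{\op} = o_{d,\P}(\kappa_u)$. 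Expanding $(\bH + \lambda\id_n)^{-1}\bM(\bH + \lambda\id_n)^{-1} = {\boldsymbol F}\,\bK_0^{-1}(\bM_0 + (\bM - \bM_0))\bK_0^{-1}\,{\boldsymbol F}^\sT$, the $(\bM - \bM_0)$ contribution has operator norm $\le (1 + o(1))^2 o_{d,\P}(\kappa_u)/\tilde\kappa^2 = o_{d,\P}(1/n)$, while the ${\boldsymbol F} - \id_n$ correction has operator norm $\le o_{d,\P}(1) \cdot O_{d,\P}(1/n) = o_{d,\P}(1/n)$ by the a priori bound from stage 1; multiplying by $n$ gives $o_{d,\P}(1)$ for each, and combining with stage 1 completes the proof. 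The main obstacle is the non-commutativity of $\bD$ and $\bG$ in $\bD\bG + \tilde\kappa\id_B$: a direct perturbative expansion of $(\bD\bG + \tilde\kappa\id_B)^{-1}$ around $(n\bD + \tilde\kappa\id_B)^{-1}$ introduces errors scaling with the ratio $(\max_{k \le \ell}\xi_{d,k}(h_d))/(\min_{k \le \ell}\xi_{d,k}(h_d))$, which is not controlled by the stated assumptions; the contraction bound $\|{\boldsymbol P}\|_{\op} \le 1$ is precisely what circumvents this.
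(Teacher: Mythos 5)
Your proof is correct, and it takes a genuinely different route from the paper's. The paper splits $\bM$ into $\bY_{\le\ell}\bD_{\le\ell}^2\bY_{\le\ell}^\sT$ plus $\kappa_u(\id_n+\bDelta_u)$, disposes of the second piece exactly as you do with $n\kappa_u/(\kappa_h+\lambda)^2=o_d(1)$, and then attacks the first piece by taking the full SVD $\bY_{\le\ell}=\sqrt{n}\,\bO\bS\bV^\sT$ and grinding through a $2\times 2$ block inversion (Schur complements in the $\bO$-basis) to show the sandwiched matrix equals $\bO[\diag(\id_B,\bzero_{n-B})+o_{d,\P}(1)]\bO^\sT$. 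Your push-through identity $\bK_0^{-1}\bY=\bY(\bD\bG+\tilde\kappa\id_B)^{-1}$ collapses everything to a single $B\times B$ computation and avoids the SVD and the block algebra entirely; the one genuinely new ingredient you need, and correctly supply, is the contraction ${\boldsymbol P}=(n\bD+\tilde\kappa\id_B)^{-1}n\bD$ with $\|{\boldsymbol P}\|_{\op}\le 1$, which neutralizes the non-commutativity of $\bD$ and $\bG$ without ever invoking the uncontrolled condition number $\max_{k\le\ell}\xi_{d,k}/\min_{k\le\ell}\xi_{d,k}$ (the paper sidesteps the same issue by keeping $\bD$ inside the Schur complement until the very last step, where $\lambda_{\min}(\bD_1)=\omega_d(1)$ is all that is used). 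The quantitative inputs are identical in both arguments: Lemma \ref{lem:concentration_YY} for $\bG/n=\id_B+o_{d,\P}(1)$, Assumption \ref{ass:activation_krr}.(b) together with $n=\omega_d(d^\ell\log d)$ for $\tilde\kappa/(n\min_{k\le\ell}\xi_{d,k})=o_d(1)$, and $n\le O_d(d^{\ell+1-\delta})$ for the $\kappa_u$ tail. One remark: you explicitly assume $\lambda\le\lambda_*(d,\ell)$, which is not in the lemma statement; the paper's own proof needs the same restriction implicitly (its final step requires $\lambda_{\min}(\bD_1)=\min_{k\le\ell}n\xi_{d,k}/(\kappa_h+\lambda)=\omega_d(1)$), and the lemma is only ever invoked under that hypothesis, so making it explicit is a point in your favor rather than a gap.
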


\begin{proof}[Proof of Lemma \ref{lem:key_H_U_H_bound}]
Denote 
\begin{align}
\bY_k = (Y_{kl}(\bx_i))_{i \in [n], l \in [B(d, k)]} \in \R^{n \times B(d, k)}.
\end{align}
Denote $B = \sum_{k \le \ell} B(d, k)$, and
\[
\bY = [\bY_0, \ldots, \bY_\ell] \in \R^{n \times B},
\]
and 
\[
\bD = \diag(\xi_0(h_d) \id_{B(d, 0)}, \ldots, \xi_\ell(h_d) \id_{B(d, \ell)}) \in \R^{B \times B}. 
\]
Then we have 
\[
\begin{aligned}
& n (\bH + \lambda \id_n)^{-1} \bM (\bH + \lambda \id_n)^{-1} \\
=& n (\bY \bD \bY^\sT + (\kappa_h + \lambda) \id_n + \kappa_h \bDelta_h)^{-1} (\bY \bD^2 \bY^\sT + \kappa_u (\id_n + \bDelta_u) ) (\bY \bD \bY^\sT + (\kappa_h + \lambda) \id_n + \kappa_h \bDelta_h)^{-1} \\
=& T_1 + T_2, 
\end{aligned}
\]
where $\| \bDelta_u \|_{\op}, \| \bDelta_h \|_{\op} = o_{d, \P}(1)$, and 
\[
\begin{aligned}
T_1 =&  n   \kappa_u (\bY \bD \bY^\sT + (\kappa_h + \lambda) \id_n + \kappa_h \bDelta_h)^{-1}  (\id_n + \bDelta_u) (\bY \bD^2 \bY^\sT + (\kappa_h + \lambda) \id_n + \kappa_h \bDelta_h)^{-1}, \\
T_2 =& n (\bY \bD \bY^\sT + (\kappa_h + \lambda) \id_n + \kappa_h \bDelta_h)^{-1} \bY \bD^2 \bY^\sT  (\bY \bD \bY^\sT + (\kappa_h + \lambda) \id_n + \kappa_h \bDelta_h)^{-1}.
\end{aligned}
\] 
For $T_1$, we have with high probability (note $n = O_d(d^{\ell + 1 - \delta})$) 
\[
\begin{aligned}
\| T_1 \|_{\op} \le& n \kappa_u \| (\bY \bD \bY^\sT + (\kappa_h + \lambda) \id_n + \kappa_h \bDelta_h)^{-1} \|_{\op}^2 \| \id_n + \bDelta_u \|_{\op} \le 2 n \kappa_u / (\kappa_h + \lambda)^2 \\
\le& 2 n \Big[ \sum_{k \ge \ell + 1} \xi_k(h_d)^2 B(d, k) \Big] / \Big[ \sum_{k \ge \ell + 1} \xi_k(h_d) B(d, k) + \lambda \Big]^2 \\
\le& 2 n \sup_{k \ge \ell + 1} \xi_k(h_d) / \Big[ \sum_{k \ge \ell + 1} \xi_k(h_d) B(d, k) + \lambda \Big]= o_{d} (1). 
\end{aligned}
\]

To bound $T_2$, let $\bY = \sqrt n \bO \bS \bV^{\sT}$ where $\bO \in \R^{n \times n}$ and $\bV \in \R^{B \times B}$ are orthogonal matrices, and $\bS = [\bS_\star ; \bzero] \equiv [\id_B + \bDelta_s ; \bzero] \in \R^{n \times B}$. By Lemma \ref{lem:concentration_YY} and the fact that $n = \omega_d(d^\ell \log d)$, we have $\| \bDelta_s \|_{\op} = o_{d, \P}(1)$. Then we have 
\[
\begin{aligned}
T_2 =& (\bO \bS \bV^{\sT} \bD \bV \bS^\sT \bO^{\sT} + [(\kappa_h + \lambda)/ n ] \id_n + \kappa_h/ n \cdot \bDelta_h )^{-1} (\bO \bS \bV^{\sT} \bD^2 \bV \bS^\sT \bO^{\sT}) \\
& \times (\bO \bS \bV^{\sT} \bD \bV \bS^\sT \bO^{\sT} + [(\kappa_h + \lambda)/ n ] \id_n + \kappa_h/ n \cdot \bDelta_h )^{-1}\\
=& \bO(\bS \bV^{\sT} \bD \bV \bS^\sT  + [(\kappa_h + \lambda) / n ] \id_n + \kappa_h / n \cdot \bDelta_0 )^{-1} ( \bS \bV^{\sT} \bD^2 \bV \bS^\sT) \\
& \times (\bS \bV^{\sT} \bD \bV \bS^\sT  + [(\kappa_h + \lambda) / n ] \id_n + \kappa_h / n \cdot \bDelta_0 )^{-1} \bO^\sT, \\
=&\bO (\diag(\bS_\star \bV^{\sT} \bD \bV \bS_\star/[(\kappa_h + \lambda) / n ], \bzero_{n - B}) +  \id_n + \kappa_h / (\kappa_h + \lambda) \cdot \bDelta_0 )^{-1} \\
& \times \diag(\bS_\star \bV^{\sT} \bD^2 \bV \bS_\star / [(\kappa_h + \lambda) / n ]^2, \bzero_{n - B}) \\
&\times(\diag(\bS_\star \bV^{\sT} \bD \bV \bS_\star/[(\kappa_h + \lambda) / n ], \bzero_{n - B}) + \id_n + \kappa_h / (\kappa_h + \lambda) \cdot \bDelta_0 )^{-1} \bO^\sT,
\end{aligned}
\]
where $\bDelta_0 = \bO^{\sT} \bDelta_h \bO$ and $\| \bDelta_0 \|_{\op} = o_{d, \P}(1)$. 

For a symmetric matrix $\bS_0 \in \R^{B \times B}$ and a symmetric matrix $\bA = [\bA_{11}, \bA_{12}; \bA_{21}, \bA_{22}] \in \R^{n \times n}$, we have 
\[
\begin{aligned}
&([\bA_{11}, \bA_{12}; \bA_{21}, \bA_{22}])^{-1} \diag(\bS_0, \bzero_{n - B}) ([\bA_{11}, \bA_{12}; \bA_{21}, \bA_{22}])^{-1} \\
=& [\bB_{11} \bS_0 \bB_{11}, \bB_{11} \bS_0 \bB_{12}; \bB_{21} \bS_0 \bB_{11}, \bB_{21} \bS_0 \bB_{12}] \\
=&[\id_B; - \bA_{22}^{-1} \bA_{21}] \bB_{11} \bS_0 \bB_{11}   [\id_B, - \bA_{12} \bA_{22}^{-1}]. 
\end{aligned}
\]
where
\[
\begin{aligned}
\bB_{11} =& (\bA_{11} - \bA_{12} \bA_{22}^{-1} \bA_{21})^{-1}, \\
\bB_{21} =& \bB_{12}^\sT = - \bA_{22}^{-1} \bA_{21} \bB_{11}. \\
\end{aligned}
\]
Taking 
\[
\begin{aligned}
\bS_0 =& \bS_\star \bV^{\sT} \bD^2 \bV \bS_\star/[(\kappa_h + \lambda) / n ]^2,\\
\bA_{11} =& \bS_\star \bV^{\sT} \bD \bV \bS_\star / [(\kappa_h + \lambda) / n ]+ \id_{B} + [\kappa_h/(\kappa_h + \lambda)] \bDelta_{0, 11}, \\
\bA_{12} =&  [\kappa_h/(\kappa_h + \lambda)] \bDelta_{0, 12}, \\
\bA_{22} = & \id_{n - B} + [\kappa_h/(\kappa_h + \lambda)] \bDelta_{0, 22}, 
\end{aligned}
\]
with $\| \bDelta_{0,11} \|_{\op} , \| \bDelta_{0,12} \|_{\op} , \| \bDelta_{0,22} \|_{\op} = o_{d,\P} (1)$.
This gives
\[
T_2 = \bO \Big\{ [\id_B; \bDelta_1]\bB_{11}\bS_0 \bB_{11} [\id_B, \bDelta_1^\sT] \Big\} \bO^\sT,
\]
with $\bDelta_1 = - (\id_{n - B} + [\kappa_h/(\kappa_h + \lambda)] \bDelta_{0, 22} )^{-1} [\kappa_h/(\kappa_h + \lambda)] \bDelta_{0, 12}$ and $\| \bDelta_1 \|_{\op} = o_{d, \P}(1)$.

Now we look at $\bB_{11}\bS_0 \bB_{11}$. We have 
\[
\begin{aligned}
\bB_{11} =& (\bS_\star \bV^{\sT} \bD \bV \bS_\star / [(\kappa_h + \lambda) / n ] + \id_{B} + [\kappa_h/(\kappa_h + \lambda)] \bDelta_{0, 11} \\
&- [\kappa_h/(\kappa_h + \lambda)]^2 \bDelta_{0, 12} [\id_{n - B} + [\kappa_h/(\kappa_h + \lambda)] \bDelta_{0, 22}]^{-1} \bDelta_{0, 21})^{-1}  \\
=& (\bS_\star \bV^{\sT} \bD \bV \bS_\star / [(\kappa_h + \lambda) / n ] + \id_{B} + \bDelta_2)^{-1} , \\
=& (\bS_\star \bV^\sT \{ \bD  / [(\kappa_h + \lambda) / n ] + \id_{B} + \bDelta_3 \} \bV \bS_\star)^{-1}, 
\end{aligned}
\]
where
\[
\bDelta_2 = [\kappa_h/(\kappa_h + \lambda)] \bDelta_{0, 11} -[\kappa_h/(\kappa_h + \lambda)]^2 \bDelta_{0, 12} [\id_{n - B} + [\kappa_h/(\kappa_h + \lambda)] \bDelta_{0, 22}]^{-1} \bDelta_{0, 21} \in \R^{B \times B}, 
\]
and 
\[
\bDelta_3 = \bV [\bS_\star^{-1}(\id_B + \bDelta_2) \bS_\star^{-1} - \id_B] \bV^\sT \in \R^{B \times B},
\]
and $\| \bDelta_2 \|_{\op}, \| \bDelta_3 \|_{\op} = o_{d, \P}(1)$. Define 
\[
\bD_1 = \bD  / [(\kappa_h + \lambda) / n ]. 
\]
We have
\[
\begin{aligned}
\bB_{11} \bS_0 \bB_{11} =& (\bS_\star \bV^\sT \{ \bD_1 + \id_{B} + \bDelta_3 \} \bV \bS_\star)^{-1} \bS_\star \bV^{\sT} \bD_1^2 \bV \bS_\star (\bS_\star \bV^\sT \{ \bD_1 + \id_{B} + \bDelta_3 \} \bV \bS_\star)^{-1}\\
=& \bS_\star^{-1} \bV^\sT (\bD_1 + \id_{B} + \bDelta_3)^{-1} \bD_1^2 (\bD_1 + \id_B + \bDelta_3)^{-1} \bV \bS_\star^{-1} \\
=& \bS_\star^{-1} \bV^\sT (\id_B + \bD_1^{-1} + \bD_1^{-1}\bDelta_3)^{-1} (\id_B + \bD_1^{-1} + \bDelta_3\bD_1^{-1})^{-1} \bV \bS_\star^{-1}.  
\end{aligned}
\]
Note we have 
\[
\lambda_{\min}(\bD_1) = \min_{k \le \ell} [ n \xi_k(h_d)] / \Big[ \sum_{k \ge \ell + 1} \xi_k(h_d) B(d, k)  + \lambda \Big], 
\]
and by Assumption \ref{ass:activation_krr} and $n = \omega_d(d^\ell \log d)$ we have $\lambda_{\min} (\bD_1) = \omega_d(1)$. As long with the fact that $\| \bS_\star - \id_B \|_{\op} = o_{d, \P}(1)$, we have 
\[
\| \bB_{11} \bS_0 \bB_{11} - \id_B \|_{\op} = o_{d, \P}(1). 
\]
As a result, we have 
\[
T_2 = \bO [\diag(\id_B, \bzero_{n - B}) + \bDelta_t ] \bO^\sT,
\]
with $\| \bDelta_t \|_{\op} = o_{d, \P}(1)$. Finally, we have 
\[
\bY \bY^\sT / n = \bO \diag(\bS_\star^2, \bzero_{n - B}) \bO^\sT = \bO [\diag(\id_B, \bzero_{n - B}) + \bDelta_y ]\bO^\sT,
\]
with $\| \bDelta_y \|_{\op} = o_{d, \P}(1)$. This proves the proposition. 
\end{proof}

\begin{lemma}\label{lem:lem_for_error_bound_R11}
Let $\{ h_d \}_{d \ge 1}$ be a sequence of functions satisfying Assumption \ref{ass:activation_krr}. Let $\omega_d(d^{\ell} \log d) \le n \le O_d(d^{\ell + 1 - \delta})$. We have
\[
\| \bY^\sT( \bH + \lambda \id_n)^{-1}\bY \bD - \id_B \|_{\op} = o_{d, \P}(1). 
\]
\end{lemma}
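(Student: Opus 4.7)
The strategy mirrors the algebra in the proof of Lemma \ref{lem:key_H_U_H_bound}: first decompose $\bH + \lambda \id_n$ using the low-degree spherical harmonics structure plus a small perturbation, then invert the resulting block matrix explicitly, and finally verify that the nontrivial part of $\bY^\sT(\bH+\lambda\id_n)^{-1}\bY\bD$ matches $\id_B$ up to vanishing error.

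First I would write $\bH + \lambda \id_n = \bY \bD \bY^\sT + (\kappa_h + \lambda) \id_n + \kappa_h \bDelta_h$ with $\|\bDelta_h\|_{\op} = o_{d,\P}(1)$, and then use the SVD $\bY = \sqrt n \bO \bS \bV^\sT$, where $\bS = [\bS_\star; \bzero]$ with $\bS_\star = \id_B + \bDelta_s$ and $\|\bDelta_s\|_{\op} = o_{d,\P}(1)$ from Lemma \ref{lem:concentration_YY}. Conjugating by $\bO$, the matrix $\bH + \lambda \id_n$ becomes a block matrix $\bA$ with $\bA_{11} = n \bS_\star \bV^\sT \bD \bV \bS_\star + (\kappa_h + \lambda) \id_B + \kappa_h \bDelta_{0,11}$, $\bA_{22} = (\kappa_h+\lambda)\id_{n-B}+\kappa_h\bDelta_{0,22}$, and off-diagonal entries $\kappa_h \bDelta_{0,12}$, $\kappa_h \bDelta_{0,21}$ each of norm $o_{d,\P}(\kappa_h)$. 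Since $\bS$ has nonzero entries only in its top $B$ rows, $\bS^\sT \bA^{-1} \bS = \bS_\star (\bA_{11} - \bA_{12}\bA_{22}^{-1}\bA_{21})^{-1}\bS_\star$ by block matrix inversion. The Schur-complement correction has norm $o_{d,\P}(\kappa_h)$, so I can write $\bA_{11} - \bA_{12}\bA_{22}^{-1}\bA_{21} = \bS_\star \bV^\sT(n\bD + \bR) \bV\bS_\star$ where $\bR = \bV \bS_\star^{-1}[(\kappa_h+\lambda)\id_B+\bDelta_2]\bS_\star^{-1}\bV^\sT$ and $\|\bDelta_2\|_{\op} = o_{d,\P}(\kappa_h+\lambda)$. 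After the $\bV$'s and $\bS_\star$'s cancel (using $\bV\bV^\sT=\id_B$ and $\bS_\star \bS_\star^{-1}=\id_B$) I obtain the clean identity
$$\bY^\sT (\bH + \lambda \id_n)^{-1} \bY \bD = n(n\bD + \bR)^{-1}\bD,$$
whence $n(n\bD+\bR)^{-1}\bD - \id_B = -(n\bD+\bR)^{-1}\bR$.

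It remains to bound $\|(n\bD+\bR)^{-1}\bR\|_{\op}$. Since $\bR \succeq \tfrac{1}{4}(\kappa_h+\lambda)\id_B$ with high probability (because $\bDelta_2$ is small compared to $\kappa_h+\lambda$ and $\bS_\star$ is close to $\id_B$), and $\bD \succeq \min_{k\le \ell}\xi_k(h_d)\cdot \id_B$, we get $\lambda_{\min}(n\bD+\bR) \ge n\min_k \xi_k(h_d)$ and $\|\bR\|_{\op} = O_{d,\P}(\kappa_h+\lambda)$. Assumption \ref{ass:activation_krr}.(b) gives $\min_k \xi_k(h_d)\ge c_\ell \kappa_h /d^\ell$, so $n\min_k\xi_k(h_d) = \omega_d(\kappa_h \log d)$ because $n=\omega_d(d^\ell \log d)$. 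Moreover $\lambda \le \lambda_* = d^\ell \min_k \xi_k(h_d)$. Splitting $\kappa_h+\lambda \le \kappa_h + \lambda_*$, both contributions to $\|\bR\|_{\op}/\lambda_{\min}(n\bD+\bR)$ are bounded by $O(d^\ell/n)=o_d(1)$, which gives the result.

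The main obstacle will be the algebraic bookkeeping: tracking the $\bV$ and $\bS_\star$ cancellations carefully, verifying that $\bR$ is sufficiently close to the scalar matrix $(\kappa_h+\lambda)\id_B$ to be approximately PSD, and combining the various $o_{d,\P}(1)$ error bounds with the bound $\lambda\le\lambda_*$. None of the steps is deep, but as in Lemma \ref{lem:key_H_U_H_bound} the block manipulations must be executed carefully to preserve the scaling.
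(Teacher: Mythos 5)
Your proposal is correct and follows essentially the same route as the paper: the decomposition $\bH+\lambda\id_n=\bY\bD\bY^\sT+(\kappa_h+\lambda)\id_n+\kappa_h\bDelta_h$ via Proposition \ref{prop:Delta_bound}, the SVD $\bY=\sqrt{n}\bO\bS\bV^\sT$ with $\bS_\star=\id_B+\bDelta_s$ from Lemma \ref{lem:concentration_YY}, a Schur-complement/block inversion reducing the quantity to $-(n\bD+\bR)^{-1}\bR$ with $\bR\approx(\kappa_h+\lambda)\id_B$ (the paper writes this as $(\id_B+\bD_1^{-1}\bLambda_5)^{-1}-\id_B$ with $\bD_1=n\bD/(\kappa_h+\lambda)$), and the final bound from $n\min_{k\le\ell}\xi_k(h_d)/(\kappa_h+\lambda)=\omega_d(1)$ via Assumption \ref{ass:activation_krr}.(b) and $n=\omega_d(d^\ell\log d)$. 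The only cosmetic difference is that you make the Schur complement and the role of $\lambda\le\lambda_*$ explicit where the paper absorbs them into near-identity factors $\bLambda_3,\bLambda_4,\bLambda_5$.
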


\begin{proof}[Proof of Lemma \ref{lem:lem_for_error_bound_R11}]~

By Proposition \ref{prop:Delta_bound}, we have $\bH + \lambda \id_n = \bY \bD \bY^\sT + (\kappa_h + \lambda) \id_n + \kappa_h \bDelta_h$ with $\| \bDelta_h \|_{\op} = o_{d, \P}(1)$. Denote the singular value decomposition $\bY = \sqrt n \bO \bS \bV^\sT$, with $\bO \in \R^{n \times n}$, $\bV \in \R^{B \times B}$ be two orthogonal matrices, and $\bS = [\bS_\star ; \bzero] = [\id_n + \bDelta_s ; \bzero] \in \R^{n \times B}$, with $\| \bDelta_s \|_{\op} = o_{d, \P}(1)$ (Lemma \ref{lem:concentration_YY}). Then we have
\[
\begin{aligned}
&\bY^\sT( \bH + \lambda \id_n)^{-1}\bY \bD - \id_B \\
=& \bV \bS^\sT \bO^\sT (\bO \bS \bV^\sT \bD \bV \bS^\sT \bO^\sT + [(\kappa_h + \lambda) / n] (\id_n + \bDelta_1))^{-1} \bO \bS \bV^\sT \bD - \id_B \\
=& \bV \bS^\sT ( \bS \bV^\sT \bD \bV \bS^\sT  +  [(\kappa_h + \lambda)  / n ] (\id_n + \bDelta_2))^{-1} \bS \bV^\sT \bD - \id_B \\
=& \bV [\bS_\star, \bzero] ( \diag(\bS_\star \bV^\sT \bD \bV \bS_\star / [(\kappa_h + \lambda) / n], \bzero_{n - B})  + \id_n + \bDelta_2)^{-1} [\bS_\star, \bzero]^\sT \bV^\sT \bD  / [(\kappa_h + \lambda) / n] - \id_B \\
=& \bV \bS_\star (\bS_\star \bV^\sT \bD \bV \bS_\star / [(\kappa_h + \lambda) / n] + \bLambda_3)^{-1} \bS_\star \bV^\sT \bD  / [(\kappa_h + \lambda) / n] - \id_B \\
=& \bV ( \bV^\sT \bD \bV / [(\kappa_h + \lambda) / n] + \bLambda_4)^{-1}\bV^\sT \bD  / [(\kappa_h + \lambda) / n] - \id_B \\
=& (\bD_1 + \bLambda_5)^{-1} \bD_1 - \id_B \\
=& (\id_B + \bD_1^{-1} \bLambda_5)^{-1}  - \id_B, \\
\end{aligned}
\]
where $\| \bDelta_i \|_{\op} = o_{d, \P}(1)$ for $i \in \{1, 2\}$ and $\| \bLambda_i - \id_B \|_{\op} = o_{d, \P}(1)$ for $i \in \{3, 4, 5\}$, and
\[
\bD_1 \equiv n \bD / (\kappa_h + \lambda).
\]
and 
\[
\lambda_{\min}(\bD_1) = \min_{k \le \ell} [ n \xi_k(h_d)] / \Big[ \sum_{k \ge \ell + 1} \xi_k(h_d) B(d, k)  + \lambda \Big]. 
\]
By Assumption \ref{ass:activation_krr}, we have $\lambda_{\min}(\bD_1) = \omega_d(1)$. This proves the lemma. 
\end{proof}

\section*{Acknowledgements}

This work was partially supported by grants NSF DMS-1613091, CCF-1714305, IIS-1741162, and
ONR N00014-18-1-2729, NSF DMS-1418362, NSF DMS-1407813.

\bibliographystyle{amsalpha}

\newcommand{\etalchar}[1]{$^{#1}$}
\providecommand{\bysame}{\leavevmode\hbox to3em{\hrulefill}\thinspace}
\providecommand{\MR}{\relax\ifhmode\unskip\space\fi MR }
\providecommand{\MRhref}[2]{%
  \href{http://www.ams.org/mathscinet-getitem?mr=#1}{#2}
}
\providecommand{\href}[2]{#2}

\newpage

\appendix

\section{Numerical results with ridge regression}
\label{app:Ridge}

\begin{figure}
\includegraphics[width=0.49\linewidth]{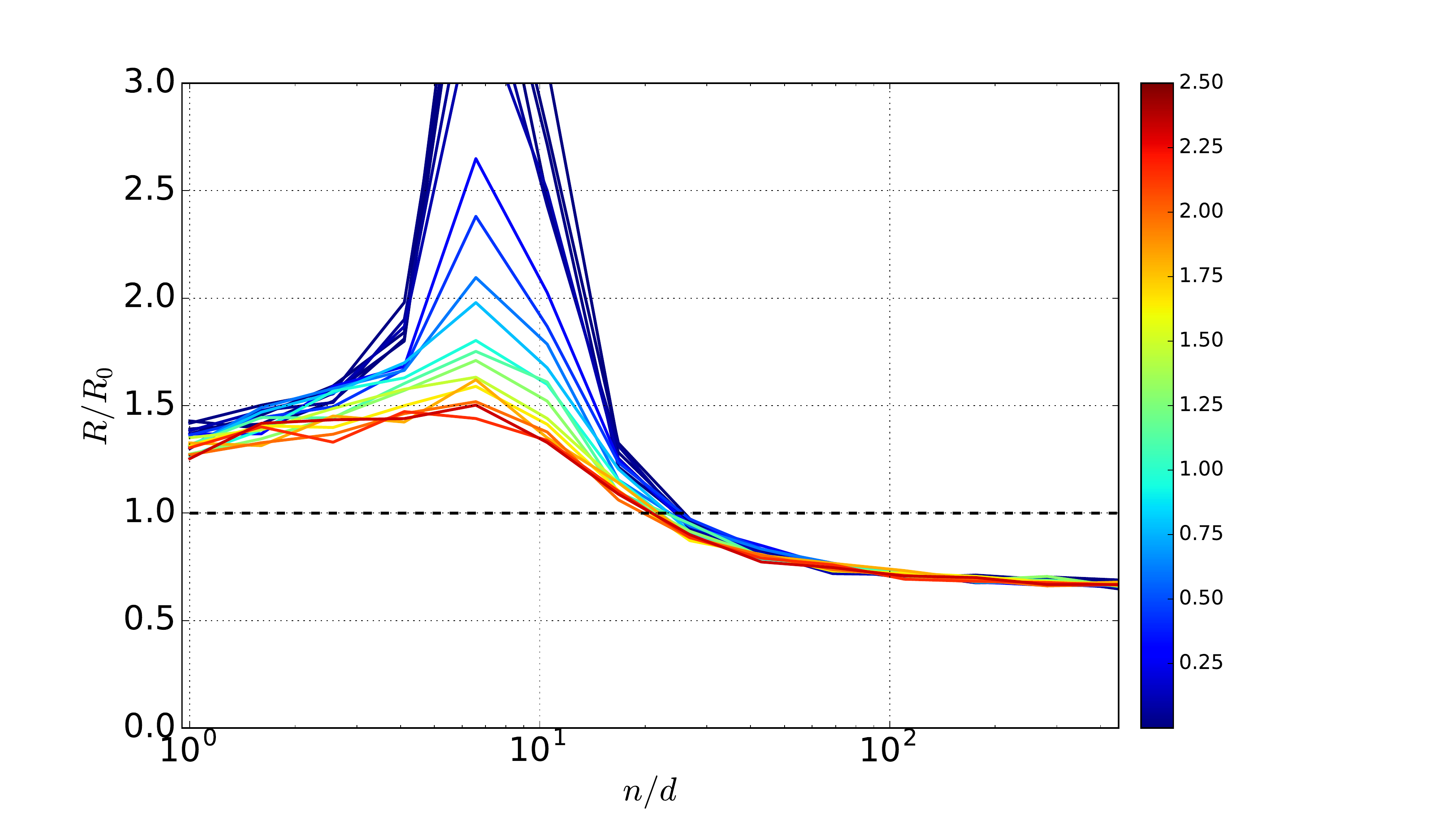}
\includegraphics[width=0.49\linewidth]{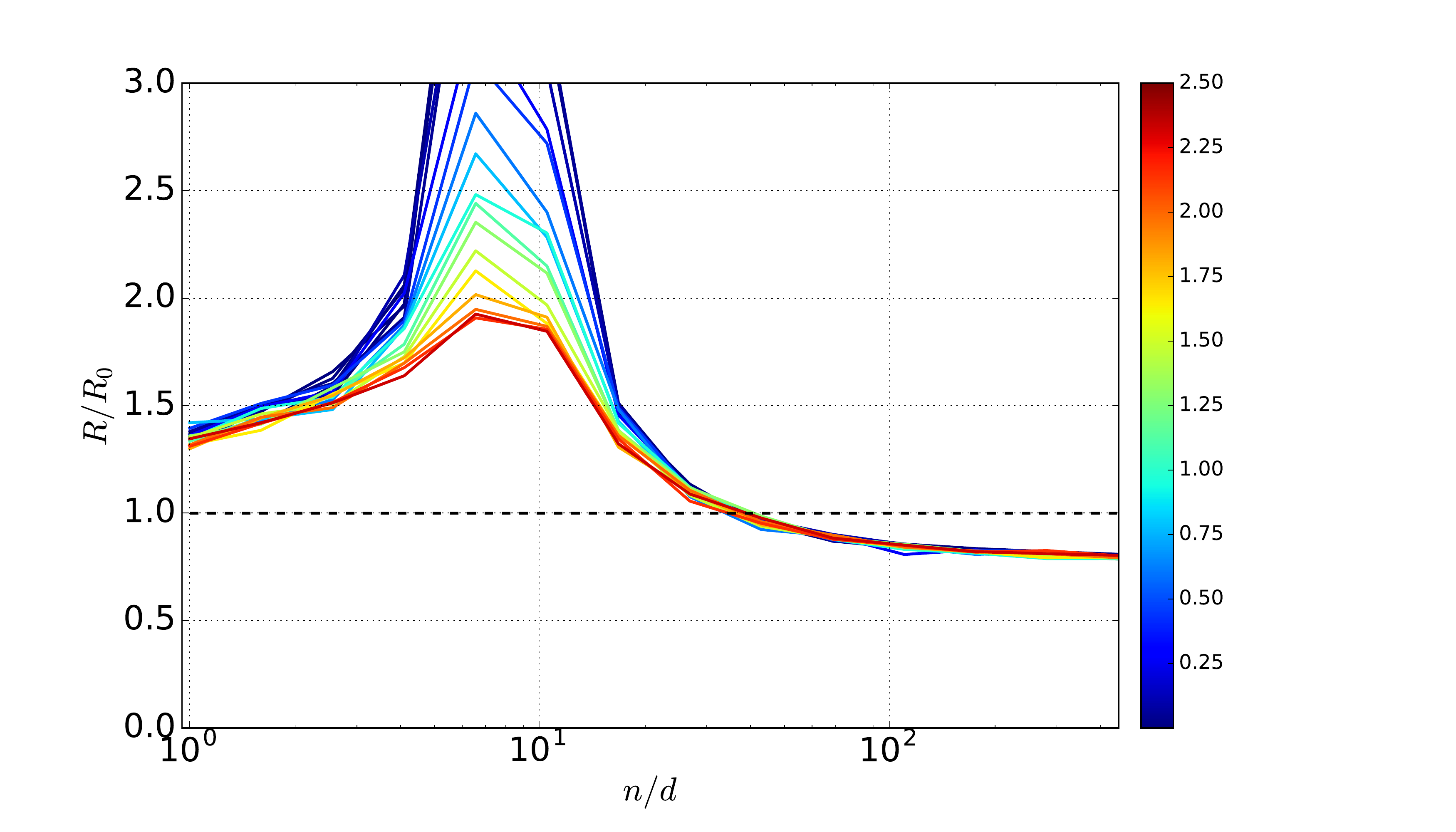}
\caption{Risk of the random features model for learning a quadratic
  function $f_{\star,2}$. We fit the model coefficients using ridge regression, with different curves corresponding to 
different values of the regularization parameter. Test error is computed on $n_{\stest}=1500$ fresh samples.
Left frame: $d=30$, $N=240$. Right frame: $d=50$, $N=400$.}\label{fig:RF-Ridge-Quad}
\end{figure}

\begin{figure}
\includegraphics[width=0.49\linewidth]{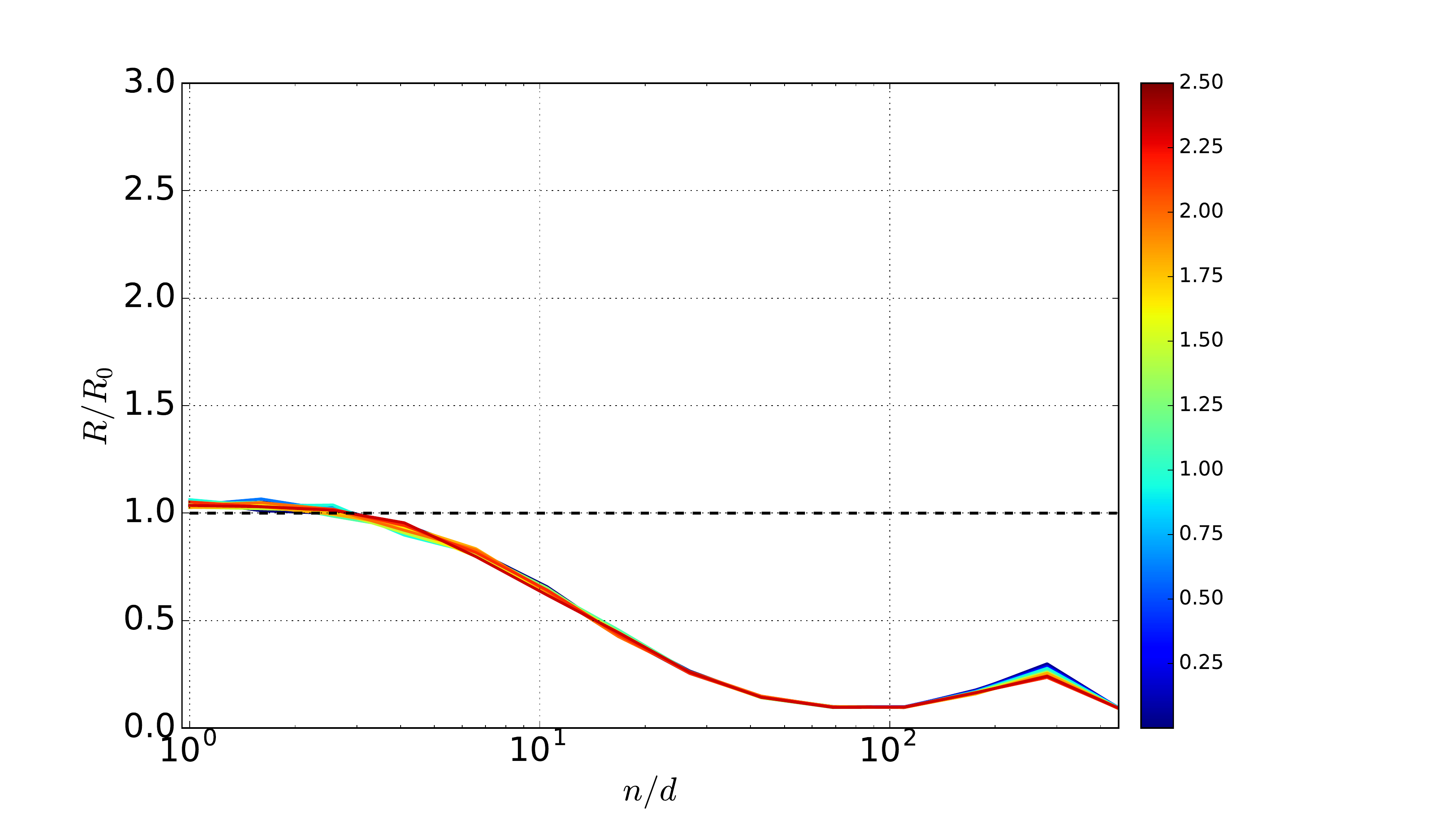}
\includegraphics[width=0.49\linewidth]{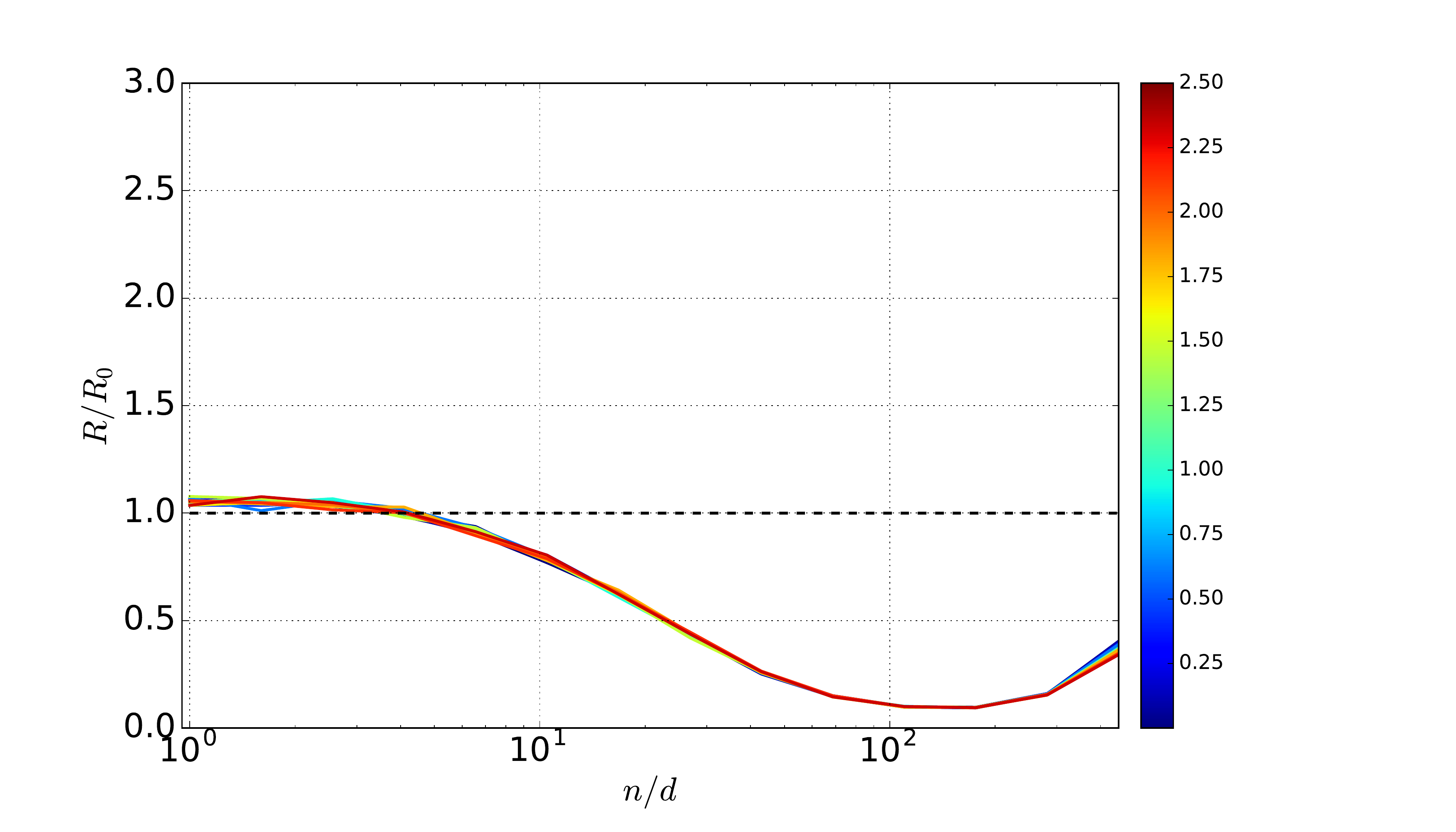}
\caption{Risk of the neural tangent model for learning a quadratic
  function $f_{\star,2}$. The other settings are the same as in Figure \ref{fig:RF-Ridge-Quad}. Test error is computed on $n_{\stest}=1500$ fresh samples. Left frame: $d=30$, $N=240$. Right frame: $d=50$, $N=400$.}\label{fig:NTK-Ridge-Quad}
\end{figure}

\begin{figure}
\includegraphics[width=0.49\linewidth]{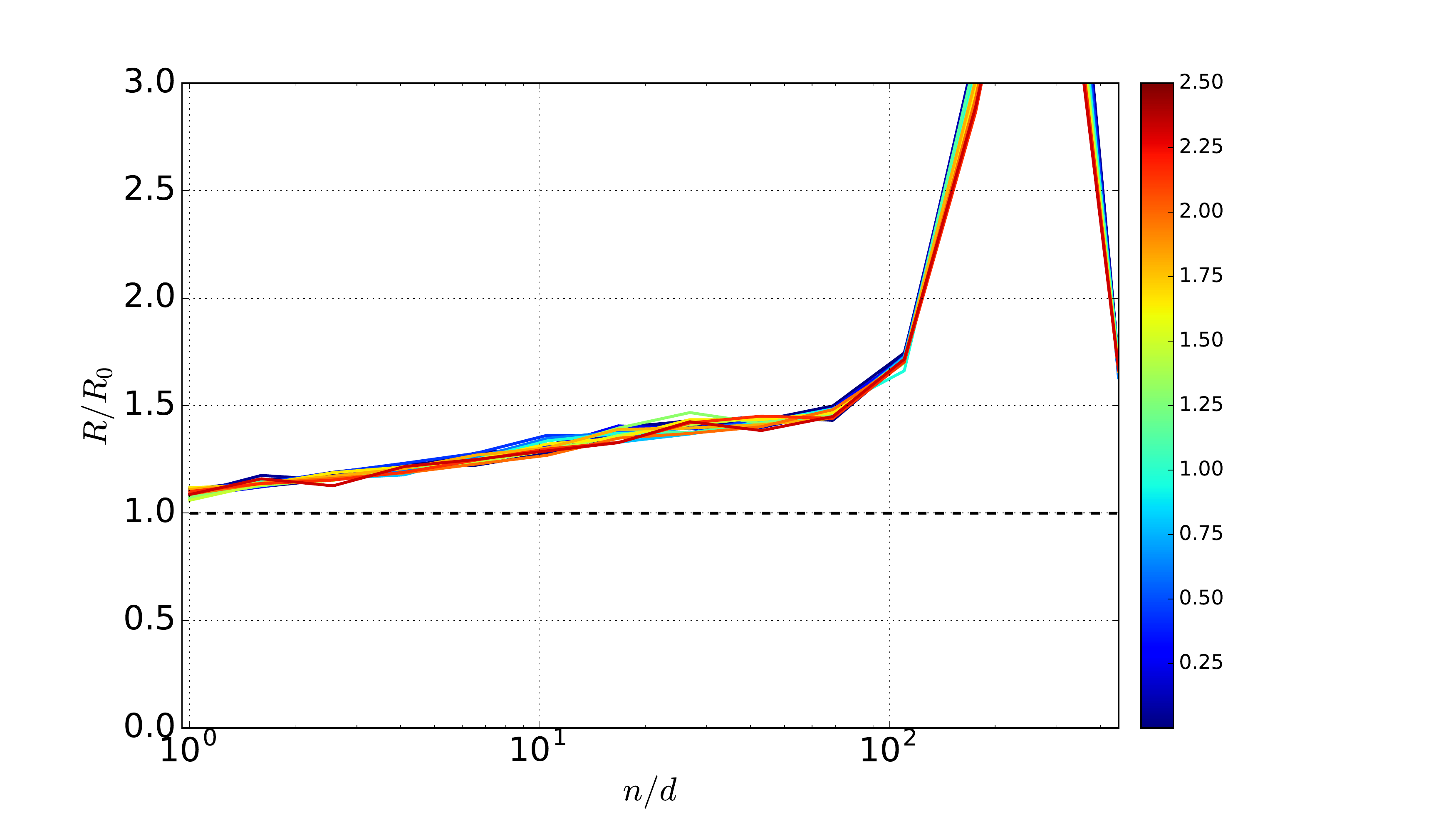}
\includegraphics[width=0.49\linewidth]{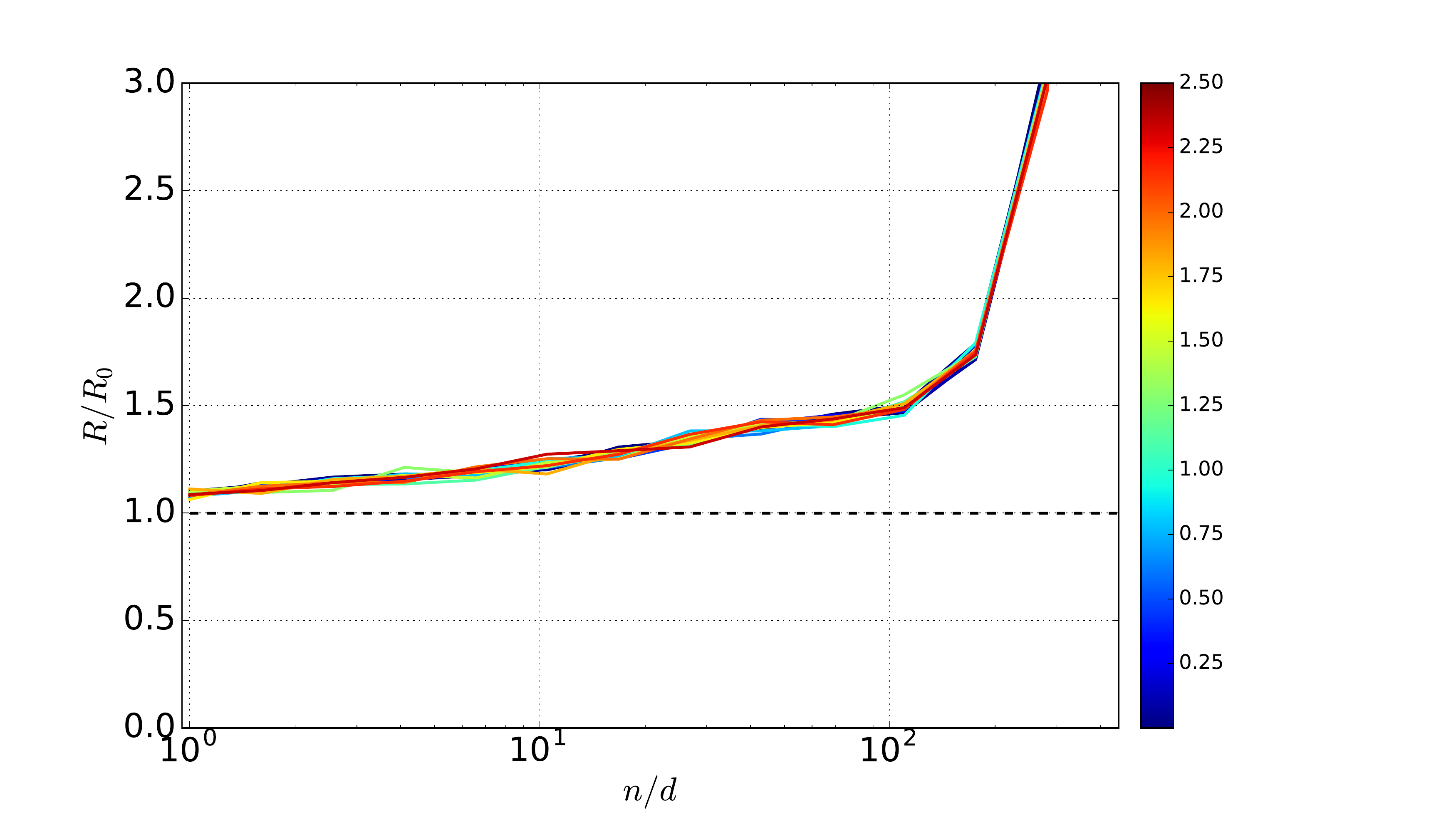}
\caption{Risk of the neural tangent model for learning a third order polynomial
  $f_{\star,2}$. The other settings are the same as in Figure \ref{fig:RF-Ridge-Quad}.  Left frame: $d=30$, $N=240$. Right frame: $d=50$, $N=400$.}\label{fig:NTK-Ridge-ThirdDeg}
\end{figure}

The reader might wonder whether the numerical results presented in
Section  \ref{sec:Numerical} might change significantly if we changed the method to estimate the coefficients $\ba = (a_i)_{i\le N}\in\reals^N$
(for the model \RF) or $\ba = (\ba_i)_{i\le N}\in\reals^{Nd}$. Our main results --Theorem \ref{thm:RF_lower_upper_bound} and Theorem \ref{thm:NT_lower_upper_bound}.(a)-- predict that 
the result should not change qualitatively: these models are limited because they cannot approximate the target function $f_{\star}$ (unless this is a low degree polynomial),
regardless of the choice of the representative $f\in \cF_{\RF}$ or $f\in\cF_{\NT}$.

In order to verify this prediction numerically, we repeated the experiments of Section \ref{sec:Numerical} using ridge regression.
We form a matrix $\bZ\in\reals^{n\times p}$ containing the $p$ covariates (with $p=N$ for \RF, and $p=Nd$ for \NT),
whereby $Z_{ij}= \sigma(\<\bw_j,\bx_i\>)$ for \RF, and $Z_{i,(j_1j_2)} = (\bx_i)_{j_2}\sigma(\<\bw_{j_1},\bx_i\>)$ for \NT. Letting $y_i =f_{\star}(\bx_i)$,
we estimate the coefficients $\ba$ via
\begin{align}
\hat\ba = \arg\min_{\ba\in\reals^p}\Big\{\|\by-\bZ\ba\|_2^2+\lambda\|\ba\|_2^2\Big\}\, .
\end{align}
The results are reported in Figures \ref{fig:RF-Ridge-Quad}, \ref{fig:NTK-Ridge-Quad}, \ref{fig:NTK-Ridge-ThirdDeg}, and are consistent with the ones
of Section  \ref{sec:Numerical}. Regularization does not help: it only reduces the peak at $n\approx p$, as expected from \cite{hastie2019surprises}, but not the large $n$
behavior. 

(Note that for \RF\, we do not report results for $d=100$, in Fig.~\ref{fig:RF-Ridge-Quad}. As in Fig.~\ref{fig:RF-SecondDeg},
the resulting risk is slightly below the baseline $R_0$: this effect vanishes for $d\gtrsim 100$.) 

\end{document}